\renewcommand{\thefootnote}{} 
\theoremstyle{plain} 
\newtheorem{theorem}{\indent\sc Theorem}[section]
\newtheorem{lemma}[theorem]{\indent\sc Lemma}
\newtheorem{corollary}[theorem]{\indent\sc Corollary}
\newtheorem{proposition}[theorem]{\indent\sc Proposition}
\theoremstyle{definition} 
\newtheorem{definition}[theorem]{\indent\sc Definition}
\newtheorem{remark}[theorem]{\indent\sc Remark}
\newtheorem{example}[theorem]{\indent\sc Example}
\newcommand{\C}{\mathbb{C}}
\newcommand{\R}{\mathbb{R}}
\newcommand{\Z}{\mathbb{Z}}
\newcommand{\N}{\mathbb{N}}
\def\dim{\mathop{\mathrm{dim}}\nolimits}
\newcommand{\abs}[1]{\left\lvert#1\right\rvert}
\newcommand{\norm}[1]{\lVert#1\rVert}
\newcommand{\calL}{\mathcal{L}}
\newcommand{\calO}{\mathcal{O}}
\newcommand{\calLAff}{\mathcal{L}_{\mathrm{Aff}}}
\newcommand{\OdX}{\calO \partial X}
\newcommand{\OdoX}{\calO \partial_O X}
\newcommand{\loge}{\log^\epsilon}
\newcommand{\expe}{\exp_\epsilon}
\newcommand{\qds}{\rho_\epsilon}
\newcommand{\radcon}{\phi}
\newcommand{\ds}[1]{\,\overline{#1}\,}
\newcommand{\Rp}{\R_{\geq 0}}
\newcommand{\famP}{\mathbb{P}} 
\newcommand{\classNPCPC}{\mathcal{C}}
\newcommand{\LieG}{\mathbf G}
\def\address#1#2{\begingroup
\noindent\parbox[t]{7.8cm}{%
\small{\scshape\ignorespaces#1}\par\vskip1ex
\noindent\small{\itshape E-mail address}%
\/: #2\par\vskip4ex}\hfill%
\endgroup}%
\title[coarse Cartan-Hadamard theorem]{\uppercase{A coarse Cartan-Hadamard theorem with application to the coarse Baum-Connes conjecture}}
\author{
%
%
\textsc{Tomohiro Fukaya, Shin-ichi Oguni} 
}
\date{} 
\begin{document}


\footnote{ 
2010 \textit{Mathematics Subject Classification}.
Primary 58J22; Secondary 20F67, 20F65.
}
\footnote{ 
\textit{Key words and phrases}. 
Non-positively curved spaces, 
Coarse Baum-Connes conjecture.
}

\thanks{T.Fukaya S.Oguni were supported by Grant-in-Aid for Young Scientists (B)
(15K17528)  and (16K17595), respectively, 
from Japan Society of Promotion of Science}
\renewcommand{\thefootnote}{\fnsymbol{footnote}} 

\begin{abstract}
We establish a coarse version of the Cartan-Hadamard theorem, 
which states that proper
coarsely convex spaces are coarsely homotopy equivalent to
the open cones of their ideal boundaries.
As an application,
we show that such spaces satisfy the coarse Baum-Connes
conjecture. Combined with the result of Osajda-Przytycki,
it implies that systolic groups and locally finite systolic
complexes satisfy the coarse Baum-Connes conjecture.
\end{abstract}

\maketitle

\section{Introduction} 
\label{sec:introduction}




The metric on a geodesic space $X$ is said to be {\itshape
convex} if all geodesic segments $\gamma_1\colon [0,a_1]\to X$ and
$\gamma_2\colon [0,a_2]\to X$ satisfy the inequality
\begin{align*}
 \ds{\gamma_1(ta_1),\gamma_2(ta_2)}\leq (1-t)\ds{\gamma_1(0),\gamma_2(0)}
 +t\ds{\gamma_1(a_1),\gamma_2(a_2)},
\end{align*}
for all $t\in [0,1]$, where we denote by $\ds{x_1,x_2}$ the distance
between $x_1$ and $x_2$.  This condition generalizes metric properties
of simply connected complete Riemannian manifolds with non-positive
sectional curvature. A geodesic space with a convex metric is also
called a Busemann non-positively curved space. Unlike Gromov's
definition of hyperbolicity of metric spaces, convexity does not behave
well under coarse equivalences of geodesic spaces even if we
allow bounded errors in the inequality. Indeed, the 2-dimensional vector
space $\R^2$ with the $l_1$-metric contains {\itshape fat 2-gons}, and
so the $l_1$-metric is not convex, although it is coarsely
equivalent to the $l_2$-metric, which is convex.
An idea to overcome this problem is to consider a particular
subfamily of geodesics.

Let $X$ be a  metric space. 
Let $C\geq 0$ be a constant.
Let $\calL$ be a family of geodesic segments.
The space $X$ is 
{\itshape geodesic $(C,\calL)$-coarsely convex}, 
if $C$ and $\calL$ satisfy the following.
\begin{enumerate}[(i)] 
 \item \label{ge-conn}       
       For $v,w\in X$, there exists a 
       geodesic segment
       $\gamma\in \calL$ 
       with $\gamma(0) = v$ and $\gamma(\ds{v,w}) = w$.
 \item \label{ge-qconvex}
       Let $\gamma, \eta \in \calL$ be geodesic segments
       such that $\gamma\colon [0,a]\to X$, $\eta\colon [0,b]\to X$. 
       For $t\in [0,a]$, $s\in [0,b]$ and
       for $0\leq c\leq 1$, we have that        
       \begin{align*}
	\ds{\gamma(ct),\eta(cs)}
	\leq c\ds{\gamma(t),\eta(s)} + (1-c)\ds{\gamma(0),\eta(0)}+ C.
       \end{align*}
\end{enumerate} 
The family $\calL$ satisfying (\ref{ge-conn}) and (\ref{ge-qconvex})
is called a {\itshape system of good geodesics}, and elements 
 $\gamma\in \calL$ are called {\itshape good geodesics}.

We say that a metric space $X$ is a 
{\itshape geodesic coarsely convex} space if
there exist a constant $C$ and a family of
geodesics $\calL$ such that $X$ is geodesic 
$(C,\calL)$-coarsely convex. 

Being geodesic coarsely convex is not invariant under coarse equivalence
yet. In Section~\ref{sec:coars-conv-spac}, we introduce an
alternative definition, we say {\itshape coarsely convex}, using
quasi-geodesics, and show that it is invariant under coarse equivalence.
We remark that geodesic coarsely convex spaces are coarsely convex
spaces.  For a coarsely convex space $X$, the {\itshape ideal boundary},
denoted by $\partial X$, is a set of equivalence classes of
quasi-geodesic rays which can be approximated by elements of $\calL$,
equipped with a metric given by the ``Gromov product''.

Suppose that $N$ is a connected, simply connected, complete,
Riemannian $n$-manifold with all sectional curvatures being less than or
equal to zero. It follows from the Cartan-Hadamard theorem that $N$ is
diffeomorphic to the Euclidean space $\R^n$. We remark that the ideal
boundary of $N$ is homeomorphic to the $(n-1)$-sphere $S^{n-1}$, and
$\R^n$ is regarded as the {\itshape open cone} over $S^{n-1}$.  The main
result of this paper is a coarse geometric analogue of this theorem.

\begin{theorem}
\label{thm:cCATHAD}
 Let $X$ be a proper coarsely convex
 space. Then $X$ is coarsely homotopy equivalent to $\OdX$,
 the open cone over the ideal boundary of $X$.
\end{theorem}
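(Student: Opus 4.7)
The plan is to imitate the classical Cartan--Hadamard argument: after fixing a basepoint $x_0 \in X$, construct an ``exponential'' coarse map $E \colon \OdX \to X$ and a ``logarithm'' coarse map $L \colon X \to \OdX$, and then show that they are mutually coarsely homotopy inverse. Throughout, the coarse convexity inequality from axiom (ii) will be used to keep bounded errors under control, while properness of $X$ will supply the compactness needed both to define $L$ and to establish that $E$ is proper.

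For the exponential, I would choose, for each ideal boundary point $z \in \partial X$, a quasi-geodesic ray $\sigma_z \colon [0,\infty) \to X$ from $x_0$ that represents $z$ and is approximated by good geodesic segments in $\calL$. Setting $E(z,t) := \sigma_z(t)$ and sending the cone point to $x_0$, the bornologous-ness of $E$ follows by passing the inequality $\ds{\gamma(ct),\eta(cs)} \leq c\ds{\gamma(t),\eta(s)} + C$, valid for any two good geodesics based at $x_0$ by (ii), to the approximating good geodesic segments of $\sigma_{z_1}$ and $\sigma_{z_2}$. Combined with the definition of the ideal-boundary metric via a Gromov product, this converts closeness in the cone into a controlled estimate in $X$, and properness of $X$ then yields that $E$ is proper.

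For the logarithm, given $x\in X$ I want to assign a ray from $x_0$ running through a bounded neighborhood of $x$. The natural recipe is to take a good geodesic $\alpha_x \in \calL$ from $x_0$ to $x$ and extend it to a full good quasi-geodesic ray. This is done by choosing, for each integer $n \geq \ds{x_0,x}$, a good geodesic $\beta_n \in \calL$ of length $n$ whose initial portion stays within bounded distance of $\alpha_x$, and then using properness plus an Arzel\`a--Ascoli / diagonal argument to extract a subsequential limit ray $\bar\alpha_x$ representing a point $z_x \in \partial X$. Setting $L(x) := (z_x, \ds{x_0,x})$ gives the candidate map, and its coarseness again reduces to (ii) applied to the rays chosen for nearby points.

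Finally, the coarse homotopies $E\circ L \simeq \mathrm{id}_X$ and $L\circ E \simeq \mathrm{id}_{\OdX}$ are built by sliding along good (quasi-)geodesics: $H(x,s) := \sigma_{z_x}(s\ds{x_0,x})$ interpolates between $x_0$ at $s=0$ and a point within bounded distance of $x$ at $s=1$, with uniform control governed by (ii), and a symmetric rescaling of the cone coordinate handles the reverse composition. The main obstacle I expect is the extension step in the construction of $L$: showing that every $x \in X$ lies within a bounded neighborhood of some good quasi-geodesic ray emanating from $x_0$, and that the assignment $x \mapsto z_x$ can be made coarsely controlled in $x$. It is here that properness, the compactness argument, and axiom (ii) must interlock carefully; once this is in place, the remaining verifications are systematic applications of (ii).
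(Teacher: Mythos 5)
Your overall template---an exponential/logarithm pair, coarse homotopies by sliding along good quasi-geodesics, properness supplying compactness---is in the spirit of the paper's proof, but the construction of $L$ on all of $X$ contains a genuine gap that the paper's argument is organized precisely to circumvent. You propose to extend a good segment $\alpha_x$ from $x_0$ to $x$ to a full ray $\bar\alpha_x\in\calL_O^\infty$, set $z_x:=[\bar\alpha_x]$, and then rely on $\sigma_{z_x}(\ds{x_0,x})$ landing within bounded distance of $x$. Nothing in the axioms of a coarsely convex space, nor in properness, guarantees that every point of $X$ lies within a uniformly bounded neighbourhood of a good quasi-geodesic ray from $x_0$. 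The axioms give good segments between pairs of points and the convexity inequality; they do not allow you to extend a segment to a ray passing near its endpoint (consider a tree built from a single infinite spine with finite teeth of length $n$ attached at the integer $n$: the tip of the $n$-th tooth is at distance $n$ from every ray from the root). Your Arzel\`a--Ascoli extraction produces \emph{some} ray, but with no control on the distance from $x$ to that ray, so $E\circ L$ need not be close to the identity, and your interpolation $H(x,s)=\sigma_{z_x}(s\,\ds{x_0,x})$ need not end anywhere near $x$.

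The paper avoids this by defining the logarithm $\loge$ only on the image $\expe(\OdoX)$ and then proving the separate, and quite delicate, statement (Proposition~\ref{prop:image-exp}) that a bounded neighbourhood $Y$ of that image is coarsely homotopy equivalent to $X$. This is explicitly \emph{not} a bounded-displacement retraction: the map $\varphi(v)=\gamma_v(\chi(T_v))$ uses a step function $\chi$ built from a finiteness argument (Lemma~\ref{lemma:properness}) and a carefully chosen subsequence (Lemma~\ref{lem:Tv-ni}) to send $v$ to a point on the good segment from $O$ to $v$ that is guaranteed to lie near $Y$, and this point can be arbitrarily far from $v$; the coarse homotopy $H(v,t)=\gamma_{\iota(v)}(T_{\iota(v)}-t+\chi(t))$ compensates by running for time $T_{\iota(v)}$, long enough to remain bornologous (Lemma~\ref{lem:H-coarse}). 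Two further points your sketch omits: the exponential must be reparametrized as $\expe(tx)=\eta_x(t^{1/\epsilon})$ to undo the $\epsilon$-power appearing in the quasi-metric $\qds(x,y)=(x\mid y)^{-\epsilon}$ on $\partial_O X$, and even then it is only \emph{pseudocontinuous}, not bornologous, so it must be precomposed with a radial contraction $\radcon$ before it becomes a coarse map (Proposition~\ref{prop:psuedconti-contraction} and Lemma~\ref{lem:pseudoconti}).
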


The class of geodesic coarsely convex spaces includes
geodesic Gromov hyperbolic spaces~\cite[\S 2, Proposition 25]{MR1086648}
and CAT(0)-spaces, more generally, 
Busemann non-positively curved spaces~\cite{MR1744486}\cite{MR2132506}.
We remark that this class is closed under direct product, therefore, 
it includes products of these spaces.
An important subclass of geodesic coarsely convex spaces is 
a class of {\itshape systolic complexes}.

Systolic complexes are connected, simply connected simplicial complexes
with combinatorial conditions on links. They satisfy one of the basic
feature of CAT(0)-spaces, that is, the balls around convex sets are
convex. This class of simplicial complexes was introduced by
Chepoi~\cite{Chepoi-graph-CAT0} (under the name of {\itshape bridged
complexes}), and independently, by 
Januszkiewich-\'Swi\polhk{a}tkowski~\cite{Janus-Switk-smpl-NPC} and
Haglund~\cite{Haglund-cmplx-smp-hyp-grnd-dim}.
Osajda-Przytycki~\cite{bdrySytolic} introduced {\itshape Euclidean
geodesics}, which behave like CAT(0) geodesics, to construct boundaries
of systolic complexes. Their result implies the following.



\begin{theorem}
[{\cite[Corollary 3.3, 3.4]{bdrySytolic}}]
\label{thm:systolic-cConvex}
 The 1-skeleton of systolic complexes are geodesic coarsely convex spaces.
\end{theorem}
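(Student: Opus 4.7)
The plan is to exhibit a family $\calL$ of unit-speed geodesic segments in the $1$-skeleton $K^{(1)}$ of a systolic complex $K$, built from the Euclidean geodesics of Osajda--Przytycki, and to verify the two axioms of geodesic coarse convexity directly. The $1$-skeleton is equipped with its graph (path) metric, so edge-paths realizing $\ds{v,w}$ between vertices $v,w$ are geodesic segments once parametrized by arclength.

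For each ordered pair of vertices $v,w\in K$, the Osajda--Przytycki construction produces a Euclidean geodesic: a canonical edge-path in $K^{(1)}$ of combinatorial length $\ds{v,w}$. Parametrizing it by arclength yields a unit-speed geodesic $\gamma_{v,w}\colon [0,\ds{v,w}]\to K^{(1)}$. We take $\calL$ to be the family of all such $\gamma_{v,w}$, augmented by arclength extensions of bounded length that also allow endpoints lying in the interior of an edge (this costs at most a bounded correction). Axiom (\ref{ge-conn}) is then immediate from the existence part of \cite[Corollary 3.3]{bdrySytolic}.

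Axiom (\ref{ge-qconvex}) is where the real content lies and is provided by the distance-convexity estimate for Euclidean geodesics established in \cite[Corollary 3.4]{bdrySytolic}. That statement gives a universal constant $C_0\ge 0$, depending only on the systolic condition, such that for any two Euclidean geodesics $\gamma,\eta$ in $K^{(1)}$ and any parameters $t,s$ and any $c\in[0,1]$,
\[
 \ds{\gamma(ct),\eta(cs)} \leq c\,\ds{\gamma(t),\eta(s)} + (1-c)\,\ds{\gamma(0),\eta(0)} + C_0.
\]
Absorbing into a possibly larger constant $C$ the bounded errors introduced by (i) rounding non-vertex endpoints to nearby vertices, and (ii) passing from the discrete vertex parametrization of \cite{bdrySytolic} to the continuous arclength parametrization used in our definition, produces precisely the inequality required by (\ref{ge-qconvex}).

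The main obstacle is thus not a new geometric argument but a careful bookkeeping of reparametrization: one must verify that the constant in the Osajda--Przytycki estimate does not depend on the specific pair of Euclidean geodesics chosen, and that their combinatorial parametrization differs from arclength by at most a uniformly bounded amount. Once these compatibility checks are carried out, the theorem is a direct translation of \cite[Corollaries 3.3, 3.4]{bdrySytolic} into the language of geodesic coarsely convex spaces.
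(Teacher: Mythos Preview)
Your proposal is correct and aligns with the paper's treatment: the paper itself gives no proof beyond the attribution in the theorem's heading, simply stating that the result of Osajda--Przytycki ``implies'' the claim, with the Euclidean geodesics furnishing the system $\calL$. Your write-up is an accurate elaboration of how Corollaries~3.3 and~3.4 of \cite{bdrySytolic} translate into axioms~(\ref{ge-conn}) and~(\ref{ge-qconvex}), so there is nothing to compare --- you have supplied the details the paper omits.
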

A group is {\itshape systolic} if it acts geometrically by simplicial
automorphisms on a systolic complex. Osajda-Przytycki used their result
to show that systolic groups admit $EZ$-structures. This implies the
Novikov conjecture for torsion-free systolic groups.  Now it is
natural to ask whether systolic groups satisfy the {\itshape the coarse
Baum-Connes conjecture}.

Let $X$ be a proper metric space. The {\itshape coarse assembly map} is
a homomorphism from the {\itshape coarse K-homology} of $X$ to the
K-theory of the {\itshape Roe-algebra} of $X$.  The coarse Baum-Connes
conjecture~\cite{MR1388312} states that for ``nice'' proper metric
spaces, the coarse assembly maps are isomorphisms.

As a corollary of Theorem~\ref{thm:cCATHAD}, we have the following.
\begin{theorem}
\label{thm:cconvex-cBC}
 Let $X$ be a proper coarsely convex space.
 Then $X$ satisfies the coarse Baum-Connes conjecture.
\end{theorem}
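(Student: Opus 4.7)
The plan is to combine Theorem~\ref{thm:cCATHAD} with the classical fact that open cones over compact metric spaces satisfy the coarse Baum-Connes conjecture.

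First I would recall that both functors appearing in the coarse assembly map $\mu\colon KX_*(-)\to K_*(C^*(-))$ are invariant under coarse homotopy equivalence on proper metric spaces, and that $\mu$ is a natural transformation with respect to coarse maps. These facts are standard from coarse index theory, going back to Higson-Roe~\cite{MR1388312} and Higson-Pedersen-Roe. Consequently, if $Y$ and $Y'$ are proper metric spaces that are coarsely homotopy equivalent, then the conjecture holds for one if and only if it holds for the other. Applying Theorem~\ref{thm:cCATHAD}, it suffices to prove the conjecture for the open cone $\calO\partial X$.

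Next I would verify that the ideal boundary $\partial X$, equipped with the Gromov-product metric defined in Section~\ref{sec:coars-conv-spac}, is a compact metric space. Since $X$ is proper, this should follow from an Arzel\`a-Ascoli argument applied to good geodesic rays emanating from a fixed basepoint, mirroring the classical compactness of the visual boundary of a proper CAT(0) or Gromov hyperbolic space.

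Finally I would invoke the classical theorem that for any compact metric space $Z$, the open cone $\calO Z$ satisfies the coarse Baum-Connes conjecture. One way to see this is to observe that every compact metric space admits a topological embedding into the Hilbert cube, and hence into $\ell^2$; composing with radial scaling produces a coarse embedding of $\calO Z$ into $\ell^2$. By Yu's theorem, any proper metric space with bounded geometry admitting a coarse embedding into Hilbert space satisfies the coarse Baum-Connes conjecture. Alternatively, a direct computation using the scaling self-map of $\calO Z$ identifies both $KX_*(\calO Z)$ and $K_*(C^*(\calO Z))$ with the reduced K-homology $\tilde{K}_{*-1}(Z)$, with $\mu$ realizing the identity.

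The main obstacle will be ensuring that the notion of coarse homotopy equivalence produced by Theorem~\ref{thm:cCATHAD} is strong enough to be detected by both functors, and that bounded geometry hypotheses (required, for example, for Yu's theorem) can be arranged after replacing $\calO\partial X$ by a coarsely equivalent discrete net. A secondary point will be to confirm that such a discretization is still coarsely homotopy equivalent to $X$, so that the reduction in the first step remains valid; alternatively one must appeal to a formulation of the coarse Baum-Connes conjecture valid beyond the bounded geometry setting.
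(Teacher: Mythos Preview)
Your proposal is correct and follows essentially the same route as the paper: coarse homotopy invariance of both sides of the assembly map reduces the question, via Theorem~\ref{thm:cCATHAD}, to the open cone $\calO\partial X$, and then one invokes the known result that open cones over compact metrizable spaces satisfy the coarse Baum-Connes conjecture. The paper cites the Higson--Roe argument for open cones directly (with the finite-dimensionality hypothesis removed in \cite[Appendix~B]{relhypgrp}), so your worries about bounded geometry and discretization---which arise only in the Yu-embedding route you sketch first---do not come into play; your ``alternative'' direct computation is exactly the path taken.
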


The coarse Baum-Connes conjecture is known to be true for several classes of 
proper metric spaces. Examples of such classes are following.
\begin{enumerate}
 \item \label{item:Ghyp} Geodesic Gromov hyperbolic spaces
       \cite{MR1388312}\cite{WillettThesis}.
 \item \label{item:BNPC}
       Busemann non-positively curved spaces
       \cite{MR1388312}\cite{WillettThesis}\cite{Busemann_cBC}.
 \item \label{item:prod}
       Direct products of geodesic Gromov hyperbolic spaces and Busemann
       non-positively curved spaces \cite{product-cBC}.
 \item \label{item:embHilb}
       Metric spaces which admit coarse embeddings into the Hilbert space
       \cite{MR1728880}.
\end{enumerate}

Theorem~\ref{thm:cconvex-cBC} covers examples
(\ref{item:Ghyp}),~(\ref{item:BNPC}) and~(\ref{item:prod}) in the above
list.  Combining it with Theorem~\ref{thm:systolic-cConvex}, we obtain
the following.
\begin{corollary}
\label{cor:syst-cBC}
 Let $X$ be a locally finite systolic complex. Then $X$ satisfies the 
 coarse Baum-Connes conjecture. Especially, systolic groups satisfy 
 the coarse Baum-Connes conjecture.
\end{corollary}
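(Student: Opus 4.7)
The plan is to obtain Corollary~\ref{cor:syst-cBC} as a direct combination of Theorem~\ref{thm:systolic-cConvex} and Theorem~\ref{thm:cconvex-cBC}, supplemented only by the standard coarse invariance of the coarse Baum-Connes conjecture.

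First, I would reduce the statement about a locally finite systolic complex $X$ to a statement about its $1$-skeleton. Equip $X$ with any metric in which every simplex has uniformly bounded diameter (e.g.\ the piecewise-Euclidean metric), and $X^{(1)}$ with the graph metric. The inclusion $\iota\colon X^{(1)}\hookrightarrow X$ is then a coarse equivalence: every point of $X$ lies within bounded distance of a vertex, and on the vertex set the two metrics agree up to a bounded additive error because $X$ is locally finite. Because $X$ is locally finite, both $X$ and $X^{(1)}$ are proper metric spaces. By Theorem~\ref{thm:systolic-cConvex}, $X^{(1)}$ is geodesic coarsely convex, hence in particular coarsely convex in the sense of Section~\ref{sec:coars-conv-spac}.

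Next, I would apply Theorem~\ref{thm:cconvex-cBC} to $X^{(1)}$ to conclude that $X^{(1)}$ satisfies the coarse Baum-Connes conjecture. Since both the coarse $K$-homology and the $K$-theory of the Roe algebra are functorial under coarse maps and invariant under coarse equivalence of proper metric spaces, the conjecture transfers along the coarse equivalence $X^{(1)}\simeq X$. This gives the first assertion of the corollary.

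Finally, for the systolic-group statement, let $G$ be a systolic group. By definition $G$ acts geometrically by simplicial automorphisms on some systolic complex $Y$; properness and cocompactness of the action force $Y$ to be locally finite. The \v{S}varc-Milnor lemma then yields a quasi-isometry, and hence a coarse equivalence, between $G$ (with any word metric for a finite generating set) and $Y$. Combining this with the first part applied to $Y$ and again invoking coarse invariance of the conjecture gives that $G$ satisfies the coarse Baum-Connes conjecture. No real obstacle arises: the corollary is essentially an assembly of the results already established in this paper, with the only external ingredients being the standard coarse invariance of the conjecture and the \v{S}varc-Milnor lemma.
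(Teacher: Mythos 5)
Your proposal is correct and matches the paper's own route: the paper explicitly derives Corollary~\ref{cor:syst-cBC} by combining Theorem~\ref{thm:systolic-cConvex} with Theorem~\ref{thm:cconvex-cBC}, together with the remark that geodesic coarsely convex implies coarsely convex, and the details you supply (coarse equivalence of $X$ with its $1$-skeleton, properness from local finiteness, coarse invariance of the conjecture, \v{S}varc--Milnor for the group case) are exactly the implicit steps.
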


Recently, Osajda-Huang~\cite{LArtin-systolic} showed that 
Artin groups of almost large-type are systolic groups, and
Osajda-Prytu{\l}a~\cite{Classifying-sp-systolic} showed that graphical small
cancellation groups are systolic groups. 
We remark that large-type Artin groups are of almost large-type,
and it is unknown whether these groups act geometrically on
CAT(0)-spaces.

\begin{corollary}
 Artin groups of almost large type and graphical small cancellation
 groups satisfy the coarse Baum-Connes conjecture. 
\end{corollary}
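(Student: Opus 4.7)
The plan is to deduce this corollary directly from Corollary \ref{cor:syst-cBC} together with the two structure theorems cited immediately before the statement. First I would recall the definition of a systolic group as one acting geometrically by simplicial automorphisms on a locally finite systolic complex, and note that, by the \v Svarc--Milnor lemma, any such group (equipped with a word metric relative to a finite generating set) is coarsely equivalent to the underlying 1-skeleton of the systolic complex on which it acts. Since the coarse Baum--Connes conjecture is invariant under coarse equivalence of proper metric spaces, the conjecture for a finitely generated systolic group reduces at once to the conjecture for its locally finite systolic model complex.

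Next I would invoke the two cited results to bring the target groups into this framework. The theorem of Huang--Osajda~\cite{LArtin-systolic} provides, for each Artin group of almost large type, a locally finite systolic complex on which the group acts geometrically; similarly, the theorem of Osajda--Prytu{\l}a~\cite{Classifying-sp-systolic} gives such an action for every graphical small cancellation group. In both cases the groups in question are therefore systolic in the sense used in Corollary \ref{cor:syst-cBC}.

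Finally, I would apply Corollary \ref{cor:syst-cBC}, which asserts the coarse Baum--Connes conjecture for every systolic group (and more generally for every locally finite systolic complex). Combined with the previous paragraph, this proves the corollary. There is essentially no obstacle here: all of the real work is embedded in the chain Theorem \ref{thm:systolic-cConvex} $\Rightarrow$ Theorem \ref{thm:cconvex-cBC} $\Rightarrow$ Corollary \ref{cor:syst-cBC}, together with the cited systolicity theorems. The only point to verify is the mild bookkeeping that the systolic complexes supplied by Huang--Osajda and Osajda--Prytu{\l}a are locally finite, which is automatic from the cocompactness of the actions and the local finiteness one extracts from geometric actions on simplicial complexes.
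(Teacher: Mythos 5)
Your argument is correct and matches exactly what the paper intends: the corollary is a direct consequence of Corollary~\ref{cor:syst-cBC} combined with the systolicity results of Huang--Osajda and Osajda--Prytu{\l}a, with the \v Svarc--Milnor lemma and coarse invariance of the conjecture supplying the routine reduction from group to model complex. The paper gives no separate proof because it is precisely this chain; your write-up is a faithful spelling-out of it.
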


Corollary~\ref{cor:syst-cBC} with a descent principle implies
the Novikov conjecture for systolic groups.  As already mentioned, it is
known to be true~\cite{bdrySytolic}.  In fact we can show the
Novikov conjecture for wider classes of groups since the coarse
Baum-Connes conjecture is stable under taking product with any
polycyclic group and is studied well for relatively hyperbolic groups.
Let $\classNPCPC$ be a class of groups consisting of direct products
of hyperbolic groups, CAT(0)-groups, systolic groups, and polycyclic
groups. Note that a polycyclic group with a word metric is not
necessarily coarsely convex. 
We give details in Remark 6.11.
\begin{theorem}
 \label{thm:relhypPC} Let a finitely generated group $G$ be 
 one of the following: 
 \begin{enumerate}
  \item a member of $\classNPCPC$, 
  \item\label{second} a group which is hyperbolic relative to a
       finite family of
       subgroups belonging to $\classNPCPC$,
  \item a group which is the direct product of a group as in (\ref{second})
	and a polycyclic group.
 \end{enumerate}
 Then the group $G$
 satisfies the coarse Baum-Connes conjecture. Moreover, if 
 $G$ is torsion free, then $G$ 
satisfies the Novikov conjecture.
\end{theorem}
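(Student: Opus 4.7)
The plan is to reduce each of the three cases to Theorem~\ref{thm:cconvex-cBC} combined with a stability statement for the coarse Baum--Connes conjecture under direct products with polycyclic groups, and then to derive the Novikov conjecture for torsion-free $G$ by the standard descent principle. For case~(1), a member $G$ of $\classNPCPC$ decomposes as $G = H \times P$, where $H$ is a finite direct product of hyperbolic, CAT(0) and systolic groups and $P$ is polycyclic. Each factor of $H$ acts geometrically on a geodesic coarsely convex space: hyperbolic groups by \cite[\S 2, Proposition~25]{MR1086648}, CAT(0)-groups by definition, and systolic groups by Theorem~\ref{thm:systolic-cConvex}. Since the class of coarsely convex spaces is closed under finite direct products, $H$ is coarsely convex, and Theorem~\ref{thm:cconvex-cBC} gives CBC for $H$. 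The polycyclic-stability statement that we discuss in Remark~6.11 then upgrades CBC for $H$ to CBC for $G = H \times P$.

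For case~(2), assume $G$ is hyperbolic relative to a finite family $\{P_1, \ldots, P_n\}$ with each $P_i \in \classNPCPC$. By case~(1) every $P_i$ already satisfies CBC, and we invoke the relative coarse Baum--Connes principle that transfers CBC from peripheral subgroups to the whole relatively hyperbolic group. Case~(3) is then immediate: apply Remark~6.11 once more to a case~(2) group times a polycyclic group. Finally, whenever $G$ is torsion-free the coarse Baum--Connes conjecture, together with the descent principle, implies the classical Novikov conjecture on higher signatures.

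The main obstacle is to give a clean formulation of the polycyclic-stability step in Remark~6.11 that is compatible with the coarse assembly map used throughout the paper --- polycyclic groups are amenable with finite asymptotic dimension but need not themselves be coarsely convex, as the authors explicitly note, so one cannot simply absorb them into the coarsely convex factor. A secondary subtlety is to verify that the relative-hyperbolic CBC statement invoked in case~(2) imposes no conditions on the peripheral subgroups beyond CBC itself, so that systolic and polycyclic peripherals (and not only hyperbolic or CAT(0) ones) are genuinely covered; once these two pieces are in place, the three cases assemble as described above.
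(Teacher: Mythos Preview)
Your overall strategy matches the paper's: Theorem~\ref{thm:relhypPC} is assembled from Theorem~\ref{thm:cconvex-cBC}, the polycyclic--stability step (Proposition~\ref{prop:assembly-map-solv} and Corollary~\ref{cor:cconvexXpolyc}, together with the fact that a polycyclic group is virtually a lattice in a simply connected solvable Lie group), and the relatively hyperbolic transfer (Theorem~\ref{relhyp_12}). So the three--case reduction you outline is exactly what the paper does.

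Where your proposal goes wrong is in the ``secondary subtlety''. You hope that the relatively hyperbolic transfer requires nothing of the peripheral subgroups beyond CBC; in fact Theorem~\ref{relhyp_12} also demands that each peripheral $P_i$ admit a finite $P_i$--simplicial model for the universal space for proper actions, and, for the Novikov conclusion, that each torsion--free $P_i$ be classified by a finite complex. These are genuine extra hypotheses and they are not automatic for arbitrary groups satisfying CBC. The paper discharges them by checking, case by case, that every group in $\classNPCPC$ has these finiteness properties (classical for hyperbolic, CAT(0), and polycyclic groups; \cite{Chepoi-Osajda-Dismant2015} for systolic groups). Your argument for case~(2) is incomplete until you supply this verification.

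Relatedly, your closing sentence ``whenever $G$ is torsion--free the coarse Baum--Connes conjecture, together with the descent principle, implies the classical Novikov conjecture'' is too strong as stated. Descent requires a finiteness condition on $BG$ (or on the classifying space for proper actions), and this must be checked for $G$ in each of the three cases; it is not a consequence of CBC plus torsion--freeness alone. The paper handles this through the ``moreover'' clause of Theorem~\ref{relhyp_12} together with the finiteness properties of $\classNPCPC$ just mentioned.
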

To the best knowledge of the authors, it is unknown whether
each group $G$ in Theorem~\ref{thm:relhypPC} admits
an $EZ$-structure or not.


Finally, we mention some algebraic properties of groups acting
geometrically on coarsely convex spaces.  These are direct
consequences of semihyperbolicity of coarsely convex spaces and
results of Alonso and Bridson~\cite{A-BSemihyp}.

\begin{corollary}
 \label{cor:CCGroup}
 Let $G$ be a group acting on a coarsely convex spaces $X$ properly and
 cocompactly by isometries. 
Then the following hold.
 \begin{enumerate}
  \item \label{item:FPinfty} $G$ is finitely presented and 
	of type $FP_{\infty}$.
  \item \label{item:isoperim} $G$ satisfies a quadratic isoperimetric inequality.
 \end{enumerate}
 Moreover, suppose that a system of good quasi-geodesic
 segments $\calL$ of $X$ is $G$-invariant, then
 \begin{enumerate}
 \setcounter{enumi}{2}
  \item \label{item:conjugacy-prob} $G$ has a solvable conjugacy problem.
  \item \label{item:plycsubgrp} Every polycyclic subgroup of $G$ contains
	a finitely generated abelian subgroup of finite index.
 \end{enumerate}
\end{corollary}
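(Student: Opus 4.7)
The strategy is to establish that $G$ is semihyperbolic in the sense of Alonso-Bridson, and then invoke their results for each of the four conclusions. The hint in the excerpt is explicit: properties (1)--(4) are consequences of semihyperbolicity, so the real content of the proof is constructing a suitable combing on $G$ from the system $\calL$ of good quasi-geodesics on $X$.

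First I would fix a basepoint $x_0\in X$ and invoke the \v{S}varc-Milnor lemma: since $G$ acts properly and cocompactly by isometries, the orbit map $g\mapsto gx_0$ is a quasi-isometry from $G$, with any word metric from a finite generating set, to $X$. Using axiom (i), for each $g\in G$ choose a good quasi-geodesic from $x_0$ to $gx_0$ and discretize it (at unit intervals, snapped to the nearest orbit point) to a path $\sigma_g$ in the Cayley graph of $G$ starting at $1$ and ending at $g$. This gives a combing of $G$. The coarse convexity axiom (ii), applied to any pair of good quasi-geodesics $\gamma,\eta$ with $\gamma(0)=x_0=\eta(0)$ and rescaled to the interval $[0,1]$, bounds $\ds{\gamma(ct),\eta(cs)}$ linearly in $c$ by $\ds{\gamma(t),\eta(s)}$, up to the additive constant $C$. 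Translating this along the quasi-isometry yields the two-sided synchronous fellow-traveler property required in the Alonso-Bridson definition of a semihyperbolic combing.

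Once semihyperbolicity of $G$ is established, conclusions (1) and (2) are immediate consequences in Alonso-Bridson: semihyperbolic groups are of type $F_\infty$, hence finitely presented and $FP_\infty$, and they satisfy a quadratic isoperimetric inequality. For (3) and (4), we invoke the additional hypothesis that $\calL$ is $G$-invariant. This invariance allows the choice of good quasi-geodesics above to be made $G$-equivariantly (by choosing representatives on a set of $G$-orbits and translating), so that the resulting combing of $G$ is $G$-equivariant. An equivariantly semihyperbolic group has solvable conjugacy problem, giving (3), and by a further theorem of Alonso-Bridson every polycyclic subgroup of such a group is virtually abelian and finitely generated, giving (4).

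The principal obstacle is the first step: the careful verification that the discretized combing on $G$ inherits the synchronous fellow-traveler property from axiom (ii). One must control the discretization error and the failure of good quasi-geodesics to be exact geodesics, and combine the additive constant $C$ with the quasi-isometry constants and the diameter of a fundamental domain to produce a genuine linear bound of the form $d(\sigma_g(t),\sigma_h(t))\le K\,d(g,h)+K'$ for all $t$. Once this bookkeeping is done, everything else is a citation of \cite{A-BSemihyp}.
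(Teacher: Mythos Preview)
Your overall strategy matches the paper's: show semihyperbolicity in the Alonso--Bridson sense and then cite \cite{A-BSemihyp}. The paper does this slightly differently --- it proves (Proposition~\ref{prop:semihyperbolicity}) that the space $X$ itself carries a bounded quasi-geodesic bicombing, and then invokes the Alonso--Bridson theorem for groups acting geometrically on semihyperbolic spaces, rather than transferring to $G$ via \v{S}varc--Milnor first. Since Alonso--Bridson show semihyperbolicity is a quasi-isometry invariant, the two routes are equivalent.

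There is, however, a genuine gap in your sketch of the fellow-traveller bound. Axiom~(\ref{qconvex})$^q$ alone does not give the synchronous bound $\ds{\gamma(t),\eta(t)}\le k_1\ds{\gamma(a),\eta(b)}+k_2$: it controls $\ds{\gamma(ct),\eta(cs)}$, and converting this into a same-parameter estimate forces you to compare $\eta(a)$ with $\eta(b)$, hence to bound $\abs{a-b}$ in terms of the endpoint distance. This is exactly axiom~(\ref{qparam-reg})$^q$, which you never invoke. Moreover, (\ref{qparam-reg})$^q$ only gives $\abs{a-b}\le\theta(\ds{\gamma(a),\eta(b)})$ with $\theta$ an arbitrary nondecreasing function; to get the \emph{linear} bound required by Alonso--Bridson, the paper uses the quasi-geodesicity of $X$ to replace $\theta$ by an affine function (see the first paragraph of the proof of Proposition~\ref{prop:semihyperbolicity}). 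Without this step you obtain only a bornologous combing, not a bounded one, and the Alonso--Bridson machinery does not apply.
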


\begin{remark}
 It is already known that systolic groups satisfy all properties
 mentioned in Corollary~\ref{cor:CCGroup}, since
 Januszkiewich-\'Swi\polhk{a}tkowski~\cite{Janus-Switk-smpl-NPC} proved
 that systolic groups are biautomatic.
\end{remark}

The organization of the paper is as follows. 
In Section~\ref{sec:coarse-geometry}, we briefly review coarse geometry,
and give the definition of coarse homotopy.
In Section~\ref{sec:coars-conv-spac}, we introduce coarsely convex
spaces, and we show that it is invariant under coarse equivalence.
In Section~\ref{sec:ideal-boundary}, we construct the ideal boundary,
then we introduce the {\itshape Gromov product} to define a topology on
the boundary. In Section~\ref{sec:main-result}, we give a proof of
Theorem~\ref{thm:cCATHAD}.  
In Section~\ref{sec:appl-coarse-baum}, we discuss on the relation with
the coarse Baum-Connes conjecture. We give a proof of
Theorem~\ref{thm:cconvex-cBC}. We also show that the coarse $K$-homology
of a coarsely convex space is isomorphic to the reduced $K$-homology of
its ideal boundary. Then we discuss on the direct product with
polycyclic groups, and on relatively hyperbolic groups.
In Section~\ref{sec:groups-acting-coars}, we show that a coarsely convex
space is semihyperbolic in the sense of Alonso-Bridson, and we mention
that Corollary~\ref{cor:CCGroup} follows from this fact.
In Section~\ref{sec:funct-analyt-char}, we give a functional analytic
characterization of the ideal boundary. As a corollary, we obtain that
the ideal boundary coincides with the combing corona in the sense of 
Engel and Wulff~\cite{combable-corona}.

\subsection*{Acknowledgments}
We are grateful to Damian Osajda for suggesting us a problem on the
coarse Baum-Connes conjecture for systolic complexes, which led us to
the definition of the coarsely convex spaces. We also appreciate his many
useful comments on the first draft of this paper. We thank Takumi Yokota
for helpful discussions.


\section{Coarse geometry}
\label{sec:coarse-geometry}
In this section we briefly review coarse geometry.
For points $v,w\in X$, we denote by $\ds{v,w}$ the distance between
$v$ and $w$. For $r\geq 0$ and for a subset $K\subset X$, we denote by 
$B_r(K)$ the closed $r$-neighbourhood of $K$ in $X$.

\subsection{Coarse map}
Let $X,Y$ be metric spaces. Let $f\colon X\to Y$ be a map.
\begin{enumerate}
 \item 
       The map $f$ is {\itshape bornologous} 
       if there exists a non-decreasing function
       $\theta\colon \R_{\geq 0} \to \R_{\geq 0}$ such that 
       for all $x,x'\in X$, we have
       \[
       \ds{f(x),f(x')} \leq \theta(\ds{x,x'}).
       \]       
  \item The map $f$ is {\itshape proper} if for each bounded subset
	$B\subset Y$, the inverse image $f^{-1}(B)$ is bounded.
  \item The map $f$ is {\itshape coarse} if it is bornologous and proper.
\end{enumerate}
For maps $f,g\colon X\to Y$, we say that $f$ and $g$ are {\itshape
close} if there exists a constant $C\geq 0$ such that
$\ds{f(x),g(x)}\leq C$ for all $x\in X$. A coarse map $f\colon X\to Y$
is a {\itshape coarse equivalence map} 
if there exists a coarse map $g\colon Y\to X$
such that the composites $g\circ f$ and $f\circ g$ are close to the identity
$\mathrm{id}_X$ and $\mathrm{id}_Y$, respectively.
We say that $X$ and $Y$ are {\itshape coarsely equivalent} if
there exists a coarse equivalence map $f\colon X\to Y$.


There exists a weaker equivalence relation between coarse maps, which plays
an important role for an algebraic topological approach to the coarse
Baum-Connes conjecture.
\begin{definition}
\label{def:chomotopy}
 Let $f,g\colon X\to Y$ be coarse maps between metric
 spaces. The maps $f$ and $g$ are 
 {\itshape coarsely homotopic} if there exists a metric subspace 
$Z = \{(x,t):0\leq t\leq T_x\}$ 
of $X\times \R_{\geq 0}$ and a coarse map
 $h\colon Z\to Y$, such that
\begin{enumerate}
 \item the map $X \ni x\mapsto T_x\in \R_{\geq 0}$ is bornologous,
 \item $h(x,0) = f(x)$, and
 \item $h(x,T_x) = g(x)$.
\end{enumerate}
Here we equip $X\times \R_{\geq 0}$ with the $l_1$-metric, that is, 
$d_{X\times \R_{\geq 0}}((x,t),(y,s)):= \ds{x,y}+ \abs{t-s}$ for 
$(x,t),(y,s)\in X\times \R_{\geq 0}$.
\end{definition}
Coarse homotopy is then an equivalence relation on coarse maps.  A
coarse map $f\colon X\to Y$ is a {\itshape coarse homotopy equivalence
map} if there exists a coarse map $g\colon Y\to X$ such that
the composites $g\circ f$ and $f\circ g$ are coarsely homotopic to the
identity $\mathrm{id}_X$ and $\mathrm{id}_Y$, respectively.  We say that
$X$ and $Y$ are {\itshape coarsely homotopy equivalent} if there exists
a coarse homotopy equivalence map $f\colon X\to Y$.

\subsection{Quasi-isometry}
\label{sec:quasi-isometry} Let $\lambda\geq 1$ and $k\geq 0$ be
constants. Let $X$ and $Y$ be metric spaces.  We say that a map 
$f\colon X\to Y$ 
is a {\itshape $(\lambda,k)$-quasi-isometric embedding} if for
all $x,x'\in X$, we have
\begin{align*}
 \frac{1}{\lambda}\ds{x,x'} -k \leq 
 \ds{f(x),f(x')} \leq \lambda\ds{x,x'} + k.
\end{align*}

 Let $X'\subset X$ be a subset. For $M\geq 0$, we say that $X'$ is
 {\itshape $M$-dense} in $X$ if $X=B_M(X')$. 
 We say that a map $f\colon X\to Y$ is a
 {\itshape quasi-isometry} if there exist constants $\lambda,k,M$ such
 that $f$ is a $(\lambda,k)$-quasi-isometric embedding and the image $f(X)$ is
 $M$-dense in $Y$. We say that $X$ and $Y$ are {\itshape
 quasi-isometric} if there exists a quasi-isometry $f\colon X\to Y$.


 A {\itshape $(\lambda,k)$-quasi-geodesic} in $X$ is a
 $(\lambda,k)$-quasi-isometric embedding $\gamma\colon I\to X$, 
 where $I$ is a closed connected subset of $\R$.  
 If $I=\Rp$, then we say that $\gamma$ is a {\itshape
 $(\lambda,k)$-quasi-geodesic ray}, and if $I=[0,a]$, then 
 we say that $\gamma$ is a {\itshape $(\lambda,k)$-quasi-geodesic segment}.


A metric space $X$ is {\itshape $(\lambda,k)$-quasi-geodesic} 
if for all $x,y\in X$, there exists 
a $(\lambda, k)$-quasi-geodesic segment $\gamma\colon [0,a]\to X$ with
$\gamma(0) = x$ and $\gamma(a) = y$.
%
We say that a metric space $X$ is {\itshape quasi-geodesic}
if there exist constants $\lambda$ and $k$ such that
$X$ is $(\lambda,k)$-quasi-geodesic.
The following criterion is well-known.
\begin{lemma}
 Let $X$ and $Y$ be quasi-geodesic spaces. Then $X$ and $Y$ are 
 coarsely equivalent if and only if $X$ and $Y$ are quasi-isometric.
\end{lemma}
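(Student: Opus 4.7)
The forward direction is routine: a $(\lambda,k)$-quasi-isometric embedding $f\colon X\to Y$ whose image is $M$-dense is manifestly bornologous (upper inequality) and proper (lower inequality), hence coarse; selecting a point $g(y)\in X$ with $\ds{f(g(y)),y}\le M$ for each $y\in Y$ yields a coarse map $g$ with $g\circ f$ and $f\circ g$ close to the respective identities. I would dispatch this direction in a few lines, noting that quasi-geodesicity is not needed here.

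The substantive direction is the converse. Given a coarse equivalence $f\colon X\to Y$ with coarse inverse $g\colon Y\to X$, and abstract bornologous control functions $\theta_f,\theta_g$, the plan is to use the quasi-geodesic hypothesis on $X$ (respectively $Y$) to upgrade $\theta_f$ (respectively $\theta_g$) to a genuine linear bound. Concretely: given $x,x'\in X$, fix a $(\lambda,k)$-quasi-geodesic segment $\gamma\colon [0,a]\to X$ from $x$ to $x'$, where $\lambda,k$ come from the quasi-geodesic structure of $X$. The quasi-isometric inequality forces $a\le \lambda(\ds{x,x'}+k)$. Subdividing $[0,a]$ into at most $a+1$ pieces of length at most $1$, consecutive sample points lie at distance $\le \lambda+k$ in $X$ and hence at distance $\le \theta_f(\lambda+k)$ in $Y$. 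Summing via the triangle inequality produces a linear upper bound
\[
\ds{f(x),f(x')}\le A\,\ds{x,x'}+B,
\]
for constants $A,B$ depending only on $\lambda,k,\theta_f$.

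By symmetry applied to $g$ using the quasi-geodesic structure of $Y$, one gets a linear upper bound $\ds{g(y),g(y')}\le A'\ds{y,y'}+B'$. Combining this with the $C$-closeness of $g\circ f$ to $\mathrm{id}_X$ yields the linear lower bound on $f$:
\[
\ds{x,x'}\le 2C+\ds{g(f(x)),g(f(x'))}\le 2C+A'\,\ds{f(x),f(x')}+B',
\]
so that $\ds{f(x),f(x')}\ge \frac{1}{A'}\ds{x,x'}-\frac{2C+B'}{A'}$. Density of $f(X)$ in $Y$ is immediate from the $C$-closeness of $f\circ g$ to $\mathrm{id}_Y$. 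Together these show $f$ is a quasi-isometry.

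The only genuinely nontrivial step is the subdivision argument promoting $\theta_f$ to a linear function; everything else reduces to triangle-inequality bookkeeping. The hypothesis that both $X$ and $Y$ be quasi-geodesic is used precisely because this promotion must be performed on both $f$ and $g$.
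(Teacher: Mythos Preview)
Your proof is correct. Note, however, that the paper does not actually supply a proof of this lemma: it is introduced with the phrase ``The following criterion is well-known'' and no argument is given. What you have written is precisely the standard argument one would find in a textbook treatment---the subdivision of a quasi-geodesic to promote an arbitrary bornologous control function to an affine one, followed by the use of the coarse inverse to obtain the lower quasi-isometric bound. There is nothing to compare against here; you have simply filled in the details the paper chose to omit.

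One small remark on exposition: you call the direction ``quasi-isometric $\Rightarrow$ coarsely equivalent'' the \emph{forward} direction, whereas as the lemma is stated in the paper the forward direction is ``coarsely equivalent $\Rightarrow$ quasi-isometric''. This is purely a labeling issue and does not affect the mathematics.
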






\subsection{Open cone}
Let $M$ be a compact metrizable space. 
The {\itshape open cone} over $M$, denoted by $\calO M$, is
the quotient $\Rp\times M/(\{0\}\times M)$. 
For $(t,x)\in \Rp\times M$,
we denote by $tx$ the point in $\calO M$ represented by $(t,x)$.

Let $d_M$ be a metric on $M$. 
We assume that the diameter of $M$ is at most 2.
We define a metric $d_{\calO M}$ on $\calO M$ by 
\begin{align*}
 d_{\calO M}(tx,sy):=\abs{t-s} + \min\{t,s\}d_M(x,y).
\end{align*}
We call $d_{\calO M}$ the induced metric by $d_M$. 

\begin{remark}
 When we take another metric $d'_M$ on $M$ such that the diameter of $M$
 is at most 2, we have the induced metric $d'_{\calO M}$ on $\calO M$ by
 $d'_M$.  Then the identity map $id_{\calO M}$ between $(\calO
 M,d_{\calO M})$ and $(\calO M,d'_{\calO M})$ is not necessarily a
 coarse equivalence map, in fact, it is not necessarily a coarse
 homotopy equivalence map.  
 Nevertheless a radial contraction gives a coarse homotopy equivalence map.
 We refer to \cite{MR1388312} and \cite{WillettThesis}.
\end{remark}

\section{coarsely convex space}
\label{sec:coars-conv-spac}

\begin{definition}
\label{def:cBNC}
Let $X$ be a metric space. Let $\lambda\geq 1$, $k \geq 0$, $E\geq 1$,
and $C\geq 0$ be constants. 
Let $\theta\colon \Rp\to\Rp$ be a non-decreasing function.
Let $\calL$ be a family of $(\lambda,k)$-quasi-geodesic segments.  
The metric space $X$ is {\itshape
$(\lambda,k,E,C,\theta,\calL)$-coarsely convex}, if
$\calL$ satisfies the following.
\begin{enumerate}[(i)$^q$] 
 \item \label{qconn}
       For $v,w\in X$, there exists a 
       quasi-geodesic segment
       $\gamma\in \calL$ with $\gamma\colon [0,a]\to X$,
       $\gamma(0) = v$ and $\gamma(a) = w$.
 \item \label{qconvex}
       Let $\gamma, \eta \in \calL$ be 
       quasi-geodesic segments 
       with $\gamma\colon [0,a]\to X$ and $\eta\colon [0,b]\to X$. 
       Then for 
       $t\in [0,a]$, $s\in [0,b]$, and $0\leq c\leq 1$, we have that 
       \begin{align*}
	\ds{\gamma(ct),\eta(cs)}
	\leq cE\ds{\gamma(t),\eta(s)} 
	      + (1-c)E\ds{\gamma(0), \eta(0)}+ C.
       \end{align*}
 \item \label{qparam-reg}       
       Let $\gamma,\eta\in \calL$ be quasi-geodesic segments 
       with $\gamma\colon [0,a]\to X$ and $\eta\colon [0,b]\to X$.
       Then for $t\in [0,a]$ and $s\in [0,b]$, we have
       \begin{align*}
	\abs{t-s} \leq \theta(\ds{\gamma(0),\eta(0)}+\ds{\gamma(t),\eta(s)}).
       \end{align*}
\end{enumerate} 
The family $\calL$ satisfying (\ref{qconn})$^q$, 
(\ref{qconvex})$^q$, and  (\ref{qparam-reg})$^q$
is called a {\itshape system of good quasi-geodesic segments}, and elements 
 $\gamma\in \calL$ are called {\itshape good quasi-geodesic segments}.
\end{definition}

We say that a metric space $X$ is a {\itshape coarsely convex} space if
there exist constants $\lambda,k,E,C$,
a non-decreasing function $\theta\colon \Rp\to \Rp$, 
and a family of
$(\lambda,k)$-quasi-geodesic segments $\calL$ such that $X$ is
$(\lambda,k,E,C,\theta,\calL)$-coarsely convex.  

We remark that if $\calL$ consists of only geodesic segments,
then $\calL$ satisfies~(\ref{qparam-reg})$^q$ by the triangle
inequality. Therefore geodesic $(C,\calL)$-coarsely convex spaces are
$(1,0,1,C,\mathrm{id}_{\Rp},\calL)$-coarsely convex. We also remark that
Gromov~\cite[6.B]{asym_invs} mentioned the inequality in
(\ref{qconvex})$^q$.

\begin{proposition}
\label{prop:ceq-pres-qconvity}
 Let $X$ and $Y$ be quasi-geodesic spaces such that $X$ and $Y$ are
 coarsely equivalent. If $X$ is coarsely convex, then so is $Y$.
\end{proposition}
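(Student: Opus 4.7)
The plan is to transport the system of good quasi-geodesic segments from $X$ to $Y$ through a coarse equivalence, and verify the three axioms of Definition~\ref{def:cBNC} with suitably enlarged constants. Since $X$ and $Y$ are both quasi-geodesic spaces that are coarsely equivalent, the lemma at the end of Section~\ref{sec:quasi-isometry} promotes the coarse equivalence to a quasi-isometry. So I fix a $(\lambda,k)$-quasi-isometric embedding $f\colon X\to Y$ with quasi-inverse $g\colon Y\to X$ (also a $(\lambda,k)$-quasi-isometric embedding, after enlarging constants if necessary) and a constant $D\geq 0$ with $\ds{fg(y),y}\leq D$ and $\ds{gf(x),x}\leq D$ for every $x\in X$ and $y\in Y$. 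Assume $X$ is $(\lambda_X,k_X,E_X,C_X,\theta_X,\calL_X)$-coarsely convex.

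Next, I define a candidate family $\calL_Y$ on $Y$. For each pair $v,w\in Y$ pick a $\gamma\in\calL_X$ from $g(v)$ to $g(w)$ on $[0,a]$, and form $\tilde\gamma\colon[0,a]\to Y$ by $\tilde\gamma(0):=v$, $\tilde\gamma(a):=w$, and $\tilde\gamma(t):=f(\gamma(t))$ for $t\in(0,a)$. Since $\tilde\gamma$ differs from $f\circ\gamma$ by at most $D$ at each endpoint, and $f\circ\gamma$ is a $(\lambda\lambda_X,\lambda k_X+k)$-quasi-geodesic segment, the map $\tilde\gamma$ is itself a $(\lambda',k')$-quasi-geodesic segment for new constants depending only on $\lambda,k,\lambda_X,k_X,D$. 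Let $\calL_Y$ consist of all such $\tilde\gamma$. Axiom (\ref{qconn})$^q$ is then immediate.

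For (\ref{qconvex})$^q$ with $\tilde\gamma,\tilde\eta\in\calL_Y$ coming from $\gamma,\eta\in\calL_X$, I estimate $\ds{\tilde\gamma(ct),\tilde\eta(cs)}$ in four steps: first replace $\tilde\gamma,\tilde\eta$ by $f\circ\gamma,f\circ\eta$ at cost at most $2D$ (only when $ct$ or $cs$ hits an endpoint); then apply the upper q.i.e.\ bound on $f$ to pass to $\ds{\gamma(ct),\eta(cs)}$; then apply the convexity axiom in $X$ to bound this by an affine combination of $\ds{\gamma(t),\eta(s)}$ and $\ds{\gamma(0),\eta(0)}$ plus $C_X$; and finally convert the $X$-distances back to $Y$-distances using the lower q.i.e.\ bound on $f$ (with a further $D$-correction at the endpoints). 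Collecting terms yields the inequality in $Y$ with $E_Y:=\lambda^2 E_X$ and an explicit additive constant $C_Y$ polynomial in $\lambda,k,E_X,C_X,D$. Axiom (\ref{qparam-reg})$^q$ is handled analogously: applying $\theta_X$ to the q.i.e.\ comparisons of distances yields $|t-s|\leq \theta_Y\bigl(\ds{\tilde\gamma(0),\tilde\eta(0)}+\ds{\tilde\gamma(t),\tilde\eta(s)}\bigr)$ where $\theta_Y(r):=\theta_X(\lambda r+\text{const})$ is still non-decreasing.

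The main technical obstacle is the endpoint discontinuity created by resetting $\tilde\gamma(0)=v$ and $\tilde\gamma(a)=w$: this prevents $f\circ\gamma$ from being literally a quasi-geodesic, and causes $\tilde\gamma(ct)$ to differ from $f(\gamma(ct))$ when $c\in\{0,1\}$. Every inequality in Definition~\ref{def:cBNC} fortunately carries an additive slack, so the $D$-error is absorbed uniformly into $k',C_Y$, and $\theta_Y$, but keeping it consistently tracked in all three axioms simultaneously is the only non-trivial part of the proof.
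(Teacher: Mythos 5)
Your proposal is correct and follows essentially the same route as the paper: transport $\calL_X$ to $Y$ by post-composing with the quasi-isometry $f$ and patching the endpoints, then verify (\ref{qconn})$^q$, (\ref{qconvex})$^q$, (\ref{qparam-reg})$^q$ by tracking the multiplicative quasi-isometry constants and absorbing the endpoint-patching error into the additive slack. The only cosmetic difference is that the paper parametrizes the new family by all $p,q$ within a fixed distance of $f\circ\gamma(0)$ and $f\circ\gamma(a)$, using density of $f(X)$, whereas you select a single $\gamma$ per pair via a chosen quasi-inverse $g$; this is an immaterial distinction.
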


\begin{proof}
 Let $X$ and $Y$ be quasi-geodesic spaces such that $X$ and $Y$ are
 coarsely equivalent. 
 There exist a map $f\colon X\to Y$ and $A\geq 1$
 such that $f(X)$ is $A$-dense in $Y$, and for all $x,x'\in X$,
 \begin{align*}
  \frac{1}{A}\ds{x,x'} -A \leq \ds{f(x),f(x')}\leq A\ds{x,x'} +A.
 \end{align*} 
Suppose that $X$ is 
$(\lambda,k,E,C,\theta,\calL_X)$-coarsely convex.
For points $p,q\in Y$ and a path $\gamma\colon[0,a]\to X$, we define 
a path $\gamma_{p,q}\colon [0,a]\to Y$ by 
\begin{align*}
 &\gamma_{p,q}(0):=p,  &\gamma_{p,q}(a):=q,& 
 &\gamma_{p,q}(t):= f\circ \gamma(t) \;\text{ for } t\in (0,a).
\end{align*}
If $\ds{p,f\circ \gamma(0)}\leq A$, $\ds{q,f\circ \gamma(a)}\leq A$ and
$\gamma$ is a $(\lambda,k)$-quasi-geodesic segment, then $\gamma_{p,q}$
is a $(A\lambda,A(k+3))$-quasi-geodesic segment in $Y$. Thus we define a
family of $(A\lambda,A(k+3))$-quasi-geodesic segments in $Y$, denoted by
$\calL_Y$, as a family consisting of all quasi-geodesic segments
$\gamma_{p,q}$ where $p,q$ are points in $Y$, and $\gamma$ is a
quasi-geodesic segment in $\calL_X$ such that $\ds{p,f\circ
\gamma(0)}\leq A$ and $\ds{q,f\circ \gamma(a)}\leq A$.

We will show that $\calL_Y$ satisfies the conditions 
in Definition~\ref{def:cBNC}.
It is clear that (\ref{qconn})$^q$ holds.
We consider (\ref{qconvex})$^q$. 
 Let $\gamma, \eta \in \calL_Y$ be quasi-geodesic segments 
 such that $\gamma\colon [0,a]\to Y$, and $\eta\colon [0,b]\to Y$.
 Then there exist $\gamma', \eta'\in \calL_X$ such that
\begin{align*}
 &\ds{\gamma(0),f\circ \gamma'(0)}\leq A,
 &\ds{\gamma(a),f\circ \gamma'(a)}\leq A,&
 &\gamma(t)=f\circ \gamma'(t) \text{ for } t\in (0,a), \\
 &\ds{\eta(0),f\circ \eta'(0)}\leq A,
 &\ds{\eta(b),f\circ \eta'(b)}\leq A,&
 &\eta(s)=f\circ \eta'(s) \text{ for } s\in (0,b).
\end{align*}
 For $t\in [0,a]$, $s\in [0,b]$ and $0\leq  c \leq 1$, we have that 
 \begin{align*}
  \ds{\gamma(ct),\eta(cs)} 
  =& \ds{\gamma(ct),f\circ \gamma'(ct)} 
  + \ds{f\circ \gamma'(ct),f\circ \eta'(cs)}  
  + \ds{f\circ \eta'(cs),\eta(cs)}\\  
  \leq& A\ds{\gamma'(ct),\eta'(cs)}+3A \\
  \leq& A\left\{cE\ds{\gamma'(t),\eta'(s)} 
         + (1-c)E\ds{\gamma'(0),\eta'(0)} + C\right\} +3A\\
  \leq& A\left\{cE(A\ds{\gamma(t),\eta(s)}+3A)\right.\\
         &+ \left.(1-c)E(A\ds{\gamma(0),\eta(0)}+3A^2)+C\right\}+3A\\ 
  \leq& cA^2E\ds{\gamma(t),\eta(s)} 
         + (1-c)A^2E\ds{\gamma(0),\eta(0)} +  3A^3E+AC+3A.
 \end{align*}

Finally we consider (\ref{qparam-reg})$^q$.
Let $\gamma,\eta\in \calL_Y$ and $\gamma',\eta'\in \calL_X$
be as above. Then for all $t\in [0,a]$ and $s\in [0,b]$, we have
\begin{align*}
 \abs{t-s} &\leq \theta(\ds{\gamma'(0),\eta'(0)}+\ds{\gamma'(a),\eta'(b)})\\
 &\leq \theta(A(\ds{\gamma(0),\eta(0)}+\ds{\gamma(a),\eta(b)})+6A^2).
\end{align*}
\end{proof}

The class of coarsely convex spaces is closed under direct product.
\begin{proposition}
 \label{prop:productspace} Let $(X,d_X)$ and $(Y,d_Y)$ be coarsely
 convex metric spaces.  Then the the product with the $\ell_1$-metric
 $(X\times Y, d_{X\times Y})$ is coarsely convex. Indeed let $\calL^X$
 and $\calL^Y$ be systems of good quasi-geodesic segments of
 $X$ and $Y$, respectively. Then for any quasi-geodesic segments
 $\gamma\in\calL^X$ defined on $[0,a]$ and $\eta\in\calL^Y$ defined on
 $[0,b]$, the map
 \begin{align*}
  \frac{a}{a+b}\gamma\oplus\frac{b}{a+b}\eta:[0,a+b]\ni t\mapsto
  \left(\gamma\left(\frac{a}{a+b}t\right),
  \eta\left(\frac{b}{a+b}t\right)\right) \in X\times Y
 \end{align*}
 is a quasi-geodesic segment of $X\times Y$, and 
 the family of such quasi-geodesic segments $\calL^{X\times Y}$ is 
 a system of good quasi-geodesic segments of $X\times Y$. 
\end{proposition}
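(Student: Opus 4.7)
The plan is to verify directly that the claimed family $\calL^{X\times Y}$ satisfies the three axioms of Definition~\ref{def:cBNC}, after first checking that its members are honest quasi-geodesic segments of $(X\times Y, d_{X\times Y})$.

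First I would fix coarse convexity data $(\lambda_X,k_X,E_X,C_X,\theta_X,\calL^X)$ for $X$ and $(\lambda_Y,k_Y,E_Y,C_Y,\theta_Y,\calL^Y)$ for $Y$, and for $\gamma\in\calL^X$ on $[0,a]$ and $\eta\in\calL^Y$ on $[0,b]$ write $\zeta(t):=(\gamma(at/(a+b)),\eta(bt/(a+b)))$ on $[0,a+b]$. Expanding $d_{X\times Y}$ as the sum of coordinate distances and applying the quasi-isometric embedding bounds for $\gamma$ and $\eta$, the upper bound $d_{X\times Y}(\zeta(t),\zeta(t'))\le \max(\lambda_X,\lambda_Y)|t-t'|+k_X+k_Y$ follows immediately, while the lower bound uses the convex combination identity $\tfrac{a}{a+b}+\tfrac{b}{a+b}=1$ to replace the weighted average of $1/\lambda_X$ and $1/\lambda_Y$ with $1/\max(\lambda_X,\lambda_Y)$. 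So $\zeta$ is a $(\lambda,k)$-quasi-geodesic segment for $\lambda=\max(\lambda_X,\lambda_Y)$, $k=k_X+k_Y$, uniformly in $\gamma,\eta$.

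Axiom (\ref{qconn})$^q$ is immediate: given $(v_1,v_2),(w_1,w_2)\in X\times Y$, pick $\gamma\in\calL^X$ joining $v_1,w_1$ and $\eta\in\calL^Y$ joining $v_2,w_2$, and observe that the associated $\zeta$ joins $(v_1,v_2)$ to $(w_1,w_2)$. For axiom (\ref{qconvex})$^q$, I would take $\zeta_i$ associated with $(\gamma_i,\eta_i)$ on $[0,a_i+b_i]$ for $i=1,2$, note that the rescaling sends $t\in[0,a_1+b_1]$ to $t':=a_1t/(a_1+b_1)$ and $t'':=b_1t/(a_1+b_1)$ with $t=t'+t''$, and similarly for $s$. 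Then
\begin{align*}
d_{X\times Y}(\zeta_1(ct),\zeta_2(cs))
&= d_X(\gamma_1(ct'),\gamma_2(cs'))+d_Y(\eta_1(ct''),\eta_2(cs''))\\
&\le cE\bigl(d_X(\gamma_1(t'),\gamma_2(s'))+d_Y(\eta_1(t''),\eta_2(s''))\bigr)\\
&\qquad+(1-c)E\bigl(d_X(\gamma_1(0),\gamma_2(0))+d_Y(\eta_1(0),\eta_2(0))\bigr)+C,
\end{align*}
with $E=\max(E_X,E_Y)$, $C=C_X+C_Y$, and the two parenthesized sums are exactly $d_{X\times Y}(\zeta_1(t),\zeta_2(s))$ and $d_{X\times Y}(\zeta_1(0),\zeta_2(0))$ by the definition of the $\ell_1$-metric.

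For axiom (\ref{qparam-reg})$^q$, I would apply $\theta_X$ and $\theta_Y$ to the $X$- and $Y$-components respectively. Writing $t-s=(t'-s')+(t''-s'')$ gives $|t-s|\le|t'-s'|+|t''-s''|$, and since $d_X(\gamma_i(\cdot),\gamma_j(\cdot))$ and $d_Y(\eta_i(\cdot),\eta_j(\cdot))$ are each bounded by the corresponding $d_{X\times Y}(\zeta_i(\cdot),\zeta_j(\cdot))$, each term is bounded by $\max(\theta_X,\theta_Y)(d_{X\times Y}(\zeta_1(0),\zeta_2(0))+d_{X\times Y}(\zeta_1(t),\zeta_2(s)))$, so one can take $\theta:=2\max(\theta_X,\theta_Y)$. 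No step is truly an obstacle; the only point that requires care is handling the quasi-isometric lower bound for $\zeta$, where one must exploit the convex combination identity rather than naively summing inequalities.
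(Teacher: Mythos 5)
Your proof is correct and takes essentially the same approach as the paper, which simply declares the verification ``straightforward to check'' with the constants $(\max\{\lambda_X,\lambda_Y\},\,k_X+k_Y,\,\max\{E_X,E_Y\},\,C_X+C_Y,\,\theta_X+\theta_Y,\,\calL^{X\times Y})$; your direct verification of the three axioms, including the careful handling of the lower quasi-isometric bound via $\tfrac{a}{a+b}+\tfrac{b}{a+b}=1$, is exactly the intended argument. The only cosmetic difference is that you use $\theta=2\max(\theta_X,\theta_Y)$ where the paper uses $\theta_X+\theta_Y$; both are valid.
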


\begin{proof}
 Let $X$ and $Y$ be metric spaces.
 Suppose that $X$ and $Y$ are
 $(\lambda,k,E,C,\theta,\calL^X)$-coarsely convex
 and $(\lambda',k',E',C',\theta',\calL^Y)$-coarsely convex, 
 respectively.
 It is straightforward to check that 
 the product $X\times Y$ is 
 $(\max\{\lambda,\lambda'\},k+k',\max\{E,E'\},C+C',
  \theta+\theta',\calL^{X\times Y})$-coarsely convex.
\end{proof}

%

CAT(0) spaces, more generally, Busemann non-positively curved spaces,
and geodesic Gromov hyperbolic spaces are examples of geodesic coarsely
convex spaces. In these examples, the set of all geodesic segments is
the system of good geodesic segments. In general, this does not
hold. Let $\Gamma_{\Z^2}$ be the Cayley graph of rank 2 free abelian
group $\Z^2$ with the standard generating set $\{(1,0), (0,1)\}$.  Let
$\gamma_n$ be a geodesic segment defined by $\gamma_n(t) := (t,0)$ for
$0\leq t \leq n$ and $\gamma_n(t):= (n,t-n)$ for $t>n$. We fix any
constant $E\geq 1$.  Then for $n\in \N$, we have
\begin{align*}
 \ds{\gamma_0(n),\gamma_n(n)}
  - \frac{1}{2E}E\ds{\gamma_0(2En),\gamma_n(2En)}
 = 2n - n = n \to \infty \quad (n\to \infty).
\end{align*}
Thus the set of all geodesic segments in $\Gamma_{\Z^2}$ does not satisfy
the condition (\ref{qconvex})$^q$ in Definition~\ref{def:cBNC}. However,
since $\Z^2$ is coarsely equivalent to $\R^2$, which is geodesic
coarsely convex, by Proposition~\ref{prop:ceq-pres-qconvity},
$\Z^2$ is coarsely convex.

\begin{example}
 Let $V$ be a normed vector space. Then $V$ is coarsely convex.
 Indeed, for $p,v\in V$ with $\norm{v}=1$, and for $r>0$, we
 define a geodesic segment $\gamma(p,v;r)\colon [0,r]\to V$ by
 $\gamma(p,v;r)(t):= p+tv$.  Let $\calLAff$ be the set of all geodesic
 segments
 $\gamma(p,v;r)$ with $p,v\in V$, $\norm{v}=1$ and $r>0$.

 Clearly $\calLAff$ satisfies~(\ref{qconn})$^q$ in
 Definition~\ref{def:cBNC}.  Since $\calLAff$ consists only of
 geodesics, it also satisfies (\ref{qparam-reg})$^q$.  For $p,v,w\in V$,
 $r,l>0$ with $\norm{v}=\norm{w} = 1$, and for $t\in [0,r]$, $s\in
 [0,l]$, $c\in [0,1]$, we have
 \begin{align*}
  \norm{(p+ctv)-(p+csw)} = c\norm{(p+tv)-(p+sw)}.
 \end{align*}
 Now it is easy to show that $\calLAff$ satisfies (\ref{qconvex})$^q$.
\end{example}

\begin{remark}
\label{rem:prefix-closed}
For a map
$\gamma\colon [a,b]\to X$, we denote by $\gamma^{-1}$, the map
$\gamma^{-1}\colon [a,b] \to X$ defined by 
$\gamma^{-1}(t):=\gamma(b-(t-a))$ for $t\in [a,b]$.  
For $c\in [a,b]$, we denote by
$\gamma|_{[a,c]}$ the restriction of $\gamma$ to $[a,c]$.
Let $\calL$ be a family of quasi-geodesic segments in $X$. 
The family $\calL$ is {\itshape symmetric} if 
$\gamma^{-1}\in \calL$ for all $\gamma\in \calL$, 
and $\calL$ is {\itshape prefix closed} if 
$\gamma|_{[a,c]} \in \calL$ for all $\gamma\in \calL$ 
with $\gamma\colon [a,b]\to X$ and for all $c\in [a,b]$.

Let $X$ be a $(\lambda,k,E,C,\theta,\calL)$-coarsely convex space.
Suppose that $\calL$ is symmetric and prefix closed. 
Then the following holds.
 Let $\gamma, \eta \in \calL$ be 
 $(\lambda,k)$-quasi-geodesic segments 
 such that $\gamma\colon [0,a]\to X$ and $\eta\colon [0,b]\to X$. 
 For $t_1,t_2\in [0,a]$, $s_1,s_2\in [0,b]$
 and $0\leq c\leq 1$, we have that 
 \begin{align*}
  \ds{\gamma(ct_2+(1-c)t_1),\eta(cs_2+(1-c)s_1)}
  \leq cE\ds{\gamma(t_2),\eta(s_2)} 
  + (1-c)E\ds{\gamma(t_1), \eta(s_1)}+ C.
 \end{align*}

It seems natural to require that $\calL$ is symmetric and prefix closed
in the definition of the coarsely convex space.
However, in the proof of Theorem~\ref{thm:cCATHAD},
we require neither condition.
\end{remark}




Finally, we mention other generalizations of the spaces of
non-positive curvature. Alonso and Bridson formulated a notion of
{\itshape semihyperbolicity} for metric spaces, and studied groups
acting on semihyperbolic spaces. In
Section~\ref{sec:groups-acting-coars}, we show that a coarsely convex
space is semihyperbolic.

Kar~\cite{Kar-asympCAT0} introduced and studied the class of metric
spaces called {\itshape asymptotically CAT(0)-spaces}. This class and
the class of coarsely convex spaces share many examples. Therefore it is
desirable to clarify the relation between these two classes of metric
spaces.


\section{Ideal boundary}
\label{sec:ideal-boundary} Throughout this section, let X be a
$(\lambda,k,E,C,\theta,\calL)$-coarsely convex space. We will
construct the ideal boundary $\partial X$ of $X$, as the set of equivalence
classes of quasi-geodesic rays which can be approximated by
quasi-geodesic segments in $\calL$.

In this section, we introduce several constants and a function. 
Here we summarize them.
\begin{align*}
 k_1 &= \lambda + k, & D_2 &= E(D_1+2k_1), \\
 D &= 2(1+E)k_1+ C,  & D_2'&=\max\{1,E(\lambda(\theta(0))+k)\},\\
 \tilde{\theta}(t)&=\theta(t+1)+1, & D_3 &= 2D_2'(D_2)^2,\\
 D_1 &= 2D+2,& D_4 &= 2E(E(1+\lambda\tilde{\theta}(1)+2k_1)+D_1).
\end{align*}
We remark that all constants in the above list are greater than or equal
to 1. We also summarize several families of quasi-geodesic segments and
rays related to $\calL$. 

We define $\calL^\infty$ as the set of all $\calL$-approximatable maps
$\gamma:\Rp\to X$ with $\gamma(t)=\gamma(\lfloor t\rfloor)$ for all
$t\in \Rp$, where $\calL$-approximatable maps are defined in
Section~\ref{sec:approximatable-ray}.
Now let $O\in X$ be a base point. 
The following is the list of the families related to $\calL$ and 
$\calL^\infty$.
\begin{align*}
 \bar{\calL}&:= \calL\cup \calL^\infty, \\
 \calL_O^\infty&:=\{\gamma\in \calL^\infty: \gamma(0)=O\},\\
 \calL_O&:=\{\gamma\in \calL: 
   \gamma\colon [0,a_\gamma]\to X,\, 
   a_\gamma\geq 2\theta(0),\, \gamma(0)=O \},\\
 \bar{\calL}_O& := \calL_O\cup \calL_O^\infty. 
\end{align*}

\subsection{Approximatable ray}
\label{sec:approximatable-ray}
Let $\gamma\colon \Rp \to X$ be a
map.  Let $\gamma_n\colon [0,a_n]\to X$ be
quasi-geodesic segments in $X$.  A sequence $\{(\gamma_n,a_n)\}_{n}$ is
an {\itshape $\calL$-approximate} sequence for $\gamma$
if for all $n$, we have
$\gamma_n\in \calL$, $\gamma_n(0)=\gamma(0)$, and for all 
$l\in \N$ the sequence 
$\{\gamma_n\}_{n}$ converges to $\gamma$ uniformly on 
$\{0,1,\dots,l\}\subset \R_{\geq 0}$. 
A map $\gamma\colon\Rp\to X$ 
is {\itshape $\calL$-approximatable} if
there exists an $\calL$-approximate sequence for $\gamma$.

\begin{lemma}
\label{lem:qg-k1}
Let $\gamma\colon \Rp\to X$ be an $\calL$-approximatable map such that
$\gamma(t)=\gamma(\lfloor t\rfloor)$ for all $t\in \Rp$.
Then $\gamma$ is a $(\lambda,k_1)$-quasi-geodesic ray, where $k_1:= \lambda+k$.
\end{lemma}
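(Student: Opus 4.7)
The plan is a direct limit argument: transfer the quasi-isometry inequality from the approximating quasi-geodesics $\gamma_n$ to $\gamma$ at integer points, then use the step-function property $\gamma(t)=\gamma(\lfloor t\rfloor)$ to extend the inequality to all of $\Rp$, with the extra constant $\lambda$ (producing $k_1=\lambda+k$) absorbing the rounding error $|t-\lfloor t\rfloor|\leq 1$.

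First, I would fix an $\calL$-approximate sequence $\{(\gamma_n,a_n)\}_n$ for $\gamma$ and pick arbitrary $s,t\in\Rp$. Let $m=\lfloor s\rfloor$ and $l=\lfloor t\rfloor$. By the definition of $\calL$-approximate sequence, $\gamma_n\to\gamma$ uniformly on the finite set $\{0,1,\dots,\max\{m,l\}\}$, so in particular for all $n$ large enough we have $a_n\geq\max\{m,l\}$ and $\gamma_n(m)\to\gamma(m)=\gamma(s)$, $\gamma_n(l)\to\gamma(l)=\gamma(t)$, where the last equalities use the hypothesis $\gamma(\cdot)=\gamma(\lfloor\cdot\rfloor)$.

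Second, each $\gamma_n$ is a $(\lambda,k)$-quasi-geodesic segment, so
\begin{align*}
\frac{1}{\lambda}\absline{m-l}-k\leq\ds{\gamma_n(m),\gamma_n(l)}\leq\lambda\absline{m-l}+k.
\end{align*}
Passing to the limit in $n$ and using continuity of the distance function,
\begin{align*}
\frac{1}{\lambda}\absline{m-l}-k\leq\ds{\gamma(s),\gamma(t)}\leq\lambda\absline{m-l}+k.
\end{align*}

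Finally, I would apply $\absline{m-l}\leq\absline{s-t}+1$ to the upper bound and $\absline{m-l}\geq\absline{s-t}-1$ to the lower bound, obtaining error terms $\lambda+k$ and $\tfrac{1}{\lambda}+k$ respectively. Since $\lambda\geq 1$ implies $\tfrac{1}{\lambda}\leq\lambda$, both error terms are bounded by $k_1=\lambda+k$, which yields the desired $(\lambda,k_1)$-quasi-geodesic inequality. There is no substantive obstacle here; the only point requiring care is ensuring $m,l$ lie in the domain $[0,a_n]$ for all sufficiently large $n$, but this is immediate from the uniform convergence being required on each finite set $\{0,1,\dots,l\}$.
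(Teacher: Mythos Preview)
Your proof is correct and essentially identical to the paper's: both transfer the $(\lambda,k)$-inequality from $\gamma_n$ at the integer points $\lfloor s\rfloor,\lfloor t\rfloor$ to $\gamma$ via the approximation, then absorb the rounding error $\absline{s-\lfloor s\rfloor},\absline{t-\lfloor t\rfloor}\leq 1$ into the additive constant, yielding $k_1=\lambda+k$. The only cosmetic difference is that the paper carries an explicit $\epsilon>0$ through the triangle inequality and lets $\epsilon\to 0$ at the end, whereas you pass to the limit directly using continuity of the distance function; these are equivalent.
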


\begin{proof}
Let $\gamma\colon \Rp\to X$ be an $\calL$-approximatable map. 
Then there exists an
$\calL$-approximate sequence $\{(\gamma_n,a_n)\}_{n}$ for $\gamma$.
 We fix $t,s\in \Rp$. Set $i:=\lfloor t\rfloor$ and 
 $j:=\lfloor s \rfloor$. Then for any $\epsilon>0$, 
 there exists an integer $n$ such that 
 \begin{align*}
  \ds{\gamma(i) , \gamma_n(i)}< \epsilon \text{ and }
  \ds{\gamma(j) , \gamma_n(j)}< \epsilon. 
 \end{align*}
 Since $\gamma_n$ is a $(\lambda,k)$-quasi-geodesic segment, we have
\begin{align*}
  \frac{1}{\lambda}\abs{i-j} - k 
  \leq \ds{\gamma_n(i),\gamma_n(j)}
  \leq \lambda\abs{i-j} + k.
\end{align*}
Then we have
 \begin{align*}
  \frac{1}{\lambda}\abs{t-s} - \frac{1}{\lambda} -k -2\epsilon
  \leq \ds{\gamma(t), \gamma(s)}
  \leq \lambda\abs{t-s} + \lambda +k +2\epsilon.
 \end{align*}
Since $\epsilon$ can be arbitrarily small, $\gamma$ is a
$(\lambda,\lambda+k)$-quasi-geodesic ray.
\end{proof}

We define a family of quasi-geodesic rays, denoted by $\calL^\infty$, 
as a family consisting of all $\calL$-approximatable maps 
$\gamma\colon \Rp\to X$ such that 
$\gamma(t)=\gamma(\lfloor t\rfloor)$ for all $t\in \Rp$.
We set $\bar{\calL}:= \calL\cup \calL^\infty$. 
Let $O\in X$ be a base point. Then we define $\calL_O^\infty$ as
the subset of $\calL^\infty$ consisting of 
all quasi-geodesic rays in $\calL^\infty$ stating at $O$.

By an argument similar to that in the proof of
Lemma~\ref{lem:qg-k1}, we have the following.
\begin{proposition}
\label{prop:qgeod-ray-convex}
Set $I=[0,a]$ or $I=\Rp$ and $J=[0,b]$ or $J=\Rp$. 
The family $\bar{\calL}$ satisfies the following.
 \begin{enumerate}[\hspace{7mm}$(1)$]
 \item \label{qrayconvex}       
       Let $\gamma, \eta \in \bar{\calL}$ be quasi-geodesics with
       $\gamma\colon I\to X$ and $\eta\colon J\to X$.
       Then for $t\in I$, $s\in J$ and $0\leq c\leq 1$, we have 
       \begin{align*}
	\ds{\gamma(ct),\eta(cs)}
	\leq cE\ds{\gamma(t),\eta(s)} +(1-c)E\ds{\gamma(0),\eta(0)}+ D,
       \end{align*}
       	where $D:= 2(1+E)k_1+ C$.
 \item \label{qrayparam-reg} We define a non-decreasing function
       $\tilde{\theta}\colon \Rp\to \Rp$ by 
       $\tilde{\theta}(t):=\theta(t+1)+1$.
       For $\gamma,\eta\in \bar{\calL}$ with $\gamma\colon I\to X$ and
       $\eta\colon J\to X$, and for $t\in I$, $s\in J$, we have
       \begin{align*}
	\abs{t-s}\leq 
	\tilde{\theta}(\ds{\gamma(0),\eta(0)}+\ds{\gamma(t),\eta(s)}).
       \end{align*}       
 \end{enumerate}
\end{proposition}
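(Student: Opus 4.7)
The plan is to reduce each inequality to the corresponding axiom in Definition~\ref{def:cBNC} by approximating any ray in $\calL^\infty$ by segments in $\calL$ and then passing to the limit. If both $\gamma,\eta$ lie in $\calL$, then~(\ref{qrayconvex}) follows from~(\ref{qconvex})$^q$ (using $C\leq D$) and~(\ref{qrayparam-reg}) follows from~(\ref{qparam-reg})$^q$ (using $\theta\leq\tilde{\theta}$). So the real work is the case where at least one of $\gamma,\eta$ belongs to $\calL^\infty$.

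The key preparatory observation is that if $\gamma\in\calL^\infty$ and $\{(\gamma_n,a_n)\}_n$ is an $\calL$-approximate sequence for $\gamma$, then $\gamma_n(0)=\gamma(0)$ exactly and, for every $t\in\Rp$,
\begin{align*}
\ds{\gamma(t),\gamma_n(t)}\leq \ds{\gamma(\lfloor t\rfloor),\gamma_n(\lfloor t\rfloor)}+\ds{\gamma_n(\lfloor t\rfloor),\gamma_n(t)}\leq \epsilon_n(t)+k_1,
\end{align*}
where $\epsilon_n(t)\to 0$ as $n\to\infty$, using $\gamma(t)=\gamma(\lfloor t\rfloor)$ and the $(\lambda,k)$-quasi-geodesic property of $\gamma_n$. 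Thus each evaluation of a $\calL^\infty$-ray costs a ``fractional-part error'' of at most $k_1$ compared with an approximating segment, and the basepoints match exactly.

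For~(\ref{qrayconvex}), I would apply~(\ref{qconvex})$^q$ to approximating segments $\gamma_n,\eta_m$ (with $\eta_m=\eta$ if $\eta\in\calL$) at times $ct$ and $cs$, then convert the resulting distances back to those of $\gamma,\eta$ by the observation above. The substitutions at $\ds{\gamma(ct),\eta(cs)}$ and at $\ds{\gamma(t),\eta(s)}$ each introduce at most $2k_1$ plus vanishing error, while the basepoint term is exact, so letting $n,m\to\infty$ the total additive excess over $C$ is at most $2k_1+2cEk_1\leq 2(1+E)k_1=D-C$, producing precisely $D$. For~(\ref{qrayparam-reg}), I would apply~(\ref{qparam-reg})$^q$ to $\gamma_n,\eta_m$ at times $\lfloor t\rfloor,\lfloor s\rfloor$ (or $s$ when $\eta\in\calL$); the left-hand side increases by at most $1$ when replacing $|\lfloor t\rfloor-\lfloor s\rfloor|$ by $|t-s|$, and for $n,m$ large enough the argument of $\theta$ exceeds $\ds{\gamma(0),\eta(0)}+\ds{\gamma(t),\eta(s)}$ by at most $1$; monotonicity of $\theta$ then yields the bound with $\tilde{\theta}(r)=\theta(r+1)+1$.

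The main obstacle is bookkeeping rather than conceptual: since the $\calL$-approximate sequence converges only on integer points and the limit ray is constant on $[\lfloor t\rfloor,\lfloor t\rfloor+1)$, every evaluation at a real time has to be passed through its integer floor, and one must track carefully that the accumulated $k_1$-errors and $+1$-shifts combine to give exactly $D$ and $\tilde{\theta}$. No geometric input beyond Definition~\ref{def:cBNC} and Lemma~\ref{lem:qg-k1} is required.
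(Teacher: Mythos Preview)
Your proposal is correct and follows essentially the same approach as the paper, which simply says the proposition follows ``by an argument similar to that in the proof of Lemma~\ref{lem:qg-k1}''. You have supplied exactly those details: approximate each $\calL^\infty$-ray by segments in $\calL$, apply axioms~(\ref{qconvex})$^q$ and~(\ref{qparam-reg})$^q$, pass evaluations through integer floors incurring at most $k_1$ per evaluation and $+1$ on parameters, and let the approximation errors vanish; your bookkeeping for the constants $D=2(1+E)k_1+C$ and $\tilde{\theta}(r)=\theta(r+1)+1$ is accurate.
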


\begin{lemma}
\label{lem:ray-same-param}
 Set $I=[0,a]$ or $I=\Rp$ and $J=[0,b]$ or $J=\Rp$. 
 Let $\gamma,\eta\in \bar{\calL}$ be quasi-geodesics such that
 $\gamma\colon I\to X$ and $\eta\colon J\to X$ with $\gamma(0)=\eta(0)$.
 For all $a\in I$, $b\in J$ and $0\leq t\leq \min\{a,b\}$, we have
 \begin{align*}
  \ds{\gamma(t),\eta(t)}
  &\leq E(\ds{\gamma(a),\eta(b)} 
    + \lambda\tilde{\theta}(\ds{\gamma(a),\eta(b)}) + k_1) + D.
 \end{align*} 
\end{lemma}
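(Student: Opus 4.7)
The plan is to reduce to a single application of the quasi-convexity inequality Proposition~\ref{prop:qgeod-ray-convex}(\ref{qrayconvex}) via a suitable rescaling of parameters, and then to absorb the resulting parameter mismatch using the quasi-geodesic bound on $\gamma$ together with the parameter-regularity Proposition~\ref{prop:qgeod-ray-convex}(\ref{qrayparam-reg}).

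By symmetry of the statement in the pairs $(\gamma,a)$ and $(\eta,b)$, I may assume $a\leq b$; if $b=0$ then $t=0$ and the inequality is trivial, so assume $b>0$ and set $c:=t/b\in[0,1]$. Applying Proposition~\ref{prop:qgeod-ray-convex}(\ref{qrayconvex}) with this $c$, and using $\gamma(0)=\eta(0)$ to kill the $(1-c)E\ds{\gamma(0),\eta(0)}$ term, yields
\[
\ds{\gamma(ta/b),\eta(t)}\leq \frac{t}{b}\,E\ds{\gamma(a),\eta(b)}+D\leq E\ds{\gamma(a),\eta(b)}+D.
\]
Next, since every $\gamma\in\bar\calL$ is a $(\lambda,k_1)$-quasi-geodesic (by Lemma~\ref{lem:qg-k1} when $\gamma\in\calL^\infty$, and because $k_1\geq k$ when $\gamma\in\calL$), combined with Proposition~\ref{prop:qgeod-ray-convex}(\ref{qrayparam-reg}) at parameters $a,b$, which gives $|a-b|\leq\tilde\theta(\ds{\gamma(a),\eta(b)})$ thanks to $\gamma(0)=\eta(0)$, and using $t/b\leq 1$, I obtain
\[
\ds{\gamma(t),\gamma(ta/b)}\leq \lambda\bigl|t-ta/b\bigr|+k_1\leq \lambda(b-a)+k_1\leq \lambda\tilde\theta(\ds{\gamma(a),\eta(b)})+k_1.
\]

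A single triangle inequality $\ds{\gamma(t),\eta(t)}\leq\ds{\gamma(t),\gamma(ta/b)}+\ds{\gamma(ta/b),\eta(t)}$ then combines the two displays; factoring out $E$ using $E\geq 1$ produces exactly the stated bound. I anticipate no real obstacle, as the argument is essentially bookkeeping; the only minor points are the uniform verification that $\bar\calL$-elements are $(\lambda,k_1)$-quasi-geodesics and the trivial handling of $b=0$. The key idea is that the choice $c=t/b$ aligns the $\eta$-side exactly with the target parameter $t$, leaving only the $\gamma$-side parameter discrepancy $b-a$ to control, which is precisely what Proposition~\ref{prop:qgeod-ray-convex}(\ref{qrayparam-reg}) is designed for.
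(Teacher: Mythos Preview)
Your proof is correct and uses the same three ingredients as the paper (one application of the convexity inequality, one quasi-geodesic estimate, one use of parameter regularity). The only cosmetic difference is the decomposition: the paper applies convexity at the matched parameters $(a,a)$ to obtain $\ds{\gamma(t),\eta(t)}\leq E\ds{\gamma(a),\eta(a)}+D$ directly, and then slides along $\eta$ from $\eta(a)$ to $\eta(b)$, so the factor $E$ appears naturally on the $\lambda\tilde\theta+k_1$ term; you instead apply convexity at $(a,b)$, slide along $\gamma$, and then coarsen via $E\geq 1$ to match the stated bound.
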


\begin{proof}
 We suppose $a\leq b$. Then
 \begin{align*}
  \ds{\gamma(t),\eta(t)}
  &\leq E\ds{\gamma(a),\eta(a)} + D\\
  &\leq E(\ds{\gamma(a),\eta(b)} + \lambda\abs{b-a} + k_1) + D\\
  &\leq E(\ds{\gamma(a),\eta(b)} 
    + \lambda\tilde{\theta}(\ds{\gamma(a),\eta(b)}) + k_1) + D.
 \end{align*}
\end{proof}

\begin{definition}
\label{def:ideal-bdry}
 For quasi-geodesic rays $\gamma$ and $\eta$ in $\calL^\infty$, 
 we say that $\gamma$ and $\eta$ are {\itshape equivalent} if
 \begin{align*}
 \sup\{\ds{\gamma(t),\eta(t)}:t\in \Rp\}<\infty,
 \end{align*}
 and we denote by $\gamma \sim \eta$. 
 For $\gamma\in \calL^\infty$, we denote by $[\gamma]$ its equivalence
 class.
 The {\itshape ideal boundary} of $X$ is the
 set $\partial X:= \calL^\infty/ \sim$ of equivalence classes of
 quasi-geodesic rays in $\calL^\infty$.
 The {\itshape ideal boundary} of $X$ with respect to $O$ is the
 set $\partial_O X:= \calL_O^\infty/ \sim$ of equivalence classes of
 quasi-geodesic rays in $\calL_O^\infty$.
\end{definition}

\begin{lemma}
\label{lem:HdistD}
 For $\gamma, \eta\in \calL_O^\infty$, if
 $[\gamma] = [\eta]$ then $\ds{\gamma(t),\eta(t)}\leq D$ for all
 $t\in \Rp$. 
\end{lemma}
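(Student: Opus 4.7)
The plan is to exploit the convexity inequality of Proposition~\ref{prop:qgeod-ray-convex}(\ref{qrayconvex}) by rescaling: since both rays start at the same basepoint $O$, their initial displacement vanishes, and so the convexity inequality forces the displacement at time $t$ to be controlled by an arbitrarily small fraction of the supremum of $\ds{\gamma(s),\eta(s)}$ plus the additive error $D$.

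More concretely, set $M := \sup_{s \in \Rp} \ds{\gamma(s),\eta(s)}$. By the assumption $[\gamma] = [\eta]$ and Definition~\ref{def:ideal-bdry}, we have $M < \infty$. Since $\gamma, \eta \in \calL_O^\infty$, we have $\gamma(0) = \eta(0) = O$, hence $\ds{\gamma(0),\eta(0)} = 0$. Both $\gamma$ and $\eta$ belong to $\bar{\calL}$ and are defined on all of $\Rp$.

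Fix $t \in \Rp$. For each $s > t$, set $c := t/s \in [0,1]$. Applying Proposition~\ref{prop:qgeod-ray-convex}(\ref{qrayconvex}) with this choice of $c$ yields
\begin{align*}
 \ds{\gamma(t),\eta(t)}
  = \ds{\gamma(cs),\eta(cs)}
 \leq cE\ds{\gamma(s),\eta(s)} + (1-c)E\ds{\gamma(0),\eta(0)} + D
 \leq \frac{t}{s}EM + D.
\end{align*}
Since this holds for every $s > t$ and the right-hand side tends to $D$ as $s \to \infty$, we conclude $\ds{\gamma(t),\eta(t)} \leq D$. For $t = 0$ the inequality is trivial since $\gamma(0) = \eta(0)$.

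There is no real obstacle here beyond recognizing that $\bar{\calL}$ is closed under the ray operations needed, so Proposition~\ref{prop:qgeod-ray-convex} applies directly to the pair of rays $\gamma, \eta \in \calL_O^\infty \subset \bar{\calL}$.
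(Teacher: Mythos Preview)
Your proof is correct and uses essentially the same idea as the paper: both arguments apply the convexity inequality of Proposition~\ref{prop:qgeod-ray-convex}(\ref{qrayconvex}) with a rescaling parameter $c = t/s$ tending to $0$, exploiting that $\gamma(0)=\eta(0)$. The only difference is presentation: the paper argues by contrapositive (if $\ds{\gamma(s),\eta(s)}>D$ at some $s$, then $\ds{\gamma(s/c),\eta(s/c)}\to\infty$), whereas you argue directly by bounding $\ds{\gamma(t),\eta(t)}$ in terms of $M$ and letting $s\to\infty$.
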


\begin{proof} 
 Let $\gamma, \eta\in \calL_O^\infty$ be quasi-geodesic rays. 
 We suppose that there
 exists $s>0$ such that $\ds{\gamma(s),\eta(s)}> D$. 
 Then by Proposition~\ref{prop:qgeod-ray-convex}, for $0< c\leq 1$, 
 we have 
 \begin{align*}
  \ds{\gamma(s/c),\eta(s/c)}
  \geq \frac{1}{cE}(\ds{\gamma(s),\eta(s)} -D) \to \infty \quad (c\to 0).
 \end{align*}
 Thus we have $\sup\{\ds{\gamma(t),\eta(t)}:t\in \Rp\} = \infty$.
\end{proof}

\subsection{Gromov product}
\label{sec:gromov-product}
We define $\calL_O$ as the subset of $\calL$ 
consisting all quasi-geodesic segments $\gamma\in \calL$ with
$\gamma\colon [0,a_\gamma]\to X$, $a_\gamma\geq 2\theta(0)$ 
and $\gamma(0)=O$. Set $\bar{\calL}_O:= \calL_O\cup \calL_O^\infty$.

\begin{definition}
We define a product 
$(\,\cdot \,| \,\cdot\,)_O\colon \bar{\calL}_O\times\bar{\calL}_O 
\to \Rp\cup \{\infty\}$ 
as follows. Set $I=[0,a]$ or $I=\Rp$ and $J=[0,b]$ or $J=\Rp$. 
Then for $\gamma,\eta\in \bar{\calL}_O$ with $\gamma\colon I\to X$ and
$\eta\colon J\to X$, we define
 \begin{align*}
  (\gamma\mid \eta)_O:= 
  \sup\{t: t\in I\cap J,\, \ds{\gamma(t),\eta(t)}\leq D_1\},
 \end{align*}
where $D_1:=2D+2$.
When it is clear which
 point is the base point $O$, we write $(\gamma\mid \eta)$ instead of
 $(\gamma\mid \eta)_O$.
\end{definition}

\begin{lemma}
\label{lem:maximizer}
 Let $\gamma,\eta\in \bar{\calL}_O$ be quasi-geodesics. 
 Set $a:=(\gamma\mid \eta)$. If $a<\infty$, then
 \begin{align*}
  \ds{\gamma(a), \eta(a)} \leq D_1 +2k_1.
 \end{align*}
\end{lemma}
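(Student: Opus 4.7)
The plan is to extract an approximating sequence from the definition of the supremum and then control the drift via the quasi-geodesic inequality. Since $a = (\gamma\mid\eta)_O = \sup S$ where $S := \{t \in I\cap J : \ds{\gamma(t),\eta(t)} \leq D_1\}$ and $a < \infty$, and since $S$ is nonempty (it contains $0$, as $\ds{\gamma(0),\eta(0)}=\ds{O,O}=0$), I can pick a sequence $t_n \in S$ with $a - 1/n < t_n \leq a$. The domain $I\cap J$ is a closed subset of $\Rp$, so $a \in I\cap J$ and the points $\gamma(a), \eta(a)$ are well defined.

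The key observation is that every element of $\bar{\calL}_O$ is a $(\lambda,k_1)$-quasi-isometric embedding: for $\gamma \in \calL_O$ this follows from the $(\lambda,k)$-quasi-geodesic definition of $\calL$ together with $k \leq k_1 = \lambda + k$, while for $\gamma \in \calL_O^\infty$ it is exactly Lemma~\ref{lem:qg-k1}. Hence $\ds{\gamma(a),\gamma(t_n)} \leq \lambda|a-t_n| + k_1$ and $\ds{\eta(a),\eta(t_n)} \leq \lambda|a-t_n| + k_1$, and since $t_n \in S$ we have $\ds{\gamma(t_n),\eta(t_n)} \leq D_1$. The triangle inequality therefore gives $\ds{\gamma(a),\eta(a)} \leq \ds{\gamma(a),\gamma(t_n)} + \ds{\gamma(t_n),\eta(t_n)} + \ds{\eta(t_n),\eta(a)} \leq 2(\lambda|a-t_n| + k_1) + D_1$, and sending $n \to \infty$ yields $\ds{\gamma(a),\eta(a)} \leq D_1 + 2k_1$, as claimed.

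There is essentially no obstacle here. The only subtlety worth flagging is that when $\gamma \in \calL_O^\infty$ is a step function (so $\gamma(t) = \gamma(\lfloor t\rfloor)$), the values $\gamma(t_n)$ need not converge pointwise to $\gamma(a)$; nevertheless, the quasi-isometric embedding bound still forces $\ds{\gamma(a),\gamma(t_n)}$ into the additive constant $k_1$ as $|a-t_n|$ shrinks, so the limiting argument is unaffected.
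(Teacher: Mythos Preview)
Your proof is correct and follows essentially the same approach as the paper: pick parameters $t_n \nearrow a$ lying in the set $\{t:\ds{\gamma(t),\eta(t)}\leq D_1\}$, use the $(\lambda,k_1)$-quasi-geodesic bound to control $\ds{\gamma(a),\gamma(t_n)}$ and $\ds{\eta(a),\eta(t_n)}$, apply the triangle inequality, and let $t_n\to a$. The paper phrases this with a single $\delta,\delta'$ rather than a sequence, but the argument is identical.
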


\begin{proof}
 Let $\gamma,\eta\in \bar{\calL}_O$ be quasi-geodesics.  Set
 $a:=(\gamma\mid \eta)$. For any positive number $\delta$ with
 $0<\delta\leq a$, 
 there exists a $\delta'$ with $0\leq \delta'\leq\delta$ and 
 $\ds{\gamma(a-\delta'), \eta(a-\delta')}\leq D_1$. 
 Now,
 \begin{align*}
  \ds{\gamma(a-\delta'),\gamma(a)}\leq \lambda\delta + k_1
  \, \text{ and } \,
  \ds{\eta(a-\delta'),\eta(a)}\leq \lambda\delta + k_1.
 \end{align*}
 Thus
 $\ds{\gamma(a), \eta(a)}\leq D_1 + 2(\lambda\delta + k_1)$.
 Since $\delta$ can be arbitrarily small, we have
 $\ds{\gamma(a), \eta(a)} \leq D_1 +2k_1$.
\end{proof}

\begin{lemma}
\label{lem:qultm}
Set $D_2:=E(D_1+2k_1)$.
 For $\gamma,\eta,\xi\in \bar{\calL}_O$, we have
 \begin{align*}
  (\gamma\mid \xi) \geq D_2^{-1}\min
  \{(\gamma\mid \eta), (\eta\mid \xi)\}.
 \end{align*}
\end{lemma}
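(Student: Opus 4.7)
The plan is to bound the distance $\ds{\gamma(t),\xi(t)}$ by interpolating from the common base point $O$ toward the ``divergence points'' $\gamma(a),\eta(a)$ (for the pair $\gamma,\eta$) and $\eta(b),\xi(b)$ (for $\eta,\xi$), where $a:=(\gamma\mid\eta)$ and $b:=(\eta\mid\xi)$. The key ingredients are Proposition~\ref{prop:qgeod-ray-convex}(\ref{qrayconvex}) (convexity) applied with $\gamma(0)=\eta(0)=\xi(0)=O$, and Lemma~\ref{lem:maximizer} (control of the distance at the supremum).

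First I would dispose of the degenerate case $a=b=\infty$. Then there exists a sequence $t_n\to\infty$ with $\ds{\gamma(t_n),\eta(t_n)}\le D_1$, and both $\gamma,\eta\in \calL_O^\infty$ must be rays. Convexity with $c=s/t_n$ and the vanishing of $\ds{\gamma(0),\eta(0)}$ gives $\ds{\gamma(s),\eta(s)}\le (s/t_n)E D_1 + D$ for all $s\le t_n$; letting $n\to\infty$ yields $\ds{\gamma(s),\eta(s)}\le D$ for all $s$, and the same for $\eta,\xi$. The triangle inequality gives $\ds{\gamma(s),\xi(s)}\le 2D< D_1$ for all $s$, whence $(\gamma\mid\xi)=\infty$ and the inequality is trivial.

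In the remaining case, I may assume without loss of generality that $m:=\min\{a,b\}=a<\infty$. By Lemma~\ref{lem:maximizer}, $\ds{\gamma(a),\eta(a)}\le D_1+2k_1$. Applying Proposition~\ref{prop:qgeod-ray-convex}(\ref{qrayconvex}) with $c=t/a$ and using $\gamma(0)=\eta(0)=O$, I obtain for $0\le t\le a$
\[
\ds{\gamma(t),\eta(t)}\le \tfrac{t}{a}\,E(D_1+2k_1)+D = \tfrac{t}{a}D_2+D.
\]
An analogous estimate (with $b$ in place of $a$) gives $\ds{\eta(t),\xi(t)}\le \tfrac{t}{b}D_2+D\le \tfrac{t}{a}D_2+D$ when $b<\infty$, while if $b=\infty$ the same bound holds since the argument of the previous paragraph yields $\ds{\eta(t),\xi(t)}\le D$. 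Note $m/D_2\le m\le a$ lies in the intersection of the domains of $\gamma,\eta,\xi$, so all three values are defined there.

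Finally, the triangle inequality yields
\[
\ds{\gamma(t),\xi(t)}\le 2\tfrac{t}{a}D_2+2D
\]
for $0\le t\le a$. Substituting $t=a/D_2=m/D_2$ gives $\ds{\gamma(m/D_2),\xi(m/D_2)}\le 2+2D=D_1$, so by definition of the Gromov product, $(\gamma\mid\xi)\ge m/D_2 = D_2^{-1}\min\{(\gamma\mid\eta),(\eta\mid\xi)\}$, as required. The only real subtlety is the bookkeeping across the various cases where $a$ and/or $b$ are infinite, which the preliminary reduction handles cleanly.
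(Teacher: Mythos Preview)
Your proof is correct and follows essentially the same approach as the paper: apply the convexity estimate of Proposition~\ref{prop:qgeod-ray-convex}(\ref{qrayconvex}) from the common base point toward the points $\gamma(a),\eta(a)$ and $\eta(b),\xi(b)$ (using Lemma~\ref{lem:maximizer} to bound those distances by $D_1+2k_1$), then combine via the triangle inequality at the parameter $m/D_2$. The paper's proof is terser, simply evaluating both estimates directly at $a':=D_2^{-1}\min\{a,b\}$ to obtain $\ds{\gamma(a'),\eta(a')}\le D+1$ and $\ds{\eta(a'),\xi(a')}\le D+1$, whence $\ds{\gamma(a'),\xi(a')}\le D_1$; it does not explicitly treat the cases where $a$ or $b$ is infinite, whereas you handle these carefully.
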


\begin{proof}
 Set $a:= (\gamma\mid \eta)$ and $b:= (\eta\mid \xi)$. 
Set $a':=  D_2^{-1} \min\{a,b\}$. Then
 \begin{align*}
  \ds{\gamma(a'),\eta(a')}&\leq \frac{a'}{a}E\ds{\gamma(a),\eta(a)}+ D
  \leq D_2^{-1}E (D_1+2k_1) + D = D+1,\\
 \ds{\eta(a'),\xi(a')}&\leq \frac{a'}{b}E\ds{\eta(b),\xi(b)}+D 
  \leq D_2^{-1}E(D_1+2k_1) + D = D+1, \\
  \ds{\gamma(a'),\xi(a')} 
  &\leq  2D+2 =D_1.
 \end{align*} 
 It follows that $(\gamma\mid \xi) \geq a'$.
\end{proof}

\begin{lemma}
\label{lem:same-rep}
 Set $D_2':=\max\{1,E(\lambda(\theta(0))+k)\}$. We have the following.
 \begin{enumerate}[\hspace{7mm}$(1)$]
  \item \label{item:seg-wt-same-end}
	For $\gamma,\eta\in \calL_O$ with $\gamma\colon [0,a]\to X$ and 
	$\eta\colon [0,b]\to X$, if $\gamma(a)=\eta(b)$, then
	\begin{align*}
	 (\gamma\mid \eta) \geq D_2'^{-1}\min\{a,b\} .
	\end{align*}
  \item \label{item:asymp-rays}
	For $\gamma,\eta\in \calL_O^\infty$, if $[\gamma]=[\eta]$, then
	\begin{align*}
	 (\gamma\mid \eta) =\infty.
	\end{align*}
 \end{enumerate}
\end{lemma}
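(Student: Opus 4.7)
My plan is to handle the two parts separately; (\ref{item:asymp-rays}) is essentially a corollary of Lemma~\ref{lem:HdistD}, while (\ref{item:seg-wt-same-end}) is the one that requires a genuine argument using convexity.

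For (\ref{item:asymp-rays}), I would just observe that if $[\gamma]=[\eta]$, then Lemma~\ref{lem:HdistD} gives $\ds{\gamma(t),\eta(t)}\leq D$ for every $t\in \Rp$. Since $D_1=2D+2>D$, this means every $t\in\Rp$ belongs to the set whose supremum defines $(\gamma\mid\eta)$, so $(\gamma\mid\eta)=\infty$.

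For (\ref{item:seg-wt-same-end}), assume without loss of generality that $a\leq b$. The strategy is to pick $t_\ast := D_2'^{-1}a = D_2'^{-1}\min\{a,b\}$ and verify that $\ds{\gamma(t_\ast),\eta(t_\ast)}\leq D_1$; this immediately gives $(\gamma\mid\eta)\geq t_\ast$. First, since $\gamma(0)=\eta(0)=O$ and $\gamma(a)=\eta(b)$, property (\ref{qparam-reg})$^q$ applied to $\gamma$ and $\eta$ at endpoints yields $|a-b|\leq \theta(0)$. Next, taking $c=D_2'^{-1}\leq 1$ in the convexity inequality (\ref{qconvex})$^q$ evaluated at $t=a$, $s=b$, I get
\begin{align*}
\ds{\gamma(ca),\eta(cb)} \leq cE\cdot 0 + (1-c)E\cdot 0 + C = C.
\end{align*}
Since $\eta$ is a $(\lambda,k)$-quasi-geodesic and $|ca-cb|=c|a-b|\leq c\theta(0)$, the triangle inequality gives
\begin{align*}
\ds{\gamma(ca),\eta(ca)} \leq \ds{\gamma(ca),\eta(cb)} + \ds{\eta(cb),\eta(ca)} \leq C + \lambda c\theta(0) + k.
\end{align*}

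The final step is to check the constants. By definition $D_2'\geq E(\lambda\theta(0)+k)\geq \lambda\theta(0)$ (since $E\geq 1$), so $c\lambda\theta(0)\leq 1$, yielding $\ds{\gamma(t_\ast),\eta(t_\ast)}\leq C+k+1$. It remains to observe that this is at most $D_1=4(1+E)k_1+2C+2$, which is clear since $k\leq k_1$ and $C\geq 0$. The only mildly delicate point, and the one I would be careful about, is making sure that the choice $c=D_2'^{-1}$ tames the additive $k$ coming from the quasi-geodesic inequality for $\eta$; this is precisely what the factor $E(\lambda\theta(0)+k)$ inside the definition of $D_2'$ is designed for, so once the constants are arranged as in the paper's notation table the estimate closes cleanly.
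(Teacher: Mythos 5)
Your proof is correct and is a minor rearrangement of the paper's own argument. The paper first equalizes parameters: it uses $\gamma(a)=\eta(b)$ together with (\ref{qparam-reg})$^q$ to bound $\ds{\gamma(d),\eta(d)}\leq\lambda\theta(0)+k$ where $d=\min\{a,b\}$, and only then applies (\ref{qconvex})$^q$ at $t=s=d$ with $c=D_2'^{-1}$, so that the whole quantity $E(\lambda\theta(0)+k)$ gets scaled by $c$ down to at most $1$, giving $\leq 1+C\leq D_1$. You contract first, applying convexity at the mismatched endpoints $t=a$, $s=b$ (which is clean, since both endpoint distances vanish and you get $\leq C$), and then equalize the parameters afterwards via the quasi-geodesic estimate for $\eta$. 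The price of this ordering is that the additive $k$ from that quasi-geodesic estimate enters at full strength rather than being multiplied by $c$, so you land on $C+k+1$ rather than the paper's $C+1$; both are comfortably below $D_1$, so the proof closes. One small slip in your commentary: in your version the choice $c=D_2'^{-1}$ does \emph{not} tame the additive $k$ — that $k$ is simply absorbed by the slack between $C+1$ and $D_1$, and the scaling by $c$ only tames $\lambda\theta(0)$. In fact your argument would go through with the weaker requirement $D_2'\geq\lambda\theta(0)$; the factor $E(\lambda\theta(0)+k)$ in the definition of $D_2'$ is exactly what the paper's ordering (match first, then contract) needs.
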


\begin{proof}
 The statement (\ref{item:asymp-rays}) follows from Lemma~\ref{lem:HdistD}.
 Thus we show (\ref{item:seg-wt-same-end}). 
 For $\gamma,\eta\in \calL_O$ with $\gamma\colon [0,a]\to X$ and 
 $\eta\colon [0,b]\to X$, we suppose $\gamma(a)=\eta(b)$.
 Set $d:=\min\{a,b\}$. Then we have
 \begin{align*}
  \ds{\gamma(d),\eta(d)}\leq \lambda\abs{a-b}+k \leq 
  \lambda\theta(0)+k
 \end{align*}
 Then $\ds{\gamma(D_2'^{-1}d),\eta(D_2'^{-1}d)}\leq 1+D$. 
 Thus $(\gamma\mid \eta) \geq D_2'^{-1}d$.
\end{proof}

\begin{lemma}
\label{lem:D3}
 Set $D_3:=2D_2'(D_2)^2$. We have the following.
 \begin{enumerate}[\hspace{7mm}$(1)$]
  \item Let $\gamma_1, \eta_1, \gamma_2, \eta_2\in \calL_O$ 
	be quasi-geodesic segments with
	$\gamma_i\colon [0,a_i] \to X$ and $\eta_i\colon [0,b_i]\to X$ 
	for $i=1,2$.
	If $\gamma_i(a_i)=\eta_i(b_i)$ for $i=1,2$, then
  \begin{align*}
   D_3^{-1} (\gamma_1\mid \gamma_2) \leq (\eta_1\mid \eta_2) 
   \leq D_3 (\gamma_1\mid \gamma_2) .
  \end{align*}

  \item For quasi-geodesic rays 
	$\gamma_1, \eta_1, \gamma_2, \eta_2\in \calL_O^\infty$, 
	if $[\gamma_i]=[\eta_i]$ for $i=1,2$, then
	\begin{align*}
	 D_3^{-1} (\gamma_1\mid \gamma_2) \leq (\eta_1\mid \eta_2) 
	 \leq D_3 (\gamma_1\mid \gamma_2) .
	\end{align*}
  \item  Let $\gamma_1, \eta_1 \in \calL_O$ 
	 be quasi-geodesic segments with 
	 $\gamma_1\colon [0,a_1] \to X$ and $\eta_1\colon [0,b_1]\to X$.
	 Let $\gamma_2, \eta_2\in \calL_O^\infty$ be quasi-geodesic rays.
	 If $\gamma_1(a_1)=\eta_1(b_1)$ and $[\gamma_2]=[\eta_2]$ then
	 \begin{align*}
	  D_3^{-1} (\gamma_1\mid \gamma_2) \leq (\eta_1\mid \eta_2) 
	  \leq D_3 (\gamma_1\mid \gamma_2) .
	 \end{align*}	
 \end{enumerate}
\end{lemma}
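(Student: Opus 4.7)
The plan is to exploit the quasi-ultrametric inequality of Lemma~\ref{lem:qultm} together with the ``parallel-quasi-geodesic'' bounds of Lemma~\ref{lem:same-rep}. Observe first that, in each of the three parts of the lemma, the hypotheses on $(\gamma_i,\eta_i)$ are symmetric in $\gamma_i\leftrightarrow\eta_i$, so it suffices to prove the lower bound $(\eta_1\mid\eta_2)\geq D_3^{-1}(\gamma_1\mid\gamma_2)$; the upper bound then follows by exchanging the roles of $\gamma_i$ and $\eta_i$ throughout.

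Write $p:=(\gamma_1\mid\gamma_2)$. Two applications of Lemma~\ref{lem:qultm} (and the fact that $D_2\geq 1$, which permits collapsing the nested minimum) give
\begin{align*}
(\eta_1\mid\eta_2)
&\geq D_2^{-1}\min\{(\eta_1\mid\gamma_1),(\gamma_1\mid\eta_2)\} \\
&\geq D_2^{-2}\min\{(\gamma_1\mid\eta_1),\,p,\,(\gamma_2\mid\eta_2)\}.
\end{align*}
Thus the task reduces to bounding each of $(\gamma_1\mid\eta_1)$ and $(\gamma_2\mid\eta_2)$ from below by a uniform multiple of $p$.

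For part~(2), Lemma~\ref{lem:same-rep}~(\ref{item:asymp-rays}) gives $(\gamma_i\mid\eta_i)=\infty$, so the minimum collapses to $p$ and we conclude $(\eta_1\mid\eta_2)\geq D_2^{-2}p\geq D_3^{-1}p$. For parts~(1) and~(3), the segment pairs $(\gamma_i,\eta_i)\in\calL_O\times\calL_O$ with $\gamma_i(a_i)=\eta_i(b_i)$ are handled as follows: applying condition~(\ref{qparam-reg})$^q$ at $t=a_i$, $s=b_i$ and using $\gamma_i(0)=\eta_i(0)=O$ yields $\abs{a_i-b_i}\leq\theta(0)$; combined with the defining property $a_i\geq 2\theta(0)$ of $\calL_O$, this upgrades to $\min\{a_i,b_i\}\geq a_i/2$. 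Since $p=(\gamma_1\mid\gamma_2)\leq\min\{a_1,a_2\}$, Lemma~\ref{lem:same-rep}~(\ref{item:seg-wt-same-end}) then yields
\begin{align*}
(\gamma_i\mid\eta_i)\geq (D_2')^{-1}\min\{a_i,b_i\}\geq (D_2')^{-1}a_i/2\geq p/(2D_2').
\end{align*}
Substituting these bounds (and the infinite bound for any equivalent-ray pair, as needed in the mixed setting of part~(3)) into the display above gives $(\eta_1\mid\eta_2)\geq D_2^{-2}\cdot p/(2D_2')=p/D_3$, matching the prescribed constant $D_3=2D_2'(D_2)^2$.

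The main obstacle is essentially bookkeeping: one must check that the length constraint $a_i\geq 2\theta(0)$ built into the definition of $\calL_O$ is precisely what upgrades the additive control $b_i\geq a_i-\theta(0)$ to the multiplicative control $b_i\geq a_i/2$ needed to compare $\min\{a_i,b_i\}$ with $p$. Beyond this, no new ingredient is required: the estimate is a clean two-step ``triangle'' argument via $\gamma_1$ and $\gamma_2$ as intermediaries.
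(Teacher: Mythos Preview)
Your proof is correct and follows essentially the same approach as the paper: two applications of the quasi-ultrametric inequality (Lemma~\ref{lem:qultm}) followed by the bounds from Lemma~\ref{lem:same-rep}, together with the factor-of-$2$ trick that the length constraint $a_i,b_i\geq 2\theta(0)$ turns $\abs{a_i-b_i}\leq\theta(0)$ into $\min\{a_i,b_i\}\geq a_i/2$. The only cosmetic difference is that the paper proves the upper bound $(\eta_1\mid\eta_2)\leq D_3(\gamma_1\mid\gamma_2)$ directly (inserting $\eta_1,\eta_2$ as intermediaries between $\gamma_1$ and $\gamma_2$) while you prove the lower bound (inserting $\gamma_1,\gamma_2$ between $\eta_1$ and $\eta_2$); as you note, the $\gamma_i\leftrightarrow\eta_i$ symmetry makes this immaterial.
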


\begin{proof}
 We give a proof for the first statement. 

 Since $b_i\geq 2\theta(0)$ and $\abs{a_i-b_i}\leq \theta(0)$ for $i=1,2$, 
 we have $a_i\geq b_i/2$ for $i=1,2$. 
 By Lemma~\ref{lem:qultm} and Lemma~\ref{lem:same-rep},
 \begin{align*}
  (\gamma_1\mid \gamma_2)
  &\geq 
   D_2^{-2}\min\{(\gamma_1\mid \eta_1),(\eta_1\mid\eta_2),
                   (\eta_2\mid \gamma_2)\}\\
  &\geq    (D_2'D_2^2)^{-1}\min\{a_1,b_1,(\eta_1\mid\eta_2),
                   a_2,b_2\}\\
  & \geq    (2D_2'D_2^2)^{-1}(\eta_1\mid\eta_2).
 \end{align*}
 We can prove the rest of the statement in the same way.
\end{proof}

\begin{definition}
 We define a product 
 $(\,\cdot \,| \,\cdot\,)\colon 
 (X\cup \partial_O X) \times (X\cup \partial_O X) \to \Rp$ 
 as follows.

 \begin{enumerate}
 \setcounter{enumi}{-1}
  \item For $v,w\in X\cup \partial_O X$ with 
        $v\in B_{\lambda 2\theta(0)+k}(O)$ or 
	$w\in B_{\lambda 2\theta(0)+k}(O)$,
        we define 
        \begin{align*}
 	(v\mid w):=0.
        \end{align*}
  \item For $v,w\in X\setminus B_{\lambda 2\theta(0)+k}(O)$, 
        we define 
        \begin{align*}
 	(v\mid w): = \sup (\gamma\mid \eta),
        \end{align*}
        where the supremum is taken over all $\gamma, \eta\in \calL_O$ with 
        $\gamma\colon [0,a] \to X$, $\eta\colon [0,b] \to X$, 
        $\gamma(a)=v$ and $\eta(b)=w$.
  \item For $x,y\in \partial_O X$, we define 
        \begin{align*}
 	(x\mid y): = \sup (\gamma\mid \eta),
        \end{align*}
        where the supremum is taken over all 
	$\gamma, \eta\in \calL_O^\infty$
        such that $x =[\gamma]$ and $y=[\eta]$.
  \item For $x\in \partial_O X$ and 
        $v\in X\setminus B_{\lambda 2\theta(0)+k}(O)$, we define
        \begin{align*}
 	(v\mid x):= \sup (\gamma\mid \eta),
        \end{align*}
        where the supremum is taken over all quasi-geodesic rays 
        $\eta\in \calL_O^\infty$ with $x=[\eta]$ and 
        quasi-geodesic segments $\gamma\in \calL_O$ with 
        $\gamma\colon [0,a]\to X$ and $v=\gamma(a)$. We define 
	$(x\mid v):=(v\mid x)$.
 \end{enumerate}
\end{definition}
Lemma~\ref{lem:D3} implies the following.
\begin{lemma}
\label{lem:chooseRep-full}
 We have the following.
 \begin{enumerate}[\hspace{7mm}$(1)$]
  \item For $v,w \in X\setminus B_{\lambda 2\theta(0)+k}(O)$ and for
	$\gamma, \eta\in \calL_O$ with
	$\gamma\colon [0,a] \to X$ and $\eta\colon [0,b] \to X$,
	if $\gamma(a)=v$ and $\eta(b)=w$, then
  \begin{align*}
   (\gamma\mid \eta) \leq  (v\mid w) \leq D_3(\gamma\mid \eta).
  \end{align*}
  \item For $x,y \in \partial_O X$ and for 
	$\gamma, \eta\in \calL_O^\infty$, 
	if $x=[\gamma]$ and $y=[\eta]$, then
  \begin{align*}
   (\gamma\mid \eta) \leq  (x\mid y) \leq D_3 (\gamma\mid \eta).
  \end{align*}
  \item  For $x\in \partial_O X$, 
	 $v\in X\setminus B_{\lambda 2\theta(0)+k}(O)$, and for
	 $\eta\in \calL_O^\infty$,
	 $\gamma\in \calL_O$ with 
	 $\gamma\colon [0,a]\to X$,
	 if $x=[\eta]$ and $v=\gamma(a)$, then
   \begin{align*}
    (\gamma\mid \eta) \leq  (v\mid x) \leq D_3(\gamma\mid \eta).
   \end{align*}
 \end{enumerate}
\end{lemma}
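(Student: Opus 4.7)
The plan is to observe that this lemma is essentially an immediate corollary of Lemma~\ref{lem:D3}, combined with the definition of $(\cdot\mid\cdot)$ as a supremum. So the proof splits into two halves for each of the three items: the lower inequality is trivially true from the definition of supremum, while the upper inequality is a direct application of the corresponding case of Lemma~\ref{lem:D3}.

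First I would handle the three lower inequalities uniformly. In each case the given pair $(\gamma,\eta)$ is one of the pairs over which the supremum defining $(v\mid w)$, $(x\mid y)$, or $(v\mid x)$ is taken (this uses only the hypotheses $\gamma(a)=v$, $\eta(b)=w$, $[\gamma]=x$, $[\eta]=y$, as appropriate, together with $\gamma,\eta\in\calL_O$ or $\calL_O^\infty$). Therefore $(\gamma\mid\eta)$ does not exceed the supremum, giving the left-hand inequality in each item.

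Next I would handle the upper bounds. For item (1), pick any competing pair $\gamma',\eta'\in\calL_O$ with $\gamma'(a')=v$ and $\eta'(b')=w$. Then $\gamma(a)=\gamma'(a')$ and $\eta(b)=\eta'(b')$, so Lemma~\ref{lem:D3}(1) applies and yields $(\gamma'\mid\eta')\leq D_3(\gamma\mid\eta)$. Taking the supremum over all such $\gamma',\eta'$ gives $(v\mid w)\leq D_3(\gamma\mid\eta)$. Items (2) and (3) follow by the same argument using parts (2) and (3) of Lemma~\ref{lem:D3}: for (2), competing rays $\gamma',\eta'\in\calL_O^\infty$ satisfy $[\gamma']=[\gamma]$ and $[\eta']=[\eta]$; for (3), one takes a competing segment $\gamma'\in\calL_O$ with $\gamma'(a')=v=\gamma(a)$ and a competing ray $\eta'\in\calL_O^\infty$ with $[\eta']=[\eta]$, so the mixed statement of Lemma~\ref{lem:D3}(3) applies.

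There is no genuine obstacle here: no additional quasi-convexity estimates need to be invoked beyond those already packaged into Lemma~\ref{lem:D3}. The only minor point to check, which I would mention briefly, is that the hypothesis $v,w\notin B_{\lambda 2\theta(0)+k}(O)$ guarantees that the supremum defining $(v\mid w)$ (resp.\ $(v\mid x)$) is taken over a nonempty family of pairs in $\calL_O\times\calL_O$ (resp.\ $\calL_O\times\calL_O^\infty$); indeed the distance lower bound ensures that any good quasi-geodesic segment from $O$ to $v$ has length at least $2\theta(0)$, hence lies in $\calL_O$. Once this nonemptiness is noted, the proof reduces entirely to the three applications of Lemma~\ref{lem:D3} described above.
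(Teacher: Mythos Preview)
Your proposal is correct and is exactly the argument the paper has in mind: the paper simply states ``Lemma~\ref{lem:D3} implies the following'' without further detail, and your write-up spells out precisely how the lower bound follows from the definition of the supremum and the upper bound from the corresponding case of Lemma~\ref{lem:D3}. One small remark: the hypothesis $v,w\notin B_{\lambda\cdot 2\theta(0)+k}(O)$ is needed less to guarantee nonemptiness of the family (since $\gamma,\eta\in\calL_O$ are already given) and more to ensure that $(v\mid w)$ is defined via the supremum rather than set to $0$ by the zeroth clause of the definition.
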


\begin{corollary}
\label{cor:qrayultm} For 
 $x,y,z\in (X\setminus B_{\lambda 2\theta(0)+k}(O))\cup \partial_O X$, 
 we have 
 \begin{align*}
  (x\mid z) \geq (D_2D_3)^{-1}\min \{(x\mid y), (y\mid z)\}.
 \end{align*}
\end{corollary}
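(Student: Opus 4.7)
The plan is to reduce the corollary to the ultrametric-type inequality in Lemma~\ref{lem:qultm} for three suitably chosen representatives in $\bar{\calL}_O$, then absorb the freedom in the choice of representative via Lemma~\ref{lem:chooseRep-full}.

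First I would fix, for each of $x, y, z$, a representative in $\bar{\calL}_O$ as follows. If the point lies in $\partial_O X$, pick any quasi-geodesic ray in $\calL_O^\infty$ representing its class. If instead the point is a $v\in X\setminus B_{\lambda 2\theta(0)+k}(O)$, apply condition (\ref{qconn})$^q$ to $O$ and $v$ to obtain a quasi-geodesic segment $\gamma\in \calL$ from $O$ to $v$. Since $\gamma$ is a $(\lambda,k)$-quasi-isometric embedding we have $\ds{O,v}\leq \lambda a_\gamma + k$, and combining this with the ball condition $\ds{O,v}>\lambda 2\theta(0)+k$ forces $a_\gamma > 2\theta(0)$; hence $\gamma\in \calL_O$. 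Denote the three chosen representatives by $\gamma_x, \gamma_y, \gamma_z$.

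Applying Lemma~\ref{lem:qultm} to $\gamma_x, \gamma_y, \gamma_z \in \bar{\calL}_O$ yields
\[
(\gamma_x \mid \gamma_z) \geq D_2^{-1}\min\{(\gamma_x \mid \gamma_y),\,(\gamma_y \mid \gamma_z)\}.
\]
The three parts of Lemma~\ref{lem:chooseRep-full}, one for each combination of cases for where the relevant pair of points lives, give
\[
(x\mid y)\leq D_3(\gamma_x\mid \gamma_y),\quad
(y\mid z)\leq D_3(\gamma_y\mid \gamma_z),\quad
(x\mid z)\geq (\gamma_x\mid \gamma_z).
\]
Chaining these inequalities produces
\[
(x\mid z)\geq (\gamma_x\mid \gamma_z)
\geq D_2^{-1}\min\{D_3^{-1}(x\mid y),\,D_3^{-1}(y\mid z)\}
= (D_2 D_3)^{-1}\min\{(x\mid y),\,(y\mid z)\},
\]
which is the desired conclusion.

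There is no serious obstacle here: the essential ultrametric content is already packed into Lemma~\ref{lem:qultm}, and Lemma~\ref{lem:chooseRep-full} was engineered precisely so that passing from chosen representatives to the supremum definition of $(\cdot\mid \cdot)$ on $(X\setminus B_{\lambda 2\theta(0)+k}(O))\cup \partial_O X$ costs at most a factor $D_3$. The only minor technical point is verifying the existence of a representative in $\calL_O$ (rather than merely in $\calL$) for a point in $X\setminus B_{\lambda 2\theta(0)+k}(O)$, which is exactly what the ball radius $\lambda 2\theta(0)+k$ was arranged to guarantee.
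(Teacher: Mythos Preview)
Your argument is correct and is exactly the route the paper intends: the corollary is stated without proof immediately after Lemma~\ref{lem:chooseRep-full}, and the implicit derivation is precisely to pick representatives in $\bar{\calL}_O$, apply Lemma~\ref{lem:qultm}, and then use Lemma~\ref{lem:chooseRep-full} to pass back to the product on points at the cost of the factor $D_3$. Your check that a point outside $B_{\lambda 2\theta(0)+k}(O)$ admits a representative in $\calL_O$ (not just $\calL$) is the only detail one might worry about, and you handled it correctly.
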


\begin{lemma}
\label{lem:approx-seg-Gprod} 
 Let $\gamma\in \calL_O^\infty$ be a quasi-geodesic ray and let 
 $\{(\gamma_n,a_n)\}_{n}$ be an $\calL$-approximate sequence 
 for $\gamma$.  Then we have 
 $\liminf_{n\to \infty}(\gamma\mid \gamma_n)= \infty$.
\end{lemma}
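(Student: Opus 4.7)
The plan is to unpack the definitions and show that the approximation property forces the Gromov product to exceed any prescribed bound. Fix $L > 0$; I will show $(\gamma \mid \gamma_n)_O \geq L$ for all sufficiently large $n$, which gives $\liminf_n (\gamma \mid \gamma_n)_O = \infty$.

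First I choose an integer $l \geq \max\{L, 2\theta(0)\}$. By the definition of an $\calL$-approximate sequence, $\gamma_n$ must be defined at $t = l$ for large $n$, hence $a_n \geq l \geq 2\theta(0)$, which guarantees $\gamma_n \in \calL_O$ so that $(\gamma \mid \gamma_n)_O$ is actually defined. Moreover, uniform convergence of $\gamma_n$ to $\gamma$ on $\{0, 1, \ldots, l\}$ yields an $N$ such that $\ds{\gamma(i), \gamma_n(i)} < 1$ for every $n \geq N$ and every $i \in \{0, 1, \ldots, l\}$.

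Next, for any $t \in [0, l]$ I set $i := \lfloor t \rfloor$. Since $\gamma \in \calL^\infty$ is constant on $[i, i+1)$, we have $\gamma(t) = \gamma(i)$, while $\gamma_n$ being a $(\lambda,k)$-quasi-geodesic segment gives $\ds{\gamma_n(i), \gamma_n(t)} \leq \lambda + k = k_1$. The triangle inequality then produces
\begin{align*}
 \ds{\gamma(t), \gamma_n(t)} \leq \ds{\gamma(i), \gamma_n(i)} + \ds{\gamma_n(i), \gamma_n(t)} < 1 + k_1 \leq D_1,
\end{align*}
where the last inequality uses $D_1 = 2D + 2 \geq 4(1+E)k_1 + 2C + 2$. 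Therefore every $t \in [0, l]$ satisfies the defining condition of the Gromov product, so $(\gamma \mid \gamma_n)_O \geq l \geq L$.

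There is no real obstacle here: the argument is essentially a direct application of the definitions, and the only subtle point is verifying that $\gamma_n$ lies in $\calL_O$ (i.e.\ that $a_n \geq 2\theta(0)$) for $n$ large, which is automatic from the approximation condition. Since $L$ was arbitrary, the conclusion $\liminf_n (\gamma \mid \gamma_n)_O = \infty$ follows.
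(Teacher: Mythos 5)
Your proof is correct and follows the same essential approach as the paper's: use the uniform convergence of $\gamma_n$ to $\gamma$ on $\{0,1,\dots,l\}$ to get $\ds{\gamma(l),\gamma_n(l)}<1\leq D_1$ for large $n$, and conclude $(\gamma\mid\gamma_n)\geq l$ since the Gromov product is a supremum. The paper's proof is shorter (it only checks the single integer $R$), while you additionally verify that $\gamma_n\in\calL_O$ and show the estimate at every $t\in[0,l]$, neither of which is strictly necessary but both of which are harmless clarifications.
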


\begin{proof}
 Let $\gamma\in \calL_O^\infty$ be a quasi-geodesic ray and let 
 $\{(\gamma_n,a_n)\}_{n}$ be an $\calL$-approximate sequence.
 Then for $R\in \N$, there exists $N>0$ such that for 
 all $n>N$, we have
 $\ds{\gamma(R),\gamma_n(R)}< 1\leq D_1$. 
 Thus $(\gamma\mid \gamma_n)\geq R$. 
\end{proof}


\begin{lemma}
\label{lem:pertarb}
Let $v,w\in X\setminus B_{\lambda 2\theta(0)+k}(O)$ be points.
Let $\eta\in \calL_O^\infty$ be a quasi-geodesic ray and
$\gamma_v,\gamma_w\in \calL_O$ be quasi-geodesic segments such that
$\gamma_v\colon [0,a_v]\to X$, 
$\gamma_w\colon [0,a_w]\to X$, $\gamma_v(a_v)=v$ 
and $\gamma_w(a_w)=w$.
Then we have 
 \begin{align}
\label{eq:2}
  (\gamma_w\mid \eta ) \geq \frac{((\gamma_v\mid \eta) - \theta(\ds{v,w}))}
      {E(E(\ds{v,w}+\lambda\tilde{\theta}(\ds{v,w})+k_1)+D_1+D+2k_1)}.  
 \end{align}
\end{lemma}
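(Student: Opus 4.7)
The plan is to bound $(\gamma_w\mid\eta)$ from below by producing a parameter at which $\gamma_w$ and $\eta$ are within $D_1$; rather than estimating $\ds{\gamma_w,\eta}$ directly, I would route through $\gamma_v$.

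First I would set $T:=(\gamma_v\mid\eta)\leq a_v$ and $T':=\min\{T,a_w\}$, and apply the parameter-regularity condition (\ref{qparam-reg})$^q$ to $\gamma_v,\gamma_w$ at their endpoints $v,w$ to obtain $|a_v-a_w|\leq\theta(\ds{v,w})$ and hence $T'\geq T-\theta(\ds{v,w})$. The asserted inequality is vacuous when $T\leq\theta(\ds{v,w})$, so I may assume $T>\theta(\ds{v,w})$. Since $T'\leq\min\{a_v,a_w\}$, Lemma~\ref{lem:ray-same-param} applied to $\gamma_v,\gamma_w$ (both based at $O$) gives $\ds{\gamma_v(T'),\gamma_w(T')}\leq M$, where $M:=E(\ds{v,w}+\lambda\tilde\theta(\ds{v,w})+k_1)+D$, and Lemma~\ref{lem:maximizer} gives $\ds{\gamma_v(T),\eta(T)}\leq D_1+2k_1$.

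Next, for $c\in[0,1]$ I would apply the quasi-convexity (\ref{qrayconvex}) of Proposition~\ref{prop:qgeod-ray-convex} twice, evaluating both pairs at the common output parameter $cT'$. Applying it to $(\gamma_v,\gamma_w)$ at base parameter $T'$ gives $\ds{\gamma_v(cT'),\gamma_w(cT')}\leq cEM+D$; applying it to $(\gamma_v,\eta)$ at base parameter $T$ with scaling factor $c':=cT'/T\in[0,1]$ (so $c'T=cT'$) gives $\ds{\gamma_v(cT'),\eta(cT')}\leq c'E(D_1+2k_1)+D\leq cE(D_1+2k_1)+D$. Combining these by the triangle inequality yields $\ds{\gamma_w(cT'),\eta(cT')}\leq cE(M+D_1+2k_1)+2D$. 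Choosing $c=2/[E(M+D_1+2k_1)]$ (which lies in $(0,1)$ by the lower bounds on the constants) makes the right-hand side at most $D_1=2D+2$, so $(\gamma_w\mid\eta)\geq cT'\geq 2(T-\theta(\ds{v,w}))/[E(M+D_1+2k_1)]$. Unpacking $M$ one sees that the denominator equals $E(E(\ds{v,w}+\lambda\tilde\theta(\ds{v,w})+k_1)+D_1+D+2k_1)$, yielding the claimed estimate.

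The main obstacle is the case $T>a_w$: here $(\gamma_v\mid\eta)$ is witnessed beyond the end of $\gamma_w$, so a direct bound on $\ds{\gamma_v(T'),\eta(T')}$ at $T'=a_w$ via Lemma~\ref{lem:maximizer} is unavailable. A naive triangle-inequality bound on $\ds{\gamma_w(T'),\eta(T')}$ followed by a single convexity application to $(\gamma_w,\eta)$ would need a preliminary convexity step to control $\ds{\gamma_v(a_w),\eta(a_w)}$, inflating the denominator by an extra factor of $E$. The trick of applying convexity separately to $(\gamma_v,\gamma_w)$ and to $(\gamma_v,\eta)$ \emph{before} combining with the triangle inequality is what avoids this inflation and produces the precise denominator in the statement.
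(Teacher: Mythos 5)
Your argument is correct and is essentially the paper's own proof: both proofs bound $\ds{\gamma_w(\cdot),\eta(\cdot)}$ by the triangle inequality through $\gamma_v(\cdot)$, then apply the convexity inequality of Proposition~\ref{prop:qgeod-ray-convex} separately to the pairs $(\gamma_v,\eta)$ and $(\gamma_v,\gamma_w)$, using Lemma~\ref{lem:maximizer} and Lemma~\ref{lem:ray-same-param} respectively for the endpoint estimates. The paper sets $S$ to the target value and uses scaling factors $S/a$, $S/b$ with $b=\min\{a_v,a_w\}$, whereas you work with $T'=\min\{(\gamma_v\mid\eta),a_w\}$ and choose $c=2/[E(M+D_1+2k_1)]$; the resulting bound $(\gamma_w\mid\eta)\geq 2S$ is in fact marginally stronger than what is stated, but the computation is the same.
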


\begin{proof}
 We denote by $S$ the right hand side of (\ref{eq:2}).
 Set $a:= (\gamma_v\mid \eta)$ and $b:=\min\{a_v, a_w\}$.
 Since $b\geq a_v-\abs{a_v -a_w}\geq (\gamma_v\mid \eta)-\theta(\ds{v,w})$,
 we have $\min\{a,b\}\geq S$. Then
 \begin{align*}
  \ds{\eta(S),\gamma_w(S)}
  &\leq \ds{\eta(S),\gamma_v(S)} + \ds{\gamma_v(S),\gamma_w(S)}\\
  &\leq \frac{S}{a}E\ds{\eta(a),\gamma_v(a)} 
      + \frac{S}{b}E\ds{\gamma_v(b),\gamma_w(b)} +2D\\
  &\leq \frac{S}{a}E(D_1+2k_1) 
  + \frac{S}{b}E(E(\ds{v,w}+\lambda\tilde{\theta}(\ds{v,w})+k_1)+D) +2D\\
  &\leq 2+2D=D_1.
 \end{align*}
 This complete the proof.
 \end{proof}

\begin{corollary}
\label{cor:pertabation}
 Set $D_4:= 2E(E(1+\lambda\tilde{\theta}(1)+k_1)+D_1+D+2k_1)$.
 For $x\in \partial X$ and 
 $v,w\in X\setminus B_{\lambda 2\theta(0)+k}(O)$, 
 if $(v\mid x)\geq 2D_3\theta(1)$ and 
 $\ds{v,w}\leq 1$, then we have
 $(w\mid x)\geq (D_3D_4)^{-1}(v\mid x)$.
\end{corollary}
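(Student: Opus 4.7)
The plan is to reduce the corollary directly to Lemma~\ref{lem:pertarb} by passing between the intrinsic Gromov product on $(X\setminus B_{\lambda 2\theta(0)+k}(O))\cup \partial_O X$ and the product on representatives in $\bar{\calL}_O$, using Lemma~\ref{lem:chooseRep-full} as the bridge. The hypothesis $(v\mid x)\geq 2D_3\theta(1)$ is tailored precisely so that after taking the quotient by $D_3$ in the passage to representatives and then subtracting $\theta(1)$ coming from Lemma~\ref{lem:pertarb}, one still retains a definite fraction of $(v\mid x)$.

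First I would fix representatives: choose a quasi-geodesic ray $\eta\in \calL_O^\infty$ with $[\eta]=x$, and quasi-geodesic segments $\gamma_v,\gamma_w\in \calL_O$ ending at $v$ and $w$ respectively. By Lemma~\ref{lem:chooseRep-full}(3), the inequality $(\gamma_v\mid \eta)\geq (v\mid x)/D_3$ holds, so the hypothesis $(v\mid x)\geq 2D_3\theta(1)$ gives $(\gamma_v\mid \eta)\geq 2\theta(1)\geq 2\theta(\ds{v,w})$, since $\ds{v,w}\leq 1$ and $\theta$ is non-decreasing.

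Next I would apply Lemma~\ref{lem:pertarb} to $\gamma_v$, $\gamma_w$, $\eta$. Since $\ds{v,w}\leq 1$ and $\theta,\tilde{\theta}$ are non-decreasing, the denominator in (\ref{eq:2}) is bounded above by
\begin{align*}
 E(E(1+\lambda\tilde{\theta}(1)+k_1)+D_1+D+2k_1)=D_4/2,
\end{align*}
so Lemma~\ref{lem:pertarb} yields
\begin{align*}
 (\gamma_w\mid \eta)\geq \frac{2\bigl((\gamma_v\mid \eta)-\theta(1)\bigr)}{D_4}.
\end{align*}

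Finally, using $(\gamma_v\mid \eta)\geq (v\mid x)/D_3\geq 2\theta(1)$, the numerator satisfies $2((\gamma_v\mid \eta)-\theta(1))\geq (\gamma_v\mid \eta)\geq (v\mid x)/D_3$, so $(\gamma_w\mid \eta)\geq (v\mid x)/(D_3 D_4)$. Then the lower bound $(w\mid x)\geq (\gamma_w\mid \eta)$ from Lemma~\ref{lem:chooseRep-full}(3) concludes the proof. The only delicate point is the bookkeeping of constants; there is no genuine obstacle, but one must verify that the hypothesis $(v\mid x)\geq 2D_3\theta(1)$ is exactly what allows the subtraction of $\theta(1)$ in Lemma~\ref{lem:pertarb} to be absorbed into a factor of $1/2$.
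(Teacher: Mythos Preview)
Your proof is correct and follows exactly the route the paper intends: the corollary is stated immediately after Lemma~\ref{lem:pertarb} with no separate argument, and your derivation---passing to representatives via Lemma~\ref{lem:chooseRep-full}(3), bounding the denominator of (\ref{eq:2}) by $D_4/2$ using $\ds{v,w}\leq 1$, and absorbing the subtracted $\theta(1)$ into a factor of $1/2$ via the hypothesis $(v\mid x)\geq 2D_3\theta(1)$---is precisely the intended computation.
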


\subsection{Topology on \texorpdfstring{$X\cup \partial_O X$}{XudX}}
\label{sec:topon-X-dX}
For all positive integers $n\geq 1$, we set
\begin{align*}
 V_n:=&\,\{(x,y)\in \partial_O X\times \partial_O X: (x\mid y)> n\} \\
 &\cup \{(x,v)\in \partial_O X\times X: (v\mid x) > n\} \\
 &\cup \{(v,x)\in X\times \partial_O X: (v\mid x) > n\} \\
  &\cup \{(v,w)\in X\times X: (v\mid w) > n\}\\ 
 &\cup \{(v,w)\in X\times X: \ds{v,w}< n^{-1}\}. 
\end{align*}

For given $n\in \N$, we take $m\in \N_{>0}$ with
$m> D_{2}D_{3}D_{4}(\theta(1)+1)n$. 
Then by Corollary~\ref{cor:qrayultm} and Corollary~\ref{cor:pertabation}, 
for all $(p,q)\in V_m$ and $(q,r)\in V_m$, we have $(p,r)\in V_n$.
It follows that the family 
$\{V_n\}_{n\in \N}$ forms 
a fundamental system of entourages of a uniform structure on 
$X\cup \partial_O X$. 
(see \cite[Chapter II, \S 1.1]{Bourbaki-gentop1-4}), which is 
metrizable (see \cite[Chapter IX, \S 2.4]{Bourbaki-gentop5-10}).

We remark that for $x\in X\cup \partial_O X$, the family 
$\{V_n[x]\}_{n\in \N}$ is a fundamental system of neighbourhoods of $x$.
Here $V_n[x]$ is defined by
\begin{align*}
 V_n[x]:= \{y\in X\cup \partial_O X: (x,y)\in V_n\}.
\end{align*}
For $v\in X$, if $n>\lambda(\ds{O,v}+k_1)$, then the set 
$\{(v,y)\in X\times (X\cup \partial_O X): (v\mid y) > n\}$ is empty. 
Thus $V_n[x]=\{w\in X: \ds{v,w}<n^{-1}\}$. It follows that the inclusion
$X\hookrightarrow X\cup \partial_O X$ is a topological embedding.


\subsection{Construction of quasi-geodesic rays}

From now on, we always assume that the coarsely convex space $X$ is
{\itshape proper}, that is, all closed bounded subsets are compact.

For a sequence $\{v_n\}$ in $X$ which goes to infinity, we will construct a 
sequence $\{N_n\}_{n}$ in $\N$ and a sequence of quasi-geodesic segments 
$\gamma_{N_n}\in \calL_O$ connecting $O$ to $v_{N_n}$, which 
converges to a quasi-geodesic ray uniformly on every finite subsets of $\N$.
%


\begin{proposition}
\label{prop:Ascoli-Arzela}
 Let $\{v_n\}_{n}$ be a sequence in $X$ such that 
 $\lim_{n\to \infty}\ds{O,v_n}=\infty$.
 Then there exists a $(\lambda,k_1)$-quasi-geodesic ray 
 $\gamma\in \calL_O^\infty$ starting at $O$, and a 
 sequence $\{N_n\}$ in $\N$
 such that $\liminf_{n\to \infty}(v_{N_n}\mid [\gamma])=\infty$.
\end{proposition}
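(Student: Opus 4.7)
The plan is to prove this by a standard Arzelà--Ascoli type diagonal extraction, then combine with the Gromov product estimates from Lemma~\ref{lem:approx-seg-Gprod} and Lemma~\ref{lem:chooseRep-full}.

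First, I would use axiom~(\ref{qconn})$^q$ of Definition~\ref{def:cBNC} to pick, for each sufficiently large $n$, a quasi-geodesic segment $\gamma_n\in\calL$ with $\gamma_n(0)=O$ and $\gamma_n(a_n)=v_n$. Since $\gamma_n$ is a $(\lambda,k)$-quasi-geodesic, one has $a_n\geq (\ds{O,v_n}-k)/\lambda$, which tends to infinity; in particular, $a_n\geq 2\theta(0)$ eventually, so $\gamma_n\in\calL_O$. Also, for each fixed integer $R$, the point $\gamma_n(R)$ is defined once $a_n\geq R$ and satisfies $\ds{O,\gamma_n(R)}\leq \lambda R+k$.

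Next, by properness of $X$, the closed ball of radius $\lambda R+k$ around $O$ is compact for every $R$. A standard diagonal argument therefore produces a subsequence $\{\gamma_{N_n}\}_n$ such that $\gamma_{N_n}(R)$ converges for every $R\in\N$. Define $\gamma\colon\N\to X$ by $\gamma(R):=\lim_{n\to\infty}\gamma_{N_n}(R)$, extended to $\Rp$ by $\gamma(t):=\gamma(\lfloor t\rfloor)$. Since $\gamma_{N_n}(0)=O$ for all $n$ and $\gamma_{N_n}$ converges to $\gamma$ uniformly on $\{0,1,\ldots,l\}$ for every $l$ (pointwise convergence on a finite set is uniform), the sequence $\{(\gamma_{N_n},a_{N_n})\}_n$ is an $\calL$-approximate sequence for $\gamma$. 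Hence $\gamma\in\calL^\infty$ by definition, and $\gamma(0)=O$ gives $\gamma\in\calL_O^\infty$; by Lemma~\ref{lem:qg-k1} it is a $(\lambda,k_1)$-quasi-geodesic ray.

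Finally, to verify the Gromov product condition, Lemma~\ref{lem:approx-seg-Gprod} applied to the $\calL$-approximate sequence $\{(\gamma_{N_n},a_{N_n})\}_n$ yields
\begin{equation*}
\liminf_{n\to\infty}(\gamma\mid \gamma_{N_n})=\infty.
\end{equation*}
Since $\ds{O,v_{N_n}}\to\infty$, for all large $n$ we have $v_{N_n}\notin B_{\lambda 2\theta(0)+k}(O)$, so item~(3) of Lemma~\ref{lem:chooseRep-full} applies with $v=v_{N_n}=\gamma_{N_n}(a_{N_n})$ and $[\gamma]\in\partial_O X$, giving
\begin{equation*}
(v_{N_n}\mid [\gamma])\;\geq\;(\gamma_{N_n}\mid\gamma).
\end{equation*}
Combining these two inequalities yields $\liminf_{n\to\infty}(v_{N_n}\mid [\gamma])=\infty$, as required.

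The only nontrivial step is the diagonal extraction, which is routine given properness; the slightly delicate point is simply bookkeeping to ensure that each $\gamma_n$ eventually lies in $\calL_O$ (so that $\calL$-approximability and the Gromov product make sense) and that the quasi-geodesic rate bound $\ds{O,\gamma_n(R)}\leq\lambda R+k$ is uniform in $n$, which places all relevant values in a single compact set.
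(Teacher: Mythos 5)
Your proof is correct and follows essentially the same approach as the paper: both use properness of $X$ to confine the values $\gamma_n(R)$ to a compact ball, perform a diagonal extraction to get pointwise convergence on $\N$, define $\gamma$ as the pointwise limit extended by flooring, verify the $\calL$-approximate sequence condition, and then invoke Lemma~\ref{lem:qg-k1} and Lemma~\ref{lem:approx-seg-Gprod} to conclude. The only minor difference is that you explicitly cite Lemma~\ref{lem:chooseRep-full}(3) for the inequality $(v_{N_n}\mid [\gamma])\geq(\gamma_{N_n}\mid\gamma)$, which the paper obtains directly from the definition of the Gromov product as a supremum; both are fine, and your extra bookkeeping that $\gamma_n\in\calL_O$ once $a_n\geq 2\theta(0)$ is a welcome clarification that the paper leaves implicit.
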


\begin{proof}
 Let $\{v_n\}_{n}$ be a sequence in $X$ such that 
 $\lim_{n\to \infty}\ds{O,v_n}=\infty$.
 We choose $(\lambda,k)$-quasi-geodesic segments  $\gamma_n:[0,a_n]\to X$
 in $\calL_O$ such that $\gamma_n(a_n)=v_n$.
 
 By induction, for all $l\geq 0$, we will construct 
 a subsequence $\{\gamma[l;n]\}_{n}$ of $\{\gamma_n\}_{n}$, 
 and a sequence $\{v_i^\infty\}_{i}$ in $X$ with $v_0^\infty=O$ 
 satisfying the following.
 \begin{enumerate}
  \item $\gamma[0;n] = \gamma_n$ for all $n\geq 0$.
  \item For all $l\geq 0$, the sequence $\{\gamma[l+1;n]\}_{n}$ is a
	 subsequence of $\{\gamma[l;n]\}_{n}$.
  \item \label{item:unif-conv}
	For all $l\geq 0$, the sequence 
	$\{\gamma[l;n]\}_n$ converges uniformly on 
	$\{0,1,\dots,l\}$ to the 
	map $\{0,1,\dots,l\}\ni i\mapsto v_i^\infty\in X$.
 \end{enumerate}

 First, we define $\gamma[0,n]=\gamma_n$ for all $n\geq 0$.
 Now we suppose that we have constructed a sequence 
 $v_0^\infty, v_1^\infty,\dots, v_l^\infty$ and a family of subsequences 
 $\{\gamma[0;n]\}_n,\dots, \{\gamma[l;n]\}_n$ satisfying the above conditions.
 Since $\gamma[l;n]$ is a $(\lambda,k)$-quasi-geodesic segment 
 for all $n\geq 0$, we have 
 \begin{align*}
  \ds{O, \gamma[l,n](l+1)}\leq \lambda(l+1)+k
 \end{align*}
 for all $n\geq 0$. 
 By the properness of $X$, 
 there exits a sequence $\{N^l_n\}_{n}$ of integers 
 such that the sequence $\{\gamma[l,N^l_n](l+1)\}_{n}$ converges.
 Thus we set $v^\infty_{l+1}:= \lim_{n\to \infty} \gamma[l,N^l_n](l+1)$ and 
 set $\gamma[l+1,n]:= \gamma[l,N^l_n]$.
 Then the sequence of the maps 
 $\{\gamma[l+1;n]\}_n$ converges uniformly on $\{0,1,\dots,l+1\}$ to 
 the map $i\mapsto v_i^\infty$.

 Now we define a map $\gamma\colon \Rp\to X$ by
 $\gamma(t):=v^\infty_{\lfloor t\rfloor}$. We claim that 
 for all $l\in \N$, the sequence of maps $\{\gamma[n;n]\}_{n}$ converges 
 uniformly on $\{0,1,\dots,l\}$ to the map $\gamma$.
 We fix $l\in \N$. Let $m$ be any integer with $m>l$. 
 Since $\{\gamma[m;n]\}_{n}$ 
 is a subsequence of $\{\gamma[l;n]\}_{n}$, for all $a\in \N$, 
 there exists $k(l,m,a)\in \N$ such that $\gamma[m;a]=\gamma[l;k(l,m,a)]$.
 We remark that the map $a\mapsto k(l,m,a)$ is increasing. 
 By~(\ref{item:unif-conv}), for any $\epsilon>0$, there exists 
 $n(l)\in \N$ 
 such that for all $n>n(l)$ and $i\in \{0,1,\dots,l\}$, we have 
 $\ds{v^\infty_i,\gamma[l,n](i)}<\epsilon$. Then let $n$ be an integer with 
 $n>\max\{l,n(l)\}$. Since $k(l,n,n)>n(l)$, we have
 \begin{align*}
  \ds{v^\infty_i,\gamma[n;n](i)} 
 = \ds{v^\infty_i,\gamma[l;k(l,n,n)]}< \epsilon
 \end{align*}
 for all $i\in \{0,1,\dots,l\}$. This completes the proof of the claim.

 For $n\in \N$, let $N_n$ be the integer such that 
 $\gamma_{N_n}=\gamma[n;n]$. It follows that 
 $\{(\gamma_{N_n},a_{N_n})\}_{n}$ is an $\calL$-approximate sequence for 
 $\gamma$. Thus  $\gamma\in \calL_O^\infty$ and 
 by Lemma~\ref{lem:qg-k1}, $\gamma$ is a $(\lambda,k_1)$-quasi-geodesic ray.
 By the construction, we have $\gamma_{N_n}(a_{N_n}) = v_{N_n}$.
 Since $(v_{N_n}\mid [\gamma])\geq (\gamma_{N_n}\mid \gamma)$, by 
 Lemma~\ref{lem:approx-seg-Gprod}, we have 
 $\liminf_{n\to \infty}(v_{N_n}\mid [\gamma])=\infty$.
\end{proof}

\begin{proposition}
 \label{lem:dXcptmet} 
 For a proper coarsely convex space $X$, the uniform space 
 $X\cup \partial_{O} X$ is compact.
\end{proposition}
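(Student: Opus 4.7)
Since the uniformity on $X\cup\partial_O X$ is metrizable, it suffices to establish sequential compactness. Given an arbitrary sequence $\{p_n\}$ in $X\cup\partial_O X$, I would first pass to a subsequence so that one of three mutually exclusive cases occurs: (i) all $p_n$ lie in some bounded subset of $X$, (ii) all $p_n$ lie in $X$ with $\ds{O,p_n}\to\infty$, or (iii) all $p_n$ lie in $\partial_O X$. Case (i) is handled by the properness of $X$ together with the fact (noted at the end of Section~\ref{sec:topon-X-dX}) that $X\hookrightarrow X\cup\partial_O X$ is a topological embedding. Case (ii) is an immediate application of Proposition~\ref{prop:Ascoli-Arzela}, which produces a subsequence $\{p_{N_n}\}$ and a ray $\gamma\in\calL_O^\infty$ with $\liminf_n(p_{N_n}\mid[\gamma])=\infty$; this is precisely convergence $p_{N_n}\to[\gamma]$ relative to the fundamental system of neighbourhoods $\{V_n[[\gamma]]\}_{n\in\N}$.

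The real work is case (iii). For each $n$ I would pick a representative $\eta_n\in\calL_O^\infty$ of $p_n$ together with an $\calL$-approximate sequence $\{(\gamma_n^j,a_n^j)\}_j$ for $\eta_n$. Since $\eta_n$ is a $(\lambda,k_1)$-quasi-geodesic ray the parameter lengths satisfy $a_n^j\to\infty$, and Lemma~\ref{lem:approx-seg-Gprod} asserts $(\eta_n\mid\gamma_n^j)\to\infty$ as $j\to\infty$. Hence I can select $j(n)$ so large that $\gamma_n^{j(n)}\in\calL_O$, the endpoint $v_n:=\gamma_n^{j(n)}(a_n^{j(n)})$ lies outside $B_{\lambda2\theta(0)+k}(O)$ with $\ds{O,v_n}>n$, and $(\eta_n\mid\gamma_n^{j(n)})>n$. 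The auxiliary sequence $\{v_n\}\subset X$ then satisfies $\ds{O,v_n}\to\infty$, so applying Proposition~\ref{prop:Ascoli-Arzela} once more yields a subsequence $\{v_{N_n}\}$ together with some $\gamma\in\calL_O^\infty$ for which $(v_{N_n}\mid[\gamma])\to\infty$.

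It remains to argue $p_{N_n}\to[\gamma]$. By Lemma~\ref{lem:chooseRep-full}(3),
\begin{align*}
 (p_{N_n}\mid v_{N_n})\ \geq\ (\eta_{N_n}\mid\gamma_{N_n}^{j(N_n)})\ >\ N_n,
\end{align*}
so $(p_{N_n}\mid v_{N_n})\to\infty$. Applying Corollary~\ref{cor:qrayultm} to the triple $p_{N_n},\,v_{N_n},\,[\gamma]$, all of which lie in $(X\setminus B_{\lambda2\theta(0)+k}(O))\cup\partial_O X$, gives
\begin{align*}
 (p_{N_n}\mid[\gamma])\ \geq\ (D_2D_3)^{-1}\min\bigl\{(p_{N_n}\mid v_{N_n}),\,(v_{N_n}\mid[\gamma])\bigr\}\ \longrightarrow\ \infty,
\end{align*}
which is the desired convergence.

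The main obstacle is the bookkeeping in case (iii): one must choose $\gamma_n^{j(n)}$ carefully enough that it actually belongs to $\calL_O$, that its endpoint sits outside the exceptional ball (so that Lemma~\ref{lem:chooseRep-full} and Corollary~\ref{cor:qrayultm} are available), and that the three quantitative inequalities $(\eta_n\mid\gamma_n^{j(n)})>n$, $\ds{O,v_n}>n$, and $a_n^{j(n)}\geq 2\theta(0)$ hold simultaneously. Once these auxiliary points $v_n$ are in hand, the argument reduces cleanly to the Ascoli--Arzel\`a-type compactness already established in Proposition~\ref{prop:Ascoli-Arzela}.
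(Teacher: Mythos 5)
Your proof is correct and follows the same overall strategy as the paper: reduce to sequential compactness, split off the bounded and unbounded cases in $X$, and handle the boundary case by manufacturing auxiliary points $v_n\in X$ going to infinity with $(v_n\mid p_n)\to\infty$, then pulling back a limit via Proposition~\ref{prop:Ascoli-Arzela} and Corollary~\ref{cor:qrayultm}.

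The one place you diverge is in how the auxiliary points $v_n$ are chosen in case (iii). The paper sets $v_n:=\eta_n(n)$ — a point on the ray itself — and then picks a fresh segment $\eta'_n\in\calL_O$ from $O$ to $v_n$, estimating $\ds{\eta'_n(a_n),\eta_n(a_n)}$ by hand and feeding that through the convexity inequality to bound $(\eta'_n\mid\eta_n)$ from below. You instead take $v_n$ to be the endpoint of an approximating segment $\gamma_n^{j(n)}$ for $\eta_n$, with $j(n)$ chosen large; this lets you quote Lemma~\ref{lem:approx-seg-Gprod} directly rather than redoing a convexity estimate, at the cost of the bookkeeping you flag (ensuring $\gamma_n^{j(n)}\in\calL_O$, $v_n$ outside the exceptional ball, and all three quantities large simultaneously — which all hold for $j(n)$ large since $a_n^j\to\infty$). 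The estimates are both sound; your version is slightly shorter in that it reuses an existing lemma, while the paper's is self-contained at this step. Your extra case (i) for bounded sequences in $X$ is handled in the paper by a quick ``we may assume $\ds{O,p_n}\to\infty$''; your explicit treatment via properness and the topological embedding $X\hookrightarrow X\cup\partial_O X$ is fine.
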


\begin{proof}
 Since $X\cup \partial_{O} X$ is metrizable, it is enough to show that
 every infinite sequence of points has a convergent
 subsequence.  Let $\{p_n\}_n$ be a sequence in 
 $X\cup \partial_{O} X$.  By choosing
 a subsequence, we can assume either of the following holds.
 \begin{enumerate}[(a)]
 \item \label{seqInX}
       $p_n\in X$ for all $n$.
 \item \label{seqIndX}
       $p_n\in \partial_O X$ for all $n$.
 \end{enumerate}
 First we consider the case (\ref{seqInX}). 
 We can suppose $\lim_n\ds{O,p_n}=\infty$.
 By Proposition~\ref{prop:Ascoli-Arzela}, there exists a quasi-geodesic ray
 $\gamma\in \calL_O^\infty$, and a sequence $\{N_n\}$ in $\N$ such that 
 $\liminf(p_{N_n}\mid [\gamma]) = \infty$. This shows that the subsequence
 $\{p_{N_n}\}$ converges to $[\gamma]$. 

 Next we consider the case (\ref{seqIndX}). 
 We choose quasi-geodesic rays $\eta_n\in \calL_O^\infty$ such that
 $p_n=[\eta_n]$. For each $n\in\N$, we set $v_n:=\eta_n(n)$.

 Let $\eta'_n\in \calL_O$ be a quasi-geodesic segment such that 
 $\eta'_n\colon [0,a_n]\to X$ and $\eta'_n(a_n)=v_n$. 
 Since 
 $\ds{\eta'_n(a_n),\eta_n(a_n)}= \ds{\eta_n(n),\eta_n(a_n)}
  \leq \lambda(\tilde{\theta}(0))+k_1$, 
 we have 
 \begin{align*}
  (v_n\mid p_n)\geq (\eta'_n\mid \eta_n) 
  \geq \frac{a_n}{E(\max\{\lambda(\tilde{\theta}(0))+k_1,1\})}
  \geq 
  \frac{\ds{O,v_n}-k}{\lambda E(\lambda(\tilde{\theta}(0))+k_1+1)} 
  \to \infty.
 \end{align*}

 By Proposition~\ref{prop:Ascoli-Arzela}, there exists a quasi-geodesic ray
 $\gamma\in \calL_O^\infty$, and a sequence 
 $\{N_n\}$ in $\N$ such that $\liminf(v_{N_n}\mid [\gamma]) = \infty$.
 By Lemma~\ref{cor:qrayultm}, we have
 \begin{align*}
  (p_{N_n}\mid [\gamma])
  \geq (D_2D_3)^{-1}\min\{(v_{N_n}\mid p_{N_n}), (v_{N_n}\mid [\gamma])\}
  \to \infty.
 \end{align*}
 This shows that the subsequence $\{p_{N_n}\}$ converges to $[\gamma]$.
\end{proof}


\subsection{Metric on the ideal boundary}
Let $\epsilon> 0$
 be a positive number. For $x,y \in \partial_O X$, we define
 $\qds (x,y):= (x\mid y)^{-\epsilon}$.
%
It immediately follows that
\begin{enumerate}
 \item for $x,y \in \partial_O X$, we have $\qds(x,y) = 0$ if and
       only if $x=y$,
 \item for $x,y \in \partial_O X$, we have $\qds(x,y) =\qds(y,x)$,
 \item for $x,y,z\in \partial_O X$, we have 
       \begin{align*}
	\qds(x,z)\leq 
       (D_2D_3)^{\epsilon} \max\{\qds(x,y), \qds(y,z)\}.
       \end{align*}
\end{enumerate}
Therefore, $\qds$ is a {\itshape quasi-metric}. There exists a
standard method, so called the {\itshape chain construction}, to obtain a
metric which is equivalent to $\qds$. For detail, see \cite{MR2268484}.

\begin{proposition}
\label{prop:q-met2met}
 Let $\epsilon$ be a positive number such that
 $(D_2D_3)^{\epsilon}\leq 2$. 
 Then there exists a metric $d_\epsilon$ on $\partial X$ such that
 $1/(2K)\qds(x,y)\leq d_\epsilon(x,y)\leq \qds(x,y)$ 
 for all $x,y\in \partial_O X$, where $K:= (D_2D_3)^{\epsilon}$.
\end{proposition}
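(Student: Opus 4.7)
The plan is to apply the standard chain construction to the quasi-metric $\qds$: set
\begin{align*}
d_\epsilon(x,y) := \inf \left\{ \sum_{i=0}^{n-1} \qds(x_i, x_{i+1}) \;:\; n\geq 1,\; x_0 = x,\; x_n = y,\; x_i \in \partial_O X \right\}.
\end{align*}
Symmetry of $d_\epsilon$ is inherited from $\qds$, and the triangle inequality is immediate by concatenating two chains. The trivial one-link chain $x_0 = x$, $x_1 = y$ gives the upper bound $d_\epsilon(x,y) \leq \qds(x,y)$, and positive definiteness will follow once the lower bound is established.

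The heart of the proof is the lower bound $d_\epsilon(x, y) \geq \frac{1}{2K} \qds(x, y)$, which I will deduce from the chain estimate
\begin{align*}
\qds(x_0, x_n) \leq 2K \sum_{i=0}^{n-1} \qds(x_i, x_{i+1})
\end{align*}
for every chain $x_0, x_1, \dots, x_n$ in $\partial_O X$. I prove this by induction on $n$. The case $n = 1$ is trivial since $K \geq 1$. For $n \geq 2$, set $S = \sum_{i=0}^{n-1} \qds(x_i, x_{i+1})$ and pick the largest $p$ with $\sum_{i=0}^{p-1} \qds(x_i, x_{i+1}) \leq S/2$; by maximality, $\sum_{i=p+1}^{n-1} \qds(x_i, x_{i+1}) \leq S/2$ as well. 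The inductive hypothesis applied to the two sub-chains yields $\qds(x_0, x_p) \leq KS$ and $\qds(x_{p+1}, x_n) \leq KS$ (with the convention that these quantities are zero when the sub-chain is empty), while $\qds(x_p, x_{p+1}) \leq S$. Iterating the quasi-ultrametric inequality
\begin{align*}
\qds(x, z) \leq K \max\{\qds(x, y),\, \qds(y, z)\},
\end{align*}
already recorded preceding the proposition, then bounds $\qds(x_0, x_n)$ in terms of $S$.

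The main obstacle is to close the induction with the target constant $2K$; here the hypothesis $K = (D_2 D_3)^\epsilon \leq 2$ enters in an essential way, because each iteration of the ultrametric inequality contributes a factor of $K$ that must be absorbed into the base constant without blowing it up. Once the chain estimate is in hand, $d_\epsilon$ separates points: for distinct $x, y \in \partial_O X$ one has $(x \mid y) < \infty$ and hence $\qds(x, y) > 0$, so $d_\epsilon(x, y) > 0$. This is the classical Frink-type argument for producing a genuine metric from a quasi-metric; a detailed exposition is in the cited reference \cite{MR2268484}.
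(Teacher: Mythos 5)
The paper gives no proof of this proposition beyond the remark that ``there exists a standard method, so called the chain construction'' together with the citation to \cite{MR2268484}. Your proposal takes the same route — define $d_\epsilon$ as the infimum of $\sum\qds(x_i,x_{i+1})$ over chains, get the upper bound from the one-link chain, and cite \cite{MR2268484} for the Frink-type chain estimate — so on the level of strategy you and the paper agree entirely.

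However, the inductive sketch you give for the chain estimate does not actually close with the constant $2K$. With the split at the half-sum point you correctly obtain three pieces bounded by $\qds(x_0,x_p)\leq KS$, $\qds(x_p,x_{p+1})\leq S$, $\qds(x_{p+1},x_n)\leq KS$, but then applying the $K$-quasi-ultrametric inequality twice, once to merge $x_0,x_p,x_{p+1}$ and once more to merge in $x_n$, yields only
\begin{align*}
\qds(x_0,x_n)\leq K\max\bigl\{\,K\max\{KS,\,S\},\;KS\,\bigr\}=K^3 S,
\end{align*}
and $K^3 S\leq 2KS$ requires $K^2\leq 2$, not merely $K\leq 2$. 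So the phrase ``each iteration of the ultrametric inequality contributes a factor of $K$ that must be absorbed'' papers over the fact that your decomposition produces one factor of $K$ too many; the hypothesis $K\leq 2$ alone does not rescue it. The argument in \cite{MR2268484} uses a more careful bookkeeping than the three-piece split you describe, and that is exactly the step you would need to supply to make the sketch a proof. Since you do explicitly fall back on the cited reference for the full details — which is all the paper itself does — the proposal is acceptable at the paper's own level of detail, but you should either reproduce the correct induction from \cite{MR2268484} or refrain from suggesting that the naive two-fold application of the ultrametric inequality, combined only with $K\leq 2$, suffices.
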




\subsection{Replacement of the base point}
\label{sec:choice-base-point}
Here we will construct a map 
$\Phi_{O}\colon \partial X \to \partial_O X$.

Let $\gamma\in \calL^\infty$ be a quasi-geodesic ray.  Set
$v_n:= \gamma(n)$ for $n\in \N$.  By Proposition~\ref{prop:Ascoli-Arzela},
there exists a quasi-geodesic ray $\gamma_O\in \calL_O^\infty$ 
and a sequence $\{N_n\}_{n}$ in $\N$ such that 
\begin{align*}
 \liminf(v_{N_n}\mid [\gamma_O])=\infty.
\end{align*}

We define a map 
$\Phi_{O}\colon \partial X \to \partial_{O} X$ 
by $\Phi_{O}([\gamma]):=[\gamma_O]$
for $\gamma\in \calL^\infty$. 
By the following lemma, the map $\Phi_{O}$ is well-defined.

\begin{lemma}
\label{lem:chg-base-pt}
Let $O'\in X$ be a point. 
For a quasi-geodesic ray $\gamma\in\calL_{O'}^\infty$ starting at $O'$, 
we have 
 \begin{align*}
  \sup\{\ds{\gamma(t),\gamma_O(t)}:t\in \Rp\}
  \leq C_{OO'},
 \end{align*}
where $C_{OO'}:= E(\lambda(\tilde{\theta}(\ds{O,O'}))+\ds{O,O'}+D_1+3k_1)+2D$.
\end{lemma}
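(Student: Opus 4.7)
The plan is to compare $\gamma$ and $\gamma_O$ through the sequence $\{(\alpha_n,\tilde a_n)\}_n$ of good quasi-geodesic segments in $\calL_O$ produced by the proof of Proposition~\ref{prop:Ascoli-Arzela}: each $\alpha_n$ runs from $O$ to $v_{N_n}=\gamma(N_n)$, and $\alpha_n(i)\to \gamma_O(i)$ for every $i\in\N$. Since $\alpha_n$ and $\gamma$ reach the same point $v_{N_n}$, though possibly at different parameter values $\tilde a_n$ and $N_n$, the convexity axiom on $\bar{\calL}$ (Proposition~\ref{prop:qgeod-ray-convex}(\ref{qrayconvex})) will yield an estimate depending only on $\ds{O,O'}$.

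First, Proposition~\ref{prop:qgeod-ray-convex}(\ref{qrayparam-reg}) applied at the common endpoint gives $|\tilde a_n-N_n|\leq\tilde\theta(\ds{O,O'})$, since the endpoint distance vanishes. Then convexity with $t=\tilde a_n$, $s=N_n$ collapses to
\begin{align*}
\ds{\alpha_n(c\tilde a_n),\gamma(cN_n)}\leq (1-c)E\ds{O,O'}+D, \qquad c\in[0,1].
\end{align*}

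For each $t\in[0,\tilde a_n]$, setting $c=t/\tilde a_n$ yields $\ds{\alpha_n(t),\gamma(tN_n/\tilde a_n)}\leq E\ds{O,O'}+D$. Because $|tN_n/\tilde a_n-t|\leq|\tilde a_n-N_n|\leq\tilde\theta(\ds{O,O'})$ and $\gamma$ is a $(\lambda,k_1)$-quasi-geodesic ray (Lemma~\ref{lem:qg-k1}), shifting the second argument gives
\begin{align*}
\ds{\alpha_n(t),\gamma(t)}\leq E\ds{O,O'}+\lambda\tilde\theta(\ds{O,O'})+k_1+D.
\end{align*}

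Finally, for each $l\in\N$ pick $n$ large enough that $\tilde a_n\geq l$; then $\alpha_n(l)\to\gamma_O(l)$, and passing to the limit transfers the same bound to $\ds{\gamma_O(l),\gamma(l)}$. The bound extends to arbitrary $t\in\Rp$ since $\gamma(t)=\gamma(\lfloor t\rfloor)$ and $\gamma_O(t)=\gamma_O(\lfloor t\rfloor)$. Expanding $C_{OO'}=E(\lambda\tilde\theta(\ds{O,O'})+\ds{O,O'}+D_1+3k_1)+2D$ and using $E\geq 1$ together with $D_1=2D+2\geq 1$ shows the computed bound is dominated by $C_{OO'}$. The main obstacle is the parameter mismatch between $\alpha_n$ and $\gamma$ near the common endpoint $v_{N_n}$; it is resolved by combining parameter regularity with the quasi-geodesic shift estimate for $\gamma$.
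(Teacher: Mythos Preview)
Your argument is correct, but it takes a different route from the paper's. The paper works only from the \emph{conclusion} of Proposition~\ref{prop:Ascoli-Arzela}, namely that $\liminf_n (v_{N_n}\mid[\gamma_O])=\infty$: given $R$, it picks $N$ with $(v_N\mid[\gamma_O])\geq RD_3$, chooses any $\gamma_N\in\calL_O$ ending at $v_N$, and then splits the estimate into two pieces, $\ds{\gamma_N(t),\gamma_O(t)}\leq E(D_1+2k_1)+D$ via the Gromov product (Lemmas~\ref{lem:maximizer} and~\ref{lem:chooseRep-full}) and $\ds{\gamma(t),\gamma_N(t)}\leq E(\lambda\tilde\theta(\ds{O,O'})+k_1+\ds{O,O'})+D$ via convexity. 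You instead unpack the \emph{proof} of Proposition~\ref{prop:Ascoli-Arzela}, using that the approximating segments $\alpha_n$ themselves end at $v_{N_n}$ and converge pointwise to $\gamma_O$ on integers; this lets you apply convexity once between $\alpha_n$ and $\gamma$ and then pass to the limit. Your approach is more direct, avoids the Gromov product lemmas entirely, and yields a sharper constant; the paper's approach is more modular in that it treats Proposition~\ref{prop:Ascoli-Arzela} as a black box and would work for any segment in $\calL_O$ ending at $v_N$, not just the one appearing in the approximate sequence.
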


\begin{proof}
Let $\gamma\in \calL_{O'}^\infty$ be a quasi-geodesic ray.  Set
$v_n:=\gamma(n)$ for $n\in \N$.  By
Proposition~\ref{prop:Ascoli-Arzela}, for any $R>0$ there exists $N$
such that $(v_N\mid [\gamma_O])\geq RD_3$.  Let $\gamma_N\in \calL_O$ be a
quasi-geodesic segment such that $\gamma_N\colon [0,a_N]\to X$ and
$\gamma_N(a_N)=v_N$.  Set $a:= (\gamma_N\mid \gamma_O)$. Then 
$a\geq R$ by Lemma~\ref{lem:chooseRep-full}, and 
$\ds{\gamma_N(a),\gamma_O(a)}\leq D_1 + 2k_1$ 
by Lemma~\ref{lem:maximizer}. Thus it
follows that for all $t\in [0,R]$, we have

\begin{align}
\label{eq:gN-gO}
 \ds{\gamma_N(t),\gamma_O(t)}
 \leq   E(\ds{\gamma_N(a),\gamma(a)}) + D \leq  E(D_1+2k_1)+D.
\end{align}

Since $\gamma_N(a_N)=v_{N}=\gamma(N)$, for all $t\in [0,a_N]$,
\begin{align*}
 \ds{\gamma(t),\gamma_N(t)}
 &\leq  E(\ds{\gamma(a_N),\gamma_N(a_N)}+ \ds{O,O'}) + D\\
 &\leq E(\ds{\gamma(N),\gamma_N(a_N)}+ \lambda\abs{N-a_N}+k_1+ \ds{O,O'})+D\\
 &\leq E(\lambda(\tilde{\theta}(\ds{O,O'}))+k_1+ \ds{O,O'})+D.
\end{align*}
Combined with (\ref{eq:gN-gO}), we have 
$\ds{\gamma(t),\gamma_O(t)}\leq C_{OO'}$ for all $t\in [0,R]$.
Since $R$ is arbitrary, we complete the proof of the Lemma.
\end{proof}

\begin{corollary}
\label{cor:BOO-bij}
 The map $\Phi_{O}\colon \partial X \to \partial_{O} X$ 
 is bijective.
\end{corollary}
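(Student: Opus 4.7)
The plan is to prove both surjectivity and injectivity of $\Phi_O$ as essentially direct consequences of Lemma~\ref{lem:chg-base-pt} together with the triangle inequality. The key observation is that Lemma~\ref{lem:chg-base-pt} provides uniform control on $\ds{\gamma(t),\gamma_O(t)}$ whenever $\gamma\in\calL_{O'}^\infty$ starts at any point $O'$: the discrepancy between $\gamma$ and its ``based'' modification $\gamma_O$ is bounded by a constant $C_{OO'}$ depending only on $\ds{O,O'}$. This is precisely the content needed to pass between $\partial X$ and $\partial_O X$.

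For surjectivity, given $[\eta]\in\partial_O X$ with representative $\eta\in\calL_O^\infty$, I view $\eta$ as an element of $\calL^\infty$ and form $\Phi_O([\eta])=[\eta_O]$, where $\eta_O\in\calL_O^\infty$ is produced by the construction preceding the statement. Applying Lemma~\ref{lem:chg-base-pt} with $O'=O$ gives $\sup_t\ds{\eta(t),\eta_O(t)}\leq C_{OO}<\infty$, so $\eta_O\sim\eta$ and $[\eta_O]=[\eta]$ in $\partial_O X$. Thus $\Phi_O([\eta])=[\eta]$, and every element of $\partial_O X$ is hit. This argument also confirms that $\Phi_O$ is independent of the auxiliary choices made in the Ascoli--Arzel\`a extraction of Proposition~\ref{prop:Ascoli-Arzela}.

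For injectivity, suppose $\Phi_O([\gamma_1])=\Phi_O([\gamma_2])$ for representatives $\gamma_i\in\calL_{O_i}^\infty$ with $O_i:=\gamma_i(0)$. By assumption $[(\gamma_1)_O]=[(\gamma_2)_O]$ in $\partial_O X$, so there is a finite $M$ with $\sup_t\ds{(\gamma_1)_O(t),(\gamma_2)_O(t)}\leq M$. Two applications of Lemma~\ref{lem:chg-base-pt} yield $\sup_t\ds{\gamma_i(t),(\gamma_i)_O(t)}\leq C_{OO_i}$ for $i=1,2$, and the triangle inequality gives
\begin{align*}
\ds{\gamma_1(t),\gamma_2(t)}\leq C_{OO_1}+M+C_{OO_2}
\end{align*}
for all $t\in\Rp$. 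Hence $\gamma_1\sim\gamma_2$ and $[\gamma_1]=[\gamma_2]$ in $\partial X$.

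I do not anticipate a serious obstacle: all of the substantive analytic work is already contained in Lemma~\ref{lem:chg-base-pt}, which transports quasi-geodesic rays between different base points with only a bounded error. The present corollary is the formal expression that this base-point dependence is invisible at the level of the boundary, so the proof reduces to recording the two triangle-inequality estimates above.
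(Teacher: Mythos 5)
Your proof is correct and follows the paper's (implicit) route: the paper leaves the corollary as a direct consequence of Lemma~\ref{lem:chg-base-pt}, which is exactly the control you exploit. Your injectivity estimate via two applications of Lemma~\ref{lem:chg-base-pt} and the triangle inequality is the same estimate that establishes well-definedness of $\Phi_O$, and your surjectivity argument (that $\Phi_O$ is a left inverse to the natural inclusion $\partial_O X\hookrightarrow\partial X$, via the $O'=O$ case of the lemma) is the expected companion step.
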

We equip $\partial X$ with the topology such that the map $\Phi_{O}$ 
is a homeomorphism. This topology does not depend on the choice of $O$.
Indeed, by following lemmae, we can show that
the composite $\Phi_{OO'}:=\Phi_{O}\circ \Phi_{O'}^{-1}$ is continuous.
\begin{lemma}
\label{lem:chg-base-pt-prod}
 Set $D_{OO'}:=E(E(D_1+2k_1)+D + 2C_{OO'})$.
 For $\gamma,\eta\in \calL_{O'}^\infty$, we have
 \begin{align*}
  (\gamma_O\mid \eta_O)_{O}
  \geq D_{OO'}^{-1}(\gamma\mid \eta)_{O'}.
 \end{align*}
\end{lemma}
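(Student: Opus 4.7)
The plan is to leverage the uniform closeness $\ds{\gamma(t),\gamma_O(t)}\leq C_{OO'}$ and $\ds{\eta(t),\eta_O(t)}\leq C_{OO'}$ supplied by Lemma~\ref{lem:chg-base-pt}, and then run the convexity inequality of Proposition~\ref{prop:qgeod-ray-convex}(\ref{qrayconvex}) on the pair $\gamma_O,\eta_O\in\calL_O^\infty$, exploiting that both rays emanate from the common base point $O$ so the $(1-c)$-term drops out.

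First, I would set $a:=(\gamma\mid\eta)_{O'}$ and treat the case $a<\infty$. Lemma~\ref{lem:maximizer} (applied with base point $O'$) yields $\ds{\gamma(a),\eta(a)}\leq D_1+2k_1$. Combining this with the two $C_{OO'}$-bounds via the triangle inequality gives
\begin{align*}
\ds{\gamma_O(a),\eta_O(a)} \leq D_1+2k_1+2C_{OO'}.
\end{align*}
Now set $c:=D_{OO'}^{-1}$ and apply Proposition~\ref{prop:qgeod-ray-convex}(\ref{qrayconvex}) to $\gamma_O,\eta_O$; since $\gamma_O(0)=\eta_O(0)=O$, the term $(1-c)E\ds{\gamma_O(0),\eta_O(0)}$ vanishes and we obtain
\begin{align*}
\ds{\gamma_O(ca),\eta_O(ca)} \leq cE(D_1+2k_1+2C_{OO'})+D.
\end{align*}
The definition $D_{OO'}=E(E(D_1+2k_1)+D+2C_{OO'})$ was chosen precisely so that $cE(D_1+2k_1+2C_{OO'})\leq 1$ (using $E\geq 1$), hence $\ds{\gamma_O(ca),\eta_O(ca)}\leq D+1<D_1$. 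By the definition of the Gromov product at $O$ this gives $(\gamma_O\mid\eta_O)_O\geq ca=D_{OO'}^{-1}(\gamma\mid\eta)_{O'}$, as desired.

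For the case $a=\infty$, I would observe that for every $R>0$ the definition of the supremum furnishes some $a_R\geq R$ with $\ds{\gamma(a_R),\eta(a_R)}\leq D_1\leq D_1+2k_1$. The identical argument with $a_R$ in place of $a$ then shows $(\gamma_O\mid\eta_O)_O\geq D_{OO'}^{-1}R$, and letting $R\to\infty$ forces $(\gamma_O\mid\eta_O)_O=\infty$, which is the inequality we want.

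There is no real obstacle here beyond book-keeping of constants; the only substantive input is that convex combinations commute (up to the additive error $D$) with distance comparison, and this is exactly what Proposition~\ref{prop:qgeod-ray-convex}(\ref{qrayconvex}) provides. The role of the outer factor of $E$ in $D_{OO'}$ is merely to guarantee the resulting summand is at most $1$ so that adding $D$ keeps us safely under the threshold $D_1=2D+2$.
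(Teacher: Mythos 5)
Your proof is correct and follows essentially the same route as the paper: combine the uniform $C_{OO'}$-closeness from Lemma~\ref{lem:chg-base-pt} with the Gromov-product bound at $O'$, then scale down by $D_{OO'}^{-1}$ via the convexity inequality. The only cosmetic difference is that you evaluate at $a=(\gamma\mid\eta)_{O'}$ using Lemma~\ref{lem:maximizer} and treat $a=\infty$ separately, whereas the paper works with an arbitrary $t<(\gamma\mid\eta)_{O'}$ and thereby avoids the case split.
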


\begin{proof}
 Let $t>0$ be any positive number with $(\gamma\mid \eta)_{O'}> t$. 
 Then we have
 $\ds{\gamma(t),\eta(t)}\leq E(D_1+2k_1) + D$. 
 By Lemma~\ref{lem:chg-base-pt}, we have
 \begin{align*}
  \ds{\gamma_O(t),\eta_O(t)}
  &\leq \ds{\gamma_O(t),\gamma(t)}  
      +\ds{\gamma(t),\eta(t)} 
      +\ds{\eta(t),\eta_O(t)}\\
  &\leq E(D_1+2k_1)+D + 2C_{OO'}.
 \end{align*}
 Then 
 $\ds{\gamma_O(D_{OO'}^{-1}t),\eta_O(D_{OO'}^{-1}t)}\leq D+1\leq D_1$. 
 Thus we have 
 $(\gamma_O\mid \eta_O)_{O}\geq D_{OO'}^{-1}t$. Since $t$ is any
 positive number with $(\gamma\mid \eta)_{O'}\geq t$, we have
 $(\gamma_O\mid \eta_O)_{O}\geq D_{OO'}^{-1}(\gamma\mid \eta)_{O'}$.
\end{proof}

By the same argument in the proof of the above lemma, we have the following.
\begin{lemma}
\label{lem:chg-base-pt-prod2}
 For $\gamma\in \calL_{O'}^\infty$ and $v\in X$, we have
 \begin{align*}
  (\gamma_O\mid v)_{O} \geq D_3^{-1}D_{OO'}^{-1}(\gamma\mid v)_{O'}.
 \end{align*}
\end{lemma}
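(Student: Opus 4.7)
The strategy is to follow the proof of Lemma~\ref{lem:chg-base-pt-prod} with a segment in place of one of the two rays. I would first pick any quasi-geodesic segment $\eta' \in \calL_{O'}$ with $\eta'(b') = v$ and any $\eta_O \in \calL_O$ with $\eta_O(b_O) = v$. By Lemma~\ref{lem:chooseRep-full}~(3), one has $(\gamma \mid \eta')_{O'} \geq D_3^{-1}(\gamma \mid v)_{O'}$, so it suffices to establish
\[
(\gamma_O \mid \eta_O)_O \geq D_{OO'}^{-1}(\gamma \mid \eta')_{O'}.
\]

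Fix $t < (\gamma \mid \eta')_{O'}$. As in the proof of Lemma~\ref{lem:chg-base-pt-prod}, Proposition~\ref{prop:qgeod-ray-convex}~(\ref{qrayconvex}) combined with Lemma~\ref{lem:maximizer} and the fact that $\gamma(0) = \eta'(0) = O'$ yields $\ds{\gamma(t), \eta'(t)} \leq E(D_1 + 2k_1) + D$. Lemma~\ref{lem:chg-base-pt} provides the bound $\ds{\gamma(t), \gamma_O(t)} \leq C_{OO'}$. The missing piece is a segment analogue of Lemma~\ref{lem:chg-base-pt} comparing $\eta'$ and $\eta_O$: applying Proposition~\ref{prop:qgeod-ray-convex}~(\ref{qrayconvex}) at the terminal parameters $(b', b_O)$ (with $\eta'(b') = \eta_O(b_O) = v$) gives $\ds{\eta'(cb'), \eta_O(cb_O)} \leq (1 - c) E \ds{O, O'} + D$ for $c \in [0, 1]$; reparametrizing via $u = cb'$ and correcting by $|b' - b_O| \leq \tilde\theta(\ds{O,O'})$ from Proposition~\ref{prop:qgeod-ray-convex}~(\ref{qrayparam-reg}) produces a bound $\ds{\eta'(t), \eta_O(t)} \leq C_{OO'}$ valid on the common parameter range. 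The triangle inequality then yields $\ds{\gamma_O(t), \eta_O(t)} \leq E(D_1 + 2k_1) + D + 2C_{OO'}$.

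Since both $\gamma_O$ and $\eta_O$ start at $O$, applying Proposition~\ref{prop:qgeod-ray-convex}~(\ref{qrayconvex}) with $c = D_{OO'}^{-1}$ rescales the estimate to $\ds{\gamma_O(ct), \eta_O(ct)} \leq D + 1 \leq D_1$, so $(\gamma_O \mid \eta_O)_O \geq D_{OO'}^{-1} t$. Letting $t$ tend to $(\gamma \mid \eta')_{O'}$ and chaining with $(\gamma_O \mid v)_O \geq (\gamma_O \mid \eta_O)_O$ completes the argument.

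The main technical obstacle is establishing the segment analogue of Lemma~\ref{lem:chg-base-pt}: unlike the two-ray situation, the segments $\eta'$ and $\eta_O$ share the terminal point $v$ but with generically different terminal parameters $b', b_O$, so coarse convexity does not line them up at equal times. Reconciling this mismatch via the parameter regularity $\tilde\theta(\ds{O,O'})$ and absorbing the error into $C_{OO'}$ is the one place where genuinely new work is needed; after that, the computation is mechanical triangulation and rescaling, which is presumably why the authors simply write ``by the same argument''.
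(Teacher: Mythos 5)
Your proposal is essentially the paper's intended argument: the authors simply write ``By the same argument in the proof of the above lemma,'' and your reconstruction — reduce via Lemma~\ref{lem:chooseRep-full}~(3) to a statement about a fixed pair of segments/rays, chain the four estimates from coarse convexity, Lemma~\ref{lem:maximizer}, Lemma~\ref{lem:chg-base-pt} and a segment analogue thereof, then rescale by $c = D_{OO'}^{-1}$ — is exactly the right translation of the two-ray proof of Lemma~\ref{lem:chg-base-pt-prod} to the ray-versus-segment setting. You also correctly identify the one genuinely new ingredient, namely the segment analogue of Lemma~\ref{lem:chg-base-pt}, and the way you produce it (coarse convexity at the terminal parameters, which coincide at $v$, followed by reparametrization with an error controlled by (\ref{qparam-reg})$^q$) is the natural one; a quick check shows the resulting bound is indeed absorbed by $C_{OO'}$.

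One point you gesture at but do not fully close is the parameter-range issue. You restrict the bound $\ds{\eta'(t),\eta_O(t)}\leq C_{OO'}$ to ``the common parameter range'' $[0,\min\{b',b_O\}]$, but then let $t$ tend to $(\gamma\mid\eta')_{O'}$, which is only guaranteed to be $\leq b'$, not $\leq b_O$. Since $\abs{b'-b_O}\leq\theta(\ds{O,O'})$ by (\ref{qparam-reg})$^q$ applied at the shared endpoint $v$, the range you actually control only gives $(\gamma_O\mid\eta_O)_O\geq D_{OO'}^{-1}\bigl((\gamma\mid\eta')_{O'}-\theta(\ds{O,O'})\bigr)$, an additive rather than multiplicative loss. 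This is harmless for the intended application (continuity of $\Phi_{OO'}$), and it can be repaired with a further constant adjustment if one insists on the multiplicative form, but as written the final ``letting $t$ tend to $(\gamma\mid\eta')_{O'}$'' step overstates what the common-range estimate delivers.
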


\begin{corollary}
 The map 
$\Phi_{OO'}\colon X\cup \partial_{O'} X \to X\cup \partial_{O}X$ 
 defined as an extension by the identity on $X$ of the map
 $\Phi_{OO'}\colon \partial_{O'} X \to \partial_{O}X$ is a homeomorphism.
\end{corollary}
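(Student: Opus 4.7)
The strategy has three parts: I first check that $\Phi_{OO'}$ is a bijection, then verify that it is continuous, and finally invoke compactness to upgrade continuity to a homeomorphism. Bijectivity is immediate, since on the subset $X$ the map is the identity, while on boundary points it is the composition $\Phi_O\circ\Phi_{O'}^{-1}$ of two bijections by Corollary~\ref{cor:BOO-bij}.

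For continuity, I work pointwise using the fundamental system of neighbourhoods $\{V_n[p]\}$ described in Section~\ref{sec:topon-X-dX}. At a point $p\in X$, the paragraph following the definition of $V_n$ shows that once $n$ is large enough (depending on $\ds{O',p}$) the set $V_n[p]$ based at $O'$ is simply the metric ball $\{w\in X:\ds{p,w}<1/n\}$, and the same holds based at $O$; since $\Phi_{OO'}$ is the identity on $X$, continuity at such $p$ is immediate. At a boundary point $p\in\partial_{O'}X$, I choose a representative $\gamma\in\calL_{O'}^\infty$ with $[\gamma]=p$, so that by definition $\Phi_{OO'}(p)=[\gamma_O]$. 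Given $n$, I must find $m$ such that $(p\mid q)_{O'}>m$ forces $(\Phi_{OO'}(p)\mid\Phi_{OO'}(q))_O>n$ for every $q\in X\cup\partial_{O'}X$. For $q=[\eta]\in\partial_{O'}X$, I chain three estimates: Lemma~\ref{lem:chooseRep-full} to descend from the boundary product $(p\mid q)_{O'}$ to the representative product $(\gamma\mid\eta)_{O'}$ at cost $D_3$, Lemma~\ref{lem:chg-base-pt-prod} to swap base points at cost $D_{OO'}$, and Lemma~\ref{lem:chooseRep-full} once more to pass up to $([\gamma_O]\mid[\eta_O])_O$. For $q\in X$, with $\Phi_{OO'}(q)=q$, I use Lemma~\ref{lem:chg-base-pt-prod2} in the middle step together with the segment-ray variant of Lemma~\ref{lem:chooseRep-full} on the outside. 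In both cases this yields a bound of the shape $(\Phi_{OO'}(p)\mid\Phi_{OO'}(q))_O\geq (D_3^2 D_{OO'})^{-1}(p\mid q)_{O'}$, so any integer $m>D_3^2 D_{OO'}n$ suffices.

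Finally, since $X\cup\partial_{O'}X$ is compact by Proposition~\ref{lem:dXcptmet} (applied with base point $O'$) and $X\cup\partial_O X$ is Hausdorff (being metrizable, as noted in Section~\ref{sec:topon-X-dX}), the continuous bijection $\Phi_{OO'}$ is automatically a homeomorphism; alternatively, applying the same argument with $O$ and $O'$ swapped shows that $\Phi_{O'O}=\Phi_{OO'}^{-1}$ is continuous. The only mildly delicate point is tracking constants through the three-step chain and verifying that the mixed case $p\in\partial_{O'}X$, $q\in X$ is genuinely covered by Lemma~\ref{lem:chg-base-pt-prod2} as stated — both of these are routine given the estimates already in hand.
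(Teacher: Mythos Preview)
Your proof is correct and follows essentially the same approach as the paper. The paper's own proof is a three-line argument invoking Corollary~\ref{cor:BOO-bij} for bijectivity, Lemmas~\ref{lem:chg-base-pt-prod} and~\ref{lem:chg-base-pt-prod2} for continuity, and then compactness plus metrizability to conclude; you have simply unpacked the continuity step (distinguishing interior from boundary points, and tracking the constants through Lemma~\ref{lem:chooseRep-full}) more explicitly than the paper does.
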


\begin{proof}
 By Corollary~\ref{cor:BOO-bij},
$\Phi_{OO'}$ is a bijection between the compact metrizable spaces.
By Lemma~\ref{lem:chg-base-pt-prod} and
Lemma~\ref{lem:chg-base-pt-prod2}, the map $\Phi_{OO'}$ is continuous,
therefore it is a homeomorphism.
\end{proof}

\begin{corollary}
\label{cor:action-on-dX} Let $G$ be a group and let $X$ be a
 $(\lambda,k,E,C,\theta,\calL)$-coarsely convex space. We suppose that
 $G$ acts on $X$ properly and cocompactly by isometries, and $\calL$ is
 invariant under the action of $G$. Then the action of $G$ extends
 continuously to the ideal boundary $\partial X$.
\end{corollary}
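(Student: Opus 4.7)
The plan is to first define the set-theoretic action of $G$ on $\partial X$, and then verify continuity using the uniform structure defined via the Gromov product.

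For the set-theoretic part, given $g \in G$ and $\gamma \in \calL^\infty$ with an $\calL$-approximate sequence $\{(\gamma_n, a_n)\}$, the $G$-invariance of $\calL$ together with the isometric action shows that $\{(g\gamma_n, a_n)\}$ is an $\calL$-approximate sequence for $g\gamma$, hence $g\gamma \in \calL^\infty$. Moreover, $g$ being an isometry implies that $\gamma \sim \eta$ forces $g\gamma \sim g\eta$, so $[\gamma] \mapsto [g\gamma]$ is a well-defined $G$-action on $\partial X$ that agrees with the given action on $X$.

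To prove continuity, I would transport the problem to $\partial_O X$ via the homeomorphism $\Phi_O$. Under this identification, the action of $g$ corresponds to the map $\partial_O X \to \partial_O X$ sending $[\eta]$ (represented by $\eta \in \calL_O^\infty$) to $[(g\eta)_O]$, where $(g\eta)_O \in \calL_O^\infty$ is the re-based ray produced by Proposition~\ref{prop:Ascoli-Arzela} applied to the translated ray $g\eta \in \calL_{gO}^\infty$. Since the $G$-action is by isometries, for $\gamma, \eta \in \calL_O^\infty$ one has $(g\gamma \mid g\eta)_{gO} = (\gamma \mid \eta)_O$, and Lemma~\ref{lem:chg-base-pt-prod} applied with $O' = gO$ gives
\[
((g\gamma)_O \mid (g\eta)_O)_O \geq D_{O,gO}^{-1}\,(\gamma \mid \eta)_O.
\]
An analogous estimate for the Gromov product of a ray with a point $v \in X$ follows from Lemma~\ref{lem:chg-base-pt-prod2}, while the purely metric part of the entourages $V_n$ is preserved outright by the isometry $g$. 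Combined with Lemma~\ref{lem:chooseRep-full}, these bounds imply that for every $m$ there exists $n$ such that $(p,q) \in V_n$ implies $(gp,gq) \in V_m$, so $g$ acts continuously on $X \cup \partial_O X$, and in particular on $\partial X$.

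The main point requiring care is the base-point bookkeeping: the image $g\eta$ of a ray in $\calL_O^\infty$ naturally lies in $\calL_{gO}^\infty$, not in $\calL_O^\infty$, so one has to invoke the change-of-basepoint machinery from Subsection~\ref{sec:choice-base-point} to compare Gromov products taken with respect to the distinct base points $O$ and $gO$. I note that the properness and cocompactness of the $G$-action play no role in this argument; only the $G$-invariance of $\calL$ and the isometric action are actually used.
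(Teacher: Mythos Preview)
Your argument is correct and follows exactly the route the paper intends: the corollary is stated without proof immediately after the result that $\Phi_{OO'}\colon X\cup\partial_{O'}X\to X\cup\partial_O X$ is a homeomorphism, and your proof is precisely the unpacking of that fact --- the action of $g$ on $X\cup\partial_O X$ factors as the isometry-induced identification $X\cup\partial_O X\to X\cup\partial_{gO}X$ (which preserves Gromov products on the nose) followed by $\Phi_{O,gO}$. Your observation that properness and cocompactness of the $G$-action are not used is correct; only properness of the space $X$ (needed for the Arzel\`a--Ascoli step behind $\Phi_O$), the isometric action, and $G$-invariance of $\calL$ enter.
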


\subsection{Examples}
Let $X$ be a proper geodesic Gromov hyperbolic space and let $\calL$ be
a set of all geodesic segments. Then $X$ is a coarsely convex
space with the system of good geodesic segments $\calL$. 
The Gromov boundary of
$X$ is homeomorphic to the ideal boundary $\partial X$. In fact, the
Gromov boundary is identical to $\partial X$ as a set. It is easy to
show that usual topology of the Gromov boundary coincides with the one
given in Section~\ref{sec:topon-X-dX}.

Next we consider the ideal boundary of the Euclidean plane
$\R^2$. Let $\calL_{\R^2}$ be a set of all geodesic segments in
$\R^2$. Then $\R^2$ is a coarsely convex space with the system of good
geodesic segments $\calL_{\R^2}$. We consider the visual
compactification of $\R^2$. Namely, we define an embedding
$\varphi\colon \R^2\to D^2=\{v\in \R^2: \norm{v}\leq 1\}$ by
$\varphi(v)= v/(1+\norm{v})$ for $v\in \R^2$. We can
identify the ideal boundary $\partial \R^2$ with $S^1\subset D^2$ as a
set.  For $x\in S^1\subset D^2$, we define a geodesic ray $\eta_x\colon
\R_{\geq 0}\to \R^2$ by $\eta_x(t)=tx$. Now for $x,y\in S^1$, let
$\theta$ be the angle between $\eta_x$ and $\eta_y$. Then we have
\begin{align*}
 \sin \frac{\theta}{2} = \frac{D}{2(x\mid y)}
\end{align*}
where $D$ is a constant defined in 
Proposition~\ref{prop:qgeod-ray-convex}~(\ref{qrayconvex}).
This shows that the topology on $\partial \R^2$ coincides with that of $S^1$.

\begin{figure}[htbp]
   \centering
     \includegraphics[page=1]{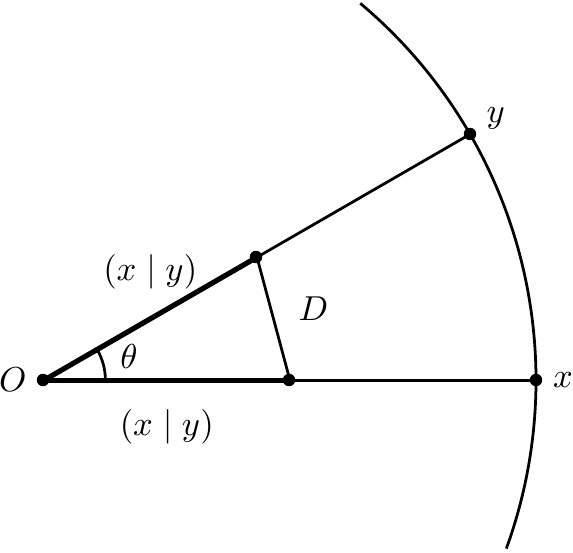}
  \caption{The ideal boundary of $\R^2$}
  \label{fig:tree}
\end{figure}

As mentioned in Proposition~\ref{prop:productspace}, a direct product of
coarsely convex spaces is also coarsely convex. The ideal boundary of
the product space is given by the join of the ideal
boundaries of the factors.

Here we recall the definition of the join.
Let $W_1$ and $W_2$ be topological spaces.
Then we consider the following equivalence relation $\sim$
on a space $W_1\times [0,1]\times W_2$.
If $(w_1,s,w_2), (w'_1,s',w'_2)$ satisfy one of three conditions
\begin{enumerate}
 \item $w_1=w'_1, s=s', w_2=w'_2$,
 \item $w_1=w'_1, s=s'=0$,
 \item $s=s'=1, w_2=w'_2$,
\end{enumerate}
then they are equivalent, that is, $(w_1,s,w_2)\sim (w'_1,s',w'_2)$.
We call the quotient space 
$W_1\star W_2:=W_1\times [0,1]\times W_2/\sim$
the join of $W_1$ and $W_2$, and denote by $(1-s)w_1\oplus sw_2$
the element whose representative is $(w_1,s,w_2)$.


\begin{proposition}
 Let metric spaces $(X,d_X)$ and $(Y,d_Y)$ be coarsely convex with
 systems of good quasi-geodesic segments $\calL^X$ and
 $\calL^Y$, respectively.  Take base points $O_X\in X$ and $O_Y\in Y$.
 For the product with the $\ell_1$-metric $(X\times Y, d_{X\times Y})$,
 which is coarsely convex with $\calL^{X\times Y}$ defined as in
 Proposition~\ref{prop:productspace}, the boundary $\partial_{(O_X,O_Y)}
 (X\times Y)$ is homeomorphic to the join
 of $\partial_{O_X} X$ and $\partial_{O_Y}Y$.
\end{proposition}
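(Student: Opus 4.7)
The plan is to construct a bijection
\[
\Psi\colon \partial_{O_X} X \star \partial_{O_Y} Y \to \partial_{(O_X, O_Y)}(X \times Y)
\]
and verify that it is continuous; since the target is compact metrizable by Proposition~\ref{lem:dXcptmet} applied to the coarsely convex product (Proposition~\ref{prop:productspace}) and the source is compact metrizable as the join of two such spaces, $\Psi$ will automatically be a homeomorphism. Given $(1-s)[\gamma]\oplus s[\eta]$ with $s\in[0,1]$ and representative rays $\gamma\in\calL^\infty$ for $X$ starting at $O_X$ and $\eta\in\calL^\infty$ for $Y$ starting at $O_Y$, I would fix $\calL^X$- and $\calL^Y$-approximate sequences $(\gamma_k, a_k)$ and $(\eta_j, b_j)$. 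For each $n\geq 1$ pick indices $k_n, j_n\geq n$ so that the slope $\sigma_n := b_{j_n}/(a_{k_n}+b_{j_n})$ satisfies $|\sigma_n - s|<1/n$; this is possible because $a_k,b_j\to\infty$ make the attainable ratios dense in $[0,1]$ asymptotically. The product segments
\[
P_n := \tfrac{a_{k_n}}{a_{k_n}+b_{j_n}}\,\gamma_{k_n}\,\oplus\, \tfrac{b_{j_n}}{a_{k_n}+b_{j_n}}\,\eta_{j_n}
\]
lie in $\calL^{X\times Y}$ by Proposition~\ref{prop:productspace}, have lengths $c_n := a_{k_n}+b_{j_n}\to\infty$, and connect $(O_X, O_Y)$ to endpoints going to infinity, so the Ascoli--Arzel\`a argument of Proposition~\ref{prop:Ascoli-Arzela} yields a subsequential limit $\zeta\in\calL^\infty$ starting at $(O_X,O_Y)$. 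I set $\Psi((1-s)[\gamma]\oplus s[\eta]) := [\zeta]$.

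For well-definedness, observe that the $X$-component of $P_n$ at integer $i$ equals $\gamma_{k_n}((1-\sigma_n) i)$, which by the $(\lambda,k)$-quasi-geodesic property lies within $\lambda+k$ of $\gamma_{k_n}(\lfloor (1-s)i\rfloor)$ once $n$ is large, and the latter converges to $\gamma((1-s)i)$ by the approximation property; symmetrically for $Y$. So any two candidate rays obtained from different choices of approximate sequences, indices $k_n, j_n$, or Ascoli--Arzel\`a subsequences have their integer values within a uniform constant of $(\gamma((1-s)\cdot), \eta(s\cdot))$ in the $\ell_1$-metric, hence define the same element of $\partial_{(O_X, O_Y)}(X\times Y)$. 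Changes of representatives within $[\gamma]$ and $[\eta]$ are handled by Lemma~\ref{lem:HdistD} applied in each factor. At the poles $s\in\{0,1\}$, the vanishing coordinate of $\zeta$ stays in a bounded neighborhood of the corresponding base point regardless of the representative chosen for the other factor, matching the join identifications $(w_1, 0, w_2)\sim(w_1, 0, w_2')$ and $(w_1, 1, w_2)\sim(w_1', 1, w_2)$.

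Bijectivity follows by running the construction in reverse. Given $[\zeta]\in\partial_{(O_X, O_Y)}(X\times Y)$ with approximate sequence $\zeta_n = \tfrac{a_n}{c_n}\gamma_n\oplus\tfrac{b_n}{c_n}\eta_n$, pass to a subsequence so that $s_n := b_n/c_n\to s\in[0,1]$. For $s\in(0,1)$ both $a_n, b_n\to\infty$ and Proposition~\ref{prop:Ascoli-Arzela} applied factorwise produces limits $\gamma,\eta$ with $\Psi((1-s)[\gamma]\oplus s[\eta])=[\zeta]$, while the pole cases $s\in\{0,1\}$ arise when $a_n$ or $b_n$ stays bounded and are absorbed by the join identifications. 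For injectivity, if two image rays are equivalent then their $\ell_1$-distance is uniformly bounded, which forces both coordinate distances to be bounded; since $\ds{\gamma((1-s)t), O_X}$ grows linearly with rate essentially $(1-s)/\lambda$, comparing at large $t$ forces the two slopes to agree, after which $[\gamma]=[\gamma']$ and $[\eta]=[\eta']$ follow modulo the pole identifications. Continuity of $\Psi$ is then verified by estimating the Gromov product of two image rays at $(O_X, O_Y)$ componentwise using Proposition~\ref{prop:qgeod-ray-convex}~(\ref{qrayconvex}): convergence in the join at an interior point forces simultaneous convergence of both boundary factors together with convergence of the slope, and this pushes the Gromov product to infinity; at the poles only the non-trivial boundary coordinate contributes, and the argument still goes through.

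The main obstacle is the well-definedness estimate in the second paragraph: one must carefully track how the small discrepancy $|\sigma_n - s|$ and the floor-function conventions in the definition of $\calL^\infty$ propagate through the evaluation $\gamma_{k_n}((1-\sigma_n)i)$ to produce a bound depending only on $\lambda, k$ and the quasi-geodesic parameter-regularity. A secondary subtlety is reconciling the pole cases $s\in\{0,1\}$, where one of $a_n, b_n$ may stay bounded, with the join identifications; this requires separate casework throughout the surjectivity, injectivity, and continuity arguments.
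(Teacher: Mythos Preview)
Your approach is correct in outline but takes a more circuitous route than the paper. The paper's key simplification is to observe that passing to the prefix-closures of $\calL^X$ and $\calL^Y$ does not change the families of approximatable rays, hence does not change the boundaries; once $\calL^X$ and $\calL^Y$ are prefix-closed, for any target slope $s\in[0,1]$ one can simply truncate one of the approximating segments $\gamma_i,\eta_i$ so that $b_i/(a_i+b_i)=s$ \emph{exactly}. This lets the paper write the limiting ray $t\mapsto(\gamma((1-s)t),\eta(st))$ down directly as an element of $(\calL^{X\times Y})^\infty$, bypassing both the Ascoli--Arzel\`a step and the slope discrepancy $|\sigma_n-s|$ that you flag as the main obstacle; in effect, prefix-closedness makes that obstacle vanish. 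Your route trades this single observation for more bookkeeping, but arrives at the same map, since you verify that your Ascoli--Arzel\`a limit lies within a uniform constant of the explicit formula. Conversely, the paper is terse on why the induced map $\bar\iota$ is actually a homeomorphism, while you sketch the bijectivity and continuity arguments in more detail; those details are needed to complete either proof.
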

\begin{proof}
 Put $Z=X\times Y$ and $O_Z=(O_X,O_Y)$.  Let $(\calL^X_{O_X})^\infty$,
 $(\calL^Y_{O_Y})^\infty$ and $(\calL^Z_{O_Z})^\infty$ be the families
 of all approximatable rays of $\calL^X$, $\calL^Y$ and
 $\calL^Z$ from base points, respectively.  Note that these families
 does not change if we replace $\calL^X$, $\calL^Y$ and
 $\calL^Z$ with their prefix-closures, respectively. We assume that
 $\calL^X$, $\calL^Y$ and $\calL^Z$ are prefix-closed without loss of
 generality.  
 Take the quotient maps
 \begin{align*}
  \pi_X\colon (\calL^X_{O_X})^\infty\to  \partial_{O_X} X, \quad 
  \pi_Y\colon (\calL^Y_{O_Y})^\infty\to \partial_{O_Y} Y,\quad
  \pi_Z\colon (\calL^Z_{O_Z})^\infty\to \partial_{O_Z} Z.
 \end{align*}
 We have the natural map between joins 
 \begin{align*}
  \pi_X\star \pi_Y\colon
  (\calL^X_{O_X})^\infty\star(\calL^Y_{O_Y})^\infty
  &\to 
  \partial_{O_X} X\star\partial_{O_Y}Y \\
  (1-s)\gamma\oplus s\eta
  &\mapsto (1-s)[\gamma]\oplus s[\eta].
 \end{align*}

 Now we consider the map
 \begin{align*}
  \iota\colon (\calL^X_{O_X})^\infty\star(\calL^Y_{O_Y})^\infty
  \to
  (\calL^Z_{O_Z})^\infty
 \end{align*}
 defined as $\iota((1-s)\gamma\oplus s\eta)(t)=(\gamma((1-s)t),\eta(st))$ 
 for any $t\in\Rp$. 
 This is well-defined. Indeed for any 
 $(1-s)\gamma\oplus s\eta$, we can take $\gamma_i\in\calL^X$ 
 with domain
 $[0,a_i]$ and $\eta_i\in\calL^Y$ with domain $[0,b_i]$
 such that they approximate $\gamma$ and $\eta$, respectively, and
 satisfy $s=\frac{b_i}{a_i+b_i}$ by noting prefix-closedness of
 $\calL^X$ and $\calL^Y$.  Then $\iota((1-s)\gamma\oplus s\eta)$ is
 approximated by $(1-s)\gamma_i\oplus s\eta_i\in \calL^Z$. 

 Now we can see that $\iota$ induces the map
 \begin{align*}
   \bar{\iota}\colon \partial_{O_X}
   X\star\partial_{O_Y}Y
  &\to\partial_{O_Z}Z \\
  (1-s)[\gamma]\oplus s[\eta]
  &\mapsto [(1-s)\gamma\oplus s\eta],
 \end{align*}
 which satisfies
 $\pi_Z\circ\iota=\bar{\iota}\circ (\pi_X\star\pi_Y)$ and is a
 homeomorphism.
\end{proof}


\section{Main result}
\label{sec:main-result} The aim of this section is to give a proof of
Theorem~\ref{thm:cCATHAD}. An outline of the proof is parallel to that
for the case of Gromov hyperbolic spaces by Higson and Roe~\cite{MR1388312}.
However, we need to modify the arguments in order to overcome
some difficulties which do not appear in the case of Gromov
hyperbolic spaces.

Here we summarize the strategy used in~\cite{MR1388312}.
Let $Y$ be a proper geodesic Gromov hyperbolic space.
Higson and Roe defined an {\itshape ``exponential map''} 
$\exp \colon \calO \partial_O Y\to Y$. They first constructed 
a coarse homotopy between the open cone $\calO \partial_O Y$
and the image of the exponential map $\exp(\calO \partial_O Y)$. 
Then they constructed a coarse homotopy between $\exp(\calO \partial_O Y)$
and $Y$. Here they used the fact that the image $\exp(\calO \partial_O Y)$
is quasi-convex and the nearest point projection onto
$\exp(\calO \partial_O Y)$ is bornologous. 
In fact, $\exp(\calO \partial_O Y)$ is a 
{\itshape ``coarsely deformation retract''} of $Y$.

Now let $X$ be a proper coarsely convex space. We introduce a {\itshape
modified} exponential map $\expe \colon \OdoX\to X$ 
by replacing the parameter $t$ by $t^{\frac{1}{\epsilon}}$.
We first construct a coarse
homotopy between $\OdoX$ and $\expe(\OdoX)$.

Then we construct a coarse homotopy between $\exp(\calO \partial_O X)$
and $X$. Here we need quite different arguments, since the image
$\expe(\OdoX)$ is not quasi-convex, and the nearest points projection is
not bornologous, in general.  In Section~\ref{sec:c-homot-betw-exp-X},
we construct the coarse homotopy using a contraction toward the
base point with an appropriate proportion, which is not necessarily a
coarsely deformation retract.

\subsection{Setting}
Let $X$ be a proper 
$(\lambda,k,E,C,\theta,\calL)$-coarsely convex space.
We fix a positive number $0 < \epsilon < 1$ such that
$(D_2D_3)^{\epsilon}\leq 2$, where $D_2$ and $D_3$ are constants
defined in Section~\ref{sec:ideal-boundary}. 
We set $K:=(D_2D_3)^{\epsilon}$.
Let $d_\epsilon$ be a metric given by
Proposition~\ref{prop:q-met2met}.  
We remark that the diameter of $(\partial_O X,d_\epsilon)$ is less than 
or equal to 1 since $(\gamma\mid \eta)\geq 1$ for all 
$\gamma,\eta\in \calL_O^\infty$. Thus the induced metric 
$d_{\OdoX}$ on the open cone $\OdoX$ is well-defined.


\subsection{Exponential map}
\label{sec:exponential-map}
We define an exponential map $\expe \colon \OdoX \to X$ as
follows.  For each $x\in \partial_O X$, we choose a quasi-geodesic ray
$\eta_x\in \calL_O^\infty$ with $x=[\eta_x]$. 
Then for $t\in \Rp$, we define 
$\expe(tx):=\eta_x(t^{\frac{1}{\epsilon}})$, 
We remark that $\expe$ is proper,
however, not necessarily bornologous.  Therefore, we need to modify
$\expe$ by combining with a {\itshape radial contraction}.

\begin{definition}
 Let $r\colon \Rp \to \Rp$ be a Lipschitz map with
 Lipschitz constant less than or equal to 1 such that $r(0)=0$ and
 $r(t)\to \infty$ when $t\to \infty$. The radial contraction associated
 to $r$ is a map $\radcon\colon \OdoX\to \OdoX$ defined by
 $\radcon(tx):= r(t)x$ for $tx\in \OdoX$.
\end{definition}
We remark that any radial contraction is coarsely homotopic to the identity.
\begin{definition}
\label{def:pseudocontinous}
 Let $V$ be a topological space and $M$ be a metric space. A map 
 $f\colon V\to M$ is {\itshape pseudocontinuous} if there exists $r>0$
 such that for any  $x\in V$, the inverse image
 $f^{-1}(B_r(f(x)))$ is a neighborhood of $x$.
 Here $B_r(f(x))$ is the closed ball of radius $r$ centered at $f(x)$ 
\end{definition}

\begin{proposition}[Higson-Roe\cite{MR1388312}]
\label{prop:psuedconti-contraction}
 Let $f\colon \OdoX\to X$ be a proper pseudocontinuous map.
 There exists a radial contraction $\radcon\colon \OdoX\to \OdoX$ 
 such that the composite $f\circ \radcon$ is a coarse map.
\end{proposition}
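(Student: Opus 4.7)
The plan is to separate the two conditions in ``coarse'' --- properness and bornologicity --- and handle them independently. Properness is almost automatic: because $r\colon \Rp\to\Rp$ is $1$-Lipschitz with $r(0)=0$, we have $r(t)\leq t$, hence $\radcon(B(k))\subseteq B(k)$ for the bounded region $B(k):=\{tx\in\OdoX: t\leq k\}$. Combined with properness of $f$, this immediately gives that $(f\circ\radcon)^{-1}$ of any bounded subset of $X$ is bounded, so $f\circ\radcon$ is proper for any radial contraction.

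For bornologicity, I would exploit pseudocontinuity on each compact region and then choose $r$ sufficiently slowly growing. Compactness of $B(n)$ follows from compactness of $\partial_O X$, which we have by Proposition~\ref{lem:dXcptmet}. Pseudocontinuity at each point of $B(n)$ yields an open neighborhood on which $f$ oscillates by at most $2r_0$; extracting a finite subcover and applying the Lebesgue number lemma produces a modulus $\delta_n>0$ such that any pair of points in $B(n)$ within distance $\delta_n$ have $f$-images within $2r_0$. Without loss of generality $\delta_n\leq 1$ is non-increasing in $n$.

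I would then build $r$ as a non-decreasing, piecewise linear, $1$-Lipschitz function with $r(0)=0$ whose slope on the interval $\{t : r(t)\in [n,n+1]\}$ equals $\delta_n$; concretely, with breakpoints $t_n$ satisfying $r(t_n)=n$ and $t_{n+1}-t_n=1/\delta_n$, so $r(t)\to\infty$ while $r$ locally grows at the rate $\delta_{r(t)}$. With such $r$, given $p=tx$ and $q=sy$ with $d_{\OdoX}(p,q)\leq R$, the images $\radcon(p),\radcon(q)$ lie in a common $B(k)$ and are joined by a cone path whose length is universally controlled by $R$ (the tangential contribution $\min(r(t),r(s))d_\epsilon(x,y)$ is tamed by $r(u)\leq u$, while the radial contribution is controlled by the slope bound). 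Chaining along this path in steps of length at most $\delta_k$ bounds $d(f\radcon(p),f\radcon(q))$ by $2r_0$ times the number of steps, and the defining property of $r$ forces this count to depend only on $R$.

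The main obstacle, and where the construction is delicate, is the tension between the two asymptotic demands on $r$: it must diverge so that $\radcon$ is coarsely homotopic to the identity on $\OdoX$, yet grow slowly enough that $\radcon$ compresses both the radial and tangential directions of the cone into the oscillation-control scale $\delta$ of $f$ uniformly as the underlying radii tend to infinity. The piecewise linear construction threads this needle by allocating precisely the $\OdoX$-length $1/\delta_n$ needed to advance one level of oscillation control, ensuring both divergence of $r$ and compatibility with the chaining estimate.
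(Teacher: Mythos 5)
The paper does not prove this lemma itself, but refers to Higson--Roe and Willett, so I assess your argument directly. Your overall structure --- compactness of $B(n)$ plus pseudocontinuity yields a modulus $\delta_n$, then build $r$ slow enough that $\radcon$ compresses unit-scale distances down to scale $\delta_n$, and finally chain --- is the right one. The gap is in the concrete choice of $r$: it controls the radial direction of the cone but not the tangential one.

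Under $\radcon$ the tangential distance between $tx$ and $ty$ is multiplied by $r(t)/t$: you need $r(t)/t\lesssim\delta_n$ when $r(t)\approx n$, so that a unit tangential step at height $t$ contracts to scale $\delta_n$, where pseudocontinuity is usable. Your construction makes the \emph{derivative} $r'(t)\approx\delta_{r(t)}$, but that controls only the radial contribution $|r(t)-r(s)|\le\delta_n|t-s|$, not the cumulative ratio $r(t)/t$. Concretely $t_n=\sum_{k<n}1/\delta_k$, and since $\delta_k$ is non-increasing this sum is at most $n/\delta_{n-1}\le n/\delta_n$; the quantity $r(t_n)/(t_n\delta_n)$ is unbounded in general (take $\delta_k=2^{-k}$: then $t_n\approx 2^n$ while $n/\delta_n=n2^n$, so $r(t_n)/(t_n\delta_n)\approx n$). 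In your chaining step the tangential part of the image path is therefore tamed only to $\le R$, and dividing by $\delta_k$ gives a step count that grows without bound in $k$, so ``the defining property of $r$ forces this count to depend only on $R$'' is false as written.

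The repair: make the breakpoint gaps grow linearly, e.g.\ $t_{n+1}-t_n=(n+1)/\delta''_{n+1}$ with $\delta''_n:=\delta_{n+2}/3$, so that $t_n\ge n/\delta''_n$ and hence $r(t)/t\le 2\delta''_n$ for $t\in[t_n,t_{n+1}]$. Then for $d_{\OdoX}(p,q)\le 1$ one gets $d_{\OdoX}(\radcon(p),\radcon(q))\le 3\delta''_n\le\delta_{n+2}$, which is exactly the modulus for the ball containing the images; for general $R$, chain $p$ to $q$ in $\OdoX$ in $\lceil R\rceil$ unit steps \emph{before} applying $\radcon$ rather than subdividing the image path. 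With these changes your argument closes.
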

For proof, see \cite[Lemma 4.2]{MR1388312} or \cite[4.7.5]{WillettThesis}.

\begin{lemma}
\label{lem:pseudoconti}
 The map $\expe\colon \OdoX \to X$ is pseudocontinuous.
\end{lemma}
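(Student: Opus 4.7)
The plan is to exhibit a fixed radius $r > 0$, depending only on the coarse-convexity parameters and on $\epsilon$, such that every $tx \in \OdoX$ has a neighborhood mapped by $\expe$ into the closed $r$-ball around $\expe(tx)$. The guiding estimate is the triangle inequality
\[
\ds{\expe(tx),\expe(sy)} \leq \ds{\eta_x(t^{1/\epsilon}),\eta_x(s^{1/\epsilon})} + \ds{\eta_x(s^{1/\epsilon}),\eta_y(s^{1/\epsilon})}.
\]
The first term is at most $\lambda + k_1$ as soon as $\absline{t^{1/\epsilon}-s^{1/\epsilon}}\leq 1$, by the quasi-geodesic property (Lemma~\ref{lem:qg-k1}). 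This forces the neighborhood to depend on $t$ (because $u\mapsto u^{1/\epsilon}$ is merely continuous, not Lipschitz), but it does not affect the bound $r$.

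For the second term I will use that whenever $u \leq (\eta_x \mid \eta_y)$, the convexity estimate of Proposition~\ref{prop:qgeod-ray-convex}~(\ref{qrayconvex}), applied at $c = u/a$ with $a$ approaching $(\eta_x\mid\eta_y)$ and using $\eta_x(0)=\eta_y(0)=O$ together with the boundary estimate of Lemma~\ref{lem:maximizer}, yields the uniform bound $\ds{\eta_x(u),\eta_y(u)}\leq E(D_1+2k_1)+D$, independent of $x$, $y$ and $u$.

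The crux is then to translate closeness in $\OdoX$ into the inequality $s^{1/\epsilon} \leq (\eta_x \mid \eta_y)$. This is precisely where the $1/\epsilon$-reparametrization pays off. From Proposition~\ref{prop:q-met2met} we have $d_\epsilon(x,y) \geq (2K)^{-1}(x\mid y)^{-\epsilon}$, and Lemma~\ref{lem:chooseRep-full} gives $(\eta_x\mid\eta_y)\geq D_3^{-1}(x\mid y)$. Combining these, the desired inequality reduces to $s\cdot d_\epsilon(x,y)\leq (2KD_3^\epsilon)^{-1}$. Since $d_{\OdoX}(tx,sy)\geq \min\{t,s\}\,d_\epsilon(x,y)$ and $s/\min\{t,s\}\leq 2$ once $\absline{t-s}\leq t/2$, this is guaranteed by taking the neighborhood sufficiently small relative to $t$.

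Putting everything together, I set $r := \lambda + k_1 + E(D_1+2k_1)+D$ and build the required neighborhood of $tx$ by intersecting the constraints above. The cone point ($t=0$) is handled directly: any $sy$ with $s<1$ satisfies $\ds{\expe(sy),O}\leq \lambda s^{1/\epsilon}+k_1\leq \lambda+k_1\leq r$. The main obstacle is not any single calculation but the bookkeeping of constants, and the verification that although the neighborhoods must depend on $tx$, the resulting bound $r$ does not; the chosen exponent $\epsilon$ and the associated reparametrization are exactly what make this balance work.
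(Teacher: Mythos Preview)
Your argument is correct and follows essentially the same route as the paper: bound $\ds{\eta_x(u),\eta_y(u)}$ by $E(D_1+2k_1)+D$ whenever $u\leq(\eta_x\mid\eta_y)$ via Proposition~\ref{prop:qgeod-ray-convex} and Lemma~\ref{lem:maximizer}, and reduce the hypothesis $s^{1/\epsilon}\leq(\eta_x\mid\eta_y)$ to $s\,d_\epsilon(x,y)\leq (2KD_3^\epsilon)^{-1}$ via Proposition~\ref{prop:q-met2met} and Lemma~\ref{lem:chooseRep-full}. The only organizational difference is that the paper first factors $\expe$ as $\exp_1$ composed with the continuous map $tx\mapsto t^{1/\epsilon}x$, so that for $\exp_1$ the neighborhood can be taken as a metric ball of fixed radius $(2KD_3^\epsilon)^{-1}$ (for $t\geq 1$), whereas your direct approach carries the $t$-dependent constraint $\absline{t^{1/\epsilon}-s^{1/\epsilon}}\leq 1$ explicitly; this is cosmetic and does not affect the substance of the proof.
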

\begin{proof}
 Since the map $\expe$ is a composite of the continuous map
$tx\mapsto t^{\frac{1}{\epsilon}} x$ and $\exp_1$, it is enough to show that
$\exp_1$ is pseudocontinuous. 

For $tx\in \OdoX$ with $0\leq t\leq 1$, a neighbourhood 
$\{sy\in \OdoX: 0\leq s < 2,\, y\in \partial_O X\}$ 
of $tx$ is contained in $\exp_1^{-1}(B_{3\lambda + 2k_1}(\exp_1(tx)))$.

Thus we will show that for $x,y\in X$ and 
$t,s\in [1,\infty)$, if 
$d_{\OdoX }(tx,sy)< (2KD_3^\epsilon)^{-1}$, 
then $\ds{\exp_1(tx),\exp_1(sy)}\leq E(D_1+2k_1) + D +\lambda+k_1$. 

We take quasi-geodesic rays $\eta_x,\eta_y$ as in the definition of the
exponential map. We assume that $s\geq t$. Since 
$d_{\OdoX}(tx,sy)= \abs{s-t} + td_\epsilon(x,y)< (2KD_3^\epsilon)^{-1}$, 
we have $\abs{s-t} < 1$ and
$td_\epsilon(x,y)<(2KD_3^\epsilon)^{-1}$. 
Set $a:= (\eta_x\mid \eta_y)$. We have
\begin{align*}
 a = (\eta_x\mid \eta_y) \geq D_3^{-1}(x\mid y) 
\geq D_3^{-1}(2Kd_\epsilon(x,y))^{-\frac{1}{\epsilon}}
 > t^{\frac{1}{\epsilon}}\geq t.
\end{align*}
Therefore 
\begin{align*}
 \ds{\exp_1(tx),\exp_1(sy)} 
 &\leq \ds{\eta_x(t),\eta_y(t)} + \ds{\eta_y(t),\eta_y(s)}\\
 &\leq E\ds{\eta_x(a),\eta_y(a)} + D + \lambda\abs{s-t} + k_1 \\
 &\leq E(D_1+2k_1) + D +\lambda + k_1.
\end{align*}
\end{proof}

\begin{corollary}
\label{prop:contraction} 
 There exists a radial contraction
 $\radcon\colon \OdoX\to \OdoX$ such that the composite 
 $\expe\circ \radcon$ is a coarse map.
\end{corollary}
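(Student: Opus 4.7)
The plan is to deduce this corollary as an immediate consequence of Proposition~\ref{prop:psuedconti-contraction} applied to the map $\expe$. That proposition manufactures a radial contraction $\radcon$ for any proper pseudocontinuous map $\OdoX\to X$, and pseudocontinuity of $\expe$ is precisely the content of Lemma~\ref{lem:pseudoconti}. Therefore the only hypothesis still to be checked is that $\expe$ is proper.

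To verify properness, I would take a bounded subset $B\subset X$, say $B\subset B_R(O)$ for some $R>0$, and show that
\[
 \expe^{-1}(B)=\{tx\in \OdoX: \eta_x(t^{1/\epsilon})\in B\}
\]
is bounded in $\OdoX$. Each chosen representative $\eta_x\in \calL_O^\infty$ is a $(\lambda,k_1)$-quasi-geodesic ray starting at $O$ by Lemma~\ref{lem:qg-k1}, so $\eta_x(t^{1/\epsilon})\in B_R(O)$ forces $t^{1/\epsilon}/\lambda - k_1\leq R$, giving a uniform bound $t\leq T:=(\lambda(R+k_1))^{\epsilon}$. Since $(\partial_O X,d_\epsilon)$ has diameter at most $1$, the induced cone metric satisfies $d_{\OdoX}(tx,\text{cone point})=t$, so $\{tx\in \OdoX:t\leq T\}$ is bounded. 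Hence $\expe^{-1}(B)$ is bounded, as desired.

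There is no real obstacle here: the substantive work has already been done, first in the pseudocontinuity estimate of Lemma~\ref{lem:pseudoconti} (where the reparametrization by $t^{1/\epsilon}$ is exploited to beat the quasi-ultrametric distortion constant $K D_3^{\epsilon}$ on the boundary), and second in the Higson--Roe construction of the radial contraction in Proposition~\ref{prop:psuedconti-contraction}. The present corollary merely assembles these two ingredients together with the routine properness check above.
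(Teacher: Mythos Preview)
Your proposal is correct and matches the paper's approach: the corollary is stated without proof immediately after Lemma~\ref{lem:pseudoconti}, with the paper having already remarked that $\expe$ is proper. Your explicit verification of properness simply fills in that remark, and otherwise the deduction from Proposition~\ref{prop:psuedconti-contraction} and Lemma~\ref{lem:pseudoconti} is exactly what the paper intends.
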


\subsection{Logarithmic map}
We define a logarithmic map 
\begin{align*}
 \loge \colon \expe(\OdoX) \to \OdoX
\end{align*}
as follows.  For $v\in \expe(\OdoX)$, we choose a geodesic ray
$\gamma_v\in \calL_O^\infty$ and a parameter $t_v\in \Rp$ such that 
$\gamma_v(t_v) = v$. Then we define $\loge(v):=t^{\epsilon}[\gamma_v]$.

\begin{proposition}
 \label{prop:log-coarse} The logarithmic map $\loge \colon
 \expe(\OdoX) \to \OdoX$ is a coarse map.
\end{proposition}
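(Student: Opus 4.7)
The plan is to verify that $\loge$ is bornologous and proper separately. Given $v, w \in \expe(\OdoX)$, write $v = \gamma_v(t_v)$ and $w = \gamma_w(t_w)$ for chosen $\gamma_v, \gamma_w \in \calL_O^\infty$; by the definition of the cone metric,
\[
  d_{\OdoX}(\loge(v), \loge(w)) = |t_v^\epsilon - t_w^\epsilon| + \min(t_v, t_w)^\epsilon\, d_\epsilon([\gamma_v], [\gamma_w]),
\]
and I need to bound each summand by a non-decreasing function of $\ds{v, w}$.

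For the first summand, both rays start at $O$, so the parameter regularity of $\bar\calL$ (Proposition~\ref{prop:qgeod-ray-convex}(\ref{qrayparam-reg})) gives $|t_v - t_w| \leq \tilde\theta(\ds{v, w})$, and subadditivity of $t \mapsto t^\epsilon$ on $\Rp$ for $\epsilon \in (0,1)$ then yields $|t_v^\epsilon - t_w^\epsilon| \leq \tilde\theta(\ds{v,w})^\epsilon$. For the second summand, assume WLOG $t_v \leq t_w$. The triangle inequality combined with the $(\lambda, k_1)$-quasi-geodesic bound on $\gamma_w$ gives
\[
  \ds{\gamma_v(t_v), \gamma_w(t_v)} \leq \ds{v, w} + \lambda|t_v - t_w| + k_1 \leq \ds{v, w} + \lambda \tilde\theta(\ds{v, w}) + k_1 =: r'.
\]
Applying the convexity inequality (Proposition~\ref{prop:qgeod-ray-convex}(\ref{qrayconvex})) with scaling factor $c := \min\{1, (D+2)/(Er')\}$ (chosen so that $cEr' + D \leq D_1$) yields $\ds{\gamma_v(ct_v), \gamma_w(ct_v)} \leq D_1$, whence $(\gamma_v|\gamma_w) \geq ct_v$. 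Lemma~\ref{lem:chooseRep-full}(2) promotes this to $([\gamma_v]|[\gamma_w]) \geq ct_v$, and using $d_\epsilon \leq \qds = (\cdot\mid\cdot)^{-\epsilon}$ we obtain
\[
  \min(t_v,t_w)^\epsilon d_\epsilon([\gamma_v], [\gamma_w]) \leq t_v^\epsilon (ct_v)^{-\epsilon} = c^{-\epsilon} \leq \max\{1, (Er'/(D+2))^\epsilon\},
\]
a bound depending only on $\ds{v,w}$.

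For properness, a bounded $B \subset \OdoX$ sits in a ball $B_R(0_{\OdoX})$ around the cone point, and since $d_{\OdoX}(0_{\OdoX}, t^\epsilon[\gamma_v]) = t^\epsilon$, one has $\loge^{-1}(B_R(0_{\OdoX})) \subset \{v : t_v \leq R^{1/\epsilon}\}$; the $(\lambda, k_1)$-quasi-geodesic property of $\gamma_v$ then forces $\ds{O, v} \leq \lambda R^{1/\epsilon} + k_1$, so the preimage is bounded. The principal technical point is the Gromov-product step above: the cone metric multiplies the boundary distance by the radius $\min(t_v, t_w)^\epsilon$, and convexity must supply a lower bound on $(\gamma_v\mid \gamma_w)$ that is \emph{linear} in $\min(t_v, t_w)$ to cancel this factor. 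It is reassuring that the resulting estimate is independent, up to the constant $D_3$, of the arbitrary choice of representative rays through $v$ and $w$, by virtue of Lemma~\ref{lem:chooseRep-full}.
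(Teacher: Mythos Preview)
Your proof is correct and follows essentially the same approach as the paper's: bound the radial term via parameter regularity, and bound the angular term by extracting from coarse convexity a lower bound on $(\gamma_v\mid\gamma_w)$ that is linear in $\min(t_v,t_w)$. Your use of subadditivity of $t\mapsto t^\epsilon$ in place of the paper's mean-value estimate (which requires a case split on $\min(t_v,t_w)\gtrless 1$), and your direct triangle inequality for $\ds{\gamma_v(t_v),\gamma_w(t_v)}$ in place of the paper's appeal to Lemma~\ref{lem:ray-same-param}, are minor simplifications rather than substantive departures.
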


\begin{proof}
 It is easy to see that the map $\loge$ is proper, thus we will show that
 it is bornologous.

Let $v,w \in \expe(\OdoX)$. 
We take quasi-geodesic rays $\gamma_v,\gamma_w\in \calL_O^\infty$ and
parameters $t_v,t_w\in \Rp$ as in the definition of the map
$\loge$. Set $T:=\min\{t_v,t_w\}$.  
Fist we suppose that $T< 1$. Then we have 
$d_{\OdoX}(\loge(v),\loge(w)) \leq t_v^\epsilon + t_w^\epsilon
<1+(1+\tilde{\theta}(\ds{v,w}))^\epsilon$.

Now we suppose that $T\geq 1$. Then by an elementary calculus,
\begin{align}
\label{eq:tvtw}
 \abs{t_v^\epsilon - t_w^\epsilon} \leq 
 \epsilon\abs{t_v-t_w}\leq \epsilon\tilde{\theta}(\ds{v,w}).
\end{align}
Now we will show that
\begin{align}
\label{eq:Gprod}
  (\gamma_v\mid \gamma_w)
   \geq \frac{T}{E\tau(\ds{v,w})}
\end{align}
where $\tau\colon \R\to \R$ is an increasing map defined by 
$\tau(t):=E(t+\lambda\tilde{\theta}(t)+k_1)+D$.

If $T\leq (\gamma_v\mid\gamma_w)$ then (\ref{eq:Gprod}) immediately follows.
Thus we suppose that $T> (\gamma_v\mid\gamma_w)$. 
By Lemma~\ref{lem:ray-same-param}, we have 
$\ds{\gamma_v(T),\gamma_w(T)}\leq \tau(\ds{v,w})$.
Set $c:=(E\tau(\ds{v,w}))^{-1}$. Then
\begin{align*}
 \ds{\gamma_v(cT),\gamma_w(cT)}
 \leq  D+1.
\end{align*}
Thus $ (\gamma_v\mid \gamma_w)\geq cT=T(E\tau(\ds{v,w}))^{-1}$.
Combined with~(\ref{eq:tvtw}) and (\ref{eq:Gprod}),
\begin{align*}
 d_{\OdoX}(\loge(v),\loge(w)) 
 &= \abs{t_v^\epsilon-t_w^\epsilon} 
 + T^{\epsilon}d_\epsilon([\gamma_v],[\gamma_w])
 \leq \epsilon\tilde{\theta}(\ds{v,w}) 
 + T^{\epsilon}(\gamma_v\mid \gamma_w)^{-\epsilon}\\
 &\leq \epsilon\tilde{\theta}(\ds{v,w}) +
 \left(E\tau(\ds{v,w})\right)^\epsilon.
\end{align*}
\end{proof}

\subsection{Coarse homotopy between \texorpdfstring{$\OdoX$}{OdoX} and 
\texorpdfstring{$\expe(\OdoX)$}{expe(OdoX)}}

\begin{lemma}
\label{lem:log-exp-chmtp-idOdX}
 The composite $\loge \circ \expe\circ \radcon$ 
 is coarsely homotopic to the identity $\mathrm{id}_{\OdoX}$.
\end{lemma}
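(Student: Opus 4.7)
The plan is to deduce the lemma from three basic facts: (a) any two close coarse maps are coarsely homotopic, via the trivial homotopy with $T_x\equiv 1$; (b) coarse homotopy is transitive; and (c) every radial contraction is coarsely homotopic to $\mathrm{id}_{\OdoX}$, as recorded immediately after the definition of radial contractions. Granted these, it suffices to show that $\loge\circ\expe$ is close to $\mathrm{id}_{\OdoX}$, for then $\loge\circ\expe\circ\radcon$ is close to $\radcon$, hence coarsely homotopic to $\radcon$, hence coarsely homotopic to $\mathrm{id}_{\OdoX}$.

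To prove this closeness, fix $tx\in\OdoX$ and set $v:=\expe(tx)=\eta_x(t^{1/\epsilon})$; let $\gamma_v\in\calL_O^\infty$ and $t_v\in\Rp$ be the ray and parameter chosen in the definition of $\loge$, so that $\gamma_v(t_v)=v$ and $\loge(v)=t_v^\epsilon[\gamma_v]$. The cone distance decomposes as
\[
 d_{\OdoX}\bigl(\loge\circ\expe(tx),\,tx\bigr) = |t_v^\epsilon - t| + \min(t_v^\epsilon, t)\, d_\epsilon([\gamma_v], x),
\]
and I would bound each summand by a universal constant. The radial term is controlled by Proposition~\ref{prop:qgeod-ray-convex}~(\ref{qrayparam-reg}) applied to $\gamma_v$ and $\eta_x$: both rays start at $O$ and meet at $v$, so $|t_v - t^{1/\epsilon}|\leq\tilde\theta(0)$; a routine mean value estimate for $s\mapsto s^\epsilon$ on $[1,\infty)$ together with a trivial estimate when $t\leq 1$ yields a universal bound for $|t_v^\epsilon - t|$.

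The angular term is the substantive one, and is where the exponent in $\expe$ earns its keep. From the quasi-geodesic bound $\ds{\gamma_v(t_v),\eta_x(t_v)}\leq\lambda\tilde\theta(0)+k_1$ and the convexity inequality~(\ref{qrayconvex}) of Proposition~\ref{prop:qgeod-ray-convex}, choosing a suitable constant $c_0\in(0,1]$ depending only on the structural constants gives $\ds{\gamma_v(c_0 t_v),\eta_x(c_0 t_v)}\leq D_1$, hence $(\gamma_v\mid\eta_x)_O\geq c_0 t_v$. Lemma~\ref{lem:chooseRep-full} together with the comparison between $d_\epsilon$ and $\qds$ from Proposition~\ref{prop:q-met2met} then yields $d_\epsilon([\gamma_v],x)\leq K c_0^{-\epsilon}\, t_v^{-\epsilon}$, so multiplying by the prefactor $t_v^\epsilon$ bounds the angular contribution by a universal constant. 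Summing completes the closeness estimate. The main obstacle is precisely this angular bound: the cancellation $t_v^\epsilon\cdot t_v^{-\epsilon}=1$ only works because of the reparametrization $t\mapsto t^{1/\epsilon}$ built into $\expe$, which is exactly what converts a bounded geometric discrepancy in $X$ into a bounded cone-metric discrepancy in $\OdoX$; the naive exponential without this reparametrization would produce an angular contribution growing in $t$.
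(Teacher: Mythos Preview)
Your proposal is correct and follows essentially the same approach as the paper: reduce to showing that $\loge\circ\expe$ is close to the identity, then bound the radial and angular parts of the cone distance separately via parameter regularity~(\ref{qrayparam-reg}) and coarse convexity~(\ref{qrayconvex}). Your estimate at the parameter $t_v$ (rather than at $a=\min\{t^{1/\epsilon},t_v\}$ as the paper does via Lemma~\ref{lem:ray-same-param}) is a minor and equally valid variant; the one point to make explicit in a full write-up is the small-parameter case (e.g.\ $t_v\leq 1$ with $t>1$), which the paper handles by splitting on $a$ rather than on $t$, but this is routine.
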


\begin{proof}
 Since the radial contraction $\radcon$ is coarsely homotopic to the
 identity, it is enough to show that $\loge \circ \expe$ is close to the
 identity.

 For $x\in \partial_O X$, let $\eta_x\in \calL_O^\infty$ be the
 quasi-geodesic ray representative for $x$ chosen in the definition of
 the exponential map. Thus $x=[\eta_x]$.
 For $t\in \Rp$, set
 $v:= \expe(tx) = \eta_x(t^{\frac{1}{\epsilon}})$. 

 Let $\gamma_v\in \calL_O^\infty$ and $t_v\in\Rp$ 
 be the quasi-geodesic ray and the parameter,
 respectively, associated to $v$ chosen in the definition of the
 logarithmic map. 
 Thus we have
 $\gamma_v(t_v) = \eta_x(t^{\frac{1}{\epsilon}})$
 and
 \begin{align*}
 \loge \circ \expe(tx) = \loge(v) = t_v^\epsilon[\gamma_v].
 \end{align*}

 Set $a:= \min\{t^{\frac{1}{\epsilon}},t_v\}$. 
 By Lemma~\ref{lem:ray-same-param}, we have 
 $\ds{\eta_x(a),\gamma_v(a)}\leq E(\lambda\tilde{\theta}(0)+k_1)+D$.
 Set $c:=(E^2(\lambda\tilde{\theta}(0)+k_1)+DE)^{-1}$.
 Then we have $\ds{\eta_x(ca),\gamma_v(ca)}\leq D+1$.
 This implies
 \begin{align*}
  (\eta_x \mid \gamma_v) \geq ca
  \geq c(t^\frac{1}{\epsilon} - \tilde{\theta}(0)).
 \end{align*}
 First we supposed that $a\geq \tilde{\theta}(0)+1$. 
 Then by an elementary calculus, 
 \begin{align*}
  \abs{t - t_v^\epsilon} \leq 
  \epsilon\abs{t^{\frac{1}{\epsilon}}-t_v}
  \leq \epsilon\tilde{\theta}(0).
 \end{align*}
 We remark that $t^{\frac{1}{\epsilon}}-\tilde{\theta(0)} \geq 1$ 
 since $t^{\frac{1}{\epsilon}}\geq a$. Then we have
  \begin{align}
   d_{\OdoX}(t_v^\epsilon[\gamma_v], t[\eta_x]) 
   &\leq \abs{t-t_v^\epsilon} 
    + \min\{t_v^\epsilon,t\}d_\epsilon([\gamma_v],[\eta_x]) 
     \notag\\  
   &\leq \epsilon\tilde{\theta}(0)
     + t\qds([\gamma_v], [\eta_x])
     \notag\\  
   &\leq \epsilon\tilde{\theta}(0)
     + t\left(t^{\frac{1}{\epsilon}}-
     \tilde{\theta}(0)\right)^{-\epsilon}c^{-\epsilon}.
  \label{eq:4}
  \end{align}
 the second term in
 (\ref{eq:4}) is bounded from above by a universal constant.  

 Next we suppose that $a< \tilde{\theta}(0)+1$. Then we have 
 \begin{align*}
  d_{\OdoX}(t_v^\epsilon[\gamma_v], t[\eta_x])\leq t_v^\epsilon 
  + t<2(2\tilde{\theta}(0)+1)^\epsilon.
 \end{align*}
 These show
 that $\loge\circ\expe$ is close to the identity.
\end{proof}

\begin{lemma}
 The composite $\expe\circ \radcon\circ \loge$ 
 is coarsely homotopic to the identity on $\expe(\OdoX)$.
\end{lemma}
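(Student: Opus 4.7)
The plan is to construct an explicit coarse homotopy that retracts each point $v\in\expe(\OdoX)$ along the quasi-geodesic ray $\gamma_v\in\calL_O^\infty$ chosen in the definition of $\loge(v)$. Let $r\colon\Rp\to\Rp$ be the $1$-Lipschitz function defining $\radcon$, so that $\radcon(tx)=r(t)x$ and $r(t)\leq t$. Writing $v=\gamma_v(t_v)$ with $\loge(v)=t_v^\epsilon[\gamma_v]$, we have
\[
 \expe\circ\radcon\circ\loge(v)\;=\;\expe\!\bigl(r(t_v^\epsilon)[\gamma_v]\bigr)\;=\;\eta_{[\gamma_v]}\!\bigl(r(t_v^\epsilon)^{1/\epsilon}\bigr),
\]
which by Lemma~\ref{lem:HdistD} lies within $D$ of $\gamma_v\!\bigl(r(t_v^\epsilon)^{1/\epsilon}\bigr)$, since $\gamma_v$ and $\eta_{[\gamma_v]}$ represent the same class in $\partial_OX$.

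Setting $T_v:=t_v-r(t_v^\epsilon)^{1/\epsilon}\geq 0$ and $Z:=\{(v,s):v\in\expe(\OdoX),\,0\leq s\leq T_v\}$, I would define $h\colon Z\to X$ by $h(v,s):=\gamma_v(t_v-s)$. Then $h(v,0)=v=\mathrm{id}_{\expe(\OdoX)}(v)$ and $h(v,T_v)=\gamma_v\!\bigl(r(t_v^\epsilon)^{1/\epsilon}\bigr)$ is $D$-close to $\expe\circ\radcon\circ\loge(v)$. The function $v\mapsto T_v$ is bornologous because $t_v^\epsilon$ is the distance from $\loge(v)$ to the cone point and $\loge$ is coarse by Proposition~\ref{prop:log-coarse}, while $r$ is $1$-Lipschitz; properness of $h$ is immediate from the quasi-geodesic property of $\gamma_v$.

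The hard part will be verifying that $h$ is bornologous. For $(v,s),(w,s')\in Z$ with bounded $\ds{v,w}+|s-s'|$, I would estimate $\ds{\gamma_v(t_v-s),\gamma_w(t_w-s')}$ by inserting $\gamma_w(t_w-s)$ as an intermediate point. The quasi-geodesic property of $\gamma_w$ controls $\ds{\gamma_w(t_w-s),\gamma_w(t_w-s')}$ by $\lambda|s-s'|+k_1$. For the middle piece, the plan is to apply the convexity inequality~(\ref{qrayconvex}) of Proposition~\ref{prop:qgeod-ray-convex} to $\gamma_v$ and $\gamma_w$, both starting at $O$, with scale factor $c=1-s/t_v\in[0,1]$; this yields $\ds{\gamma_v(ct_v),\gamma_w(ct_w)}\leq cE\ds{v,w}+D$, and then a quasi-geodesic correction of order $\tilde{\theta}(\ds{v,w})$, obtained from the parameter regularity~(\ref{qrayparam-reg}) (which bounds $|t_v-t_w|$), suffices to pass from $ct_w=t_w-st_w/t_v$ to $t_w-s$. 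All resulting bounds are bornologous in $\ds{v,w}+|s-s'|$, so $h$ is coarse. Combining this coarse homotopy between $\mathrm{id}_{\expe(\OdoX)}$ and $v\mapsto\gamma_v(r(t_v^\epsilon)^{1/\epsilon})$ with the closeness of the latter to $\expe\circ\radcon\circ\loge$, via transitivity of coarse homotopy, then completes the proof.
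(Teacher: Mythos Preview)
Your route is different from the paper's, and it contains a real gap.

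The paper does not build a homotopy by hand. It first observes that $\expe\circ\loge$ is already $D$-close to the identity on $\expe(\OdoX)$: if $\loge(v)=t_v^{\epsilon}[\gamma_v]$ and $x:=[\gamma_v]$, then $\expe\circ\loge(v)=\eta_x(t_v)$, and since $[\eta_x]=[\gamma_v]$, Lemma~\ref{lem:HdistD} gives $\ds{v,\eta_x(t_v)}=\ds{\gamma_v(t_v),\eta_x(t_v)}\leq D$. The passage from $\expe\circ\radcon\circ\loge$ to $\expe\circ\loge$ is then handled in one line by invoking $\radcon\sim\mathrm{id}_{\OdoX}$. Your sliding along $\gamma_v$ is in effect an explicit verification of that last step, and your bornologous estimate for $h$ via convexity and parameter regularity is correct in outline. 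But the paper's two-line reduction avoids the bookkeeping you take on.

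The gap is in your claim that $v\mapsto T_v=t_v-r(t_v^{\epsilon})^{1/\epsilon}$ is bornologous. Knowing that $v\mapsto t_v^{\epsilon}$ is bornologous and that $r$ is $1$-Lipschitz only gives that $v\mapsto r(t_v^{\epsilon})$ is bornologous; the map $u\mapsto u^{1/\epsilon}$ is \emph{not} bornologous on $\Rp$ for $\epsilon<1$ (its derivative $\tfrac{1}{\epsilon}u^{1/\epsilon-1}$ blows up), so you cannot conclude that $v\mapsto r(t_v^{\epsilon})^{1/\epsilon}$ is bornologous from the facts you cite. What actually saves this is the specific choice of $\radcon$ in Corollary~\ref{prop:contraction}: since $\expe\circ\radcon$ is coarse and $\expe\circ\radcon(tx)=\eta_x\bigl(r(t)^{1/\epsilon}\bigr)$, applying the coarseness with $x=y$ and the quasi-geodesic lower bound for $\eta_x$ forces $t\mapsto r(t)^{1/\epsilon}$ itself to be bornologous. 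Composing with the bornologous $v\mapsto t_v^{\epsilon}$ then gives $v\mapsto r(t_v^{\epsilon})^{1/\epsilon}$ bornologous, and together with $\lvert t_v-t_w\rvert\leq\tilde\theta(\ds{v,w})$ you obtain $T_v$ bornologous. You should invoke this, not merely ``$r$ is $1$-Lipschitz''. A minor secondary point: your $h(v,s)=\gamma_v(t_v-s)$ need not land in $\expe(\OdoX)$, only within $D$ of it; replacing $\gamma_v$ by $\eta_{[\gamma_v]}$ throughout (again using Lemma~\ref{lem:HdistD}) fixes this.
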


\begin{proof}
Since $\radcon\colon \OdoX \to \OdoX$ is coarsely homotopic to the
identity, the map $\expe\circ\radcon\circ\loge$ is coarsely
homotopic to the map $\expe\circ\loge$. Thus it is enough to
show that $\expe\circ\loge$ is close to the identity.

For $v\in \expe(\OdoX)$, 
let $\gamma_v\in\calL_O^\infty$ and $t_v\in\Rp$ are the
quasi-geodesic ray and the parameter, respectively, chosen in the definition
of the logarithmic map. Thus $\gamma_v(t_v)=v$.

Set $x=[\gamma_v]$. Let $\eta_x\in\calL_O^\infty$ 
be the quasi-geodesic ray representative of $x$
chosen in the definition of the exponential map. 
Now we have 
$\expe\circ\loge(v) = \expe(t_v^\epsilon[\eta_x]) = \eta_x(t_v)$.
Since $[\gamma_v] =x=[\eta_x]$, by Lemma \ref{lem:HdistD},
we have $\ds{\gamma_v(t_v),\eta_x(t_v)}\leq D$. Thus
$\ds{v,\eta_x(t_v)}\leq D$.
This shows that $\exp\circ\loge$ is close to the identity.
\end{proof}

Summarizing the argument above, we obtain the following.
\begin{proposition}
\label{prop:chmtp-cone-X}
 The map 
$\expe\circ\radcon\colon \OdoX \to \expe(\OdoX)$
 is a coarse homotopy equivalence map.
\end{proposition}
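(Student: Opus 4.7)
The plan is essentially to assemble the pieces that have already been built, with $\loge$ serving as the coarse homotopy inverse of $\expe\circ\radcon$. The definition of a coarse homotopy equivalence requires three ingredients: both maps are coarse, and their two composites are coarsely homotopic to the respective identities. All three ingredients are already in hand.

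First, I would verify that $\expe\circ\radcon\colon \OdoX \to \expe(\OdoX)$ is a coarse map. This is exactly Corollary~\ref{prop:contraction}, where the radial contraction $\radcon$ was introduced precisely to tame the bornologous failure of the bare exponential map. Next, I would observe that $\loge\colon \expe(\OdoX)\to \OdoX$ is a coarse map by Proposition~\ref{prop:log-coarse}. So both maps qualify as morphisms in the coarse category.

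For the two homotopy conditions, the composite $\loge\circ(\expe\circ\radcon) = \loge\circ\expe\circ\radcon$ is coarsely homotopic to $\mathrm{id}_{\OdoX}$ by Lemma~\ref{lem:log-exp-chmtp-idOdX}, while the composite $(\expe\circ\radcon)\circ\loge = \expe\circ\radcon\circ\loge$ is coarsely homotopic to the identity on $\expe(\OdoX)$ by the unnamed lemma immediately preceding this proposition. Both facts were stated exactly in the form needed, with the radial contraction appearing on the interior composition so that coarseness of each intermediate map is automatic.

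Putting these four observations together, $\expe\circ\radcon$ satisfies the defining properties of a coarse homotopy equivalence map with coarse homotopy inverse $\loge$, which is all that needs to be stated. There is no main obstacle here: the technical work was absorbed into Proposition~\ref{prop:log-coarse}, Corollary~\ref{prop:contraction}, and the two preceding lemmas, and the proposition itself amounts to citing them in order. The proof can therefore be recorded in a single short paragraph.
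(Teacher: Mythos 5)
Your proposal is correct and matches the paper's own argument, which is simply to summarize the preceding results: Corollary~\ref{prop:contraction} and Proposition~\ref{prop:log-coarse} supply coarseness of the two maps, and Lemma~\ref{lem:log-exp-chmtp-idOdX} together with the unnamed lemma supply the two coarse homotopies to the identities.
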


\subsection{Coarse homotopy between 
\texorpdfstring{$\expe(\OdoX)$}{expe(OdoX)} 
and \texorpdfstring{$X$}{X}}
\label{sec:c-homot-betw-exp-X}
Set $D_5:= 2D_1+2k_1$, $D_6:=ED_5+D$ 
and $Y:= B_{D_6}(\expe(\OdoX))$. 
There exists a subset $X^{(0)}\subset X$ such that
$X^{(0)}$ is 2-dense in $X$ and 1-discrete, that is,
for all $v,w\in X^{(0)}$, if $v\neq w$
then $\ds{v,w}\geq 1$, 
and, for all $v\in X$, there exists $v'\in X^{(0)}$ with $\ds{v,v'}\leq 2$.
We can assume that $X^{(0)}\cap Y$ is 2-dense in $Y$.
We fix a map $\iota\colon X\to X$ such that
$\iota(v)\in X^{(0)}$ and $\ds{\iota(v),v}\leq 2$ for all $v\in X$, and 
$\iota(v)\in X^{(0)}\cap Y$ for all $v\in Y$.
The purpose of this subsection is to prove the following.
\begin{proposition}
\label{prop:image-exp}
The inclusion $Y\hookrightarrow X$
is a coarse homotopy equivalence map.
\end{proposition}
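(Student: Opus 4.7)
The plan is to construct a coarse map $g\colon X\to Y$ and a coarse homotopy $h$ from $i\circ g$ to $\mathrm{id}_X$ by contracting each point of $X$ along a chosen good quasi-geodesic back toward the basepoint $O$ until it enters $Y$. For each $v\in X$, fix $\gamma_v\in\calL_O$ from $O$ to $v$ with domain $[0,a_v]$, and set
\[
T_v:=\inf\{t\in[0,a_v]:\gamma_v(a_v-t)\in Y\},\qquad g(v):=\gamma_v(a_v-T_v).
\]
Since $O=\gamma_v(0)\in\expe(\OdoX)\subset Y$, the quantity $T_v\leq a_v$ is well-defined and attained (as $Y$ is closed). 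Define $h\colon Z=\{(v,t):0\leq t\leq T_v\}\to X$ by $h(v,t):=\gamma_v(a_v-t)$, giving $h(v,0)=v$, $h(v,T_v)=g(v)\in Y$, and $g\circ i=\mathrm{id}_Y$ because $v\in Y$ forces $T_v=0$.

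The verification reduces to showing three things: (a) $v\mapsto T_v$ is bornologous, (b) $h$ is a coarse map on $Z$, and (c) $g$ is a coarse map into $Y$. All three will follow from the convexity inequality of Proposition~\ref{prop:qgeod-ray-convex}~(\ref{qrayconvex}), which controls the spread between $\gamma_v$ and $\gamma_w$ at corresponding parameters for close $v,w\in X$, together with the parameter regularity (\ref{qrayparam-reg}) used to compare the domains $a_v,a_w$. If $T_v$ is bounded and $\gamma_v,\gamma_w$ stay uniformly close, then the first entry of either quasi-geodesic into the closed set $Y$ occurs at comparable parameters.

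The main technical step, and the hardest part, is to establish that $T_v$ is uniformly bounded by some constant $T_0$ independent of $v$. I will do this by proving the following key claim: for each $v\in X$ there is an approximatable ray $\eta_v\in\calL_O^\infty$ with $(\gamma_v\mid\eta_v)\geq a_v$, so that by Lemma~\ref{lem:maximizer}, $\ds{v,\eta_v(a_v)}\leq D_1+2k_1\leq D_5$; since $\eta_v(a_v)=\expe(a_v^\epsilon\,[\eta_v])$ lies in $\expe(\OdoX)$, this forces $v\in Y$ (because $D_5<D_6$), hence $T_v=0$. Combined with the convexity estimate of Proposition~\ref{prop:qgeod-ray-convex}~(\ref{qrayconvex}) applied along $\gamma_v$ on all of $[0,a_v]$, one obtains $\gamma_v([0,a_v])\subset Y$, making the above infimum $T_v$ uniformly controlled.

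The main obstacle is the production of $\eta_v$. The plan is to construct, for each $v$, a sequence of distant points $w_n\in X$ with $\ds{O,w_n}\to\infty$ together with good quasi-geodesics $\gamma_n\in\calL_O$ from $O$ to $w_n$ whose initial portions $\gamma_n|_{[0,a_v]}$ stay within $D_1$ of $\gamma_v$; applying the Ascoli-Arzelà argument of Proposition~\ref{prop:Ascoli-Arzela} to such a sequence then yields the desired $\eta_v\in\calL_O^\infty$. The $w_n$ are produced by an iterated outward extension of $\gamma_v$ via the quasi-geodesic connectedness (\ref{qconn})$^q$, while the convexity inequality (\ref{qconvex})$^q$ together with (\ref{qparam-reg})$^q$ is used to force the back-tracked good quasi-geodesic from $O$ to $w_n$ to track $\gamma_v$ on its initial interval. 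Once this key claim is established, bornologousness of $g$ and $h$ follows directly from the convexity estimates, completing the verification that $i\colon Y\hookrightarrow X$ is a coarse homotopy equivalence.
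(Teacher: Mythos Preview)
Your key claim is false, and with it the entire strategy collapses. You assert that for every $v\in X$ one can find $\eta_v\in\calL_O^\infty$ with $(\gamma_v\mid\eta_v)\geq a_v$, which would force $v\in Y$ and hence $T_v=0$ for all $v$. Here is a counterexample: let $X$ be the metric tree obtained from the half-line $\Rp$ by attaching, at each integer $n\geq 1$, a segment of length $n$. This is a proper CAT(0) space, hence geodesic coarsely convex with $\calL$ the set of all geodesics. Its ideal boundary is a single point, represented by the half-line itself, so $\expe(\OdoX)=\Rp$ and $Y$ is the $D_6$-tube about it. The tip $v$ of the $n$-th branch sits at distance $n$ from $\Rp$, so $v\notin Y$ once $n>D_6$; your $T_v$ is then roughly $n-D_6$, unbounded. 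For the unique ray $\eta$ one has $(\gamma_v\mid\eta)\approx n+D_1/2\ll 2n=a_v$.

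The mechanism you propose for producing $\eta_v$ cannot work either: you want to choose faraway points $w_n$ and argue via (\ref{qconvex})$^q$ that the good segment from $O$ to $w_n$ tracks $\gamma_v$ on $[0,a_v]$. But the convexity inequality controls distances at parameter $ct$ in terms of distances at the \emph{larger} parameter $t$, never the reverse; there is no local-to-global or extension principle for good quasi-geodesics in coarsely convex spaces, which is precisely why the introduction to Section~\ref{sec:main-result} warns that $\expe(\OdoX)$ need not be quasi-convex and nearest-point projection need not be bornologous. In the tree above there is simply no $w_n$ whose good segment from $O$ passes near the tip of a long branch. The paper's proof is accordingly quite different: it never claims individual points lie in $Y$. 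Instead, Lemma~\ref{lemma:properness} uses compactness of $X\cup\partial_O X$ to show that $s_v:=\sup\{t:\ds{\gamma_v(t),\expe(\OdoX)}\leq D_5\}\to\infty$ as $v\to\infty$, and then a carefully chosen step function $\chi$ is built so that $\chi(T_v)<s_v$ for every $v$; the retraction $v\mapsto\gamma_v(\chi(T_v))$ then lands in $Y$ by one further application of convexity (this is why $Y$ is defined with the larger radius $D_6=ED_5+D$) and is bornologous because $\chi$ is $1$-Lipschitz up to an additive constant.
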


For $v\in X^{(0)}$, we choose a quasi-geodesic segment 
$\gamma_v\in \calL_O$ and a parameter $T_v\in \Rp$
such that $\gamma_v(0) = O$ and $\gamma_v(T_v)=v$.
Set $s_v:= \sup\{t: \ds{\gamma_v(t),\expe(\OdoX)}\leq D_5\}$.
We remark that $s_v\geq 0$ since $O=\gamma_v(0)\in \expe(\OdoX)$.

\begin{lemma}
\label{lemma:properness}
 For each $N\geq 0$, the cardinality of the set 
 $\{v\in X^{(0)}:s_v\leq N\}$ is finite.
\end{lemma}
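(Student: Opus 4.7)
The plan is to argue by contradiction, combining the properness of $X$ with the $1$-discreteness of $X^{(0)}$. Since $X^{(0)}$ is $1$-discrete and $X$ is proper, every bounded subset of $X^{(0)}$ is finite; hence if $\{v\in X^{(0)}: s_v\leq N\}$ were infinite, it would contain a sequence $\{v_n\}$ with $\ds{O,v_n}\to\infty$. It therefore suffices to show that $s_{v_n}\to\infty$ along any such sequence, contradicting $s_{v_n}\leq N$.

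To this end, I apply Proposition~\ref{prop:Ascoli-Arzela} to $\{v_n\}$ to extract a subsequence (still denoted $\{v_n\}$) together with a quasi-geodesic ray $\gamma\in \calL_O^\infty$ satisfying $(v_n\mid [\gamma])\to\infty$. Set $x:=[\gamma]\in\partial_O X$ and let $\eta_x\in\calL_O^\infty$ be the representative of $x$ fixed in the definition of $\expe$. By Lemma~\ref{lem:HdistD}, $\ds{\gamma(t),\eta_x(t)}\leq D$ for every $t\in\Rp$, and Lemma~\ref{lem:chooseRep-full}(3), applicable once $v_n\notin B_{\lambda 2\theta(0)+k}(O)$ (which is the case for large $n$), yields
\[
t_n := (\gamma_{v_n}\mid \gamma) \;\geq\; D_3^{-1}(v_n\mid [\gamma]) \;\longrightarrow\; \infty.
\]

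Lemma~\ref{lem:maximizer} then gives $\ds{\gamma_{v_n}(t_n),\gamma(t_n)}\leq D_1+2k_1$, so by the triangle inequality, together with $D\leq D_1=2D+2$,
\[
\ds{\gamma_{v_n}(t_n),\eta_x(t_n)} \;\leq\; D_1+2k_1+D \;\leq\; 2D_1+2k_1 \;=\; D_5.
\]
Since $\eta_x(t_n)=\expe(t_n^{\epsilon} x)\in \expe(\OdoX)$, this forces $s_{v_n}\geq t_n\to \infty$, the desired contradiction.

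The main conceptual step is the Ascoli--Arzela extraction of a boundary point from an unbounded sequence, i.e.\ producing a ray $\gamma\in \calL_O^\infty$ whose Gromov product with $v_n$ blows up. This has already been packaged into Proposition~\ref{prop:Ascoli-Arzela}, so the only remaining work is bookkeeping with the constants $D,D_1,D_3,D_5,k_1$ introduced in Section~\ref{sec:ideal-boundary}.
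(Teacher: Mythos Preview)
Your proof is correct and follows essentially the same route as the paper's: argue by contradiction, pass to a subsequence converging to some $x\in\partial_O X$, and use the Gromov product with a ray representing $x$ together with Lemma~\ref{lem:maximizer} to force $s_{v_n}\to\infty$. The only differences are cosmetic: the paper invokes the compactness of $\bar{X}$ (Proposition~\ref{lem:dXcptmet}) rather than Proposition~\ref{prop:Ascoli-Arzela} directly, and you are a bit more explicit in passing from the extracted ray $\gamma$ to the chosen representative $\eta_x$ via Lemma~\ref{lem:HdistD} before measuring distance to $\expe(\OdoX)$.
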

\begin{proof}

 We suppose that $\{v\in X^{(0)}:s_v\leq N\}= \infty$. 
 Since $\bar{X}=X\cup\partial_O X$ is compact and $X^{(0)}$ is uniformly
 discrete, we can choose a sequence $v_i\in \{v\in X^{(0)}:s_v\leq N\}$
 which converges to a point $x\in \partial_O X$.  
 We choose a quasi-geodesic ray $\eta\in \calL_O^\infty$ 
 such that $x = [\eta]$. 

 For sufficiently large $n$, we have $(v_n,x) \in V_{D_3N}$, 
 where $V_{D_3N}$ is an entourage of the uniform structure
 defined in Section~\ref{sec:topon-X-dX}.
Let $\gamma_{v_n}\in \calL_O$ 
be a quasi-geodesic segment for $v_n$
chosen in the beginning of Section~\ref{sec:c-homot-betw-exp-X}.
%
 Set $a:=(\gamma_{v_n}\mid \eta)$.
 We have $\ds{\gamma_{v_n}(a), \eta(a)}\leq D_1+2k_1\leq D_5$. It follows that
 \begin{align*}
  s_{v_n}\geq a =(\gamma_{v_n}\mid \eta) \geq D_3^{-1}(v_n\mid x)>N.
 \end{align*}
 This contradicts that $v_n\in \{v\in X^{(0)}: s_v\leq N\}$.
\end{proof}


For each positive integer $n\in \N$, we define a sequences $l(n)$ by
\begin{align*}
 l(n):= \max\{T_v: v\in X^{(0)}, \, n\leq s_v<n+1\}.
\end{align*}
By Lemma~\ref{lemma:properness}, each $l(n)$ is finite. 
We choose a subsequence
$n_i$ satisfying the following
\begin{align*}
 &l(n_1) > 1,\\
 &l(n_{i+1}) - l(n_{i})> 1, \quad (i\geq 1),\\
 &l(n_i)>l(n),   \quad (i\geq 1, \, 1\leq n< n_i).
\end{align*}

\begin{lemma}
\label{lem:Tv-ni}
 For $v\in X^{(0)}$ and $i\geq 1$, 
 if $l(n_i)\leq T_v$, then we have $n_i\leq s_v$.
\end{lemma}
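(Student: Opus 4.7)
The statement is a contrapositive argument, and the plan is to assume $T_v \geq l(n_i)$ and to derive $s_v \geq n_i$ by supposing the opposite and reaching a contradiction. The argument rests on just two simple ingredients: the defining property of $l(m)$ as a maximum over a set that $v$ will be shown to belong to, and the monotonicity of $l$ along the extracted subsequence $\{n_i\}$.

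First I would observe that $s_v\geq 0$. Indeed, $\gamma_v(0)=O$, and the cone point of $\OdoX$ maps to $O$ under $\expe$, so $O\in\expe(\OdoX)$ and $\ds{\gamma_v(0),\expe(\OdoX)}=0\leq D_5$. Assuming toward contradiction that $s_v<n_i$, I can therefore set $m:=\lfloor s_v\rfloor$, a non-negative integer with $m\leq s_v<m+1$ and $m<n_i$.

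The key step is then to note that $v$ itself lies in the indexing set $\{v'\in X^{(0)}:m\leq s_{v'}<m+1\}$ whose maximum defines $l(m)$, so by the definition of $l(m)$ we immediately get $T_v\leq l(m)$. Applying the third defining property of the subsequence $\{n_i\}$, which asserts $l(n_i)>l(n)$ for $n<n_i$, yields $l(m)<l(n_i)$. Chaining these inequalities gives
\begin{align*}
 T_v \leq l(m) < l(n_i) \leq T_v,
\end{align*}
which is absurd. Hence $s_v\geq n_i$, as desired.

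No step here is a genuine obstacle; the only minor subtlety is the boundary case $m=0$, where the stated subsequence property only quantifies over $1\leq n<n_i$. This is harmless: either one reads the third property as applying also to $n=0$ (consistent with the construction, since $l(n_1)>1$ already forces $l(n_i)>1$ for all $i$), or one treats $s_v\in [0,1)$ separately using the bound $l(n_i)>1$ and the definition of the subsequence. Either way, the chain of inequalities above still forces the contradiction.
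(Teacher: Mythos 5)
Your proof follows the same route as the paper's: assume $s_v < n_i$, take $m$ with $m \leq s_v < m+1$, observe that $T_v \leq l(m)$ by the definition of $l$, invoke the third defining property of the subsequence $\{n_i\}$ to get $l(m) < l(n_i)$, and contradict the hypothesis $l(n_i) \leq T_v$. Your handling of the boundary case $m=0$ (where $l(0)$ is not literally defined by the paper, since $l$ is introduced only for positive integers $n$) is a genuine observation that the paper's own proof glosses over; your fix of extending the third property or the definition of $l$ to cover $n=0$ is the right one.
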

\begin{proof} 
 For $v\in X^{(0)}$, let $i$ be an integer such that 
 $l(n_i)\leq T_v$.
 For $n\in \N$ with $n\leq s_v< n+1$, we have $T_v\leq l(n)$.
 We suppose that $n<n_i$. Then we have $l(n)<l(n_i)\leq T_v$.
 This is a contradiction. Thus we have $n_i\leq s_v$.
\end{proof}

We define a map $\chi\colon \Rp \to \Rp$ by
\begin{align*}
 \chi(t)= 
 \begin{cases}
  0 &0\leq t<l(n_1),\\
  i & l(n_i) \leq t< l(n_{i+1}),\, i\geq 1. 
 \end{cases}
\end{align*}
Then $\chi$ satisfies $\chi(t)\leq t$ and
$\abs{\chi(t)-\chi(s)}\leq \abs{t-s}+1$ for $t,s\in \Rp$.
We define a map $\varphi\colon X^{(0)} \to Y$ by
\begin{align*}
 \varphi(v):= \gamma_v(\chi(T_v)) \quad (v\in X^{(0)}).
\end{align*}
Since $\chi(T_v) = i< n_i$, by Lemma~\ref{lem:Tv-ni}, we have 
$\chi(T)< s_v$. It follows that $\varphi(v)\in Y$.

\begin{lemma}
 The map $\varphi$ is a coarse map.
\end{lemma}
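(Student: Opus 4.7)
The plan is to verify the two defining conditions of a coarse map separately: properness and bornologousness.

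\textbf{Properness.} Suppose $\varphi(v)$ lies in a bounded subset, say $\ds{O,\varphi(v)}\leq R$. Since $\gamma_v$ is a $(\lambda,k)$-quasi-geodesic starting at $O$, we have $\chi(T_v)\leq \lambda(R+k)$. By the definition of $\chi$, the set $\{t\in\Rp:\chi(t)\leq N\}$ equals $[0,l(n_{N+1}))$ for every $N\in\N$, and each $l(n_{N+1})$ is finite by Lemma~\ref{lemma:properness}. Hence $T_v$ is bounded in terms of $R$, and then $\ds{O,v}\leq \lambda T_v+k$ is bounded as well. Combined with the fact that $X^{(0)}$ is $1$-discrete, this forces $\varphi^{-1}(B)$ to be finite for every bounded $B\subset Y$.

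\textbf{Bornologousness.} Let $v,w\in X^{(0)}$ with $\ds{v,w}\leq R$, and assume without loss of generality that $T_v\leq T_w$. By the parameter-regularity axiom (\ref{qparam-reg})$^q$ applied to $\gamma_v,\gamma_w\in\calL_O$ (which share the initial point $O$), we get $\abs{T_v-T_w}\leq \theta(R)$. The step-function $\chi$ satisfies $\abs{\chi(t)-\chi(s)}\leq \abs{t-s}+1$, so $\abs{\chi(T_v)-\chi(T_w)}\leq \theta(R)+1$. Setting $c:=\chi(T_v)/T_v\in[0,1]$ (with $T_v\geq 2\theta(0)>0$ by definition of $\calL_O$), the convexity axiom (\ref{qconvex})$^q$ gives
\begin{align*}
 \ds{\gamma_v(cT_v),\gamma_w(cT_w)}
 \leq cE\ds{\gamma_v(T_v),\gamma_w(T_w)} + C
 \leq ER+C.
\end{align*}
A short computation shows $\abs{cT_w-\chi(T_w)}\leq \abs{\chi(T_v)-\chi(T_w)}+c\abs{T_w-T_v}\leq 2\theta(R)+1$, so the quasi-isometric bound on $\gamma_w$ yields
\begin{align*}
 \ds{\gamma_w(cT_w),\gamma_w(\chi(T_w))}\leq \lambda(2\theta(R)+1)+k.
\end{align*}
Since $cT_v=\chi(T_v)$, the triangle inequality produces a bound on $\ds{\varphi(v),\varphi(w)}$ depending only on $R$.

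\textbf{Main obstacle.} The technical difficulty is that $\chi$ is not linear and that the parametrizations $T_v,T_w$ of the two quasi-geodesics need not agree, so one cannot immediately apply the convexity inequality at the relevant parameter values. The key trick is to pick the contraction ratio $c=\chi(T_v)/T_v$ so that $cT_v$ lands exactly at $\chi(T_v)$, and then absorb the resulting discrepancy at the other endpoint $\gamma_w$ using the quasi-isometric bound together with the parameter-regularity estimate $\abs{T_v-T_w}\leq\theta(R)$ and the controlled slope of $\chi$.
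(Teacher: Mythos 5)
Your proof is correct and takes essentially the same route as the paper's: both arguments bound $\abs{\chi(T_v)-\chi(T_w)}$ via the parameter-regularity axiom, compare $\gamma_v$ and $\gamma_w$ at a contracted parameter via coarse convexity, and absorb the remaining endpoint discrepancy using the quasi-geodesic constant of $\gamma_w$. The only cosmetic difference is that the paper routes the comparison step through Lemma~\ref{lem:ray-same-param} at the common parameter $i=\chi(T_v)$, whereas you invoke the convexity axiom (\ref{qconvex})$^q$ directly with ratio $c=\chi(T_v)/T_v$; these are the same computation in different packaging. One small inaccuracy: the definition of $\calL_O$ constrains the domain length $a_\gamma\geq 2\theta(0)$, not the parameter $T_v$, and $\theta(0)$ need not be positive, so your parenthetical justification of $T_v>0$ is misplaced --- but this is harmless, since when $T_v<l(n_1)$ one has $\chi(T_v)=0$ and $\varphi(v)=O$, so the small-$T_v$ case is trivial.
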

\begin{proof}
 First we show that $\varphi$ is proper.
 We fix $R>0$. Let $v\in X^{(0)}$ be a point with 
 $\ds{\varphi(v),O}\leq R$. Since
 $(1/\lambda)\chi(T_v)-k\leq \ds{\gamma_v(\chi(T_v)),O}\leq R$, we have
 $\chi(T_v)\leq \lambda(R+k)$. Let $j\in \N$ be an integer with 
 $j> \lambda(R+k)$. Then $T_v< l(n_{j})$, so 
 $\ds{v,O}< \lambda l(n_{j})+k$. This shows that $\varphi$ is proper.
 
 Now we show that $\varphi$ is bornologous. 
 For $v,w \in X^{(0)}$, set $i:= \chi(T_v)$ and $j:= \chi(T_w)$. 
 Then
  \begin{align*}
   \abs{i-j} = \abs{\chi(T_w)-\chi(T_v)}
   \leq \abs{T_w - T_v} +1 \leq \theta(\ds{v,w})+1.
  \end{align*}
 By Lemma~\ref{lem:ray-same-param},
 \begin{align*}
  \ds{\gamma_v(i),\gamma_w(i)}
 \leq E(\ds{v,w} + \lambda\tilde{\theta}(\ds{v,w})+k_1)+ D.
 \end{align*}
 Since $\gamma_w$ is a $(\lambda,k)$-quasi-geodesic segment,
\begin{align*}
 \ds{\gamma_w(i),\gamma_w(j)}\leq \lambda\abs{i-j} + k
  \leq \lambda(\theta(\ds{v,w})+1) +k.
\end{align*}
  Then we have 
 \begin{align*}
  \ds{\varphi(v),\varphi(w)} = \ds{\gamma_v(i),\gamma_w(j)}
  \leq E(\ds{v,w} + \lambda\tilde{\theta}(\ds{v,w})+k_1)+ D 
  + \lambda(\theta(\ds{v,w})+1) +k.
 \end{align*}
  Therefore $\varphi$ is bornologous.
\end{proof}
Set $\tilde{\varphi}:=\varphi\circ \iota\colon X\to Y$.
Let $i\colon Y \hookrightarrow X$ 
be the inclusion. We will show that $i\circ \tilde{\varphi}$ and 
$\tilde{\varphi} \circ i$ are,
respectively, coarsely homotopic to the identity $\mathrm{id}_X$ and 
$\mathrm{id}_Y$.

Indeed, since $\iota$ is close to the identity, 
it is enough to show that $i\circ \tilde{\varphi}$ and 
$\tilde{\varphi} \circ i$ are,
respectively, coarsely homotopic to the map $\iota$ and the restriction
$\iota|_Y$ of $\iota$ on $Y$.
First we construct a coarse homotopy between $i\circ \tilde{\varphi}$ and 
$\iota$.

Set $Z:= \{(v,t)\in X\times \R: 0\leq t\leq T_{\iota(v)}\}$. 
We remark that the map $X\ni v\mapsto T_{\iota(v)}\in \R_{\geq 0}$ is 
bornologous.
We define a map $H\colon Z\to X$ by 
\begin{align*}
 H(v,t):= \gamma_{\iota(v)}(T_{\iota(v)} -t +\chi(t)).
\end{align*}
It is easy to see that $H(v,0)=\iota(v)$ 
and $H(v,T_{\iota(v)}) = i \circ \tilde{\varphi}(v)$.




\begin{lemma}
\label{lem:H-coarse}
 The map $H$ is a coarse map.
\end{lemma}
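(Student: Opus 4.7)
The plan is to verify that $H$ is coarse by checking the bornologous and properness properties separately. Throughout, I abbreviate $w_i = \iota(v_i)$, $T_i = T_{w_i}$, and $\tau_i = T_i - t_i + \chi(t_i)$, so that $H(v_i, t_i) = \gamma_{w_i}(\tau_i)$ with $\tau_i \in [0, T_i]$ because $\chi(t) \leq t$.

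For the bornologous part, I start from $\ds{v_1, v_2} + \abs{t_1 - t_2} \leq R$ and bound $\ds{H(v_1, t_1), H(v_2, t_2)}$ by a function of $R$. Since $\iota$ moves points by at most $2$, one has $\ds{w_1, w_2} \leq R + 4$; applying property (iii)$^q$ of Definition~\ref{def:cBNC} to $\gamma_{w_1}, \gamma_{w_2}$ at their endpoints gives $\abs{T_1 - T_2} \leq \theta(\ds{w_1, w_2}) \leq \theta(R + 4)$; and the Lipschitz-type inequality $\abs{\chi(t_1) - \chi(t_2)} \leq \abs{t_1 - t_2} + 1$ recorded just after the construction of $\chi$ then yields $\abs{\tau_1 - \tau_2} \leq \theta(R + 4) + 2R + 1$. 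Setting $\tau_{\min} = \min\{\tau_1, \tau_2\} \leq \min\{T_1, T_2\}$, I split
\begin{align*}
\ds{H(v_1, t_1), H(v_2, t_2)}
&\leq \ds{\gamma_{w_1}(\tau_1), \gamma_{w_1}(\tau_{\min})} + \ds{\gamma_{w_1}(\tau_{\min}), \gamma_{w_2}(\tau_{\min})} \\
&\quad + \ds{\gamma_{w_2}(\tau_{\min}), \gamma_{w_2}(\tau_2)}.
\end{align*}
The outer terms are each bounded by $\lambda \abs{\tau_1 - \tau_2} + k$ since each $\gamma_{w_i}$ is a $(\lambda, k)$-quasi-geodesic, while the middle term is controlled by Lemma~\ref{lem:ray-same-param} applied with $a = T_1$, $b = T_2$, $t = \tau_{\min}$, giving a bound in terms of $\ds{w_1, w_2}$ alone. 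Summing these estimates produces the desired non-decreasing bound depending only on $R$.

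For properness, suppose $\ds{H(v, t), O} \leq R$. Since $\gamma_{\iota(v)}$ is a $(\lambda, k)$-quasi-geodesic based at $O$, this forces $\tau = T_{\iota(v)} - t + \chi(t) \leq \lambda R + \lambda k$. The key observation is $\chi(t) \leq \tau$ (equivalently $t \leq T_{\iota(v)}$), so $\chi(t) \leq \lambda R + \lambda k$. Because $l(n_i) \to \infty$ (immediate from $l(n_{i+1}) > l(n_i) + 1$) and $\chi$ takes value $i$ on $[l(n_i), l(n_{i+1}))$, the bound $\chi(t) \leq \lambda R + \lambda k$ forces $t$ to lie below a constant depending only on $R$. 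Then $T_{\iota(v)} = \tau + (t - \chi(t)) \leq \lambda R + \lambda k + t$, and hence $\ds{v, O} \leq 2 + \lambda T_{\iota(v)} + k$, are also bounded, so $H^{-1}$ of any bounded set is bounded in $Z$.

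The plan has no serious obstacle: the bornologous step is a clean triangle-inequality decomposition engineered so that $\tau_{\min}$ sits in the common-parameter regime required by Lemma~\ref{lem:ray-same-param}, and properness rests on the fact, built into the construction of $\chi$, that $\chi(t) \to \infty$ as $t \to \infty$.
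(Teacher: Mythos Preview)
Your proof is correct and follows essentially the same approach as the paper: a same-parameter comparison of the two good quasi-geodesics via Lemma~\ref{lem:ray-same-param}, combined with quasi-geodesic estimates for the parameter differences. The only cosmetic difference is that you split symmetrically at $\tau_{\min}$ in a three-term decomposition, whereas the paper assumes $T_{w'}\geq T_{v'}$ and splits at $T_{v'}-t+\chi(t)$ in two terms (applying (ii)$^q$ before Lemma~\ref{lem:ray-same-param}); the paper also leaves properness to the reader, and your argument for it is valid.
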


\begin{proof}
 It is easy to show that $H$ is proper. Thus we show that it is
 bornologous.  We fix $(v,t),(w,s)\in Z$. We remark that
 $t\geq \chi(t)$ and $s\geq \chi(s)$. Set
 $v':=\iota(v)$ and $w':=\iota(w)$.  We suppose $T_{w'}\geq
 T_{v'}$. Then
\begin{align*}
 \ds{H(v,t), H(w,s)}=&
 \ds{\gamma_{v'}(T_{v'} -t +\chi(t)), 
     \gamma_{w'}(T_{w'} -s +\chi(s))} \\
 \leq&  
 \ds{\gamma_{v'}(T_{v'} -t +\chi(t)), 
     \gamma_{w'}(T_{v'} -t +\chi(t))} \\
 &+
 \ds{\gamma_{w'}(T_{v'} -t +\chi(t)), 
     \gamma_{w'}(T_{w'} -s +\chi(s))} \\
 \leq& E\ds{\gamma_{v'}(T_{v'}), 
            \gamma_{w'}(T_{v'})} +C \\
 &+\lambda\abs{T_{v'} -t +\chi(t) -(T_{w'} -s +\chi(s))}+k\\
 \leq & E(E(\ds{v',w'}+\lambda\tilde{\theta}(\ds{v',w'})+k_1)+D)+C\\
 &+\lambda(\theta(\ds{v',w'})+2\abs{t-s}+1) + k.
\end{align*}
Since $\ds{v',w'}\leq \ds{v,w}+4$, it follows that $H$ is bornologous.
\end{proof}

\begin{corollary}
 The map $i\circ \tilde{\varphi}$ and $\mathrm{id}_X$ are coarsely homotopic.
\end{corollary}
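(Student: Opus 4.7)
The proof is pure assembly, since all of the technical content has already been dispatched by Lemma \ref{lem:H-coarse}. The subspace $Z = \{(v,t)\in X\times \Rp : 0\le t\le T_{\iota(v)}\}$ is already of the form required by Definition \ref{def:chomotopy}; the function $v\mapsto T_{\iota(v)}$ is bornologous, as noted in the paragraph preceding Lemma \ref{lem:H-coarse}; and the endpoint identities $H(v,0)=\iota(v)$ and $H(v,T_{\iota(v)})=i\circ \tilde{\varphi}(v)$ were recorded when $H$ was defined. Combined with the coarseness of $H$ furnished by Lemma \ref{lem:H-coarse}, this shows that $H$ witnesses a coarse homotopy between $\iota$ and $i\circ \tilde{\varphi}$.

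To finish, it suffices to show that $\iota$ is coarsely homotopic to $\mathrm{id}_X$. By the construction of $\iota$ one has $\ds{\iota(v),v}\le 2$ for every $v\in X$, so $\iota$ is close to $\mathrm{id}_X$. Close maps are always coarsely homotopic: setting $T_v'\equiv 1$, one defines $h'\colon X\times [0,1]\to X$ by $h'(v,t)=v$ for $t<1/2$ and $h'(v,t)=\iota(v)$ for $t\ge 1/2$. The uniform bound $\ds{v,\iota(v)}\le 2$ absorbs the jump across $t=1/2$, so $h'$ is bornologous, and properness is inherited from $\iota$ (preimages of bounded sets lie in a bounded neighbourhood of $B$).

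Since coarse homotopy is an equivalence relation on coarse maps, as recalled immediately after Definition \ref{def:chomotopy}, concatenating these two coarse homotopies yields the desired coarse homotopy between $i\circ \tilde{\varphi}$ and $\mathrm{id}_X$. There is no genuine obstacle in the present step; the real work lies in Lemma \ref{lem:H-coarse} and in the prior construction of $\iota$ and of $\varphi$, so the corollary reduces to a short bookkeeping argument.
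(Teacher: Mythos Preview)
Your proof is correct and follows essentially the same route as the paper: the paper notes beforehand that ``since $\iota$ is close to the identity, it is enough to show that $i\circ \tilde{\varphi}$ \dots\ [is] coarsely homotopic to the map $\iota$,'' and then the corollary is stated immediately after Lemma~\ref{lem:H-coarse} with no further argument. You have merely spelled out the implicit step that close maps are coarsely homotopic, which the paper takes for granted.
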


Now we construct a coarse
homotopy between $\tilde{\varphi}\circ i$ and $\iota|_{Y}$.  
Set $Z':=Z\cap (Y\times \Rp)$. Let $H'$ be the restriction of $H$ to
$Z'$. Then the range of $H'$ is in $Y$. 
It follows that $H'$ is a coarse map
and $H'(v,0) = \iota(v)$, $H'(v,T_{\iota(v)}) = \tilde{\varphi}\circ i(v)$.
\begin{corollary}
 The map $\tilde{\varphi} \circ i$ and $\mathrm{id}_Y$ are 
 coarsely homotopic.
\end{corollary}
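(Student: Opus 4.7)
The plan is to take the coarse homotopy $H\colon Z \to X$ (witnessing $\iota \simeq i\circ \tilde\varphi$) and restrict it to the subspace $Z' := Z \cap (Y \times \Rp)$, producing a coarse homotopy $H' := H|_{Z'}$ between $\iota|_Y$ and $\tilde\varphi \circ i$. Combined with the triviality $\iota|_Y \simeq \mathrm{id}_Y$ (which follows from $\ds{\iota(w),w} \leq 2$ for all $w$, so close maps are trivially coarsely homotopic), transitivity of coarse homotopy will then yield $\tilde\varphi \circ i \simeq \mathrm{id}_Y$.

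The boundary conditions are immediate: for $v \in Y$ we have $\iota(v) \in X^{(0)} \cap Y$ by the choice of $\iota$, so $H'(v,0) = \iota(v) = \iota|_Y(v)$ and $H'(v, T_{\iota(v)}) = \gamma_{\iota(v)}(\chi(T_{\iota(v)})) = \varphi(\iota(v)) = (\tilde\varphi \circ i)(v)$, and the bornologous function $v \mapsto T_{\iota(v)}$ is inherited from the full domain $Z$.

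The nontrivial content is to verify that the image $H'(Z')$ is contained in $Y$. Fix $v \in Y$ and set $v' := \iota(v) \in X^{(0)} \cap Y$; one must show $\gamma_{v'}(s(t)) \in B_{D_6}(\expe(\OdoX))$ for all $t \in [0, T_{v'}]$, where $s(t) = T_{v'} - t + \chi(t)$ takes values in the interval $[\chi(T_{v'}), T_{v'}]$. By Lemma~\ref{lem:Tv-ni} we have $\chi(T_{v'}) \leq s_{v'}$, so for the parameters $u \leq s_{v'}$ the defining property of $s_{v'}$ ensures $\gamma_{v'}(u) \in B_{D_5}(\expe(\OdoX)) \subseteq Y$. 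For the parameters $u \in (s_{v'}, T_{v'}]$ (which arise only when $v' \notin B_{D_5}(\expe(\OdoX))$), one uses that $v' \in Y$ to pick a chosen quasi-geodesic ray $\eta_x \in \calL_O^\infty$ and $r \in \Rp$ with $\ds{v', \eta_x(r)} \leq D_6$, and applies the coarse convexity inequality of Proposition~\ref{prop:qgeod-ray-convex} to the pair $(\gamma_{v'}, \eta_x)$. The calibrated choice $D_6 = ED_5 + D$ is designed so that the resulting bound on $\ds{\gamma_{v'}(u), \expe(\OdoX)}$ stays at most $D_6$ across the entire interval.

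Once the image containment is established, $H'$ is automatically a coarse map as the restriction of the coarse map $H$ (verified in Lemma~\ref{lem:H-coarse}) to a metric subspace with codomain $Y$, so it realizes the coarse homotopy $\iota|_Y \simeq \tilde\varphi \circ i$, and the corollary follows. The main obstacle is precisely this image containment for parameters $u \in (s_{v'}, T_{v'}]$: it relies delicately on the interplay between the $D_5$-proximity valid up to $s_{v'}$, the $D_6$-proximity at the endpoint $v'$, the coarse convexity multiplier $E$, and the definitional link $D_6 = ED_5 + D$ that ties $D_5$ and $D_6$ together.
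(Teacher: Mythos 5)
Your strategy --- restricting $H$ to $Z'=Z\cap(Y\times\Rp)$ and establishing $H'(Z')\subseteq Y$ --- matches the paper, and you correctly single out the range containment as the substantive point (the paper merely asserts it). But the argument you give does not establish it, on two counts. First, from $s_{v'}=\sup\{t:\ds{\gamma_{v'}(t),\expe(\OdoX)}\leq D_5\}$ being a supremum you cannot conclude $\gamma_{v'}(u)\in B_{D_5}(\expe(\OdoX))$ for all $u\leq s_{v'}$: that set need not be downward-closed. What one \emph{can} conclude for $u<s_{v'}$ is the following: pick $t^*$ with $u<t^*$ in the set and a ray $\eta$ with $\ds{\gamma_{v'}(t^*),\eta(r)}\leq D_5$, then apply Proposition~\ref{prop:qgeod-ray-convex}~(\ref{qrayconvex}) with $c=u/t^*$ from the common origin $O=\gamma_{v'}(0)=\eta(0)$ to get $\ds{\gamma_{v'}(u),\eta(cr)}\leq cED_5+D\leq ED_5+D=D_6$. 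This lands in $Y$, but not in the smaller ball $B_{D_5}(\expe(\OdoX))$ you asserted.

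Second, and decisively, for $u\in(s_{v'},T_{v'}]$ the bound you claim is not what the convexity inequality produces. Starting from $\ds{v',\eta_x(r)}\leq D_6$ and applying Proposition~\ref{prop:qgeod-ray-convex}~(\ref{qrayconvex}) with $c=u/T_{v'}$ gives $\ds{\gamma_{v'}(u),\eta_x(cr)}\leq cE\cdot D_6+D\leq ED_6+D$, and $ED_6+D>D_6$ here since $D=2(1+E)k_1+C>0$. The ``calibration'' $D_6=ED_5+D$ produces an output $\leq D_6$ only when the input proximity is $\leq D_5$, which is exactly the regime you are \emph{not} in: by definition of $s_{v'}$, for $u\in(s_{v'},T_{v'}]$ one has $\ds{\gamma_{v'}(u),\expe(\OdoX)}>D_5$. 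So your argument only places $H'(Z')$ inside $B_{ED_6+D}(\expe(\OdoX))$, not inside $Y=B_{D_6}(\expe(\OdoX))$, and the statement that the bound ``stays at most $D_6$ across the entire interval'' is unjustified (and in fact false as a consequence of that single application). The corollary is still recoverable, since $Y$ is coarsely dense in $B_{ED_6+D}(\expe(\OdoX))$ so the inclusion is a coarse equivalence and the homotopy can be transported back to $Y$; but that extra step is missing, and it is precisely at the point you yourself flagged as the heart of the matter.
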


This completes the proof of Proposition~\ref{prop:image-exp}.  Combining it
with Proposition~\ref{prop:chmtp-cone-X}, we obtain
Theorem~\ref{thm:cCATHAD}.

\section{Application to the Coarse Baum-Connes conjecture}
\label{sec:appl-coarse-baum}
\subsection{Coarse Baum-Connes conjecture}
\label{sec:coarse-baum-connes}
The coarse category is a category whose objects are proper metric spaces
and whose morphisms are close classes of coarse maps.  Let $X$ be a
proper metric space. There are two covariant functors $X \mapsto
KX_*(X)$ and $X \mapsto K_*(C^*(X))$ from the coarse category to the
category of $\Z_2$-graded Abelian groups. Here the $\Z_2$-graded Abelian
group $KX_*(X)$ is called the {\itshape coarse $K$-homology} of $X$, and the
$C^*$-algebra $C^*(X)$ is called the {\itshape Roe algebra} of $X$.  
Roe \cite{MR1147350} constructed the following {\itshape coarse assembly map}
\begin{align*}
 \mu_*&\colon KX_*(X) \to K_*(C^*(X)),
\end{align*}
which is a natural transformation from the coarse
$K$-homology to the $K$-theory of the Roe algebra. 
For detail, see also \cite{MR1388312}, \cite{MR1344138} and
\cite{MR1817560}.

The important feature of these functors is, both  
the coarse K-homology and the K-theory of the Roe-algebra
are coarse homotopy invariants in the following sense. 

\begin{proposition}
 Let $X$ and $Y$ be proper metric spaces.  If there exists a coarse
 homotopy equivalence map $f\colon X\to Y$, then in the
 following commutative diagram, two vertical homomorphisms both denoted
 by $f_*$ are isomorphisms
 \begin{align*}
  \xymatrix{
  KX_*(X) \ar[d]^\cong_{f_*} \ar[r]^{\mu_*} 
     &K_*(C^*(X))\ar[d]^\cong_{f_*}  \phantom{.}\\
  KX_*(Y) \ar[r]^{\mu_*} &  K_*(C^*(Y)).
  }
 \end{align*}
\end{proposition}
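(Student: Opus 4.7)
The plan is to reduce the claim to the coarse homotopy invariance of each of the two functors $KX_*$ and $K_*(C^*(\cdot))$, together with naturality of the coarse assembly map $\mu_*$. Both invariance statements are classical (established in the cited references of Higson--Roe, Roe, and Willett); the argument I would present here records how they yield the proposition.

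First I would recall that, by the very definition of the coarse category, both functors already descend to close classes of coarse maps. In particular, for any coarse map $\phi\colon X\to Y$, the induced homomorphisms $\phi_*\colon KX_*(X)\to KX_*(Y)$ and $\phi_*\colon K_*(C^*(X))\to K_*(C^*(Y))$ are well defined and depend only on the close class of $\phi$. Moreover, $\mu_*$ is a natural transformation, so the square
\begin{align*}
\xymatrix{
KX_*(X)\ar[d]_{\phi_*}\ar[r]^{\mu_*}& K_*(C^*(X))\ar[d]^{\phi_*}\\
KX_*(Y)\ar[r]^{\mu_*}& K_*(C^*(Y))
}
\end{align*}
commutes for any coarse $\phi$. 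This immediately gives the commutativity in the statement.

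The heart of the matter is to upgrade functoriality to coarse homotopy invariance: for coarsely homotopic coarse maps $f,g\colon X\to Y$, one needs $f_*=g_*$ on both functors. Here I would factor a coarse homotopy $h\colon Z\to Y$ through the two inclusions
\begin{align*}
i_0\colon X\to Z,\ i_0(x)=(x,0),\qquad i_1\colon X\to Z,\ i_1(x)=(x,T_x),
\end{align*}
so that $f=h\circ i_0$ and $g=h\circ i_1$. Thus it suffices to prove $(i_0)_*=(i_1)_*$. This is the main technical point and the step where I would appeal to the standard results in \cite{MR1388312}, \cite{MR1344138}, \cite{MR1817560} and \cite{WillettThesis}: the cylinder $Z\subset X\times\Rp$ is flasque in the $\Rp$-direction, and a coarse Eilenberg-type swindle together with excision for Roe algebras (respectively, for the coarsening construction defining $KX_*$) shows that the two constant-height sections $i_0$ and $i_1$ of the projection $Z\to X$ induce the same map both on $KX_*$ and on $K_*(C^*(\cdot))$.

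With coarse homotopy invariance in hand, the proposition is formal. Given a coarse homotopy equivalence $f\colon X\to Y$ with coarse homotopy inverse $g\colon Y\to X$, we have $g\circ f$ coarsely homotopic to $\mathrm{id}_X$ and $f\circ g$ coarsely homotopic to $\mathrm{id}_Y$, hence
\begin{align*}
g_*\circ f_*=(g\circ f)_*=(\mathrm{id}_X)_*=\mathrm{id},\qquad
f_*\circ g_*=(f\circ g)_*=(\mathrm{id}_Y)_*=\mathrm{id},
\end{align*}
on both $KX_*$ and $K_*(C^*(\cdot))$. Therefore the two vertical maps in the displayed square are isomorphisms, which combined with naturality of $\mu_*$ finishes the proof. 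The main obstacle is the flasqueness/swindle argument underlying coarse homotopy invariance; I would treat this as a black box from the cited literature rather than reproving it here.
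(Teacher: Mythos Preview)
Your proposal is essentially correct in its overall structure, and in fact it is more detailed than the paper's own treatment: the paper does not give a proof at all, but merely states that ``coarse homotopy invariance is proved by the Mayer--Vietoris principle'' and refers to \cite[Proposition 12.4.12]{MR1817560} and \cite[Theorem 4.3.12]{WillettThesis}. So both you and the paper defer the real work to the literature; the difference is only in which mechanism is named.

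Two remarks on that difference. First, the paper identifies the \emph{Mayer--Vietoris principle} as the engine behind coarse homotopy invariance, whereas you invoke flasqueness and an Eilenberg swindle. Both appear in the literature, but the cited references (Higson--Roe's \emph{Analytic $K$-homology} and Willett's thesis) organize the proof around Mayer--Vietoris, so your pointer is slightly off-target relative to what is actually cited. Second, your specific claim that ``the cylinder $Z\subset X\times\Rp$ is flasque in the $\Rp$-direction'' is not accurate as stated: $Z=\{(x,t):0\le t\le T_x\}$ is bounded in the $t$-coordinate over each $x$, so it does not admit a shift map pushing it to infinity. The actual arguments either embed the situation into genuinely flasque spaces like $X\times\Rp$ or, as the paper indicates, proceed via Mayer--Vietoris decompositions of the cylinder. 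Since you explicitly treat this step as a black box, the slip does not invalidate your outline, but it would be worth correcting the informal justification.
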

Coarse homotopy invariance is proved by {\itshape Mayer-Vietoris
principle}. For details, see \cite[Proposition 12.4.12]{MR1817560} and
\cite[Theorem 4.3.12]{WillettThesis}.

\begin{corollary}
\label{cor:hmtpeq-cBC} Let $X$ and $Y$ are proper metric
 spaces. We suppose that $X$ and $Y$ are coarsely homotopy
 equivalent.  If the coarse assembly map $\mu_*\colon KX_*(Y)\to
 K_*(C^*(Y))$ is an isomorphism, then so is the coarse assembly map
 $\mu_*\colon KX_*(X)\to K_*(C^*(X))$.
\end{corollary}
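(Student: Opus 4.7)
The plan is to deduce this immediately from the preceding proposition, which already packages coarse homotopy invariance of both functors $X\mapsto KX_*(X)$ and $X\mapsto K_*(C^*(X))$ together with naturality of the coarse assembly map $\mu_*$ in a single commutative square. So essentially no new work is required beyond a diagram chase.

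More concretely, first I would invoke the hypothesis to fix a coarse homotopy equivalence map $f\colon X\to Y$. By the preceding proposition, $f$ induces a commutative diagram
\begin{align*}
 \xymatrix{
  KX_*(X) \ar[d]^\cong_{f_*} \ar[r]^{\mu_*}
     &K_*(C^*(X))\ar[d]^\cong_{f_*} \\
  KX_*(Y) \ar[r]^{\mu_*} &  K_*(C^*(Y))
  }
\end{align*}
in which both vertical arrows are isomorphisms. By assumption, the bottom horizontal arrow $\mu_*\colon KX_*(Y)\to K_*(C^*(Y))$ is also an isomorphism. Then the top horizontal arrow $\mu_*\colon KX_*(X)\to K_*(C^*(X))$ equals the composite $f_*^{-1}\circ \mu_*\circ f_*$ of three isomorphisms, hence is itself an isomorphism.

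There is essentially no obstacle here: the content is entirely contained in the preceding proposition, and the only thing to check is the elementary two-out-of-three style fact that in a commutative square with two parallel sides isomorphisms, one of the remaining sides is an isomorphism if and only if the other is. So the proof will be a one-line diagram chase of the form above.
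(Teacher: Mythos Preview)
Your proposal is correct and matches the paper's approach exactly: the paper states this as an immediate corollary of the preceding proposition without further proof, and your diagram chase is precisely the implicit argument.
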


Let $M$ be a compact metric space. 
Higson-Roe~\cite[Section 7]{MR1388312} showed that 
the coarse Baum-Connes conjecture holds for the open cone $\calO M$. We
remark that in~\cite{MR1388312}, $M$ is assumed to be finite
dimensional.  However, by \cite[Appendix B]{relhypgrp}, we can remove
this assumption.
\begin{theorem}
\label{thm:open-cone-cBC}
 Let $M$ be a compact metric space. Then the coarse assembly map
 \begin{align*}
 \mu_*&\colon KX_*(\calO M) \to K_*(C^*(\calO M))
\end{align*}
 is an isomorphism.
\end{theorem}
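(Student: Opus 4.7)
The plan is to prove the theorem in two stages: first establish the result for open cones over finite CW-complexes by induction on dimension using a coarse Mayer--Vietoris argument, then pass to arbitrary compact metric spaces by a continuity/approximation argument.

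For the finite-dimensional base case, when $M$ is a single point, $\calO M$ is isometric to $\Rp$, and both $KX_*(\Rp)$ and $K_*(C^*(\Rp))$ are readily computable (for instance by Bott periodicity and the fact that $\Rp$ has asymptotic dimension one), so that $\mu_*$ is directly verified to be an isomorphism. For the inductive step, I would take a finite CW-complex $M$ of dimension $n$ and choose a decomposition $M = A \cup B$ by closed subcomplexes with $A \cap B$ of dimension strictly less than $n$. Then $\calO M = \calO A \cup \calO B$ is an $\omega$-excisive decomposition in the sense of Higson--Roe--Yu, with $\calO A \cap \calO B$ coarsely equivalent to $\calO(A \cap B)$. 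Both $KX_*$ and $K_*(C^*(-))$ admit Mayer--Vietoris long exact sequences for such decompositions, and the assembly map $\mu_*$ is a natural transformation between them, so the Five Lemma combined with the inductive hypothesis closes the induction.

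For a general compact metric space $M$, the plan is to express $M = \varprojlim M_i$ as the inverse limit of finite polyhedra $M_i$, equipped with compatible metrics of uniformly bounded diameter, and to relate $\calO M$ coarsely to an appropriate exhaustion built from $\{\calO M_i\}$. One then wants Milnor-type short exact sequences
\begin{align*}
 0 \to \varprojlim\nolimits^1 KX_{*+1}(\calO M_i) \to KX_*(\calO M) \to \varprojlim KX_*(\calO M_i) \to 0
\end{align*}
and the analogous sequence for $K_*(C^*(\calO M))$. Naturality of $\mu_*$, the finite-dimensional case, and the Five Lemma for the comparison of these two Milnor sequences then yield the isomorphism for $\calO M$ in general.

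The main obstacle is the construction and compatibility of the Milnor sequence on the Roe algebra side. Unlike the coarse $K$-homology, the Roe algebra of the inverse limit does not decompose in any formal way from the Roe algebras of the approximants, so one has to analyse carefully how the $C^*(\calO M_i)$ sit inside $C^*(\calO M)$ under the exhaustion, and ensure that the radial contractions required to identify different choices of metric on $\calO M$ are compatible with the inverse system. This circle of ideas — exhaustion of open cones by cones over finite polyhedral approximations plus control of the associated Milnor $\varprojlim^1$ terms — is precisely what is carried out in the appendix of~\cite{relhypgrp} to remove the finite-dimensionality hypothesis from the original Higson--Roe theorem.
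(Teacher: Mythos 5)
Your outline matches the approach the paper implicitly relies on: the paper itself offers no proof, only a citation to Higson--Roe \cite[Section 7]{MR1388312} for the finite-dimensional case and to \cite[Appendix B]{relhypgrp} for the removal of that hypothesis, and both of those arguments run along the lines you sketch (flasqueness plus Mayer--Vietoris on a cell decomposition, then a $\varprojlim/\varprojlim^1$ comparison for the passage to an arbitrary compact metric space). Your identification of the real technical difficulty --- constructing and comparing the Milnor sequence on the Roe-algebra side --- is also the right one.

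There is, however, an organizational gap in your finite-dimensional induction. You induct on the dimension $n$ of the CW-complex $M$ and propose $M = A \cup B$ with $A\cap B$ of dimension $< n$; but the Mayer--Vietoris/Five Lemma argument then requires the theorem for $\calO A$ and $\calO B$, and $A$, $B$ may still have dimension $n$, so the induction as stated does not close. The standard fix is to induct on the number of cells rather than on dimension: write $M = M' \cup_{\phi} D^k$ with $M'$ having one fewer cell, note that $\calO D^k$ is flasque (the cone over a contractible space admits the obvious shift toward infinity, so both $KX_*$ and $K_*(C^*(-))$ vanish on it) and that the intersection is the cone over a sphere, handled by a preliminary induction over dimension in which the two hemispheres are flasque disks. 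Equivalently, you can treat the base case of your induction as ``cones over disks are flasque'' rather than ``$\calO(\mathrm{pt}) = \Rp$ is computable,'' which also replaces your appeal to Bott periodicity and asymptotic dimension by the cleaner vanishing-by-flasqueness argument. With that reorganization the finite-polyhedron case goes through, and the inverse-limit step you describe is exactly what is carried out in \cite[Appendix B]{relhypgrp}.
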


\begin{proof}
 [Proof of Theorem~\ref{thm:cconvex-cBC}] Let $X$ be a proper
 coarsely convex space.  By Theorem~\ref{thm:cCATHAD}, $X$ is
 coarsely homotopy equivalent to the open cone $\OdoX$.
 Then by Theorem~\ref{thm:open-cone-cBC} and
 Corollary~\ref{cor:hmtpeq-cBC}, the coarse assembly map
 $\mu_*\colon KX_*(X)\to K_*(C^*(X))$ is an isomorphism.
\end{proof}
\subsection{Coarse compactification}
\label{sec:coarse-comp}
Let $X$ be a non-compact proper metric space.  Let $\varphi\colon X\to
\C$ is a function. We say that $\varphi$ is {\itshape slowly oscillating}
if for any $\epsilon>0$ and $R>0$, there exists a bounded subset
$B\subset X$ such that
 \begin{align*}
  \sup\{\abs{\varphi(v)-\varphi(w)}
       : v,w\in X\setminus B,\, \ds{v,w}\leq R\}< \epsilon.
 \end{align*}

\begin{definition}
 Let $X$ be a proper metric space, and let $\bar{X}$ be a
 compactification of $X$. Then $\bar{X}$ is a 
 {\itshape coarse compactification} if
 for any continuous map $\varphi \colon \bar{X}\to \C$, 
 the restriction of $\varphi$ to $X$ is slowly oscillating.
\end{definition}
For detail on coarse compactifications, see \cite[Section 2.2]{MR2007488}, 
\cite[Section 5.1]{MR1147350} or \cite[Section 2.2]{boundary}.
Let $\bar{X}$ be a coarse compactification of $X$. 
Set $\partial X:=\bar{X} \setminus X$. Then there exists a
certain transgression map
\begin{align}
\label{Tr:1} &{T_{\partial X}} \colon KX_*(X) 
 \to \tilde{K}_{*-1}(\partial X).
\end{align}
Here $\tilde{K}_*(\partial X)$ is the reduced K-homology of $\partial X$.
Higson-Roe constructed a homomorphism 
$b\colon K_*(C^*(X))\to \tilde{K}_{*-1}(\partial X)$
such that $T_{\partial X} = b\circ \mu_*$.
Therefore if the transgression
map (\ref{Tr:1}) is injective, then so is the coarse assembly map for $X$.
See \cite[9. Appendix]{MR1388312} and \cite{boundary} for detail.

Let $M$ be a compact metrizable space. The open cone $\calO M$ has a
natural compactification $\calO M \cup M$ by attaching $M$ at
infinity. Indeed, we set $\mathcal{C}M:=[0,1]\times M/(\{0\}\times M)$
and define $\varphi\colon \Rp \to [0,1)$ by
$\varphi(t):=t/(1+t)$. Then a map $tx\mapsto \varphi(t)x$ gives an
embedding of $\calO M$ into $\mathcal{C}M$ with an open dense image.

\begin{proposition}
 Let $M$ be a compact metric space. Then the compactification 
 $\calO M \cup M$ is a coarse compactification, and
 the transgression map
 \begin{align*}
  T_{M} \colon KX_*(\calO M) \to \tilde{K}_{*-1}(M)
 \end{align*}
 is an isomorphism.
\end{proposition}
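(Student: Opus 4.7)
The plan is to prove the two assertions in turn: that $\calO M \cup M$ is a coarse compactification, and that the transgression $T_M$ is an isomorphism.

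\emph{Coarse compactification.} First I would identify $\calO M \cup M$ with the compact metrizable space $\mathcal{C}M := [0,1] \times M/(\{0\}\times M)$ via the homeomorphism $tx \mapsto (t/(1+t))x$ on $\calO M$, extended by the identity on $M$ (viewed as $\{1\}\times M \subset \mathcal{C}M$). Fix a compatible metric $\bar d$ on $\mathcal{C}M$. Any continuous $f \colon \calO M \cup M \to \C$ is then uniformly continuous on $\mathcal{C}M$. Given $\epsilon, R > 0$, I would look for $T > 0$ such that $B := \{tx \in \calO M : t \leq T\}$ verifies the slow oscillation condition. If $tx, sy \notin B$ and $\ds{tx,sy} \leq R$, the definition of $d_{\calO M}$ forces $|t-s| \leq R$ and $\min\{t,s\}\, d_M(x,y) \leq R$, hence $d_M(x,y) \leq R/T$. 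The three distances $\bar d(tx, x)$, $\bar d(x,y)$, and $\bar d(y, sy)$ in $\mathcal{C}M$ all tend to $0$ uniformly as $T \to \infty$ (the outer two behave like $1/(1+T)$, the middle is bounded by $R/T$), so uniform continuity of $f$ lets me choose $T$ with $|f(tx) - f(sy)| < \epsilon$.

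\emph{Transgression isomorphism.} I would reduce to known results. Since $T_M = b \circ \mu_*$ for the Higson--Roe boundary map $b \colon K_*(C^*(\calO M)) \to \tilde K_{*-1}(M)$, and since $\mu_*$ is an isomorphism by Theorem~\ref{thm:open-cone-cBC}, it suffices to show that $b$ is an isomorphism. For $M$ of finite covering dimension this is the content of Higson--Roe~\cite[Section~7]{MR1388312}, obtained via a Mayer--Vietoris decomposition of $\calO M$ into cones over closed subsets of $M$, which produces the dimension shift. To remove the finite-dimensionality hypothesis, I would write $M = \varprojlim M_n$ as an inverse limit of finite-dimensional compacta, check compatibility of both sides with this limit in the appropriate (Milnor $\varprojlim^1$) sense, and pass to the limit.

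The main obstacle is precisely this last step: $K_*(C^*(\calO M))$ does not obviously respect inverse limits of $M$, since the Roe algebra is built from operators on the whole cone rather than from the combinatorics of $M$. Controlling this is the content of~\cite[Appendix~B]{relhypgrp}, which is the source I would cite to close the argument.
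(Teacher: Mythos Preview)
The paper does not give its own proof of this proposition at all; it simply cites \cite[Proposition~4.3]{MR1388312}, \cite[Lemma~4.5.3]{WillettThesis}, and \cite[Lemma~5.1]{boundary}. So there is no argument in the paper to compare your sketch against. That said, let me comment on the two parts of your proposal.

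Your argument for the coarse compactification is correct and complete. It is a direct computation using uniform continuity on the compact model $\mathcal{C}M$, and it gives more than the paper, which provides nothing beyond the citation.

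Your treatment of the transgression isomorphism has a gap. You factor $T_M = b \circ \mu_*$, invoke Theorem~\ref{thm:open-cone-cBC} for $\mu_*$, and then assert that ``$b$ is an isomorphism'' is the content of \cite[Section~7]{MR1388312}. But Section~7 of Higson--Roe is the proof that $\mu_*$ is an isomorphism for open cones, not a computation of $K_*(C^*(\calO M))$ or of $b$. In fact the logical flow in \cite{MR1388312} runs the other way: one first establishes $KX_*(\calO M) \cong \tilde K_{*-1}(M)$ directly (this is their Proposition~4.3, the very reference the paper cites here), and this identification of the left-hand side is part of the input to the Section~7 proof that $\mu_*$ is an isomorphism. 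So your reduction is, at best, circular with respect to the sources you are citing, and at worst attributes to those sources a statement about $b$ that they do not prove independently.

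The route taken by the cited references is to compute $T_M$ directly: the coarse $K$-homology $KX_*(\calO M)$ coincides with $K_*^{\mathrm{lf}}(\calO M)$ since the open cone is uniformly contractible (or, equivalently, is coarsely homotopy equivalent to its Rips/anti-\v{C}ech nerves), and then $K_*^{\mathrm{lf}}(\calO M) \cong \tilde K_{*-1}(M)$ is a suspension/excision computation for the pair $(\mathcal{C}M, M)$. This does not go through $\mu_*$ or the Roe algebra at all. If you want to follow your factorization strategy instead, you would need an independent computation of $K_*(C^*(\calO M))$ via Paschke duality on the corona, which is possible but is not what you have sketched.
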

For the proof see 
\cite[Proposition 4.3]{MR1388312}, \cite[Lemma 4.5.3]{WillettThesis} 
or \cite[Lemma 5.1]{boundary}.

\begin{proposition}
\label{prop:vanishing-var}
 Let $X$ be a proper coarsely convex space. 
 Then $\bar{X}=X\cup \partial X$ is a coarse compactification, 
 where $\partial X$ is the ideal boundary of $X$.
\end{proposition}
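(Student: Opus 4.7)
The plan is to argue by contradiction, combining the compactness of $\bar{X}=X\cup \partial X$ (Proposition~\ref{lem:dXcptmet} via the homeomorphism $\Phi_O$) with the Gromov-product behaviour enforced by coarse convexity. Suppose $\varphi\colon \bar{X}\to \C$ is continuous but $\varphi|_X$ fails to be slowly oscillating; then I can extract $\epsilon>0$, $R>0$ and sequences $\{v_n\},\{w_n\}\subset X$ satisfying $\ds{v_n,w_n}\leq R$, $\ds{O,v_n}\to\infty$ and $\absline{\varphi(v_n)-\varphi(w_n)}\geq \epsilon$. By compactness and metrizability of $\bar{X}$, after passing to subsequences I may assume $v_n\to x$ and $w_n\to y$ in $\bar{X}$; since $\ds{O,w_n}\geq \ds{O,v_n}-R\to \infty$, both $x$ and $y$ must lie in $\partial X$.

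The heart of the argument is then the claim $x=y$. For each $n$ choose good quasi-geodesic segments $\gamma_n,\eta_n\in \calL_O$ with $\gamma_n(a_n)=v_n$ and $\eta_n(b_n)=w_n$. Condition (\ref{qparam-reg})$^q$ of Definition~\ref{def:cBNC} bounds $\absline{a_n-b_n}\leq \theta(R)$, so setting $T_n:=\min\{a_n,b_n\}\to\infty$ and applying Lemma~\ref{lem:ray-same-param} to $\gamma_n,\eta_n$ (which share the basepoint $O$) yields a constant $M=M(R)$ with $\ds{\gamma_n(T_n),\eta_n(T_n)}\leq M$. Using the convexity estimate (\ref{qconvex})$^q$ together with $\gamma_n(0)=\eta_n(0)=O$, for every $c\in[0,1]$
\begin{align*}
 \ds{\gamma_n(cT_n),\eta_n(cT_n)}\leq cEM+C.
\end{align*}
Choosing a single $c_0>0$ with $c_0EM+C\leq D_1$ independently of $n$ forces $(\gamma_n\mid \eta_n)\geq c_0T_n\to\infty$, and Lemma~\ref{lem:chooseRep-full} upgrades this to $(v_n\mid w_n)\to\infty$. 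Since $v_n\to x$ and $w_n\to y$ also give $(v_n\mid x)\to\infty$ and $(w_n\mid y)\to\infty$ (directly from the definition of the entourages $V_n$ in Section~\ref{sec:topon-X-dX}), iterating the quasi-ultrametric inequality of Corollary~\ref{cor:qrayultm} twice yields $(x\mid y)=\infty$, hence $x=y$.

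Once $x=y$ is established, continuity of $\varphi$ at this common boundary point makes $\varphi(v_n)-\varphi(w_n)\to 0$, contradicting $\absline{\varphi(v_n)-\varphi(w_n)}\geq \epsilon$. The main obstacle I anticipate is the Gromov-product estimate in the middle paragraph: one must exploit the quantitative cooperation between the multiplicative constant $E$, the additive error $C$ and the parameter-regularity function $\theta$ in the definition of coarse convexity in order to convert the \emph{local} hypothesis $\ds{v_n,w_n}\leq R$ into the \emph{global} divergence $(v_n\mid w_n)\to\infty$. Without this quantitative bridge the two boundary limits $x,y$ need not coincide and the coarse compactification property could fail.
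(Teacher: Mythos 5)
Your proof is correct, and the heart of it — that $\ds{v,w}\leq R$ together with $\ds{O,v}$ large forces $(v\mid w)$ large, via the convexity inequality applied to the good segments from $O$ to $v$ and to $w$ — is exactly the computation the paper uses. The logical packaging differs, though. The paper proceeds directly: continuity of $\varphi$ on the compact space $\bar{X}$ gives uniform continuity with respect to the entourages $V_n$, so for a given $\epsilon$ one picks $n$, then exhibits an explicit radius $d=d(n,R)$ outside which any pair $v,w$ with $\ds{v,w}\leq R$ satisfies $\ds{\gamma_v(n),\gamma_w(n)}\leq D_1$, hence $(v\mid w)\geq n$, hence $(v,w)\in V_n$. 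You instead argue by contradiction and sequential compactness: you extract convergent subsequences $v_n\to x$, $w_n\to y$ in $\bar{X}$, prove $(v_n\mid w_n)\to\infty$ by the same convexity estimate (routed through Lemma~\ref{lem:ray-same-param}), and then invoke the quasi-ultrametric inequality of Corollary~\ref{cor:qrayultm} twice to force $(x\mid y)=\infty$ and hence $x=y$, after which pointwise continuity at $x$ gives the contradiction. Both are valid; the paper's version is a bit leaner because it never needs the quasi-ultrametric inequality or the identification $x=y$ — it translates uniform continuity into the entourage language directly — whereas yours makes the geometry at infinity more explicit at the cost of an extra limiting argument. Your proof silently uses that $\calL_O$ requires $a_n\geq 2\theta(0)$ and that $(v\mid w)$ is only defined for $v,w$ outside $B_{2\lambda\theta(0)+k}(O)$; both hold for $n$ large since $a_n\to\infty$, but it is worth noting.
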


\begin{proof}
 Let $X$ be a $(\lambda,k,E,C,\theta,\calL)$-coarsely convex
 space.  
 Let $O\in X$ be the base point and 
 let $\partial X$ be the ideal boundary with respect to $O$. 
 
 Let $\varphi \colon \bar{X}\to \C$ be a continuous map. We will show
 that $\varphi$ is slowly oscillating. Since $X\cup \partial X_O$ is
 compact, for any $\epsilon>0$, there exists $n>0$ such that if
 $(p,q)\in V_n$ then
 $\abs{\varphi(p)-\varphi(q)}<\epsilon $, where $V_n$ is an entourage of
 the uniform structure defined in Section~\ref{sec:topon-X-dX}. 
 
 Now we fix $R>1$ and set 
 $d:= \lambda n\{E(R + \lambda\theta(R)+k)\}+k$.
 Let $v,w\in X\setminus B_{d}(O)$ be points with $\ds{v,w}\leq R$.
 Let $\gamma_v,\gamma_w\in \calL_O$ be quasi-geodesic segments such that
 $\gamma_v\colon [0,a_v]\to X$, $\gamma_w\colon [0,a_w]\to X$, 
 $\gamma_v(a_v) =v$ and $\gamma_w(a_w)=w$. Set $a:=\min\{a_v,a_w\}$. 
 We remark that $a\geq (d-k)/\lambda\geq n$. 
 We can suppose without loss of generality that $a=a_v\leq a_w$.
Then 
 \begin{align*}
  \ds{\gamma_v(n),\gamma_w(n)}\leq& 
  \frac{n}{a}E\ds{\gamma_v(a),\gamma_w(a)} +C\\
  \leq& \frac{\lambda n}{d-k}E
  (\ds{\gamma_v(a_v),\gamma_w(a_w)}+\ds{\gamma_w(a_w),\gamma_w(a_v)}) +C\\
  \leq& \frac{\lambda n}{d-k}E(R + \lambda\theta(R)+k)+C\\
  \leq& 1+C \leq D_1.
 \end{align*}
 It implies that $(v\mid w)\geq (\gamma_v\mid \gamma_w) \geq n$. 
 Thus $(v,w)\in V_n$, and so $\abs{\varphi(v)-\varphi(w)}<\epsilon$.
\end{proof}

\begin{theorem}
 Let $X$ be a proper coarsely convex space. 
 Then the transgression map
 \begin{align*}
  {T_{\partial X}} \colon KX_*(X) \to \tilde{K}_{*-1}(\partial X).  
 \end{align*}
 is an isomorphism.
\end{theorem}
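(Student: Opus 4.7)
The plan is to combine Theorem~\ref{thm:cCATHAD} with the open-cone case just established. Let $f\colon \OdoX \to X$ be the coarse homotopy equivalence $\expe\circ\radcon$ (composed with the inclusion $Y\hookrightarrow X$) constructed in Section~\ref{sec:main-result}. Coarse homotopy invariance of $KX_*$ makes $f_*\colon KX_*(\OdoX)\to KX_*(X)$ an isomorphism, and by the preceding proposition $T_{\partial_O X}\colon KX_*(\OdoX)\to \tilde{K}_{*-1}(\partial_O X)$ is an isomorphism. It therefore suffices to prove commutativity of the diagram
\[
\xymatrix{
KX_*(\OdoX) \ar[r]^{T_{\partial_O X}} \ar[d]_{f_*} & \tilde{K}_{*-1}(\partial_O X) \ar[d]^{\cong} \\
KX_*(X) \ar[r]^{T_{\partial X}} & \tilde{K}_{*-1}(\partial X)
}
\]
where the right-hand vertical is induced by the homeomorphism $\Phi_O^{-1}\colon \partial_O X\to \partial X$ of Section~\ref{sec:choice-base-point}.

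The central step is to extend $f$ to a continuous map $\bar{f}\colon \OdoX\cup \partial_O X\to X\cup \partial X$ that restricts to $\Phi_O^{-1}$ on the boundaries. Given a sequence $t_n x_n\in \OdoX$ with $t_n\to\infty$ and $x_n\to x_\infty$ in $\partial_O X$, set $v_n:=\expe(t_n x_n)=\eta_{x_n}(t_n^{1/\epsilon})$. Choosing any quasi-geodesic segment $\gamma_n\in\calL_O$ with $\gamma_n(a_n)=v_n$, Lemma~\ref{lem:ray-same-param} applied to $\gamma_n$ and $\eta_{x_n}$, which share the endpoint $v_n$, bounds $\ds{\gamma_n(t),\eta_{x_n}(t)}$ by a universal constant for $t\leq\min\{a_n,t_n^{1/\epsilon}\}$. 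Rescaling via the quasi-convexity inequality in Proposition~\ref{prop:qgeod-ray-convex} yields $(v_n\mid x_n)\geq c\,t_n^{1/\epsilon}\to\infty$ for an appropriate $c>0$, and Corollary~\ref{cor:qrayultm} together with $(x_n\mid x_\infty)\to\infty$ gives $(v_n\mid x_\infty)\to\infty$, which is precisely the required convergence in the topology of Section~\ref{sec:topon-X-dX}.

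With continuity of $\bar{f}$ in hand, naturality of the transgression map under continuous maps of coarse compactifications yields commutativity of the square, and this naturality is the standard functoriality recalled in Section~\ref{sec:coarse-comp}, where $T_{\partial X}$ is built from the Mayer-Vietoris principle applied to the pair $(X,X\cup\partial X)$. The main technical obstacle is the uniform joint continuity of $\bar{f}$ at boundary points: one must control $\expe(tx)$ as both $t$ and $x$ vary simultaneously, which is why the argument must route through the quasi-ultrametric inequality for the extended Gromov product rather than direct estimates along a single fixed ray.
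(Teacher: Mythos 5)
Your proof is correct and follows essentially the same route as the paper's. The paper simply presents the commutative square and says the result "follows immediately"; you actually supply the missing content, namely that the coarse homotopy equivalence $\expe\circ\radcon\colon\OdoX\to X$ extends to the two compactifications so that the right-hand vertical is $(\Phi_O^{-1})_*$, which is precisely the identification the paper uses implicitly. Your Gromov-product estimate---showing $\ds{\gamma_n(t),\eta_{x_n}(t)}$ is bounded by a universal constant on $[0,\min\{a_n,t_n^{1/\epsilon}\}]$ via Lemma~\ref{lem:ray-same-param}, rescaling with Proposition~\ref{prop:qgeod-ray-convex}~(\ref{qrayconvex}) to get $(v_n\mid x_n)\geq c(t_n^{1/\epsilon}-\tilde{\theta}(0))$, then applying Corollary~\ref{cor:qrayultm}---is the right argument, and together with the metrizability of $X\cup\partial_O X$ gives the required sequential convergence $\bar{f}(t_nx_n)\to x_\infty$. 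Two small caveats. First, $\expe\circ\radcon$ is generally not continuous on $\OdoX$ (the representative $\eta_x$ may vary discontinuously in $x$), so speaking of "a continuous map $\bar{f}$" is loose; what you in fact establish, and what naturality of $T_{\partial X}$ requires, is that $f$ is coarse and its boundary limit map $\partial_O X\to\partial X$ is well-defined and continuous, equal to $\Phi_O^{-1}$. Second, the naturality of the transgression map under coarse maps extending to the compactifications is not actually "recalled in Section~\ref{sec:coarse-comp}" of the paper; it is a standard fact from the cited references (\cite{MR1388312}, \cite{boundary}) that both you and the paper are relying on.
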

\begin{proof}
 The statement follows immediately from the following diagram.
 \begin{align*}
  \xymatrix{
  KX_*(X) \ar[d]^\cong \ar[r]^{T_{\partial X}} 
     &\tilde{K}_{*-1}(\partial X) \ar@{=}[d]  \\
  KX_*(\OdoX) \ar[r]^{T_{\partial X}} &\tilde{K}_{*-1}(\partial X)
  }
 \end{align*}
\end{proof}

\subsection{Direct product with polycyclic groups}

One of advantages of the coarse Baum-Connes conjecture is that
the coarse Mayer-Vietoris principle holds for both sides of the
coarse assembly maps.  As an application of this, we have the
following (\cite[Proposition 7.2]{product-cBC}).

\begin{proposition}
\label{prop:assembly-map-solv}
Let $\LieG$ be a simply connected solvable Lie group 
with a lattice.
We equip $\LieG$ with a proper left invariant metric.
Let $Y$ be a proper metric space.
Suppose that $Y$ satisfies the coarse Baum-Connes conjecture.
Then so does the direct product $Y\times \LieG$.
\end{proposition}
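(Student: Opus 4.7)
The plan is to reduce to lattices via quasi-isometry and then induct on the Hirsch length of the lattice using a coarse Mayer--Vietoris decomposition. By a theorem of Mostow, a lattice $\Gamma \leq \LieG$ in a simply connected solvable Lie group is polycyclic; since $\Gamma$ acts on $\LieG$ properly and cocompactly by isometries, the \v{S}varc--Milnor lemma provides a quasi-isometry between $\Gamma$ (with a word metric) and $\LieG$. As both $KX_*$ and $K_*(C^*(-))$ are coarse invariants and the coarse assembly map is natural, it suffices to prove that $Y \times \Gamma$ satisfies the coarse Baum--Connes conjecture for every polycyclic group $\Gamma$, assuming $Y$ does.

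I would induct on the Hirsch length $h(\Gamma)$. When $h(\Gamma) = 0$ the group $\Gamma$ is finite, $Y \times \Gamma$ is coarsely equivalent to $Y$, and the conclusion follows from the hypothesis. In the inductive step, pick $N \triangleleft \Gamma$ with $\Gamma/N$ cyclic; the finite cyclic case is absorbed by replacing $\Gamma$ with its finite-index subgroup $N$, so assume $\Gamma/N \cong \Z$ and let $\pi\colon \Gamma\to \Z$ be the projection. Define
\begin{align*}
\Gamma_+ := \pi^{-1}(\Z_{\geq 0}), \qquad \Gamma_- := \pi^{-1}(\Z_{\leq 0}).
\end{align*}
The decomposition $\Gamma = \Gamma_+ \cup \Gamma_-$ is $\omega$-excisive, with $\Gamma_+ \cap \Gamma_- = N$, and taking the product with $Y$ yields an $\omega$-excisive decomposition $Y \times \Gamma = (Y\times \Gamma_+) \cup (Y\times \Gamma_-)$ having intersection $Y\times N$.

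The decisive step is to show that $Y\times \Gamma_\pm$ are flasque. Choose $s\in \Gamma$ with $\pi(s) = 1$. Right translation $g\mapsto gs$ on $\Gamma$ has uniformly bounded displacement $d(gs,g) = d(s,e)$ since the metric is left-invariant, maps $\Gamma_+$ into itself, and its iterates push bounded subsets out of every bounded set. Extending by the identity in the $Y$-factor produces a flasque structure on $Y\times \Gamma_+$; using $s^{-1}$ in place of $s$ handles $Y\times \Gamma_-$. Hence $KX_*(Y\times\Gamma_\pm) = 0$ and $K_*(C^*(Y\times\Gamma_\pm)) = 0$, and the coarse Mayer--Vietoris sequences collapse to connecting isomorphisms
\begin{align*}
KX_*(Y\times\Gamma) \cong KX_{*-1}(Y\times N), \qquad K_*(C^*(Y\times\Gamma)) \cong K_{*-1}(C^*(Y\times N)).
\end{align*}
Naturality of the coarse assembly map and the five lemma then reduce CBC for $Y\times\Gamma$ to CBC for $Y\times N$. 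Since $h(N) = h(\Gamma) - 1$, the induction closes.

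The main obstacle is the technical verification of the coarse Mayer--Vietoris machinery in the product setting: that $Y\times\Gamma = (Y\times\Gamma_+)\cup (Y\times\Gamma_-)$ is genuinely $\omega$-excisive (requiring the projection $\pi$ to be coarsely well behaved on $\Gamma$), that flasqueness passes cleanly to the product, and that the long exact sequences for $KX_*$ and for $K_*$ of the Roe algebra commute with the coarse assembly map as required by the five lemma. These are standard but delicate points, especially on the Roe-algebra side where the Mayer--Vietoris sequence arises from an extension of $C^*$-algebras whose ideals must be identified correctly with $C^*(Y\times\Gamma_\pm)$.
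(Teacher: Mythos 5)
The overall strategy—replace $\LieG$ by a lattice $\Gamma$ via \v{S}varc--Milnor, then run a coarse Mayer--Vietoris induction on Hirsch length using the half-spaces $\Gamma_\pm = \pi^{-1}(\Z_{\geq 0})$ and $\pi^{-1}(\Z_{\leq 0})$—is reasonable, and the coarse excisiveness of the decomposition does check out. But the crucial flasqueness claim has a genuine gap. Flasqueness requires not only that the shift $\phi(g) = gs$ be close to the identity and that its iterates eventually leave every bounded set, but also that the family $\{\phi^n\}$ be \emph{equi-coarse}: there must be a single function controlling $\ds{\phi^n(g),\phi^n(h)}$ in terms of $\ds{g,h}$ uniformly in $n$. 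This is exactly the hypothesis you did not verify, and it fails here. Since the word metric is only left-invariant, $\ds{gs^n,hs^n} = |s^{-n}(g^{-1}h)s^n|$, the word length of the conjugate of $g^{-1}h$ by $s^n$. For a nonabelian polycyclic $\Gamma$ this conjugation distorts lengths without bound: in $\Z^2\rtimes_A\Z$ with $A$ hyperbolic, $|s^{-n}(1,0)s^n| = |A^{-n}(1,0)|_\Gamma$ grows linearly in $n$; already for the discrete Heisenberg group $|s^{-n}y s^n| = |y z^{-n}|_\Gamma \approx \sqrt{n}$. So the iterates of right translation are not equi-coarse, and neither $\Gamma_+$ nor $Y\times\Gamma_+$ is flasque via this map. (Using left translation instead trades one problem for the other: left translation is an isometry, hence the iterates are trivially equi-coarse, but $\ds{s^n g, g} = |g^{-1}s^n g|$ is not bounded in $g$, so it is not close to the identity.) Your argument is correct only in the abelian case, where the word metric is bi-invariant.

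This is not a cosmetic issue: equi-coarseness is precisely what makes the Eilenberg swindle on the Roe algebra well defined, so without it the claimed vanishing $K_*(C^*(Y\times\Gamma_\pm)) = 0$ is unsupported, and the whole induction collapses. Dealing with the exponential distortion in solvable groups is the real technical content of the cited result \cite[Proposition 7.2]{product-cBC}, and the present paper does not reprove it; you would need a substitute for the flasqueness of the half-spaces (or a different decomposition entirely) before your Mayer--Vietoris step can be completed. A secondary, minor issue is the treatment of finite cyclic quotients in the induction: replacing $\Gamma$ by a finite-index $N$ does not decrease Hirsch length, so you should pass once and for all to a finite-index poly-$\Z$ subgroup before starting the induction rather than absorbing the finite case step by step.
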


\begin{corollary}
\label{cor:cconvexXpolyc}
 Let $\LieG$ be a simply connected solvable Lie group with a lattice,
 and let $X$ be a proper coarsely convex space.
 Then the direct product $X\times \LieG$ satisfies the coarse
 Baum-Connes conjecture.
\end{corollary}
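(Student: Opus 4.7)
The plan is to chain together the two results cited immediately before the corollary. Since $X$ is a proper coarsely convex space, Theorem~\ref{thm:cconvex-cBC} gives that $X$ satisfies the coarse Baum-Connes conjecture, that is, the coarse assembly map $\mu_*\colon KX_*(X)\to K_*(C^*(X))$ is an isomorphism. Taking $Y=X$ in Proposition~\ref{prop:assembly-map-solv}, and using the hypothesis that $\LieG$ is a simply connected solvable Lie group equipped with a proper left invariant metric and admitting a lattice, we conclude that the direct product $Y\times \LieG = X\times \LieG$ satisfies the coarse Baum-Connes conjecture.

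There is essentially no obstacle here, since all of the hard work has been carried out in establishing Theorem~\ref{thm:cCATHAD} (and hence Theorem~\ref{thm:cconvex-cBC}) as well as Proposition~\ref{prop:assembly-map-solv}, which is extracted from \cite{product-cBC} via a coarse Mayer-Vietoris argument. The only point to verify, implicit in applying Proposition~\ref{prop:assembly-map-solv}, is that $X\times \LieG$ be treated as a proper metric space with an appropriate product metric (the $\ell_1$-metric of the factor metrics is proper since both factors are proper), which is routine. Thus the corollary is a direct corollary of the quoted two results.
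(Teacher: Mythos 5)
Your proposal is correct and follows exactly the route the paper intends: Theorem~\ref{thm:cconvex-cBC} gives the coarse Baum-Connes conjecture for $X$, and Proposition~\ref{prop:assembly-map-solv} with $Y=X$ then yields it for $X\times\LieG$. The paper presents this as an immediate corollary and offers no further argument, so there is nothing more to compare.
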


We remark that every polycyclic group $G$ admits a normal subgroup $G'$
of finite index in $G$ which is isomorphic to a lattice in a simply
connected solvable Lie group. See \cite[Theorem 4.28]{MR0507234}.




\subsection{Relatively hyperbolic groups}

In \cite{relhypgrp}, the authors studied
the coarse Baum-Connes conjecture for relatively hyperbolic groups.
\begin{theorem}
[\cite{relhypgrp}]
\label{relhyp_12} 
 Let $G$ be a finitely generated group and
 $\famP=\{P_1,\dots,P_k\}$ be a finite family of subgroups.  Suppose
 that G is hyperbolic relative to $\famP$.  If each subgroup $P_i$
 satisfies the coarse Baum-Connes conjecture, and admits a finite
 $P_i$-simplicial complex which is a universal space for proper actions,
 then $G$ satisfies the coarse Baum-Connes conjecture.  Moreover, if
 $G$ is torsion-free and each subgroup $P_i$ is classified by a
 finite simplicial complex, then $G$ satisfies the Novikov
 conjecture.
\end{theorem}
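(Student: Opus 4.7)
The plan is to reduce the coarse Baum-Connes conjecture for $G$ to that for the Groves-Manning cusped space combined with the conjecture for each peripheral subgroup $P_i$, via a coarse Mayer-Vietoris argument. First, I would construct the cusped space $X = X(G, \famP)$ by attaching a combinatorial horoball to every left coset $gP_i$ in the Cayley graph of $G$. By Groves-Manning, $X$ is a proper geodesic Gromov hyperbolic space carrying a proper, isometric $G$-action that permutes the horoballs. Since geodesic Gromov hyperbolic spaces form a subclass of geodesic coarsely convex spaces (taking $\calL$ to be the family of all geodesic segments), Theorem~\ref{thm:cconvex-cBC} already tells us that $X$ satisfies the coarse Baum-Connes conjecture.

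Next, I would build an auxiliary $G$-space $\hat{X}$ by replacing each horoball in $X$ by a copy of the finite $\underline{E}P_i$ given by hypothesis, suitably thickened and glued along the corresponding coset. The finiteness of $\underline{E}P_i$ ensures that $G$ acts properly and cocompactly on $\hat{X}$, so $\hat{X}$ is coarsely equivalent to $G$ with a word metric. I would then write $\hat{X} = A \cup B$ as an $\omega$-excisive decomposition, where $B$ is a $G$-invariant neighborhood of the disjoint union of the thickened $\underline{E}P_i$-pieces, and $A$ is a complementary thickening coarsely equivalent to the complement of the horoballs inside $X$. The intersection $A \cap B$ is a coarse disjoint union, over cosets, of copies of $P_i$.

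The main step is now a five-lemma argument on the coarse Mayer-Vietoris sequence for both the coarse $K$-homology and the $K$-theory of the Roe algebra. The pieces $B$ and $A \cap B$ are coarse disjoint unions of copies of $P_i$ (or $\underline{E}P_i$, which are coarsely equivalent to $P_i$), so they satisfy CBC by the hypothesis on each $P_i$ combined with the standard permanence of CBC under coarse disjoint union of uniformly bounded-geometry pieces. The piece $A$ is coarsely embedded in the hyperbolic cusped space $X$, and a further excisive comparison with $X$ itself transfers CBC from $X$ to $A$. The five lemma then yields CBC for $\hat{X}$, hence for $G$.

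The main obstacle is carrying out the identifications in the middle paragraph precisely enough that the coarse geometry of $A$ inside $X$ and of $B$ as a disjoint union of $\underline{E}P_i$'s are both compatible with the Mayer-Vietoris decomposition; this requires a careful choice of horoball thickness and of the gluing of $\underline{E}P_i$ onto the horospherical copy of $P_i$, together with a quantitative bounded geometry argument for the peripheral pieces to handle infinite unions. Finally, for the Novikov conjecture, if $G$ is torsion-free with each $BP_i$ finite, then Dahmani's construction provides a finite model of $BG$, and the descent principle of Higson-Roe deduces split-injectivity of the Baum-Connes assembly map for $G$ from CBC, implying the Novikov conjecture in the standard way.
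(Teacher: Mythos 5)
The paper does not prove this theorem; it is quoted from \cite{relhypgrp}, so there is no in-paper argument to compare against. Your high-level plan (Groves-Manning cusped space plus a coarse Mayer-Vietoris argument) does match the strategy of the cited reference, and the last paragraph on the Novikov conclusion (Dahmani's finite classifying space together with the descent principle) is fine.

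The real gap is in how you dispose of the coarse disjoint union pieces. You claim $B$ and $A\cap B$ satisfy the coarse Baum-Connes conjecture ``by the hypothesis on each $P_i$ combined with the standard permanence of CBC under coarse disjoint union of uniformly bounded-geometry pieces.'' There is no such permanence result. A coarse disjoint union of bounded-degree expander graphs is a bounded-geometry coarse disjoint union of finite metric spaces, each of which trivially satisfies CBC, yet the coarse assembly map for the union fails to be surjective. What rescues the relatively hyperbolic setting is the much stronger structure present: the pieces are, up to finitely many types, isometric copies of a single $P_i$ (respectively of a single combinatorial horoball over $P_i$), and each $P_i$ admits a finite $P_i$-simplicial model for the universal space for proper actions. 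Proving CBC for such \emph{uniform} coarse disjoint unions, using the finiteness hypothesis in an essential way, is precisely the technical core of \cite{relhypgrp}; your proposal compresses it into an appeal to a non-theorem. The same issue reappears in the step transferring CBC from the cusped space to $A$, since there the complementary pieces are again coarse disjoint unions of horoballs and of horospherical slices. Until that uniform-disjoint-union lemma is stated and proved, the five-lemma scaffolding does not close.
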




Let $\classNPCPC$ be a class of groups consisting of all finite direct
products of hyperbolic groups, CAT(0)-groups, systolic groups, and
polycyclic groups.  Each group $P$ in $\classNPCPC$ admits a finite
$P$-simplicial complex which is a universal space for proper actions.
We refer \cite{Chepoi-Osajda-Dismant2015} for the case of
systolic groups.  If $P$ in $\classNPCPC$ is torsion free,
then $P$ is classified by a finite simplicial complex.
Now 
Theorem~\ref{thm:relhypPC} follows from Theorem~\ref{thm:cconvex-cBC}, 
Theorem~\ref{relhyp_12} and
Proposition~\ref{prop:assembly-map-solv}, 

\begin{remark}
 The $3$-dimensional discrete Heisenberg group never act geometrically
 on any coarsely convex space, since it does not satisfy any quadratic
 isoperimetric inequality~\cite[Example 8.1.1]{EetalWPG}, which violate
 the conclusion of Corollary~\ref{cor:CCGroup}.  Hence
 Theorem~\ref{thm:relhypPC} does not follows directly from
 Theorem~\ref{thm:cconvex-cBC}.
\end{remark}

More generally, by a similar argument as the proof of
\cite[Theorem 1.1]{product-cBC}, we can show the following.

\begin{theorem}
\label{thm:prod-relhyp} 
Let $m$ be a positive integer. 
For $1\leq j\leq m$, let $G^j$ be a group in $\classNPCPC$, or, be a
finitely generated group which is
hyperbolic relative to a family of subgroups 
$\famP^j= \{P^j_1,\dots,P^j_{k^j}\}$ consisting of members 
of $\classNPCPC$. Then the direct product group
\begin{align*}
 G:= G^1\times \dots \times G^m
\end{align*}
satisfies the coarse Baum-Connes conjecture.  Moreover, if $G$
is torsion-free, then $G$ satisfies the Novikov conjecture.
\end{theorem}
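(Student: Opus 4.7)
The plan is to adapt the argument of \cite[Theorem 1.1]{product-cBC}, which established the analogous statement using only hyperbolic and Busemann non-positively curved building blocks, to the broader class of factors treated here. The genuinely new ingredients are Corollary~\ref{cor:syst-cBC} (systolic groups are coarsely convex and hence satisfy the coarse Baum-Connes conjecture) and Proposition~\ref{prop:assembly-map-solv} (polycyclic factors may be absorbed). First I would rearrange the product: each factor $G^j$ of type (a) is itself a direct product of hyperbolic, CAT(0), systolic, and polycyclic subfactors, so by commutativity of direct products, $G$ is isomorphic to $X\times P\times H^1\times\cdots\times H^\ell$, where $X$ collects all hyperbolic, CAT(0), and systolic subfactors (so that $X$ is coarsely convex by Theorem~\ref{thm:systolic-cConvex} together with Proposition~\ref{prop:productspace}), $P$ is polycyclic, and each $H^i$ is relatively hyperbolic with peripheral subgroups in $\classNPCPC$. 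Since every polycyclic group is virtually a lattice in a simply connected solvable Lie group, Proposition~\ref{prop:assembly-map-solv} reduces the problem to showing that $X\times H^1\times\cdots\times H^\ell$ satisfies the coarse Baum-Connes conjecture.

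Next I would induct on $\ell$. The base case $\ell=0$ is Theorem~\ref{thm:cconvex-cBC}. For the inductive step, set $Y:=X\times H^1\times\cdots\times H^{\ell-1}$ and consider $Y\times H^\ell$, where $H^\ell$ is hyperbolic relative to $\famP=\{P_1,\dots,P_k\}$ with each $P_i\in\classNPCPC$. The relatively hyperbolic structure provides a geometric decomposition of $H^\ell$ (as exploited in \cite{relhypgrp}) into a Gromov hyperbolic ``core'' $C$ and a collection of horoballs attached along translates of the peripherals. Taking the Cartesian product with $Y$ induces a coarsely excisive cover of $Y\times H^\ell$, and the coarse Mayer-Vietoris principle applied to both sides of the coarse assembly map, together with the Five Lemma, reduces the verification of cBC on $Y\times H^\ell$ to its verification on the pieces of this cover and on their pairwise intersections.

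The pieces come in two flavours. Pieces of the first kind, $Y\times C$, involve a Gromov hyperbolic factor; since Gromov hyperbolic spaces are coarsely convex, I would absorb $C$ into $X$, obtaining a problem $(X\times C)\times H^1\times\cdots\times H^{\ell-1}$ with only $\ell-1$ relatively hyperbolic factors, handled by the inductive hypothesis. The main obstacle will be pieces of the second kind, namely $Y$ times a horoball over some $P_i$: a horoball is coarsely equivalent to $P_i\times\Rp$, so the problem reduces to $Y\times P_i\times\Rp$. Writing $P_i\in\classNPCPC$ as a product of coarsely convex subfactors and a polycyclic subfactor, I would absorb the coarsely convex part into $X$ and strip off the polycyclic factor together with the $\Rp$ via repeated application of Proposition~\ref{prop:assembly-map-solv}, again reducing to a problem with $\ell-1$ relatively hyperbolic factors and concluding by the inductive hypothesis. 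Finally, the Novikov conjecture for torsion-free $G$ follows from the established coarse Baum-Connes conjecture by the standard descent principle.
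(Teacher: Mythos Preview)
Your overall plan---reduce to the argument of \cite[Theorem~1.1]{product-cBC}, supplemented by Corollary~\ref{cor:syst-cBC} for systolic factors and Proposition~\ref{prop:assembly-map-solv} for polycyclic factors---is exactly what the paper indicates; the paper itself gives no further detail beyond citing that argument.

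Your attempt to flesh out the inductive Mayer--Vietoris step, however, contains a genuine error. A relatively hyperbolic group $H^\ell$ with its word metric does \emph{not} decompose into a Gromov hyperbolic ``core'' plus horoballs. It is the augmented (cusped) space $\hat H^\ell$, which is Gromov hyperbolic, that decomposes coarsely as $H^\ell$ (the depth-zero part) together with the combinatorial horoballs over the peripheral cosets, the intersections being the cosets $gP_i$ themselves. Furthermore, a combinatorial horoball over $P_i$ is \emph{not} coarsely equivalent to $P_i\times\Rp$: horizontal distances contract exponentially with depth, so the horoball is Gromov hyperbolic regardless of $P_i$. In the corrected picture the Five Lemma yields cBC for $Y\times H^\ell$ from cBC for $Y\times\hat H^\ell$, for $Y$ times the horoballs, and for $Y\times gP_i$. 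The first two are handled by absorbing the hyperbolic factor into the coarsely convex part and invoking the inductive hypothesis with $\ell-1$ relatively hyperbolic factors; the third is where your ``split $P_i$ into coarsely convex and polycyclic subfactors, absorb and strip'' analysis actually belongs. With this rearrangement your outline is repairable, though one must still treat the infinite family of horoball pieces with care, as in \cite{relhypgrp}. For the Novikov statement, descent also requires a finite model for the classifying space for proper actions of $G$; this is supplied by the remarks following Theorem~\ref{relhyp_12}, not by cBC alone.
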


\section{Groups acting on a coarsely convex space}
\label{sec:groups-acting-coars}
From the view point of geometric group theory, it is natural to consider
groups acting on coarsely convex spaces. In this section, we mention
some algebraic properties of such groups, which follows immediately from
{\itshape semihyperbolicity} of coarsely convex spaces.



Alonso and Bridson~\cite{A-BSemihyp} introduce another formulation of
``nonpositively curved space'', called semihyperbolic space.
We show that a coarsely convex space is semihyperbolic in their sense.



First, we briefly review the definition and properties.
Let $X$ be a metric space.  A {\itshape discrete path} is a map
$\gamma\colon [0,T_\gamma]\cap \Z \to X$ with 
$T_\gamma\in \N\cup \{0\}$.
For convenience, we consider $\gamma$ as a map 
$\gamma\colon \N\cup \{0\}\to X$ by
setting $\gamma(t):=\gamma(T_\gamma)$ if $t\geq T_\gamma$. 
Let $\mathscr{P}'(X)$ be the set of discrete paths.
We consider the endpoints map $e\colon \mathscr{P}'(X) \to X\times X$ given 
by $e(\gamma)=(\gamma(0),\gamma(T_\gamma))$.

A {\itshape bicombing} is a section $s\colon X\times X \to
\mathscr{P}'(X)$ of the endpoints map $e$. We denote the image of
$(x,y)$ by $s_{(x,y)}$.

A bicombing $s$ is said to be {\itshape quasi-geodesic} if there
exist constants $\lambda, k$ such that $s_{(x,y)}$ is a
$(\lambda, k)$-quasi-geodesic segment for all $x,y \in X$.

A bicombing $s$ is called {\itshape bounded} if there exist
constants $k_1\geq 1$, $k_2\geq 0$ such that, for all $x,y,x',y'\in X$
and $t\in \N\cup \{0\}$, 
\begin{align*}
 \ds{s_{(x,y)}(t),s_{(x',y')}(t)} \leq k_1\max\{\ds{x,x'},\ds{y,y'}\} +k_2.
\end{align*}

\begin{definition}[\cite{A-BSemihyp}]
 A metric space $X$ is {\itshape semihyperbolic} if it admits a bounded
 quasi-geodesic bicombing.
\end{definition}

Alonso and Bridson~\cite[Theorem 1.1]{A-BSemihyp} showed 
that being semihyperbolic is invariant
under quasi-isometries. Then they studied groups acting on a 
semihyperbolic space.

\begin{theorem}[{\cite[Theorem 2.8 and Theorem 5.1]{A-BSemihyp}}]
 \label{thm:semihypGrp}
 Let $G$ be a group acting on a semihyperbolic space $X$ properly and
 cocompactly by isometries. Then the following holds.
 \begin{enumerate}
  \item $G$ is finitely presented and of type $FP_{\infty}$.
  \item $G$ satisfies a quadratic isoperimetric inequality.
 \end{enumerate}
 Moreover, suppose that a bicombing $s$ of $X$ is $G$-invariant, then
 \begin{enumerate}
 \setcounter{enumi}{2}
  \item $G$ has a solvable conjugacy problem.  
  \item Every polycyclic subgroup of $G$ contains
	a finitely generated abelian subgroup of finite index.
 \end{enumerate}
\end{theorem}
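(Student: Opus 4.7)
The plan is to reduce the problem to the Alonso–Bridson framework for semihyperbolic groups. By the Švarc–Milnor lemma applied to the proper cocompact isometric action of $G$ on $X$, the orbit map $g\mapsto g\cdot x_0$ is a quasi-isometry between $G$ (with any word metric) and $X$. Since semihyperbolicity is a quasi-isometry invariant (cited Theorem~1.1 of~\cite{A-BSemihyp}), $G$ itself admits a bounded quasi-geodesic bicombing; that is, $G$ is a semihyperbolic group in the sense just reviewed.

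For (1) and (2), I would then work directly with this bicombing on $G$. A finite presentation, and more generally the finiteness property $FP_\infty$, is extracted by observing that every sufficiently short loop in the Cayley graph can be filled using combing paths, and that gluing adjacent combing paths produces the cells needed for higher-dimensional filling, with only finitely many cells up to translation. The quadratic isoperimetric inequality then follows from the bounded fellow-traveler property: given a loop of word length $n$, inserting combing paths from $1\in G$ to each of its $n$ vertices subdivides the loop into $n$ van Kampen subdiagrams, each of length $O(n)$ with consecutive paths uniformly close (by $k_1\cdot 1+k_2$); summing gives the quadratic bound on total area.

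For (3) and (4), I would use the hypothesis that the bicombing $s$ on $X$ is $G$-invariant to obtain a $G$-equivariant combing of the Cayley graph of $G$. With such an equivariant combing in hand, the conjugacy problem reduces to a bounded search: two elements $u,v\in G$ are conjugate if and only if some conjugator $g$ of length bounded by an explicit function of $|u|+|v|$ makes the combing paths of $u$ and $gvg^{-1}$ asynchronously fellow travel, which is decidable. For (4), elements of infinite order acquire well-defined translation lengths along their combing tracks, and a standard Alonso–Bridson argument shows that abelian subgroups of a semihyperbolic group with a $G$-equivariant bicombing are undistorted; combined with the classical structure theory of polycyclic groups, this forces every polycyclic subgroup of $G$ to contain a finitely generated abelian subgroup of finite index.

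The main obstacle I anticipate is precisely the equivariant transfer in the third paragraph: the quasi-isometry produced by the Švarc–Milnor argument is not $G$-equivariant on the nose, so one has to build the combing on $G$ by hand, e.g.\ by picking a Borel section of the orbit map (or exploiting cocompactness to choose representatives $G$-equivariantly on orbits), and then verifying that the resulting discrete paths in the Cayley graph remain bounded and quasi-geodesic. Once this equivariant pullback is secured, (1) and (2) need only the existence of some semihyperbolic structure on $G$, while (3) and (4) plug into the standard machinery of~\cite{A-BSemihyp}.
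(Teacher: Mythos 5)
There is nothing in the paper to compare against: Theorem~\ref{thm:semihypGrp} is stated as a black-box citation of \cite[Theorems 2.8 and 5.1]{A-BSemihyp}, and no proof is given. Your proposal is therefore really a reconstruction of Alonso--Bridson's argument, and in broad outline it follows their plan: transfer the bicombing to the group via \v{S}varc--Milnor, deduce the finiteness and filling properties from the fellow-traveller property of the bicombing, and use the equivariant bicombing for conjugacy and subgroup structure. A few comments on the details.

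For the transfer step you say ``the quasi-isometry produced by the \v{S}varc--Milnor argument is not $G$-equivariant on the nose.'' Be careful here: the orbit map $g\mapsto g\cdot x_0$ \emph{is} $G$-equivariant. What is not automatically equivariant is the quasi-inverse $X\to G$ used to pull back combing paths from $X$ to discrete paths in the Cayley graph, since one must choose, for each point of $X$, a nearby group element. Your remedy (choose orbit representatives via a fundamental domain and extend equivariantly) is indeed how one makes this choice compatible with the $G$-action when $s$ is $G$-invariant, and it is the mechanism Alonso--Bridson use. Your items (1) and (2) only need the non-equivariant pullback, and your quadratic filling argument via strips between consecutive combing paths is the standard one.

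The genuine gap is in your step (4). ``Abelian subgroups are undistorted'' does not by itself force a polycyclic subgroup $H\le G$ to be virtually abelian. The Alonso--Bridson argument for their Theorem~5.1 is more delicate: one introduces the translation number $\tau(g)=\lim_n |g^n|/n$, shows that in a semihyperbolic group with an equivariant bicombing $\tau$ is strictly positive on elements of infinite order and has good sub-multiplicativity and commutation properties, and then combines this with the structure theory of polycyclic groups. If $H$ were not virtually abelian it would contain a semidirect product $\Z^n\rtimes_\phi \Z$ with $\phi$ of infinite order in $\mathrm{GL}_n(\Z)$, and the resulting exponential distortion of a cyclic subgroup contradicts positivity of translation numbers. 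As sketched, your proposal does not supply this chain of implications, and without it item (4) does not follow. Items (1)--(3) and the transfer step are, up to this gap, a fair outline of the cited result.
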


\begin{proposition}
 \label{prop:semihyperbolicity} Let $X$ be a coarsely convex space. Then
 $X$ is semihyperbolic.  Moreover, suppose that a group $G$ acts on $X$
 by isometries, and $G$ preserves a system of good quasi-geodesic
 segments $\calL$ of $X$, then $X$ admits $G$-invariant bounded
 quasi-geodesic bicombing.
\end{proposition}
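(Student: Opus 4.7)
The plan is to construct an explicit bicombing directly from $\calL$, using a discretization keyed to endpoint distance rather than to the natural quasi-geodesic parameter, so as to obtain a linear fellow-traveller bound. For each pair $(x,y)\in X\times X$, condition (i)$^q$ of Definition~\ref{def:cBNC} supplies a good quasi-geodesic segment $\gamma_{x,y}\in \calL$ from $x$ to $y$, defined on $[0,a_{x,y}]$. Setting $n_{x,y}:=\max\{1,\lceil \ds{x,y}\rceil\}$, I define the discrete path $s_{(x,y)}\colon \N\cup\{0\}\to X$ with $T_{s_{(x,y)}}=n_{x,y}$ by $s_{(x,y)}(t):=\gamma_{x,y}(t\,a_{x,y}/n_{x,y})$ for $0\le t\le n_{x,y}$, extended by $s_{(x,y)}(t):=y$ for $t>n_{x,y}$. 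The $(\lambda,k)$-quasi-geodesic property of $\gamma_{x,y}$ together with the uniform bound $a_{x,y}/n_{x,y}\le\lambda(1+k)$ transfers to $s_{(x,y)}$ as a $(\lambda',k')$-quasi-geodesic property with universal constants (the case $\ds{x,y}$ small is handled by noting that then $n_{x,y}$ is bounded and the quasi-geodesic inequalities can be absorbed into $k'$).

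For the bounded fellow-traveller property, I fix $(x,y),(x',y')\in X\times X$ and an integer $T\ge 0$; write $a=a_{x,y}$, $b=a_{x',y'}$, $n=n_{x,y}$, $m=n_{x',y'}$, and assume $n\le m$ without loss of generality. When $T\le n$, condition (ii)$^q$ applied with $c=T/n$ gives
\begin{align*}
 \ds{\gamma_{x,y}(Ta/n),\gamma_{x',y'}(Tb/n)}
 \le (T/n)E\ds{y,y'}+(1-T/n)E\ds{x,x'}+C
 \le E\max\{\ds{x,x'},\ds{y,y'}\}+C,
\end{align*}
while the $(\lambda,k)$-quasi-geodesic property of $\gamma_{x',y'}$ yields
$\ds{\gamma_{x',y'}(Tb/n),\gamma_{x',y'}(Tb/m)}\le \lambda(b/m)(m-n)+k$. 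Here $b/m\le\lambda(1+k)$ is uniform, and $|m-n|\le \ds{x,x'}+\ds{y,y'}+1$ by the triangle inequality applied to $\ds{x,y}$ and $\ds{x',y'}$, so combining the two estimates via the triangle inequality in $X$ produces a linear bound in $\max\{\ds{x,x'},\ds{y,y'}\}$. The cases $n<T\le m$ and $T>m$ are handled by the same kind of estimate, since $s_{(x,y)}(T)=y$ in these ranges. Adding the contributions establishes the bounded fellow-traveller inequality with universal constants, so $X$ is semihyperbolic.

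For the $G$-equivariant version, I would use the $G$-invariance of $\calL$ to choose the assignment $(x,y)\mapsto \gamma_{x,y}$ equivariantly: pick representatives of the diagonal $G$-orbits on $X\times X$, choose a good quasi-geodesic segment for each representative, and propagate by the $G$-action. The main obstacle will be the equivariant choice when the diagonal stabilizer $G_{(x,y)}=G_x\cap G_y$ is non-trivial, since the chosen $\gamma_{x,y}$ must then be fixed by $G_{(x,y)}$; this requires an invariant selection argument standard in the setting of proper actions. A pleasant feature of the construction is that condition (iii)$^q$ (parameter regularity involving $\theta$) is never needed — the discretization along endpoint distance lets the triangle inequality for $\lceil\cdot\rceil$ play the role of $\theta$, which is what makes the fellow-traveller constant linear rather than controlled by the a priori arbitrary function $\theta$.
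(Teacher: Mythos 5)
Your construction is correct and takes a genuinely different route from the paper's. The paper parameterizes each bicombing path by the natural quasi-geodesic parameter of a segment in $\calL$, which forces it to control the difference of domain lengths between $\gamma_{x,y}$ and $\gamma_{x',y'}$. It does this by invoking condition (iii)$^q$ of Definition~\ref{def:cBNC}, noting that the resulting $\theta$-bornologous map $X\times X \ni (x,y)\mapsto t_{x,y}$ defined on the quasi-geodesic space $X\times X$ is automatically large-scale Lipschitz, and thereby replacing the a priori arbitrary function $\theta$ by a linear one; only then does an application of (ii)$^q$ yield the linear fellow-traveller bound. You instead reparameterize each segment over $\{0,\dots,n_{x,y}\}$ with $n_{x,y}=\max\{1,\lceil\ds{x,y}\rceil\}$, so the parameter lengths satisfy $\abs{n_{x,y}-n_{x',y'}}\le\ds{x,x'}+\ds{y,y'}+1$ by the triangle inequality alone; combined with the uniform bounds on $a_{x,y}/n_{x,y}$ coming from the $(\lambda,k)$-quasi-geodesic inequality, condition (ii)$^q$ already gives the bounded fellow-traveller constant directly. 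As you observe, (iii)$^q$ is never used, so you establish the result under formally weaker hypotheses and avoid the linearization-of-$\theta$ step entirely; the trade-off is that the paper's version yields the bound in the segments' intrinsic parameter, which is what Section~\ref{sec:ideal-boundary} also works with, while your reparameterization is tailor-made for the bicombing. One point to flag: your treatment of the $G$-equivariant case, like the paper's one-line "follows from the construction," glosses over the situation where the diagonal stabilizer $G_x\cap G_y$ is non-trivial and permutes the good segments from $x$ to $y$; you raise the issue honestly, but the claim that an ``invariant selection argument standard in the setting of proper actions'' handles it is not obviously justified, since $\calL$ need not contain a segment fixed by the stabilizer. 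A cleaner way out (in the spirit of Alonso--Bridson) is to note that any two good segments with the same endpoints fellow-travel within $D$ of each other by (ii)$^q$, so the choice of $\gamma_{x,y}$ is irrelevant up to bounded error, and the equivariance can be demanded only up to uniformly bounded perturbation.
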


\begin{proof}
 Let $X$ be $(\lambda,k,E,C,\theta,\calL)$-coarsely convex.  Then we can
 assume that $\theta$ is a large scale Lipschitz function.
 Indeed for every $(x,y)\in X\times X$, we take $\gamma_{x,y}\in \calL$
 whose domain is $[0,a_{x,y}]$ and $t_{x,y}\in [0,a_{x,y}]$ with
 $\gamma_{x,y}(0)=x$, $\gamma_{x,y}(t_{x,y})=y$.  Then the map
 \begin{align*}
  X\times X\to \Rp;(x,y)\mapsto t_{x,y}
 \end{align*}
 is $\theta$-bornologous.
 We equip $X\times X$ with the $\ell_1$-metric.
 Since $X$ is quasi-geodesic, so is $X\times X$.
 Hence we have constants $A\geq 1,\, B'\geq 0$ such that
 \begin{align*}
  \abs{t_{x,y}-t_{x',y'}} \leq A(\ds{x,x'}+\ds{y,y'})+B'
 \end{align*}
 for any $(x,y),(x'y')\in X\times X$.
 When for $(x,y)\in X\times X$ we choose different
 $\eta_{x,y}\in \calL$ whose domain is $[0,b_{x,y}]$
 and $u_{x,y}\in [0,b_{x,y}]$ with
 $\eta_{x,y}(0)=x$, $\eta_{x,y}(u_{x,y})=y$,
 we have $\abs{t_{x,y}-u_{x,y}} \leq \theta(0)$.
 We put $B=B'+\theta(0)$. Then we have the following.
 \begin{enumerate}[(i)'$^q$]
  \setcounter{enumi}{2}
  \item \label{qparam-reg'}
  Let $\gamma,\eta\in \calL$ be quasi-geodesic segments
  with $\gamma\colon [0,a]\to X$ and $\eta\colon [0,b]\to X$.
  Then for $t\in [0,a]$ and $s\in [0,b]$, we have
 \begin{align*}
  \abs{t-s} \leq A(\ds{\gamma(0),\eta(0)}+\ds{\gamma(t),\eta(s)})+B.
 \end{align*}
 \end{enumerate}
 Let $\gamma,\eta\in \calL$ be quasi-geodesic segments as in
 (\ref{qparam-reg'})'$^q$.
 Put $\gamma(t)=\gamma(a)$ for any $t\geq a$ and
 $\eta(u)=\eta(b)$ for any $u\geq b$. 
 We assume that $a\leq b$.  Then
 (\ref{qparam-reg'})'$^q$ implies
 \begin{align*}
  \ds{\gamma(0),\eta(0)}+\ds{\gamma(a),\eta(a)}&
  \leq \ds{\gamma(0),\eta(0)}
    +\ds{\gamma(a),\eta(b)}+\ds{\eta(a),\eta(b)}\\
  &\leq \ds{\gamma(0),\eta(0)}
    +\ds{\gamma(a),\eta(b)}+\lambda\abs{b-a}+k\\
  &\leq \ds{\gamma(0),\eta(0)}
    +\ds{\gamma(a),\eta(b)}
    +\lambda (A(\ds{\gamma(a),\eta(b)}+\ds{\gamma(0),\eta(0)})+B)+k\\
  &\leq (\lambda A+1)
   (\ds{\gamma(a),\eta(b)}+\ds{\gamma(0),\eta(0)})+\lambda B+k.
 \end{align*} 
 For any $t\leq a$, we have 
 \begin{align*}
  \ds{\gamma(t),\eta(t)}
  &\leq \frac{t}{a}E\ds{\gamma(a),\eta(a)}
  + \frac{a-t}{a}E\ds{\gamma(0), \eta(0)}+ C\\
  &\leq E(\ds{\gamma(a),\eta(a)}
  + \ds{\gamma(0), \eta(0)})+ C\\
  &\leq E(\lambda A+1)(\ds{\gamma(a),\eta(b)}
  + \ds{\gamma(0), \eta(0)})+(E(\lambda B+k)+C).
\end{align*}
 Also for any $t\ge a$,
\begin{align*}
 \ds{\gamma(t),\eta(t)}
 =\ds{\gamma(a),\eta(t)}
 &\leq \ds{\gamma(a),\eta(b)}+\ds{\eta(b),\eta(t)}\\
 &\leq \ds{\gamma(a),\eta(b)}+\lambda\abs{b-t}+k\\
 &\leq \ds{\gamma(a),\eta(b)}+\lambda\abs{b-a}+k\\
 &\leq \ds{\gamma(a),\eta(b)}+\lambda
 (A(\ds{\gamma(a),\eta(b)}+\ds{\gamma(0),\eta(0)})+B)+k\\
 &\leq (\lambda A+1)
 (\ds{\gamma(a),\eta(b)}+\ds{\gamma(0),\eta(0)})+\lambda B+k.
\end{align*}

 Now we can easily construct a bounded quasi-geodesic bicombing from $\calL$.
 The second assertion follows from the construction.
\end{proof}

Corollary~\ref{cor:CCGroup} follows immediately from
Theorem~\ref{thm:semihypGrp} and
Proposition~\ref{prop:semihyperbolicity}. 
Another application is given in Corollary~\ref{cor:cohom-dim}.

An advantage of a group $G$ acting on a coarsely convex space $X$ is, if
$G$ preserve a system of good geodesic segments $\calL$ of $X$,
then $G$ acts on the ideal boundary $\partial X$ of $X$, as we have
already seen in Corollary~\ref{cor:action-on-dX}. We hope more algebraic
and geometric properties of the group $G$ can be understood
through the topology of $\partial X$, such as splitting of $G$, as in
the case of hyperbolic groups by Bowditch~\cite{Bow-cut-pts-hyp} and
that of CAT(0)-groups by Papasoglu-Swenson~\cite{JSJ-CAT0}.

It also seems natural to ask whether the group $G$ admits finite
$G$-simplicial complex which is a universal space for proper actions.

\section{A functional analytic characterization of the ideal boundary}
\label{sec:funct-analyt-char} The aim of this section is to give a
functional analytic characterization of the ideal boundaries of coarsely
convex spaces. As an application, we show that the ideal boundary coincides with
the bicombing corona introduced by Engel and Wulff~\cite{combable-corona}.

  Let $X$ be a proper metric space which is
$(\lambda,k,E,C,\theta,\calL)$-coarsely convex. Let $O$ be a base point.
We denote by $(\cdot \mid \cdot)$ the Gromov product with respect to the
base point $O$. We use constants defined in the beginning of
Section~\ref{sec:ideal-boundary}.

\begin{definition}
\label{def:Gromov-fct}
 We say that a function
 $f\colon X\to \C$ is a {\itshape Gromov} function if for all
 $\epsilon>0$, there exists $R>0$ such that for $v,w\in X$ with $(v\mid
 w)>R$, we have $\abs{f(v)-f(w)}<\epsilon$.
\end{definition}

We denote by $C_g(X)$ a set of continuous Gromov functions. We will show
that the set $C_g(X)$ is in fact an algebra and it is isomorphic to the
algebra of all continuous functions on the ideal boundary
compactification $\bar{X}=X\cup \partial_O X$.

Let $C(X)$ and $C(\bar{X})$ 
be the algebra of continuous complex valued
functions on $X$, and on $\bar{X}$, respectively. 
Let $\iota\colon C(\bar{X}) \to C(X)$ be a
homomorphism defined by $\iota(f)=f|_X$ where $f|_X$ denotes the
restriction of $f$ on $X$.  We will show that in fact the image of
$\iota$ lies in $C_g(X)$.



\begin{proposition}
\label{prop:Gromov-fct}
 For all $f\in C(\bar{X})$, the restriction $f|_X$ is a Gromov function.
\end{proposition}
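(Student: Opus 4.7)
The plan is to deduce the Gromov property directly from compactness of $\bar{X}$ together with the concrete form of the fundamental system of entourages $\{V_n\}_{n\in\N}$ defined in Section~\ref{sec:topon-X-dX}. Because $\bar{X}=X\cup\partial_O X$ is compact and metrizable (Proposition~\ref{lem:dXcptmet} together with the metrizability observation in Section~\ref{sec:topon-X-dX}), any $f\in C(\bar{X})$ is uniformly continuous with respect to this uniform structure.

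Concretely, given $\epsilon>0$, I will first invoke uniform continuity to produce $n\in\N$ such that $(p,q)\in V_n$ implies $\abs{f(p)-f(q)}<\epsilon$ for all $p,q\in\bar{X}$. Then I will unfold the definition of $V_n$: one of its defining subsets is
\begin{align*}
\{(v,w)\in X\times X:(v\mid w)>n\},
\end{align*}
so every pair $v,w\in X$ with $(v\mid w)>n$ automatically sits in $V_n$. Setting $R:=n$, the Gromov function condition of Definition~\ref{def:Gromov-fct} is satisfied for $f|_X$. Note that when $v$ or $w$ lies in $B_{\lambda 2\theta(0)+k}(O)$ the Gromov product is defined to be $0$, so the condition $(v\mid w)>R$ is vacuous there and no separate argument is needed for the ``compact core.''

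The only mild point to check is that no ambiguity arises in identifying the restriction of $f$ with a function satisfying Definition~\ref{def:Gromov-fct}, which just uses that the inclusion $X\hookrightarrow\bar{X}$ is a topological embedding (also noted in Section~\ref{sec:topon-X-dX}). There is no substantive obstacle: the statement is essentially the content of ``the Gromov product controls the uniform structure on $\bar{X}$,'' packaged through uniform continuity on the compact space $\bar{X}$.
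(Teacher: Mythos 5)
Your argument matches the paper's own proof: compactness of $\bar{X}$ gives uniform continuity of $f$ with respect to the entourage system $\{V_n\}$, and then the inclusion $\{(v,w)\in X\times X:(v\mid w)>n\}\subset V_n$ yields the Gromov function property with $R=n$. The small extra remarks about the compact core and the topological embedding are harmless but not needed.
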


\begin{proof}
 Let $f\in C(\bar{X})$ be a continuous function on $\bar{X}$. Let
 $\{V_n\}_{n\in \N}$ be the fundamental system of entourages of the
 uniform structure on $\bar{X}$ defined in Section~\ref{sec:topon-X-dX}.
 Since $\bar{X}$ is compact, for $\epsilon>0, $ there exists $n$ such that
 for $(x,y)\in V_n$, we have $\abs{f(x)-f(y)}< \epsilon$.

 Now for $v,w\in X$ with $(v\mid w)>n$, we have $(v,w)\in V_n$. Thus 
 $\abs{f(v)-f(w)}< \epsilon$.
 It follows that the restriction $f|_X$ is a Gromov function.
\end{proof}

Now we have shown that the restriction map
$\iota\colon C(\bar{X})\to C_g(X)$ is well-defined. 
To show the subjectivity of $\iota$, we need the following lemma.

\begin{lemma}
\label{lem:on-q-ray}
 Set $\delta_1:=\lambda(\tilde{\theta}(0))+k_1$.
 Let $\gamma \in \calL_O^\infty$ be a quasi-geodesic ray.
 For $t\in \R_{\geq0}$ and $\gamma_t\in \calL_O$ with domain $[0,a_t]$
 such that $\gamma_t(a_t)=\gamma(t)$,
 we have $(\gamma\mid \gamma_t)> 
 (t-\tilde{\theta}(0))/E\delta_1$.
\end{lemma}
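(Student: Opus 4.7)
The plan is to combine the convexity inequality (\ref{qrayconvex})$^q$ of Proposition~\ref{prop:qgeod-ray-convex} with the parameter regularity (\ref{qrayparam-reg})$^q$, applied to the pair $\gamma,\gamma_t\in\bar{\calL}_O$ which share both the basepoint $O$ and (at parameters $t$ and $a_t$) the endpoint $\gamma(t)=\gamma_t(a_t)$.

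First I would set $b:=\min\{a_t,t\}$. Applying (\ref{qrayparam-reg})$^q$ to $\gamma$ and $\gamma_t$ with the distance between the two marked endpoints equal to zero yields $\abs{a_t-t}\leq \tilde{\theta}(0)$, hence $b\geq t-\tilde{\theta}(0)$. Next, I would estimate $\ds{\gamma(b),\gamma_t(b)}$ directly: both $\gamma$ (by Lemma~\ref{lem:qg-k1}) and $\gamma_t$ are $(\lambda,k_1)$-quasi-geodesics, so sliding from parameter $b$ out to the common endpoint $\gamma(t)=\gamma_t(a_t)$ along whichever of the two paths has not yet reached it gives
\[
\ds{\gamma(b),\gamma_t(b)}\leq \lambda\abs{a_t-t}+k_1\leq \lambda\tilde{\theta}(0)+k_1=\delta_1.
\]

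The key step is then to contract using (\ref{qrayconvex})$^q$ with $c:=1/(E\delta_1)\in(0,1]$ (valid since $E\geq 1$ and $\delta_1\geq 1$). Because $\gamma(0)=\gamma_t(0)=O$, the inequality gives
\[
\ds{\gamma(cb),\gamma_t(cb)}\leq cE\cdot\delta_1+(1-c)E\cdot 0+D=1+D.
\]
Since $1+D<D_1=2D+2$, the point $cb$ lies in the set defining the Gromov product, so
\[
(\gamma\mid\gamma_t)\geq cb=\frac{b}{E\delta_1}\geq\frac{t-\tilde{\theta}(0)}{E\delta_1}.
\]
Finally, to promote this to a strict inequality I would exploit the slack $1+D<D_1$: for $t'$ slightly larger than $cb$, the $(\lambda,k_1)$-quasi-geodesic bound on both $\gamma$ and $\gamma_t$ forces $\ds{\gamma(t'),\gamma_t(t')}\leq (1+D)+2(\lambda(t'-cb)+k_1)$, which remains below $D_1$ for small $t'-cb$, so the supremum is strictly greater than $cb$.

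The only nontrivial point is verifying $c\leq 1$ and $1+D<D_1$ so that the contraction lands inside $[0,1]$ and the resulting distance lies under the threshold defining $(\,\cdot\mid\cdot\,)$; both are routine from the list of constants in Section~\ref{sec:ideal-boundary}. The conceptual content is entirely carried by the convexity step together with the observation that the two quasi-geodesics share endpoints up to parameter error $\tilde{\theta}(0)$.
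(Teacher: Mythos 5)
Your proof is correct and follows essentially the same route as the paper: use parameter regularity to get $\abs{a_t-t}\leq\tilde\theta(0)$, bound the distance between $\gamma$ and $\gamma_t$ at the common parameter (you use $b=\min\{a_t,t\}$, the paper uses $a_t$; both give a bound by $\delta_1$), then contract by $c=1/(E\delta_1)$ via the convexity inequality to land under the threshold $D+1\leq D_1$. The only addition is your closing argument to upgrade $\geq$ to the strict $>$ appearing in the statement; the paper's own proof is silent on this and actually concludes with $\geq$. Your observation that the slack $1+D<D_1$ together with the quasi-geodesic bound gives an open interval of admissible parameters is a valid way to close that small gap.
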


\begin{proof}
 Let $\gamma \in \calL_O^\infty$ be a quasi-geodesic ray.
 For $t\in \R_{\geq0}$, we choose
 $\gamma_t\in \calL_O$ whose domain is $[0,a_t]$,
 such that  $\gamma_t(a_t)=\gamma(t)$. We remark that 
 $\abs{a_t-t}\leq \tilde{\theta}(0)$.
 Since
 \begin{align*}
  \ds{\gamma(a_t),\gamma_t(a_t)}=\ds{\gamma(a_t),\gamma(t)}\leq
  \lambda(\tilde{\theta}(0))+k_1 = \delta_1,
 \end{align*}
 we have
 \begin{align*}
  \ds{\gamma\left(\frac{a_t}{E\delta_1}\right),
  \gamma_t\left(\frac{a_t}{E\delta_1}\right)}
  \leq D+1.
 \end{align*}
 Thus we have 
 $(\gamma\mid\gamma_t)\geq (t-\tilde{\theta(0)})/E\delta_1$.
\end{proof}




Now we show that the map $\iota\colon C(\bar{X})\to C_g(X)$ is surjective.
Indeed, we show that every $f\in C_g(X)$ can be extended to $\bar{X}$.

\begin{lemma}
\label{lem:exists}
 Let $f\colon X\to \C$ be a continuous Gromov function. Let $\gamma\in
 \calL_O^\infty$ be a quasi-geodesic ray. Then the limit 
 \begin{align*}
  \lim_{n\to \infty}f(\gamma(n))
 \end{align*}
 exists.
\end{lemma}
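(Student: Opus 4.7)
The plan is to show that $\{f(\gamma(n))\}_n$ is a Cauchy sequence in $\C$, and then conclude by completeness. Since $f$ is a Gromov function, for every $\epsilon>0$ there exists $R>0$ such that $(v\mid w)>R$ implies $\abs{f(v)-f(w)}<\epsilon$. So the entire content of the lemma is the assertion that $(\gamma(n)\mid \gamma(m))\to\infty$ as $n,m\to\infty$.

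To establish this, I would first approximate the ray $\gamma$ by genuine quasi-geodesic segments in $\calL_O$. For each $n$, choose $\gamma_n\in \calL_O$ with domain $[0,a_n]$ such that $\gamma_n(a_n)=\gamma(n)$; such segments exist by condition~(\ref{qconn})$^q$ applied to $O$ and $\gamma(n)$ (for $n$ large enough that $a_n\geq 2\theta(0)$, which is automatic since $\gamma$ is a $(\lambda,k_1)$-quasi-geodesic ray and hence escapes any bounded set). Lemma~\ref{lem:on-q-ray}, applied with $t=n$, then gives
\begin{align*}
 (\gamma\mid \gamma_n) \geq \frac{n-\tilde{\theta}(0)}{E\delta_1}\xrightarrow[n\to\infty]{}\infty,
\end{align*}
and similarly for $\gamma_m$.

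Next, I would invoke the quasi-ultrametric inequality on $\bar{\calL}_O$ from Lemma~\ref{lem:qultm} with the triple $(\gamma_n,\gamma,\gamma_m)$ to obtain
\begin{align*}
 (\gamma_n\mid \gamma_m)\geq D_2^{-1}\min\{(\gamma_n\mid \gamma),(\gamma\mid \gamma_m)\},
\end{align*}
so $(\gamma_n\mid \gamma_m)\to\infty$ as $n,m\to\infty$. For $n,m$ large enough, $\gamma(n)$ and $\gamma(m)$ lie outside $B_{\lambda 2\theta(0)+k}(O)$, and then Lemma~\ref{lem:chooseRep-full}(1) yields
\begin{align*}
 (\gamma(n)\mid \gamma(m))\geq (\gamma_n\mid \gamma_m),
\end{align*}
which therefore also tends to infinity. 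Combining this with the Gromov-function property of $f$, for any $\epsilon>0$ we have $\abs{f(\gamma(n))-f(\gamma(m))}<\epsilon$ once $n,m$ are sufficiently large, proving that $\{f(\gamma(n))\}_n$ is Cauchy and hence convergent.

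There is no serious obstacle: the lemma is essentially an exercise in chaining together the already-established estimates. The only mild care required is to note that Lemma~\ref{lem:on-q-ray} bounds the product between the ray $\gamma\in\calL_O^\infty$ and an approximating segment $\gamma_n\in\calL_O$, not directly between two such segments, so one must pass through $\gamma$ via the quasi-ultrametric inequality to compare $\gamma_n$ and $\gamma_m$. One should also verify the mild hypothesis $a_n\geq 2\theta(0)$ needed to place $\gamma_n$ in $\calL_O$; this holds for all sufficiently large $n$ because $\gamma$ is quasi-isometrically embedded and thus $\gamma(n)$ leaves every bounded neighborhood of $O$.
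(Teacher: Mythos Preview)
Your proof is correct and follows essentially the same route as the paper: show $\{f(\gamma(n))\}$ is Cauchy by proving $(\gamma(n)\mid\gamma(m))\to\infty$, which you obtain from Lemma~\ref{lem:on-q-ray} combined with the quasi-ultrametric inequality, passing through the ray $\gamma$. The only cosmetic difference is that the paper applies the quasi-ultrametric inequality at the level of points (Corollary~\ref{cor:qrayultm}, with the triple $\gamma(n),[\gamma],\gamma(m)$) rather than at the level of elements of $\bar{\calL}_O$ (Lemma~\ref{lem:qultm}, with the triple $\gamma_n,\gamma,\gamma_m$) followed by Lemma~\ref{lem:chooseRep-full}; since Corollary~\ref{cor:qrayultm} is itself derived from Lemma~\ref{lem:qultm} and Lemma~\ref{lem:chooseRep-full}, the two arguments are effectively identical.
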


\begin{proof}
 We will show that the sequence $\{f(\gamma(n))\}_{n\in \N}$ is a Cauchy
 sequence. 
 For $\epsilon> 0$, there exists $N>0$ such that for $v,w\in X$ with 
 $(v\mid w)\geq (D_2D_3)^{-1}(N-\tilde{\theta}(0))/E\delta_1$,
 we have 
 \begin{align*}
  \abs{f(v)-f(w)}< \epsilon.
 \end{align*}
 Here $\delta_1$ is a constant defined in Lemma~\ref{lem:on-q-ray}. 
 Then for $m>n>N$, by Corollary~\ref{cor:qrayultm} and
 Lemma~\ref{lem:on-q-ray}, we have
 \begin{align*}
  (\gamma(n)\mid \gamma(m))&\geq  
  (D_2D_3)^{-1}\min\{(\gamma(n)\mid [\gamma]),\,([\gamma]\mid \gamma(m))\}\geq 
(D_2D_3)^{-1}(N-\tilde{\theta}(0))/E\delta_1.
 \end{align*}
 Thus we have $\abs{f(\gamma(n))-f(\gamma(m))}<\epsilon$. 
 This complete the proof.
\end{proof}

\begin{lemma}
\label{lem:uniqueness}
 For $\gamma,\eta\in \calL_O^\infty$, 
 if $\gamma\sim \eta$, then $\lim_{n\to\infty}(\gamma(n)\mid\eta(n)) = \infty$.
\end{lemma}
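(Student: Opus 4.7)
The plan is to reduce the claim to the product of good quasi-geodesic segments by approximating $\gamma$ and $\eta$ along the way, then to propagate asymptoticity from the rays to their truncations via the quasi-ultrametric property of the Gromov product.

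First I would fix the setup. Pick $N$ so large that for $n \geq N$ both $\gamma(n)$ and $\eta(n)$ lie in $X \setminus B_{\lambda 2\theta(0) + k}(O)$; this is automatic because a $(\lambda, k_1)$-quasi-geodesic ray escapes every bounded set. For such $n$, choose good quasi-geodesic segments $\gamma_n, \eta_n \in \calL_O$ with $\gamma_n(a_n) = \gamma(n)$ and $\eta_n(b_n) = \eta(n)$, noting that $a_n, b_n \geq 2\theta(0)$ for $n$ large, so these segments genuinely belong to $\calL_O$. By the definition of the Gromov product on $X \setminus B_{\lambda 2\theta(0)+k}(O)$, we have the lower bound
\begin{align*}
 (\gamma(n) \mid \eta(n)) \;\geq\; (\gamma_n \mid \eta_n).
\end{align*}
So it suffices to prove $(\gamma_n \mid \eta_n) \to \infty$.

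Next I would collect three intermediate products that tend to infinity. By Lemma~\ref{lem:on-q-ray} applied to $\gamma$ (respectively $\eta$) with parameter $n$, we obtain
\begin{align*}
 (\gamma \mid \gamma_n) \;\geq\; \frac{n - \tilde{\theta}(0)}{E \delta_1},
 \qquad
 (\eta \mid \eta_n) \;\geq\; \frac{n - \tilde{\theta}(0)}{E \delta_1},
\end{align*}
which both tend to $\infty$. Since $\gamma \sim \eta$ means $[\gamma] = [\eta]$, Lemma~\ref{lem:same-rep}\,(\ref{item:asymp-rays}) gives $(\gamma \mid \eta) = \infty$.

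The final step is to combine these three bounds using the quasi-ultrametric inequality of Lemma~\ref{lem:qultm}. Applied once to the triple $\gamma_n, \eta, \eta_n$ we get
\begin{align*}
 (\gamma_n \mid \eta_n) \;\geq\; D_2^{-1} \min\{\,(\gamma_n \mid \eta),\ (\eta \mid \eta_n)\,\},
\end{align*}
and applied to the triple $\gamma_n, \gamma, \eta$ we get
\begin{align*}
 (\gamma_n \mid \eta) \;\geq\; D_2^{-1} \min\{\,(\gamma_n \mid \gamma),\ (\gamma \mid \eta)\,\} \;=\; D_2^{-1}(\gamma_n \mid \gamma),
\end{align*}
using $(\gamma \mid \eta) = \infty$. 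Substituting yields
\begin{align*}
 (\gamma_n \mid \eta_n) \;\geq\; D_2^{-2} \min\{\,(\gamma \mid \gamma_n),\ (\eta \mid \eta_n)\,\} \;\geq\; \frac{n - \tilde{\theta}(0)}{D_2^{\,2} E \delta_1},
\end{align*}
which tends to infinity, completing the proof. There is no real obstacle here: the argument is a clean assembly of the quasi-ultrametric, the ray-approximation estimate, and the equivalence criterion; the only thing to check carefully is that the base-point condition $a_n, b_n \geq 2\theta(0)$ holds for large $n$ so that $\gamma_n, \eta_n$ actually lie in $\calL_O$.
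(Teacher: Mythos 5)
Your proof is correct and takes essentially the same route as the paper's: bound $(\gamma\mid\gamma_n)$ and $(\eta\mid\eta_n)$ from below via Lemma~\ref{lem:on-q-ray}, observe that $\gamma\sim\eta$ makes the middle Gromov product infinite, and chain two applications of the quasi-ultrametric inequality. The only cosmetic difference is that you apply Lemma~\ref{lem:qultm} directly to representatives in $\bar{\calL}_O$ (yielding the slightly sharper constant $D_2^{-2}$), whereas the paper invokes Corollary~\ref{cor:qrayultm} on $([\gamma]\mid\gamma(n))$, $([\gamma]\mid[\eta])$, $([\eta]\mid\eta(n))$ to get $(D_2D_3)^{-2}$ in one step.
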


\begin{proof}
 Let 
 $\gamma,\eta\in \calL_O^\infty$ be quasi-geodesic rays 
 with $\gamma\sim \eta$.
 By Corollary~\ref{cor:qrayultm} and Lemma~\ref{lem:on-q-ray}, we have
 \begin{align*}
  (\gamma(n)\mid \eta(n))&\geq (D_2D_3)^{-2}
  \min\{
   ([\gamma]\mid\gamma(n)),\, ([\gamma]\mid[\eta]),\, ([\eta]\mid\eta(n))
  \}\\
  &\geq \frac{n-\tilde{\theta}(0)}{(D_2D_3)^{2}E\delta_1} \to \infty.
 \end{align*}
\end{proof}

\begin{corollary}
\label{cor:unique} Let $f\colon X\to \C$ be a Gromov function. For
 $\gamma,\eta\in \calL_O^\infty$, if $\gamma\sim \eta$, 
then $\lim_{n\to \infty}f(\gamma(n)) = \lim_{n\to \infty}f(\eta(n))$.
\end{corollary}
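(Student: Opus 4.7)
The plan is to combine the three preceding results in a straightforward way. By Lemma~\ref{lem:exists}, both limits $\lim_{n\to\infty}f(\gamma(n))$ and $\lim_{n\to\infty}f(\eta(n))$ exist in $\C$, so denote them $L_\gamma$ and $L_\eta$. To show $L_\gamma = L_\eta$, it suffices to show $|f(\gamma(n)) - f(\eta(n))| \to 0$ as $n\to\infty$.

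First I would fix $\epsilon > 0$. Since $f$ is a Gromov function, by Definition~\ref{def:Gromov-fct} there exists $R>0$ such that for all $v,w\in X$ with $(v\mid w) > R$, we have $|f(v)-f(w)| < \epsilon$. Next, since $\gamma \sim \eta$, Lemma~\ref{lem:uniqueness} gives $\lim_{n\to\infty}(\gamma(n)\mid\eta(n)) = \infty$, so there exists $N\in\N$ such that for all $n\geq N$, we have $(\gamma(n)\mid\eta(n)) > R$. Applying the Gromov-function estimate with $v = \gamma(n)$ and $w = \eta(n)$ yields $|f(\gamma(n)) - f(\eta(n))| < \epsilon$ for all $n\geq N$. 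Since $\epsilon$ was arbitrary, $|f(\gamma(n)) - f(\eta(n))| \to 0$, and then passing to the limit using Lemma~\ref{lem:exists} gives $L_\gamma = L_\eta$.

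There is no real obstacle here; the corollary is an immediate consequence of chaining together Definition~\ref{def:Gromov-fct}, Lemma~\ref{lem:exists}, and Lemma~\ref{lem:uniqueness}. The only point to be slightly careful about is to write the triangle-style argument $|L_\gamma - L_\eta| \leq |L_\gamma - f(\gamma(n))| + |f(\gamma(n)) - f(\eta(n))| + |f(\eta(n)) - L_\eta|$ and let $n\to\infty$ to conclude rigorously, rather than merely asserting $|f(\gamma(n))-f(\eta(n))|\to 0$ implies equality of the limits (which is of course correct, but writing the triangle inequality makes it transparent). Altogether the proof should take only a few lines.
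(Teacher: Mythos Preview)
Your proposal is correct and matches the paper's approach exactly: the paper gives no explicit proof for this corollary, treating it as an immediate consequence of Definition~\ref{def:Gromov-fct}, Lemma~\ref{lem:exists}, and Lemma~\ref{lem:uniqueness}, which is precisely the chain of reasoning you wrote out.
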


Let $f\colon X\to \C$ be a continuous Gromov function. We extend $f$ to 
a map $\bar{f}\colon \bar{X}\to \C$ by 
$\bar{f}(x):=\lim_{n\to\infty}f(\gamma(n))$ where $x\in \partial_O X$ and
$\gamma\in \calL_O^\infty$ is a representative of $x$.
By Lemma~\ref{lem:exists} and Corollary~\ref{cor:unique}, this extension is
well-defined.

\begin{lemma}
\label{lem:conti-at-infty}
 The above extension $\bar{f}\colon \bar{X}\to \C$ is continuous.
\end{lemma}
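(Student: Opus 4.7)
The plan is to verify continuity of $\bar f$ separately at interior points and at boundary points. For a point $v\in X$, continuity is immediate: the inclusion $X\hookrightarrow \bar X$ is a topological embedding (end of Section~\ref{sec:topon-X-dX}) and $\bar f|_X=f$ is continuous by hypothesis, so every basic neighbourhood of $v$ in $\bar X$ contains a metric ball on which $f$, hence $\bar f$, has small oscillation.

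Fix now a boundary point $x\in\partial_O X$, a representative $\gamma\in\calL_O^\infty$ of $x$, and $\epsilon>0$. Using the Gromov property of $f$, choose $R_0>0$ such that $\abs{f(v)-f(w)}<\epsilon/3$ whenever $(v\mid w)>R_0$. Combining Lemma~\ref{lem:on-q-ray} with Lemma~\ref{lem:chooseRep-full}~(3) yields $(\gamma(m)\mid x)\to\infty$ as $m\to\infty$, and by construction $f(\gamma(m))\to\bar f(x)$. I pick $m_0$ large enough that $\gamma(m_0)\notin B_{\lambda 2\theta(0)+k}(O)$, $\abs{f(\gamma(m_0))-\bar f(x)}<\epsilon/3$, and $(\gamma(m_0)\mid x)>(D_2D_3)^2R_0$. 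I will show that $V_n[x]$ with $n$ any integer exceeding $(D_2D_3)^2R_0$ is a neighbourhood of $x$ on which $\abs{\bar f(\cdot)-\bar f(x)}<\epsilon$.

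For $v\in V_n[x]\cap X$ one has $(x\mid v)>n$ (so $v\notin B_{\lambda 2\theta(0)+k}(O)$), and Corollary~\ref{cor:qrayultm} gives
\[
(\gamma(m_0)\mid v)\geq (D_2D_3)^{-1}\min\{(\gamma(m_0)\mid x),(x\mid v)\}>R_0,
\]
whence $\abs{f(v)-f(\gamma(m_0))}<\epsilon/3$ and therefore $\abs{\bar f(v)-\bar f(x)}<2\epsilon/3$. For $y\in V_n[x]\cap\partial_O X$, take any representative $\eta\in\calL_O^\infty$ of $y$, and by the same reasoning applied to $\eta$ choose $m_1\geq m_0$ with $\eta(m_1)\notin B_{\lambda 2\theta(0)+k}(O)$, $\abs{f(\eta(m_1))-\bar f(y)}<\epsilon/3$, and $(y\mid\eta(m_1))>(D_2D_3)^2R_0$. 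Two applications of Corollary~\ref{cor:qrayultm} then yield
\[
(\gamma(m_1)\mid\eta(m_1))\geq (D_2D_3)^{-2}\min\{(\gamma(m_1)\mid x),(x\mid y),(y\mid\eta(m_1))\}>R_0,
\]
so $\abs{f(\gamma(m_1))-f(\eta(m_1))}<\epsilon/3$, and four triangle inequalities combine to give $\abs{\bar f(y)-\bar f(x)}<\epsilon$.

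The argument is largely uniform-space bookkeeping. The main delicate point is absorbing the loss factors $(D_2D_3)^{-1}$ and $(D_2D_3)^{-2}$ from the coarse ultrametric into a single choice of $n$ at the outset, so that one neighbourhood $V_n[x]$ works uniformly in $y$, even though the auxiliary index $m_1$ depends on the representative $\eta$ of $y$.
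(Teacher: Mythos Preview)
Your argument is correct and follows essentially the same route as the paper's proof: pick a representative $\gamma$ of $x$, use the Gromov property of $f$ together with Lemma~\ref{lem:on-q-ray} and Corollary~\ref{cor:qrayultm} (once for $v\in X$, twice for $y\in\partial_O X$) to reduce $\abs{\bar f(x)-\bar f(\cdot)}$ to differences of values of $f$ at points with large Gromov product. The only bookkeeping point to tighten is the passage from $m_0$ to $m_1$: in the boundary case you invoke $(\gamma(m_1)\mid x)>(D_2D_3)^2R_0$ and $\abs{f(\gamma(m_1))-\bar f(x)}<\epsilon/3$, but you only stated these for $m_0$. Both hold for \emph{all} $m\ge m_0$ once $m_0$ is chosen large enough (the lower bound from Lemma~\ref{lem:on-q-ray} is monotone in $m$, and $f(\gamma(m))\to\bar f(x)$), so simply phrase the choice of $m_0$ as a threshold rather than a single index; then three triangle inequalities (not four) already give $\abs{\bar f(y)-\bar f(x)}<\epsilon$. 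The paper's version sidesteps this by allowing different parameters $T'$ and $S'$ along $\gamma$ and $\eta$, but the difference is cosmetic.
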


\begin{proof}
 We show that for each $x\in \partial_O X$, the map
 $\bar{f}$ is continuous at $x$. We choose $\gamma\in \calL_O^\infty$ which
 is a representative of $x$. For $\epsilon>0$, there exists $T>0$ such
 that for $t\geq T$
 \begin{align*}
  \abs{\bar{f}(x)-f(\gamma(t))}< \frac{\epsilon}{3}.
 \end{align*}
 Since $f$ is a
 Gromov function, there exists $R>0$ such that for $v,w\in X$ with
 $(v\mid w)\geq (D_2D_3)^{-2}R$, we have 
 \begin{align*}
  \abs{f(v)-f(w)}<\frac{\epsilon}{3}
 \end{align*}
 Set $T':= \max\{T, RE\delta_1+\tilde{\theta}(0)\}$.
 By Lemma~\ref{lem:on-q-ray}, we have $(x\mid \gamma(T'))> R$. 

 First let $v\in X$ be a point with $(x\mid v)>R$. 
 It follows that
 \begin{align*}
  (\gamma(T')\mid v) \geq 
  (D_2D_3)^{-1}\min\{(\gamma(T')\mid x),\, (x\mid v)\}
  >(D_2D_3)^{-1}R. 
 \end{align*}
 Therefore we have 
$\abs{\bar{f}(x)-f(v)}\leq \abs{\bar{f}(x)-f(\gamma(T'))}
 +\abs{f(\gamma(T'))-f(v)}<\epsilon$.

 Next let $y\in \partial_O X$ be a point with $(x\mid y)>R$.
 We choose $\eta\in\calL_O^\infty$ which is a representative of $y$.
 There exists $S>0$ such that for $s\geq S$
 \begin{align*}
  \abs{\bar{f}(y)-f(\eta(s))}< \frac{\epsilon}{3}.
 \end{align*}
 Set $S':= \max\{S, RE\delta_1+\tilde{\theta}(0)\}$.
 By Lemma~\ref{lem:on-q-ray}, we have $(y\mid \eta(S'))> R$. 
 This implies 
  \begin{align*}
  (\gamma(T')\mid \eta(S')) \geq 
  (D_2D_3)^{-2}
  \min\{(\gamma(T')\mid x),\, (x\mid y),
   \,(y\mid \eta(S'))\}> (D_2D_3)^{-2}R. 
 \end{align*}
 Therefore we have 
 \begin{align*}
  \abs{\bar{f}(x)-\bar{f}(y)}
  \leq \abs{\bar{f}(x)-f(\gamma(T'))} + \abs{f(\gamma(T'))-f(\eta(S'))}
  +\abs{f(\gamma(S'))-\bar{f}(y)}<\epsilon .
 \end{align*} 
 It follows that $\bar{f}$ is continuous at $x$.
\end{proof}
It follows from Lemma~\ref{lem:conti-at-infty}
that the map $\iota\colon C(\bar{X})\to C_g(X)$ is surjective.
Especially, all function $f\in C_g(X)$ is bounded. 
Moreover, we have the following.

\begin{theorem}
 \label{thm:main} Let $C_b(X)$ denote the $C^*$-algebra of bounded
continuous complex valued functions on $X$. The set $C_g(X)$ is a closed
$*$-sub-algebra of $C_b(X)$ and the restriction map $\iota\colon
C(\bar{X})\to C_g(X)$ is an isomorphism.
\end{theorem}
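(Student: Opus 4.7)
The plan is to assemble pieces already proved and then verify the three outstanding points: (a) $C_g(X)\subset C_b(X)$, (b) $C_g(X)$ is a closed $*$-subalgebra, and (c) $\iota$ is a bijective $*$-homomorphism, hence a $C^*$-algebra isomorphism.

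First I would observe that Proposition~\ref{prop:Gromov-fct} together with Lemma~\ref{lem:conti-at-infty} already gives the factorisation: the restriction $\iota\colon C(\bar X)\to C_g(X)$ lands in $C_g(X)$, and every $f\in C_g(X)$ arises as the restriction of some $\bar f\in C(\bar X)$. Since $\bar X=X\cup\partial_O X$ is compact (Proposition~\ref{lem:dXcptmet}), every $\bar f\in C(\bar X)$ is bounded, so $f=\bar f|_X\in C_b(X)$. This establishes $C_g(X)\subseteq C_b(X)$.

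Next I would check the algebraic structure. The definition of a Gromov function (Definition~\ref{def:Gromov-fct}) is stable under $\C$-linear combinations, pointwise products, and complex conjugation by the standard $\varepsilon/(2\sup|g|+1)$ type estimates using boundedness obtained above; together with the fact that sums and products of continuous functions are continuous, this makes $C_g(X)$ a $*$-subalgebra of $C_b(X)$. Closedness under the sup norm is a routine $\varepsilon/3$ argument: given $f_n\in C_g(X)$ with $\|f_n-f\|_\infty\to 0$ and $f\in C_b(X)$, for each $\varepsilon>0$ pick $n$ with $\|f_n-f\|_\infty<\varepsilon/3$ and then $R$ from the Gromov property of $f_n$ controlling the middle term, yielding $|f(v)-f(w)|<\varepsilon$ whenever $(v\mid w)>R$. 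Continuity of $f$ is automatic as a uniform limit of continuous functions.

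For the isomorphism, I would verify that $\iota$ is a $*$-homomorphism (immediate from being a restriction map) and is bijective. Surjectivity is exactly the content of Lemma~\ref{lem:conti-at-infty}, which provides an explicit extension $\bar f\in C(\bar X)$ for any $f\in C_g(X)$. For injectivity, I would note that $X$ is dense in $\bar X$: by construction any $x\in\partial_O X$ is represented by some $\gamma\in\calL_O^\infty$, and Lemma~\ref{lem:on-q-ray} combined with Lemma~\ref{lem:chooseRep-full} shows $(\gamma(n)\mid x)\to\infty$, i.e.\ $\gamma(n)\to x$ in $\bar X$. Hence any $F\in C(\bar X)$ with $F|_X=0$ vanishes identically by continuity, proving $\iota$ is injective. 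A bijective $*$-homomorphism between $C^*$-algebras is automatically an isometric isomorphism, which completes the proof.

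The only mildly delicate point is the closedness under uniform limits, because one must ensure the Gromov-tail condition passes to the limit; but the $\varepsilon/3$ argument handles this cleanly, so I do not anticipate a real obstacle here. Everything else is a direct appeal to the lemmas already established in the preceding subsection.
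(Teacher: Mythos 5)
Your proposal is correct and follows essentially the same route as the paper: the paper states Theorem~\ref{thm:main} immediately after establishing surjectivity of $\iota$ via Proposition~\ref{prop:Gromov-fct} and Lemma~\ref{lem:conti-at-infty}, leaving boundedness, the closed $*$-subalgebra structure, and injectivity implicit, and your write-up correctly supplies those routine verifications (boundedness from compactness of $\bar X$, the $\varepsilon/3$ argument for closedness, density of $X$ in $\bar X$ via Lemma~\ref{lem:on-q-ray} and Lemma~\ref{lem:chooseRep-full} for injectivity).
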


Engel and Wulff introduced the {\itshape combing compactifications} for
proper combing spaces~\cite{combable-corona}. They also showed that a
proper coarsely convex space $X$ admits a proper combing. In fact they
showed that this combing satisfies better condition, {\itshape coherent}
and {\itshape expanding} \cite[Lemma 3.26]{combable-corona}. We denote by 
$\overline{X}{}^H$ the combing compactification of $X$.

\begin{corollary}
\label{cor:idealbdry-eq-combing}
 The identity map on $X$ extends to a homeomorphism 
 $\bar{X}\to \overline{X}{}^H$
 from the ideal boundary compactification to the 
 combing compactification.
\end{corollary}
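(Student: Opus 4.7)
The plan is to realize both compactifications $\bar{X}$ and $\overline{X}{}^H$ as Gelfand spectra of unital $C^*$-subalgebras of $C_b(X)$ containing $C_0(X)$, and show that these subalgebras coincide. By Theorem~\ref{thm:main}, the ideal boundary compactification $\bar{X}$ corresponds to the subalgebra $C_g(X)$ of continuous Gromov functions; and the inclusion $C_0(X) \subseteq C_g(X)$ follows from Proposition~\ref{prop:vanishing-var} combined with the fact that $X \hookrightarrow \bar{X}$ is an open topological embedding. The combing compactification $\overline{X}{}^H$ is, by construction, the Gelfand spectrum of a $C^*$-subalgebra $C_h(X) \subseteq C_b(X)$ consisting of those continuous bounded functions that are slowly oscillating with respect to long fellow-traveling initial segments of the coherent, expanding, proper combing of $X$ built by Engel--Wulff \cite{combable-corona}.

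First, I would unpack the combing of a proper coarsely convex space used in \cite[Lemma 3.26]{combable-corona} and observe that each combing ray from the basepoint $O$ to a point $v \in X$ is, up to uniformly bounded error, a representative $\gamma_v \in \calL_O$ in our sense. Together with Lemma~\ref{lem:ray-same-param}, this implies that the fellow-travel length between the combing rays ending at $v$ and at $w$ is comparable, up to multiplicative and additive constants depending only on the parameters $(\lambda,k,E,C,\theta)$, to $(\gamma_v \mid \gamma_w)$, and hence by Lemma~\ref{lem:chooseRep-full} comparable to the Gromov product $(v \mid w)$.

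Second, this quantitative dictionary gives $C_g(X) = C_h(X)$ directly from Definition~\ref{def:Gromov-fct} and the combing-corona definition. In one direction, a large Gromov product $(v \mid w) > R$ forces, via any choice of representatives in $\calL_O$, a long fellow-travel of the corresponding combing rays, so every continuous Gromov function is slowly oscillating in the combing sense and belongs to $C_h(X)$. Conversely, a long combing fellow-travel at $v$ and $w$ yields a lower bound on $(\gamma_v \mid \gamma_w)$ and hence on $(v \mid w)$, so every combing-slowly-oscillating function is a Gromov function and lies in $C_g(X)$. Both estimates are propagated to the Gromov product using Corollary~\ref{cor:qrayultm} and Corollary~\ref{cor:pertabation}, which allow one to change representatives and to absorb the bounded error between combing rays and elements of $\calL_O$.

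Third, once $C_g(X) = C_h(X)$ as unital $C^*$-subalgebras of $C_b(X)$ containing $C_0(X)$, Gelfand duality produces a canonical homeomorphism of their spectra extending the identity on $X$, which is the desired homeomorphism $\bar{X} \to \overline{X}{}^H$. The main obstacle is the quantitative translation in the first step: matching Engel--Wulff's combing-theoretic condition with our Gromov product condition requires a careful comparison of the specific combing they construct with arbitrary representatives $\gamma_v \in \calL_O$, which is where the estimates from Section~\ref{sec:gromov-product} do the real work; after that, steps two and three are formal.
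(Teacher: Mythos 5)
Your proposal follows the same strategy the paper uses: both compactifications are exhibited as Gelfand spectra of unital $C^*$-subalgebras of $C_b(X)$ containing $C_0(X)$ --- $\bar{X}$ via Theorem~\ref{thm:main}, $\overline{X}{}^H$ via Engel--Wulff's construction --- and the two subalgebras are identified by translating Gromov products into combing fellow-travel lengths. The paper is terser, simply invoking the argument of \cite[Lemma 3.23]{combable-corona} (which handles the hyperbolic case through Roe's functional-analytic characterization of the Gromov boundary) with Theorem~\ref{thm:main} substituted for Roe's proposition, whereas you unpack the details of that adaptation; the underlying route is the same.
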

\begin{proof}
 Engel and Wulff showed the similar statement for proper geodesic Gromov
 hyperbolic spaces and for 
 Gromov boundaries~\cite[Lemma 3.23]{combable-corona}. The key ingredient
 is a functional analytic characterization of the Gromov boundary by Roe
 \cite[Proposition 2.1]{ROEHYPERBOLIC}. Now by Theorem~\ref{thm:main},
 we can apply the argument in the proof 
 of~\cite[Lemma 3.23]{combable-corona} just replacing the Gromov product
 in usual sense by the one in the setting of coarsely convex spaces
 defined in Section~\ref{sec:gromov-product}.
\end{proof}


Engel and Wulff obtained many results on groups equipped with
expanding and coherent combings~\cite{combable-corona}.  Here we apply
one of them to groups acting on coarsely convex spaces.

Let $G$ be a group acting geometrically on a
proper coarsely convex space $X$.  Then $G$ is finitely generated, and
$G$ is equipped with a word metric which is quasi-isometric to $X$.
By Proposition~\ref{prop:ceq-pres-qconvity}
the metric space $G$ is coarsely convex. Let $\partial G$ be the ideal
boundary of $G$. We denote by $\dim(\partial G)$ the topological
dimension of $\partial G$. We also denote by $\mathrm{cd}(G)$ the
cohomological dimension of the group $G$.
Combining \cite[Corollary 7.13]{combable-corona} with
Corollary~\ref{cor:idealbdry-eq-combing}, we obtain the following.
\begin{corollary}
\label{cor:cohom-dim}
 Let $G$ be a group acting geometrically on a proper coarsely convex space.
 If $G$ admits a finite model for the classifying space $BG$, then
 \begin{align*}
  \mathrm{cd}(G) = \dim(\partial G)+1.
 \end{align*}
\end{corollary}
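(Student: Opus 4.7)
The plan is to reduce the statement directly to the result of Engel and Wulff via the identification of the ideal boundary with the combing corona. The work consists of checking the hypotheses and tracking a topological invariance; the geometric content has already been established by the preceding material.

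First I would observe that since $G$ acts on the proper coarsely convex space $X$ properly and cocompactly by isometries, the \v{S}varc--Milnor lemma yields that $G$ equipped with a word metric is quasi-isometric to $X$, hence coarsely equivalent. Both $G$ and $X$ are quasi-geodesic, so Proposition~\ref{prop:ceq-pres-qconvity} applies and $G$ is itself a proper coarsely convex space. In particular the ideal boundary $\partial G$ constructed in Section~\ref{sec:ideal-boundary} is defined, and by the results of Section~\ref{sec:ideal-boundary} the compactification $\overline{G} = G \cup \partial_O G$ is a compact metrizable coarse compactification.

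Next I would invoke Corollary~\ref{cor:idealbdry-eq-combing}: since $G$ is a proper coarsely convex space, the combing arising from its system of good quasi-geodesic segments is expanding and coherent (as noted just before the corollary, citing \cite[Lemma 3.26]{combable-corona}), so the combing compactification $\overline{G}^H$ exists, and the identity on $G$ extends to a homeomorphism $\overline{G} \to \overline{G}^H$. Restricting this homeomorphism to the complement of $G$ gives a homeomorphism $\partial G \cong \partial^H G$, where $\partial^H G$ denotes the combing corona. Since topological dimension is a homeomorphism invariant, $\dim(\partial G) = \dim(\partial^H G)$.

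Finally, given the hypothesis that $G$ admits a finite model for $BG$, I would apply \cite[Corollary 7.13]{combable-corona} to the expanding coherent combing on $G$ to obtain $\mathrm{cd}(G) = \dim(\partial^H G) + 1$. Combining this with the dimension identity from the previous step yields $\mathrm{cd}(G) = \dim(\partial G) + 1$, as required. The only nontrivial step is the verification that the combing on $G$ inherited from $\calL$ meets the expanding/coherent hypotheses of \cite[Corollary 7.13]{combable-corona}, but this is precisely what \cite[Lemma 3.26]{combable-corona} establishes for proper coarsely convex spaces, so there is no real obstacle; the corollary is essentially a bookkeeping synthesis of previously cited results.
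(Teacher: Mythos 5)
Your proposal is correct and follows essentially the same path as the paper: transfer coarse convexity from $X$ to $G$ via Švarc--Milnor and Proposition~\ref{prop:ceq-pres-qconvity}, identify $\partial G$ with the combing corona through Corollary~\ref{cor:idealbdry-eq-combing}, and then apply \cite[Corollary 7.13]{combable-corona}. The only difference is that you make explicit a couple of bookkeeping points (the homeomorphism invariance of topological dimension, and the quasi-geodesic hypothesis in Proposition~\ref{prop:ceq-pres-qconvity}) that the paper leaves implicit.
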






\bibliographystyle{amsplain} \bibliography{/Users/tomo/Library/tex/math}

\bigskip
\address{ Tomohiro Fukaya \endgraf
Department of Mathematics and Information Sciences,
Tokyo Metropolitan University,
Minami-osawa Hachioji, Tokyo, 192-0397, Japan
}

\textit{E-mail address}: \texttt{tmhr@tmu.ac.jp}

\address{ Shin-ichi Oguni\endgraf
Department of Mathematics, Faculty of Science,
Ehime University,
2-5 Bunkyo-cho,
Matsuyama,
Ehime,
790-8577, Japan
}

\textit{E-mail address}: \texttt{oguni@math.sci.ehime-u.ac.jp}



\end{document}